\numberwithin{equation}{section}
\numberwithin{figure}{section}
\newtheorem{theorem}{Theorem}[section]
\newtheorem{lemma}[theorem]{Lemma}
\newtheorem{corollary}[theorem]{Corollary}
\theoremstyle{definition}
\newtheorem{remark}[theorem]{Remark}
\newtheorem{remarks}[theorem]{Remarks}
\newtheorem{example}[theorem]{Example}
\newtheorem{examples}[theorem]{Examples}
\newtheorem{notation}[theorem]{Notation}
\newtheorem{conjecture}{Conjecture}
\def\Z{\ensuremath{\mathbb{Z}}}
\def\R{\ensuremath{\mathbb{R}}}
\def\C{\ensuremath{\mathbb{C}}}
\def\H{\ensuremath{\mathbb{H}}}
\newcommand{\deffont}[1]{\textbf{#1}}
\newcommand{\pa}[1]{\left(#1\right)}
\newcommand{\cpa}[1]{\left\{#1\right\}}
\newcommand{\wt}[1]{\widetilde{#1}}
\newcommand{\tn}[1]{\textnormal{#1}}
\newcommand{\br}[1]{\left[#1\right]}
\newcommand{\fg}[1]{\left\langle #1\right\rangle}
\newcommand{\Int}[2]{\tn{Int}_{#2} #1}
\newcommand{\Cl}[2]{\tn{Cl}_{#2} #1}
\newcommand{\Fr}[2]{\tn{Fr}_{#2} #1}
\newcommand{\interior}[1]{#1^{\circ}}
\newcommand{\closure}[1]{\overline{#1}}
\newcommand{\im}[1]{\tn{Im} \, #1}
\newcommand{\cokernel}[1]{\tn{Coker} \, #1}
\newcommand{\myspan}[1]{\tn{span} \, #1}
\newcommand{\rank}[1]{\tn{rank} \, #1}
\newcommand{\rest}[1]{\left.#1\right|}
\newcommand{\card}[1]{\left| #1 \right|}
\newcommand{\eqc}[1]{\llbracket #1 \rrbracket}
\newcommand{\e}{\varepsilon}
\newcommand{\eds}{\oplus}
\newcommand{\bc}[1]{\mathcal{B}\pa{#1}}
\newcommand{\uc}[1]{\mathcal{U}\pa{#1}}
\newcommand*{\twoheadrightarrowtail}{\mathrel{\rightarrowtail\kern-1.9ex\twoheadrightarrow}}
\newcommand{\nnr}{[0,\infty)}
\newcommand{\es}{\mathbin{\natural}}
\newcommand{\dlim}{\varinjlim}
\newcommand{\ilim}{\varprojlim}
\newcommand{\zco}[2]{H^{#1}\pa{#2}}
\newcommand{\rzco}[2]{\wt{H}^{#1}\pa{#2}}
\newcommand{\co}[3]{H^{#1}\pa{#2;#3}}
\newcommand{\zho}[2]{H_{#1}\pa{#2}}
\newcommand{\zeco}[2]{H^{#1}_{\tn{e}}\pa{#2}}
\newcommand{\rzeco}[2]{\wt{H}^{#1}_{\tn{e}}\pa{#2}}
\newcommand{\eco}[3]{H^{#1}_{\tn{e}}\pa{#2;#3}}
\renewcommand{\hom}[3]{\tn{Hom}_{#1}\pa{#2,#3}}
\font\cuf=cmtt8
\newcommand{\curl}[1]{{\cuf #1}}
\title{Ends and end cohomology}
\author[W.~Bass]{William G. Bass}
\address{Department of Mathematics, Oberlin College, Oberlin, OH 44074}
\email{wbass@oberlin.edu}
\author[J.~Calcut]{Jack S. Calcut}
\address{Department of Mathematics, Oberlin College, Oberlin, OH 44074}
\email{jcalcut@oberlin.edu}
\keywords{Ends, naturality, end cohomology, ray-based space, end sum, profinite.}
\subjclass[2020]{54D35, 55N20, 55P57, 54H11}
\date{\today}
\begin{document}

\begin{abstract}
Ends and end cohomology are powerful invariants for the study of noncompact spaces.
We present a self-contained exposition of the topological theory of ends
and prove novel extensions including the existence of an exhaustion of a proper map.
We define reduced end cohomology as the relative end cohomology of a ray-based space.
We use those results to prove a version of a theorem of King that computes the reduced end cohomology
of an end sum of two manifolds.
We include a complete proof of Freudenthal's fundamental theorem on the number of ends of a topological group,
and we use our results on dimension-zero end cohomology to prove---without using transfinite induction---a theorem
of N\"obeling on freeness of certain modules of continuous functions.
\end{abstract}

\maketitle

\tableofcontents

\section{Introduction}
\label{sec:introduction}

In the study of noncompact spaces, ends play a fundamental role.
The notion of an end is quite intuitive---the real line has two ends and the real plane has one end.
Freudenthal~\cite{freudenthal31} put the theory of ends on a firm foundation and proved
his groundbreaking result that each path connected topological group has at most two ends.
Since that time, Hopf~\cite{hopf}, Stallings~\cite{stallings}, and others
have significantly extended Freudenthal's ideas to prove substantial topological, algebraic, and geometric results.

Several topological invariants have useful adaptations to the noncompact setting including
homotopy and homology groups and cohomology rings.
Proper maps play a central role, and adaptations often involve a direct or an inverse limit---see
Hughes and Ranicki~\cite[Ch.~1--3]{hr}, Geoghegan~\cite[Ch.~11--16]{geoghegan}, and Guilbault~\cite{guilbault}.

The classical connected sum of compact surfaces also has an analogue for noncompact manifolds---\textit{end sum}---introduced
by Gompf~\cite{gompf83,gompf85}.
One glues together two noncompact manifolds guided by a proper ray in each manifold.
That operation is now a major tool for constructing interesting manifolds.
For recent examples, see Bennett~\cite{bennett}, Sparks~\cite{sparks}, Gompf and the second author~\cite{cg},
and Guilbault, Haggerty, and the second author~\cite{cgh}.
End sum may also be used to prove the hyperplane unknotting theorem:
each proper embedding of $\R^{n-1}$ in $\R^n$ is unknotted provided $n\ne3$~\cite{cks,cg}.

The dependence of end sum on ray choice is an immediate question.
Even in $\R^n$, the question of whether there is a \textit{choice} of ray is interesting.
Fox and Artin~\cite[p.~983]{foxartin} exhibited the first knotted ray in $\R^3$.
Gompf~\cite[Lemma~A.1]{gompf85} showed using finger moves that each ray in $\R^n$ is unknotted for $n\geq 4$.
King, Siebenmann, and the second author~\cite[Thm.~9.13]{cks} used embedded Morse theory to show that each ray unknots in $\R^2$.
Thus, a ray knots in $\R^n$ if and only if $n=3$, and the end sum of two copies of $\R^n$ is diffeomorphic to $\R^n$ provided $n\ne 3$.
Myers~\cite{myers} showed that end sums of two copies of $\R^3$ yield uncountably many topological types---see also~\cite[App.]{ch}.

In dimensions greater than $3$, Siebenmann conjectured~\cite[pp.~1804--1805]{cks} that ray choice
could alter the topological type of the end sum of two open, one-ended manifolds.
Haggerty and the second author~\cite{ch} constructed examples verifying Siebenmann's conjecture.
Guilbault, Haggerty, and the second author~\cite{cgh} later produced many more examples.
All of those examples were distinguished using their end cohomology algebras.
King (unpublished) suggested a theorem for computing the end cohomology algebra of an end sum in terms
of the end cohomology algebras of the summands~\cite[$\S$5]{cgh}.
That theorem relies on \textit{reduced} end cohomology---which lacks a precise definition in the literature.
The main goals of the present paper are to provide a precise definition of reduced end cohomology,
to use that definition to state and prove King's theorem, and to include all supporting topological background.
We have intentionally included complete proofs of that background material since some of it is hard to find in the literature.

Reduced cohomology is sometimes regarded merely as a tool for streamlining statements
such as the cohomology of a point, a sphere, a wedge sum, and a connected sum of manifolds.
More significantly, the suspension axiom holds in all dimensions for reduced cohomology, and
homotopical cohomology yields reduced theories---see
May~\cite[Ch.~19 \& 22]{maycc}.
Various constructions lead to reduced cohomology $\rzco{\ast}{X}$ of a space $X$:
augmentation, relative cohomology of based spaces, and homotopical cohomology.
We compare those approaches in Section~\ref{sec:reducedcohomology}.
They yield canonically isomorphic results when $X$ is path-connected---but not otherwise.
That difference is relevant to the theory of ends where disconnected spaces are ubiquitous.
All three approaches yield splittings
\begin{equation}\label{reducedsplitting}
\zco{\ast}{X} \cong \rzco{\ast}{X} \eds \zco{\ast}{\bullet}
\end{equation}
where $\cpa{\bullet}$ is a one-point space.
While augmentation has the apparent advantage of avoiding a choice of basepoint,
the resulting splitting~\eqref{reducedsplitting} is not natural in $X$ and cup products with dimension-zero classes are not well-defined.
If instead one chooses a basepoint and defines $\rzco{\ast}{X}=\zco{\ast}{X,\bullet}$,
then the splitting~\eqref{reducedsplitting} is natural in $X$ for based maps,
all cup products are well-defined, and $\rzco{\ast}{X}$ is a graded, associative algebra.
Therefore, the definition using a basepoint is preferred.

Given a noncompact space $X$ with an exhaustion by compacta, the ends of $X$ are defined using the complements of those compacta.
The end cohomology algebra $\zeco{\ast}{X}$ of $X$ is defined to be the direct limit of the cohomology algebras of those complements.
A \deffont{baseray} in $X$ is a proper embedding $r:\nnr\to X$.
By analogy with the preferred definition of reduced cohomology,
we define---see~\eqref{eq:recdef}---the \deffont{reduced end cohomology algebra} $\rzeco{\ast}{X}$ of $X$ to be the relative end cohomology algebra
of the ray-based space $(X,r)$.
In Theorem~\ref{mainrecotheorem}, we obtain a splitting
\begin{equation}\label{recosplitting}
\zeco{\ast}{X} \cong \rzeco{\ast}{X} \eds \zeco{\ast}{\nnr}
\end{equation}
that is natural in $X$ for ray-based proper maps.
To obtain that splitting, we prove in Theorem~\ref{retract} that $X$ properly retracts to any baseray.

We study dimension-zero end cohomology in detail.
Using coefficients in a PID $R$---equipped with the discrete topology---we prove in Lemma~\ref{dimzeroendcoho}
that $\zeco{0}{X} \cong C(E(X),R)$
where $C(E(X),R)$ denotes the $R$-module of continuous functions from the end space of $X$ to $R$.
In Theorem~\ref{recdirect}, we give a very direct proof that $\zeco{0}{X}$ is a free $R$-module
of countable rank equal to the number of ends of $X$ (where infinities are not distinguished).
To make the algebra in the proof of Theorem~\ref{recdirect} as simple as possible, we introduce in Section~\ref{sec:cem}
the notion of an \textit{exhaustion of a map} and prove its existence for any proper map.

Lemma~\ref{dimzeroendcoho} and Theorem~\ref{recdirect} combine to prove a theorem of N\"obeling:
if $A$ is a profinite space---the limit
of an inverse system of finite sets---then $C(A,\Z)$ is a free $\Z$-module.
Our proof of N\"obeling's theorem in Remarks~\ref{ecmstR} relies on the theory of ends only for
locally finite trees and does not use transfinite induction.

Our development of reduced end cohomology requires several topological results---some novel---including the existence of:
compact exhaustions of spaces and maps, end spaces,
baserays, and retracts to baserays.
Those results ultimately require some niceness conditions on spaces.
The main class of spaces with which we work---generalized continua---includes manifolds, locally finite simplicial complexes,
locally finite CW-complexes, and many ANRs.
We have included complete proofs of those results for generalized continua---sometimes assumed metrizable---for the benefit of the reader,
to make this paper as self-contained as possible, since we are unaware of complete published proofs of some background material,
and to advertize certain notions such as Guilbault's \textit{efficient} compact exhaustions.

This paper is organized as follows.
Section~\ref{sec:ces} introduces generalized continua and proves that each generalized continuum admits an efficient exhaustion by compacta.
Section~\ref{sec:es} develops the endpoint compactification of a generalized continuum.
We include a complete proof of Freudenthal's theorem: each path connected generalized continuum that is a topological group has at most two ends.
Section~\ref{sec:cem} introduces our notion of a compact exhaustion of a proper map and
proves that each proper map of generalized continua has an efficient compact exhaustion.
Section~\ref{sec:br} studies baserays in generalized continua,
including an example of a generalized continuum that contains no baseray.
We prove that each \textit{metrizable} generalized continuum contains a baseray pointing to any specified end.
Also, we prove that there exists a proper retract to a given baseray.
Section~\ref{sec:reducedcohomology} compares three well-known definitions of reduced ordinary cohomology.
In Section~\ref{sec:endcohomology}, we review end cohomology and define reduced end cohomology.
We prove a natural splitting for reduced end cohomology, study dimension-zero reduced end cohomology in detail,
and use those results to prove N\"obeling's theorem.
Section~\ref{sec:king} recalls the notion of an end sum of manifolds.
Using our definition of reduced end cohomology, we state and prove a version of King's theorem that
computes the end cohomology of an end sum.
Section~\ref{sec:fd} closes with several questions for further study.

Throughout, we use the following conventions.
\deffont{Spaces} are Hausdorff topological spaces.
As usual, $I=[0,1]\subseteq\R$ denotes the closed unit interval.
A \deffont{compactum} is a compact space---not necessarily metric.
A \deffont{map} is a continuous function.
A map is \deffont{proper} provided the preimage of each compactum is compact.
A \deffont{proper homotopy} is a homotopy that is a proper map.
Maps are \deffont{properly homotopic} provided they are homotopic by a proper homotopy;
properly homotopic maps are necessarily proper.
Let $X$ be a space, and let $A\subseteq X$ be a subspace of $X$.
Then, $\interior{A}$ denotes the topological \deffont{interior} of $A$ in $X$,
$\closure{A}$ denotes the topological \deffont{closure} of $A$ in $X$,
and $\Fr{A}{}$ denotes the topological \deffont{frontier} of $A$ in $X$.
If the ambient space $X$ must be specified, then we instead write
$\Int{A}{X}$, $\Cl{A}{X}$, and $\Fr{A}{X}$ for those subspaces.
We say $A$ is \deffont{bounded} provided $\closure{A}$ is compact,
and $A$ is \deffont{unbounded} provided $\closure{A}$ is noncompact.
\deffont{Component} means connected component.
We adopt the convention that each \deffont{connected} space has exactly one nonempty component.
In particular, connected implies nonempty,
and the empty space is neither connected nor disconnected.
The notation $Y\rightarrowtail Z$ denotes an injective function and
$Y\twoheadrightarrow Z$ denotes a surjective function.
If $Y\subseteq Z$, then $Y\hookrightarrow Z$ denotes inclusion.
We write $Y\approx Z$ to mean that $Y$ and $Z$ are homeomorphic.
We use the singular theory for (co)homology and---following Hatcher~\cite[p.~212]{hatcher}---we refer to the integer $k$
in any homology group $H_k$ or cohomology group $H^k$ as the \deffont{dimension} rather than the degree.

\section*{Acknowledgement}
We thank Craig Guilbault for the inspiring lecture series on ends he gave
at Oberlin College in April 2018 and for helpful conversations.
We also thank the referee for their careful reading and useful comments.

\section{Compact exhaustions of spaces}
\label{sec:ces}

Let $X$ be a space.
An \deffont{exhaustion of $X$ by compacta}---also called a \deffont{compact exhaustion} of $X$---is a countably indexed, nested sequence
$K_1 \subseteq K_2 \subseteq \cdots$ of compacta in $X$ such that:
\begin{enumerate*}[label=(\roman*)]
  \item\label{coverX} $X=\cup_i K_i$---meaning the $K_i$ cover $X$---and
  \item\label{inint} $K_i\subseteq \interior{K_{i+1}}$ for each $i\in\Z_+$.
\end{enumerate*}
Those properties ensure that for each compactum $K\subseteq X$ there exists $i\in\Z_+$ such that $K\subseteq \interior{K_i}$.
Using Guilbault's terminology~\cite[$\S$3.3.1]{guilbault},
a compactum $K\subseteq X$ is \deffont{efficient} provided $K$ is connected (hence, nonempty) and
each component of $X-K$ is unbounded.
A compact exhaustion $\cpa{K_i}$ of $X$ is \deffont{efficient} provided each $K_i$ is efficient.

\begin{examples}\label{eceexamples}\mbox{ \\ }
\begin{enumerate}[label=(\alph*)]
\item\label{ecennr} Each efficient compact exhaustion of $\nnr$ has the form $K_i=[0,b_i]$ for some increasing, unbounded sequence
$0\leq b_1<b_2<b_3<\cdots$ of real numbers.
\item Euclidean space $\R^n$ is efficiently exhausted by closed disks of radius $i\in\Z_+$ centered at the origin.
\item Each subsequence of an (efficient) compact exhaustion is an (efficient) compact exhaustion.
Those two facts will be used throughout.
\item A nested sequence $\cpa{K_i}$ of compacta in a space $X$ can cover $X$
without satisfying property~\ref{inint} in the definition of a compact exhaustion.
Consider $\R^2$ and let $K_i$ be the closed disk of radius $i\in\Z_+$ centered at the origin
minus the open sector of points with polar angle $0<\theta<\pi/i$ as in Figure~\ref{fig:pacman}.
\begin{figure}[htbp!]
    \centerline{\includegraphics[scale=1.0]{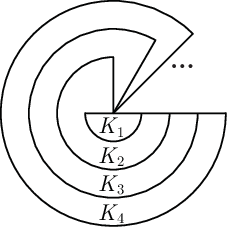}}
    \caption{Nested compacta $K_i$ covering $\R^2$.}
\label{fig:pacman}
\end{figure}
The nested compacta $\cpa{K_i}$ cover $\R^2$, but property~\ref{inint} in the definition of compact exhaustion is not satisfied.
No compactum in $\R^2$ containing an open neighborhood of the origin is contained in any $K_i$, let alone in the interior of any $K_i$.
\item For a compact exhaustion that is not efficient, consider $\R^2$ and
let $K_i$ be the closed disk of radius $i\in\Z_+$ centered at the origin minus
the open disk of radius $1/4$ centered at $(i-1/2,0)$.
Each complement $\R^2-K_i$ has one bounded component.
\item Not every space admits a compact exhaustion.
Consider the wedge sum $W$ of countably infinitely many copies of the closed unit interval $\br{0,1}\subseteq\R$ each based at $0$.
The simplicial $1$-complex $W$ fails to be locally finite at the wedge point $p$, and
no compact subspace of $W$ contains $p$ in its interior.
Thus, some conditions are required to ensure that a space admits an exhaustion by compacta.
\end{enumerate}
\end{examples}

Using terminology of Baues and Quintero~\cite[p.~58]{bq}, a \deffont{generalized continuum} is a Hausdorff space that is
connected, locally connected, $\sigma$-compact, and locally compact.
Recall that a space is \deffont{$\sigma$-compact} provided it is the union of countably many compacta.
Generalized continua include all:
\begin{enumerate}[label=(\arabic*)]
\item\label{topmflds} connected, Hausdorff, second countable manifolds (with or without boundary),
\item\label{lfscs} connected, locally finite simplicial complexes,
\item\label{lfcws} connected, locally finite CW complexes, and
\item\label{manrs} connected, separable, metric absolute neighborhood retracts (ANRs).
\end{enumerate}

\begin{remarks}\label{gcremarks}\mbox{ \\ }
\begin{enumerate}[label=(\alph*)]
\item To show those and other
connected spaces are generalized continua, useful references include
Lee~\cite[pp.~38--44~\&~93--110]{lee} and Spivak~\cite[pp.~1--6~\&~459--460]{spivak} for manifolds,
Hatcher~\cite[pp.~519--529]{hatcher} and Geoghegan~\cite[$\S$10.1]{geoghegan} for CW complexes,
Guilbault~\cite[$\S$3.1, 3.3, \& 3.12]{guilbault} for ANRs,
Munkres~\cite[p.~215]{munkres} for Urysohn's metrization theorem,
and Rudin~\cite{rudin} for a short proof that each metric space is paracompact.
\item\label{gcnice} Each generalized continuum is also paracompact, regular, normal, and Lindel{\"o}f.
There are several ways to deduce those properties from the literature.
For instance, locally compact, Hausdorff, and $\sigma$-compact imply paracompact by Hocking and Young~\cite[Thm.~2-65]{hockingyoung}.
Paracompact and Hausdorff imply regular and normal by Hocking and Young~\cite[Thms. \hbox{2-62} \& \hbox{2-63}]{hockingyoung}.
Regular, Hausdorff, and $\sigma$-compact imply Lindel{\"o}f by Engelking~\cite[Thm.~3.8.5]{engelking}.
\item Each space of type~\ref{topmflds}--\ref{manrs} above is metrizable.
Nevertheless, there exist nonmetrizable generalized continua---see Example~\ref{nmgc}.
\item\label{cgcuece} If $X$ is a \textit{compact} generalized continuum,
then there is a unique efficient compact exhaustion of $X$, namely $K_i=X$ for $i\in\Z_+$.
(Otherwise, some $X-K_i$ contains a component $C$.
Efficiency of $K_i$ implies $\closure{C}$ is noncompact.
But, $\closure{C}$ is compact since it is closed in the compactum $X$, a contradiction.)
\end{enumerate}
\end{remarks}

It is a well-known fact that each generalized continuum admits an \emph{efficient} exhaustion by compacta.
For instance, that fact---with the local connectedness hypothesis replaced
by local path connectedness---is Exercise~3.3.4 in Guilbault~\cite{guilbault}.
We include a proof to make this text more self contained,
since we are unaware of a published proof,
and we use it to prove novel extensions to pairs of spaces and to maps.
The following notation and terminology will be useful.

\begin{notation}
Given a subspace $A$ of a space $X$, let $\bc{A}$ denote the set of bounded components of $A$,
and let $\uc{A}$ denote the set of unbounded components of $A$.
If $K$ is a compactum in $X$,
then we define $K'=K\cup\bc{X-K}$ the \deffont{bounded filling} of $K$ in $X$.
We employ standard notation for the union of sets:
if $\mathcal{A}$ is a set of sets,
then $\cup\mathcal{A}$ and $\sqcup\mathcal{A}$ respectively denote the union and disjoint union of the sets in $\mathcal{A}$.
In particular, we have
\begin{equation}\label{XdisjointunionK}
X=K \sqcup \bc{X-K} \sqcup \uc{X-K} = K' \sqcup \uc{X-K}
\end{equation}
Note that $\sqcup$ merely emphasizes that sets are disjoint and is not used in a topological sense.
\end{notation}

\begin{example}
Consider the connected compactum $K\subseteq \R^2$ in Figure~\ref{fig:punctureddisk}.
\begin{figure}[htbp!]
    \centerline{\includegraphics[scale=1.0]{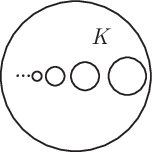}}
    \caption{Connected compactum $K$ in $\R^2$.}
\label{fig:punctureddisk}
\end{figure}
It is obtained by removing from the closed unit disk a sequence of disjoint open subdisks converging to a point.
The complement of $K$ in $\R^2$ has infinitely many bounded components.
Nevertheless, the bounded filling $K'$ of $K$ in $\R^2$ is compact and
the number of unbounded components of $\R^2-K$ is finite.
Those two properties hold generally as will be shown in Lemma~\ref{compactlemma}.
\end{example}

\begin{lemma}\label{compslemma}
Let $X$ be a generalized continuum, let $K\subseteq X$ be a nonempty compactum, and let $C$ be a component of $X-K$.
Then:
\begin{enumerate*}[label=(\roman*)]
  \item\label{compsopen} $C$ is open in $X$,
  \item\label{CmeetsnhbdK} $C$ meets each open neighborhood of $K$ in $X$,
	\item\label{clCinKuC} $\closure{C}\subseteq K \cup C$,
	\item\label{clCmeetsK} $\closure{C}$ meets $K$,
	\item\label{KconnKunioncompsconn} if $K$ is connected, then $K$ union any collection of components of $X-K$ is connected, and
	\item\label{KconnK'conn} if $K$ is connected, then the bounded filling $K'$ of $K$ in $X$ is connected.
\end{enumerate*}
\end{lemma}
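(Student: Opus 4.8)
The plan is to obtain (i) and (iii) directly from local connectedness, to deduce (iv) and then (ii) from the connectedness of $X$, and finally to get (v) and (vi) from a standard gluing principle for connected sets. I would stress at the outset that only the Hausdorff, connected, and locally connected hypotheses are used in this lemma; $\sigma$-compactness and local compactness play no role here.

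First I would note that $K$ is closed in $X$ (a compactum in a Hausdorff space), so $X-K$ is open and is therefore locally connected as an open subspace of the locally connected space $X$. In a locally connected space the components of the ambient space are open, so $C$ is open in $X-K$ and hence open in $X$; this is~\ref{compsopen}. Moreover $C$ is closed in $X-K$, so its closure in $X$ satisfies $\closure{C}\cap(X-K)=C$, and combining this with the decomposition~\eqref{XdisjointunionK} yields $\closure{C}\subseteq K\cup C$; this is~\ref{clCinKuC}. Equivalently, the union of the remaining components of $X-K$ is open and disjoint from $C$, so $\closure{C}$ avoids it.

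Next, for~\ref{clCmeetsK}: if $\closure{C}$ missed $K$, then~\ref{clCinKuC} would give $\closure{C}=C$, so $C$ would be clopen in $X$; since $X$ is connected and $C$ is nonempty this forces $C=X$, contradicting $K\neq\emptyset$ and $K\cap C=\emptyset$. Hence $\closure{C}\cap K\neq\emptyset$. Property~\ref{CmeetsnhbdK} then follows at once (even though it is stated earlier): given an open neighborhood $U$ of $K$, choose $p\in\closure{C}\cap K\subseteq U$; since $p\in\closure{C}$, the neighborhood $U$ must meet $C$. For~\ref{KconnKunioncompsconn}, assume $K$ is connected and let $\mathcal{C}$ be any collection of components of $X-K$. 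The key sublemma is that $K\cup C$ is connected for each single $C$: a separation of $K\cup C$ would, since $K$ and $C$ are each connected, have to split it as $K\sqcup C$ with both pieces closed in $K\cup C$, forcing $\closure{C}\cap K=\emptyset$ and contradicting~\ref{clCmeetsK}. Since each $K\cup C$ is connected and contains the common nonempty connected set $K$, the union $K\cup\bigcup_{C\in\mathcal{C}}C=\bigcup_{C\in\mathcal{C}}(K\cup C)$ is a union of connected sets with a point in common, hence connected. Finally~\ref{KconnK'conn} is the special case $\mathcal{C}=\bc{X-K}$, so the bounded filling $K'=K\cup\bc{X-K}$ is connected.

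The arguments are elementary point-set topology, so there is no single hard obstacle; the steps needing the most care are the use of connectedness of $X$ in~\ref{clCmeetsK}, where it is essential that the open set $C$ cannot also be closed, and the gluing step in~\ref{KconnKunioncompsconn}, where one must handle an arbitrary—possibly infinite—collection of components by reducing to the one-component case and then invoking the common-point union theorem rather than attempting a direct separation argument on the whole union.
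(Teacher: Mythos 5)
Your proof is correct, and it deviates from the paper's in two places worth noting. For~\ref{CmeetsnhbdK} the paper argues directly: if $C$ missed an open neighborhood $N$ of $K$, then $C$ together with the union of $N$ and the remaining components of $X-K$ would separate $X$. You instead derive~\ref{CmeetsnhbdK} as a corollary of~\ref{clCmeetsK}, by choosing a point of $\closure{C}\cap K$ inside the given neighborhood; this reordering is legitimate (neither proof of~\ref{clCmeetsK} uses~\ref{CmeetsnhbdK}) and saves one separation argument. For~\ref{KconnKunioncompsconn} the paper runs a single clopen-subset argument on the whole union $A$: a nonempty relatively clopen $B\subseteq A$ meeting $K$ must contain $K$, and then, via $\closure{C}\subseteq K\cup C$ and $\closure{C}\cap K\neq\emptyset$, must contain every component in the collection. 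You instead first prove the one-component case that $K\cup C$ is connected---showing a separation would force $\closure{C}\cap K=\emptyset$, against~\ref{clCmeetsK}---and then glue with the common-point union theorem. The two routes are comparable in length; yours reduces to a standard quotable theorem, the paper's is self-contained. Your remark that only the Hausdorff, connected, and locally connected hypotheses are used is accurate: the paper's proof likewise never invokes local compactness or $\sigma$-compactness. The only loose end is the degenerate case of an empty collection in~\ref{KconnKunioncompsconn}, where your identity $K\cup\bigcup_{C\in\mathcal{C}}C=\bigcup_{C\in\mathcal{C}}(K\cup C)$ fails because the right side is an empty union; but then the set in question is just $K$ and there is nothing to prove.
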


\begin{proof}
As $K\subseteq X$ where $K$ is compact and $X$ is Hausdorff, $K$ is closed in $X$ and $X-K$ is open in $X$.
As $X$ is locally connected, $X-K$ is locally connected.
As $C$ is a component of $X-K$, $C$ is open in $X-K$.
As $X-K$ is open in $X$, $C$ is open in $X$ proving~\ref{compsopen}.

Let $N$ be an open neighborhood of $K$ in $X$.
Suppose, by way of contradiction, that $C$ is disjoint from $N$.
By~\ref{compsopen}, each component of $X-K$ is open in $X$.
So, the union $U$ of $N$ and all components of $X-K$ other than $C$ is open in $X$.
Thus, $C$ and $N$ separate $X$.
That contradicts the hypothesis that $X$ is connected and proves~\ref{CmeetsnhbdK}.

Note that $K\cup C$ is closed in $X$ since its complement in $X$ is the union of all components of $X-K$ other than $C$
and components of $X-K$ are open in $X$ by~\ref{compsopen}.
As \hbox{$K\cup C$} is closed in $X$ and contains $C$, $\closure{C}\subseteq K\cup C$ proving~\ref{clCinKuC}.
Suppose, by way of contradiction, that $\closure{C}$ is disjoint from $K$.
As $\closure{C}\subseteq K\cup C$, we have $\closure{C}\subseteq C$ and $\closure{C}= C$.
So, $C$ is open and closed in $X$, is nonempty, and has nonempty complement since $K$ is nonempty.
Thus, $X$ is disconnected, a contradiction, which proves~\ref{clCmeetsK}.

Assume $K$ is connected and let $A\subseteq X$ be the union of $K$ and an arbitrary collection of components of $X-K$.
Let $B\subseteq A$ be a subspace that is open, closed, and meets $K$.
As $K$ is connected, $K\subseteq B$.
Let $C$ be any component of $X-K$ that lies in $A$.
As $C$ is connected, $\closure{C}$ is connected.
We have $\closure{C}\subseteq K\cup C \subseteq A$ where the first containment holds by~\ref{clCinKuC}.
By~\ref{clCmeetsK}, $\closure{C}$ meets $K\subseteq B$.
Hence, $C\subseteq\closure{C}\subseteq B$ where the second containment holds since $\closure{C}$ is connected.
Therefore, $A\subseteq B$ and $A$ is connected, proving~\ref{KconnKunioncompsconn}.
Now,~\ref{KconnK'conn} is an immediate consequence of~\ref{KconnKunioncompsconn}.
\end{proof}

\begin{lemma}\label{compstechlemma}
Let $Y$ be a subspace of a space $Z$.
Then:
\begin{enumerate*}[label=(\roman*)]
  \item\label{connss} if $A\subseteq Y$, then $A$ is connected as a subspace of $Y$ if and only if
	$A$ is connected as a subspace of $Z$, and
  \item\label{compsofunionofcomps} if $Y$ is a union of components of $Z$, then the components of $Y$ are the components of $Z$ in that union.
\end{enumerate*}
\end{lemma}

\begin{proof}
The topology on $A$ as a subspace of $Y$ equals the topology on $A$ as a subspace of $Z$, which proves~\ref{connss}.
Let $\cpa{C_i \mid i\in T}$, where $T$ is an arbitrary index set, be the set of components of $Z$.
Suppose $Y=\cup_{i\in S} C_i$ for some index set $S\subseteq T$.
Consider $C_j$ for some $j\in S$.
As $C_j$ is connected in $Z$, $C_j$ is nonempty and is connected in $Y$ by~\ref{connss}.
So, $C_j$ lies in a unique component $C$ of $Y$.
As $C$ is connected in $Y$, $C$ is connected in $Z$ by~\ref{connss}.
As $C$ meets the component $C_j$ of $Z$, $C$ lies in $C_j$.
Thus, $C_j=C$ is a component of $Y$.
Conclusion~\ref{compsofunionofcomps} follows since $Y=\cup_{i\in S} C_i$.
\end{proof}

\begin{lemma}\label{compactlemma}
Let $X$ be a generalized continuum and let $K\subseteq X$ be a nonempty compactum.
Then:
\begin{enumerate*}[label=(\roman*)]
  \item\label{finunbdedcomps} $X-K$ has finitely many unbounded components,
  \item\label{K'compact} the bounded filling $K'$ of $K$ in $X$ is compact, and
	\item\label{unbdedcomplK'} $X-K'$ has only unbounded components and $\uc{X-K'}=\uc{X-K}$.
\end{enumerate*}
\end{lemma}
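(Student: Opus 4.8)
The plan is to produce a single compact neighborhood $L$ of $K$ and extract all three conclusions from the compactness of its frontier. Since $X$ is locally compact and Hausdorff and $K$ is compact, every point of $K$ has a compact neighborhood; finitely many of their interiors cover $K$, and the union $L$ of the corresponding compacta is a compact neighborhood with $K\subseteq\interior{L}$. Because $L$ is closed (being compact in a Hausdorff space), its frontier $\Fr{L}{}=L-\interior{L}$ is compact, and since $K\subseteq\interior{L}$ we have $\Fr{L}{}\subseteq X-K$. Each point of $\Fr{L}{}$ lies in some component of $X-K$, and these components are open in $X$ by Lemma~\ref{compslemma}\ref{compsopen}; so they form an open cover of the compactum $\Fr{L}{}$, and disjointness of components upgrades a finite subcover into the statement that only finitely many components $D_1,\dots,D_m$ of $X-K$ meet $\Fr{L}{}$.

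The organizing observation is a dichotomy for an arbitrary component $C$ of $X-K$: by Lemma~\ref{compslemma}\ref{CmeetsnhbdK}, $C$ meets the open neighborhood $\interior{L}$ of $K$, so either $C\subseteq\interior{L}$ (whence $\closure{C}\subseteq L$ is compact and $C$ is bounded) or $C$ leaves $\interior{L}$, in which case the connectedness of $C$ together with the partition $X-\Fr{L}{}=\interior{L}\sqcup(X-L)$ forces $C$ to meet $\Fr{L}{}$, so $C$ is one of $D_1,\dots,D_m$. In particular every unbounded component meets $\Fr{L}{}$ and hence occurs among $D_1,\dots,D_m$, which proves~\ref{finunbdedcomps}. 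For~\ref{K'compact}, the same dichotomy shows each bounded component is either contained in $\interior{L}$ or equal to one of the bounded $D_j$, so $K'\subseteq L\cup\bigcup\{\closure{D_j}:D_j\text{ bounded}\}$, a finite union of compacta and hence compact; since $K'=X-\cup\,\uc{X-K}$ is closed in $X$ (its complement is a union of open components, by~\eqref{XdisjointunionK} and Lemma~\ref{compslemma}\ref{compsopen}), $K'$ is a closed subset of a compactum, hence compact.

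Finally, \ref{unbdedcomplK'} should fall out formally. By~\eqref{XdisjointunionK}, $X-K'=\cup\,\uc{X-K}$ is a union of components of $X-K$, so Lemma~\ref{compstechlemma}\ref{compsofunionofcomps} identifies the components of $X-K'$ with exactly the unbounded components of $X-K$. Since boundedness of a component is measured by its closure in $X$ and is unaffected by whether we delete $K$ or the larger set $K'$, every component of $X-K'$ is unbounded and $\uc{X-K'}=\uc{X-K}$.

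I expect the main obstacle to be~\ref{K'compact}: the punctured-disk example shows that $\bc{X-K}$ may be infinite, so the bounded part of $K'$ cannot be controlled component by component. The key point is that all but finitely many bounded components are swallowed by $\interior{L}$, while the finitely many that escape $\interior{L}$ are caught by the compactness of $\Fr{L}{}$; securing the containment $K'\subseteq L\cup\bigcup\closure{D_j}$ correctly—and separately verifying that $K'$ is closed—is where the care is required.
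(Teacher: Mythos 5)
Your proof is correct and follows essentially the same route as the paper's: a compact neighborhood of $K$, Lemma~\ref{compslemma} parts~\ref{compsopen} and~\ref{CmeetsnhbdK} plus a connectedness/separation argument to show that all but finitely many components of $X-K$ are swallowed by that neighborhood, then~\ref{K'compact} from closedness of $K'$ together with a finite union of compacta, and~\ref{unbdedcomplK'} from Lemma~\ref{compstechlemma}\ref{compsofunionofcomps}. The only cosmetic difference is how finiteness is extracted: you cover the compact frontier $\Fr{L}{}$ by the open, pairwise disjoint components, whereas the paper covers $\closure{N}$ (for a bounded open neighborhood $N$ of $K$) by $N$ together with finitely many components and then argues every other component lies in $N$.
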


\begin{proof}
As $X$ is locally compact, there exists a bounded open neighborhood $N$ of $K$ in $X$.
So, $X$ has an open cover $\cpa{N}\cup\bc{X-K}\cup\uc{X-K}$.
As $\closure{N}$ is compact, $\closure{N}$ has a finite cover
\begin{equation}\label{finitecoverofclosureV}
\closure{N} \subseteq N\cup B_1\cup B_2\cup\cdots\cup B_k \cup U_1\cup U_2\cup\cdots\cup U_l
\end{equation}
where each $B_i$ is a bounded component of $X-K$ and each $U_i$ is an unbounded component of $X-K$.

We claim that any component $C$ of $X-K$ other than a $B_i$ or $U_i$ in~\eqref{finitecoverofclosureV} is contained in $N$.
Suppose, by way of contradiction, that $C$ is such a component not contained in $N$.
By Lemma~\ref{compslemma} parts~\ref{compsopen} and~\ref{CmeetsnhbdK}, $C\cap N$ is open in $X$ and in $C$, and $C\cap N$ is nonempty.
Also, $C-\closure{N}$ is open in $X$ and in $C$.
As $C$ is not contained in $N$, $C$ meets $X-N$.
By~\eqref{finitecoverofclosureV}, $C$ meets $X-\closure{N}$ and $C-\closure{N}$ is nonempty.
Thus, $C\cap N$ and $C-\closure{N}$ separate $C$.
But, $C$ is connected, and that contradiction proves the claim.

As $N$ is bounded, no unbounded component of $X-K$ is contained in $N$.
So, the claim implies that $U_1$, $U_2$,\ldots , $U_l$ must be all of the unbounded components of $X-K$ proving~\ref{finunbdedcomps}.
The claim also implies that all but finitely many bounded components of $X-K$ lie in $N$.

As $\cup \, \uc{X-K}$ is open in $X$, its complement $K'$ is closed in $X$.
Also
\[
K'=K\cup\bc{X-K} \subseteq\closure{N}\cup \closure{B_1}\cup\closure{B_2}\cup\cdots\cup\closure{B_k}
\]
where the latter is a finite union of compact spaces and thus is compact.
So, $K'$ is closed in $X$ and is contained in a compact subspace of $X$.
Hence, $K'$ is compact which proves~\ref{K'compact}.

By~\eqref{XdisjointunionK}, we have $X-K=\sqcup\bc{X-K}\sqcup\uc{X-K}$ and $X-K'=\sqcup\,\uc{X-K}$.
Lemma~\ref{compstechlemma}\ref{compsofunionofcomps} with $Z=X-K$ and $Y=X-K'$
implies that the components of $X-K'$ are exactly the unbounded components of $X-K$.
That proves~\ref{unbdedcomplK'} and the lemma.
\end{proof}

\begin{lemma}[Monotonicity]\label{monotonicity}
Let $X$ be a generalized continuum, and let $K\subseteq L$ be compacta in $X$.
Then:
\begin{enumerate*}[label=(\roman*)]
  \item\label{welldef} each unbounded component of $X-L$ lies in a unique unbounded component of $X-K$,
  \item\label{surj} each unbounded component of $X-K$ contains at least one unbounded component of $X-L$,
	\item\label{cansurj} there is a canonical surjection $\uc{X-L} \twoheadrightarrow \uc{X-K}$ of finite sets,
	\item\label{monoton} $0\leq\card{\uc{X-K}}\leq\card{\uc{X-L}}<\infty$, and
	\item\label{monotonint} if $K\subseteq \interior{L}$, then $K'\subseteq \interior{\pa{L'}}$.
\end{enumerate*}
\end{lemma}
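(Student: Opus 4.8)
The plan is to deduce all five statements from a single \emph{containment principle}: since $K\subseteq L$ we have $X-L\subseteq X-K$, so every connected subset of $X-L$—in particular every component of $X-L$—lies in a unique component of $X-K$, and a component of $X-L$ that meets a component $C$ of $X-K$ is forced to lie entirely inside $C$. To prove~\ref{welldef} I would take an unbounded component $D$ of $X-L$, let $C$ be the unique component of $X-K$ containing it, and check that $C$ is unbounded: since $\closure{D}\subseteq\closure{C}$, if $\closure{C}$ were compact then $\closure{D}$, being closed in $X$ and contained in a compactum, would also be compact, contradicting that $D$ is unbounded. Uniqueness is immediate because components partition $X-K$.

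For~\ref{surj} the idea is a contradiction argument powered by Lemma~\ref{compactlemma}. By that lemma, $X-L$ has only finitely many unbounded components, the bounded filling $L'$ is compact, and $X-L'$ is precisely the union of those finitely many unbounded components. Given an unbounded component $C$ of $X-K$, suppose no unbounded component of $X-L$ lies in $C$. By the containment principle each unbounded component of $X-L$ is then disjoint from $C$, so $C$ misses $X-L'$ and hence $C\subseteq L'$. But then $\closure{C}$ is a closed subset of the compactum $L'$, so $C$ is bounded, contradicting the choice of $C$. Parts~\ref{cansurj} and~\ref{monoton} are then formal bookkeeping: \ref{welldef} defines a map $\uc{X-L}\to\uc{X-K}$, \ref{surj} says it is onto, both sets are finite by Lemma~\ref{compactlemma}\ref{finunbdedcomps}, and a surjection of finite sets gives $0\leq\card{\uc{X-K}}\leq\card{\uc{X-L}}<\infty$.

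For~\ref{monotonint}, assume $K\subseteq\interior{L}$. Since $L\subseteq L'$ we have $K\subseteq\interior{L}\subseteq\interior{\pa{L'}}$, so it remains to show that each bounded component $B$ of $X-K$ lies in $\interior{\pa{L'}}$. Because $B$ is open in $X$ by Lemma~\ref{compslemma}\ref{compsopen}, it suffices to prove $B\subseteq L'$; openness then upgrades this to $B\subseteq\interior{\pa{L'}}$. To see $B\subseteq L'$ I would write $B=(B\cap L)\cup(B-L)$: the first piece lies in $L\subseteq L'$, and every component $D$ of $X-L$ contained in $B$ has $\closure{D}\subseteq\closure{B}$ compact, hence $D$ is bounded and so $D\in\bc{X-L}$; therefore $B-L\subseteq\bc{X-L}\subseteq L'$. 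Combining gives $K'=K\cup\bc{X-K}\subseteq\interior{\pa{L'}}$, as desired.

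The main obstacle I expect is part~\ref{surj} (and, secondarily,~\ref{monotonint}): both hinge on correctly transferring (un)boundedness through closures and on the nontrivial input from Lemma~\ref{compactlemma} that $L'$ is compact and that $\uc{X-L}$ is finite. The bookkeeping of which components of $X-L$ sit inside a given component of $X-K$ must be tracked carefully, but once the compactness of $L'$ is invoked the contradiction arguments fall out cleanly.
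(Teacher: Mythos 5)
Your proof is correct and follows essentially the same route as the paper's: part (i) via the containment of $X-L$ in $X-K$, part (ii) via compactness of $L'$ and the decomposition $X=L'\sqcup\uc{X-L}$ from Lemma~\ref{compactlemma}, and parts (iii)--(iv) as formal consequences. The only (immaterial) variation is in part (v), where the paper shows $B$ is disjoint from each $U\in\uc{X-L}$ because $U\subseteq U_K$ and distinct components of $X-K$ are disjoint, while you instead observe that any component of $X-L$ inside the bounded set $B$ is itself bounded; both correctly yield $B\subseteq L'$.
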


\begin{proof}
As $K\subseteq L$, we have $X-L\subseteq X-K$.
Let $U$ be an unbounded component of $X-L$.
As $U$ is connected, $U$ is contained in a unique component $U_K$ of $X-K$.
As $U$ is unbounded, $U_K$ is unbounded proving~\ref{welldef}.
Next, let $V$ be an unbounded component of $X-K$.
By conclusion~\ref{K'compact} of Lemma~\ref{compactlemma}, $L'$ is compact.
As $V$ is unbounded, $V$ meets at least one unbounded component of $X-L$.
If $C$ is any such unbounded component of $X-L$,
then~\ref{welldef} implies that $C$ is contained in $V$, proving~\ref{surj}.
Define the function $\uc{X-L} \to \uc{X-K}$ by $U\mapsto U_K$.
That function is well-defined by~\ref{welldef}, is surjective by~\ref{surj}, and has finite domain and codomain by
conclusion~\ref{finunbdedcomps} of Lemma~\ref{compactlemma}, which proves~\ref{cansurj}.
Conclusion~\ref{monoton} follows immediately from~\ref{cansurj}.

Lastly, assume $K\subseteq \interior{L}$.
To prove~\ref{monotonint}, we must show that $K$ and each bounded component of $X-K$ lie in $\interior{\pa{L'}}$.
First, $L\subseteq L'$ implies that $K\subseteq \interior{L} \subseteq \interior{\pa{L'}}$.
Second, let $B\in \bc{X-K}$.
Conclusion~\ref{compsopen} of Lemma~\ref{compslemma} implies that $B$ is open in $X$,
so it suffices to show that $B\subseteq L'$.
Thus, it suffices to show that $B$ is disjoint from each unbounded component of $X-L$.
Let $U\in\uc{X-L}$.
Then, $U\subseteq U_K\in\uc{X-K}$ by~\ref{welldef}.
As $B\in \bc{X-K}$ is disjoint from $U_K\in\uc{X-K}$,
we see that $B$ is disjoint from $U$ proving~\ref{monotonint}.
\end{proof}

\begin{theorem}[Existence of efficient compact exhaustion]\label{gceee}
Let $X$ be a generalized continuum. Then, $X$ admits an efficient exhaustion by compacta $\cpa{K_i}$.
\end{theorem}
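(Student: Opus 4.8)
The plan is to reduce the problem to constructing a compact exhaustion by \emph{connected} compacta, and then to apply the bounded-filling operation to force every complementary component to be unbounded. The point is that efficiency splits into two independent requirements---connectedness of each $K_i$ and unboundedness of every component of $X-K_i$---and the lemmas already proved handle the second requirement cleanly: if $M$ is a connected compactum, then its bounded filling $M'$ is again connected by Lemma~\ref{compslemma}\ref{KconnK'conn}, is compact by Lemma~\ref{compactlemma}\ref{K'compact}, and satisfies that $X-M'$ has only unbounded components by Lemma~\ref{compactlemma}\ref{unbdedcomplK'}; hence $M'$ is efficient. Thus it suffices to produce connected compacta $M_1\subseteq M_2\subseteq\cdots$ with $M_i\subseteq\interior{M_{i+1}}$ and $\cup_i M_i=X$; setting $K_i=M_i'$ will then finish the proof, since Lemma~\ref{monotonicity}\ref{monotonint} upgrades $M_i\subseteq\interior{M_{i+1}}$ to $K_i\subseteq\interior{K_{i+1}}$ (which in particular gives the nesting $K_i\subseteq K_{i+1}$), while $\cup_i K_i\supseteq\cup_i M_i=X$.

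The heart of the matter is therefore the following \emph{connected thickening} claim, which I would isolate and prove first: for every compactum $K\subseteq X$ there is a connected compactum $M$ with $K\subseteq\interior{M}$. Its proof rests on the sublemma that any two points of $X$ lie in a common connected compactum. I would establish that sublemma by fixing a basepoint $p$ and showing that the set $S$ of points joinable to $p$ by a connected compactum is nonempty, open, and closed, whence $S=X$ by connectedness of $X$. Openness and closedness both use local compactness and local connectedness: around any point there is a connected open set $V$ with $\closure{V}$ compact---take the component of a bounded open neighborhood, which is open by local connectedness---and attaching such a $\closure{V}$ to an existing connected compactum along a shared point preserves both connectedness and compactness. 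This is precisely where the hypotheses that $X$ is connected, locally connected, and locally compact are consumed, and I expect this clopen argument to be the main obstacle; it is the step that replaces the local path-connectedness used in Guilbault's version by the weaker local connectedness available to us.

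Granting the sublemma, the connected thickening claim follows by covering the compactum $K$ with finitely many connected open sets $V_1,\dots,V_m$ of compact closure, choosing points $p_j\in V_j$, joining each $p_j$ to $p_1$ by a connected compactum $N_j$ supplied by the sublemma, and taking $M=\bigcup_j\pa{\closure{V_j}\cup N_j}$. This $M$ is a finite union of connected compacta chained together through $p_1$, hence connected and compact, and $K\subseteq\bigcup_j V_j\subseteq\interior{M}$ because $\bigcup_j V_j$ is open and contained in $M$.

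Finally I would assemble the connected compact exhaustion by recursion on a $\sigma$-compact decomposition $X=\bigcup_n A_n$ into compacta: let $M_1$ be a connected thickening of $A_1$, and let $M_{i+1}$ be a connected thickening of the compactum $M_i\cup A_{i+1}$, so that $M_i\cup A_{i+1}\subseteq\interior{M_{i+1}}$. By construction the $M_i$ are connected, nested, satisfy $M_i\subseteq\interior{M_{i+1}}$, and contain the $A_i$, so they cover $X$. Applying the bounded filling as described above then yields the desired efficient exhaustion $\cpa{K_i}$.
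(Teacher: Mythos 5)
Your proposal is correct, and every step it leans on is already available in the paper's toolkit: the existence of connected, bounded, open neighborhoods (local compactness plus local connectedness), the bounded-filling Lemmas~\ref{compslemma}\ref{KconnK'conn}, \ref{compactlemma}\ref{K'compact}, \ref{compactlemma}\ref{unbdedcomplK'}, and the monotonicity Lemma~\ref{monotonicity}\ref{monotonint} to upgrade $M_i\subseteq\interior{M_{i+1}}$ to $K_i\subseteq\interior{K_{i+1}}$. The final bounded-filling step is identical to the paper's. Where you genuinely diverge is in how connectedness of the exhausting compacta is achieved. The paper takes the countable cover by connected, bounded, open sets $U_i$ coming from $\sigma$-compactness and \emph{reindexes} it so that each $U_j$ meets $U_1\cup\cdots\cup U_{j-1}$ (if no such $U_j$ existed, the partial union and the remaining $U_i$ would separate $X$); the partial unions $J_i$ are then automatically connected, and $K_i=\closure{J_i}'$. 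You instead isolate two reusable lemmas: that any two points lie in a common connected compactum (via the clopen set $S$ of points joinable to a basepoint, where openness and closedness both come from attaching a connected compact closure of a neighborhood), and a ``connected thickening'' of an arbitrary compactum obtained by chaining finitely many such neighborhoods through a basepoint. Both arguments consume connectedness of $X$ in the same essential way (a separation would contradict it), but yours is packaged as a local-to-global clopen argument plus a thickening lemma, which is more modular and gives a statement (connected thickening of any compactum) of independent use; the paper's single-pass reindexing is shorter and avoids the intermediate lemmas. One cosmetic point: in your recursion you should discard empty terms of the $\sigma$-compact decomposition (or seed $M_1$ with a point) so that the thickening of $A_1$ is nonempty, and note that your argument in fact handles compact $X$ uniformly---there $X-M_i$ is empty or has only bounded components, so $M_i'=X$---whereas the paper splits that case off via Remarks~\ref{gcremarks}\ref{cgcuece}.
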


\begin{proof}
The compact case is covered by Remarks~\ref{gcremarks}\ref{cgcuece}.
So, assume that $X$ is noncompact.

Observe that each point $p\in X$ has a connected, bounded, open neighborhood $U_p$ in $X$.
To see that, local compactness of $X$ yields a compactum $C_p \subseteq X$ containing an open neighborhood $V_p$ of $p$.
Local connectedness of $X$ yields a connected, open neighborhood $U_p\subseteq V_p$ of $p$.
Then, $U_p\subseteq C_p$ implies that $\closure{U_p}\subseteq \closure{C_p}$.
As $C_p$ is compact and $X$ is Hausdorff, $C_p$ is closed in $X$.
So, $\closure{C_p}=C_p$ is compact.
Thus, $\closure{U_p}$ is a closed subspace of the compactum $C_p$ and hence is compact.
Therefore, $U_p$ is a connected, bounded, open neighborhood of $p$ in $X$ as desired.

As $X$ is $\sigma$-compact, $X=L_1\cup L_2\cup \cdots$ is a countable union of compacta.
By compactness, each $L_i$ is covered by finitely many connected, bounded, open sets $U_p$.
Reindexing, $X=U_1\cup U_2\cup\cdots$ where each $U_i$ is connected, bounded, and open in $X$.
Note that as $X$ is noncompact and each $U_i$ is bounded in $X$, no finite collection of the $U_i$ cover $X$.

Reindex the $U_i$ so that for each integer $j\geq2$, $U_j$ meets $U_1\cup U_2\cup\cdots\cup U_{j-1}$.
To achieve that, leave the index on $U_1$ unchanged and proceed inductively.
Let $m\geq2$ be the minimal index such that $U_m$ meets $U_1$.
Such an $m$ exists since otherwise $U_1$ and $\cup_{i\geq2}U_i$ separate $X$.
Swap the indices on $U_2$ and $U_m$, so $U_2$ meets $U_1$.
Assume that for some integer $k\geq2$, the desired property holds for each $j=2,3,\ldots,k$.
Let $m\geq k+1$ be the minimal index such that $U_m$ meets $U_1\cup U_2\cup\cdots\cup U_k$.
Such an $m$ exists since otherwise $U_1\cup U_2\cup\cdots\cup U_k$ and $\cup_{i\geq k+1}U_i$ separate $X$.
Swap the indices on $U_{k+1}$ and $U_m$, so $U_{k+1}$ meets $U_1\cup U_2\cup\cdots\cup U_k$.
That completes our reindexing of the $U_i$.

For each $i\in\Z_+$, define $J_i=U_1\cup U_2\cup\cdots\cup U_i$.
So, $\cpa{J_i}$ is a nested sequence of connected, bounded, open subspaces of $X$ that cover $X$.
Pass to a subsequence---still denoted $\cpa{J_i}$---so that further $\closure{J_i} \subseteq J_{i+1}$ for each $i\in\Z_+$.
That is possible since each $\closure{J_i}$ is compact and is covered by the open sets
$J_{i+1},J_{i+2},J_{i+3},\ldots$.
For each $i\in\Z_+$, let $K_i=\closure{J_i}'$ be the bounded filling of $\closure{J_i}$ in $X$.

We claim that $\cpa{K_i}$ is a desired efficient exhaustion of $X$ by compacta.
To prove that claim, each $K_i$ is compact since $J_i$ is bounded and by conclusion~\ref{K'compact} of Lemma~\ref{compactlemma}.
Each $K_i$ is connected since $J_i$ is connected, the closure of a connected subspace is connected,
and by conclusion~\ref{KconnK'conn} of Lemma~\ref{compslemma}.
Conclusion~\ref{unbdedcomplK'} of Lemma~\ref{compactlemma} implies that each component of $X-K_i$ is unbounded.
In particular, each $K_i$ is an efficient compactum in $X$.
As $J_i\subseteq K_i$ and the $J_i$ cover $X$, the $K_i$ cover $X$.
As $\closure{J_i}\subseteq J_{i+1} \subseteq \closure{J_{i+1}}$ and $J_{i+1}$ is open in $X$,
we have $\closure{J_i}\subseteq J_{i+1}\subseteq \interior{\closure{J_{i+1}}}$.
By Lemma~\ref{monotonicity} (monotonicity), we get
$\closure{J_i}'\subseteq \interior{\pa{\closure{J_{i+1}}'}}$.
That is, $K_i\subseteq \interior{K_{i+1}}$ proving the claim and the theorem.
\end{proof}

\section{End spaces}
\label{sec:es}

The topological theory of ends of spaces was initiated by Freudenthal in his 1931 thesis~\cite{freudenthal31}.
Ends were further studied by Freudenthal~\cite{freudenthal42} and Hopf~\cite{hopf} in the 1940's\footnote{As a historical aside,
Freudenthal completed his pioneering thesis on ends and topological groups before he approached Hopf to be his advisor~\cite[p.~578]{samelson}.
For further reading on Freudenthal's impacts on topology---including suspension and reduced cohomology, ends of spaces,
and direct and inverse limits---see Dieudonn\'{e}~\cite[pp.~217 \& 364]{dieudonne} and van Est~\cite{vanest93,vanest99}.}
and they now play a fundamental role in the study of noncompact spaces---see Peschke \cite{peschke} and Guilbault~\cite{guilbault}.
In this section, we review pertinent topological definitions and results necessary for our purposes.
For further references on ends of spaces, see Raymond~\cite{raymond}, Siebenmann's thesis~\cite[Ch.~1]{siebenmannthesis}, 
Porter's chapter~\cite[$\S$2]{porter}, and the books by
Hughes and Ranicki~\cite[Ch.~1]{hr}, Baues and Quintero~\cite[Ch.~I $\S$9]{bq}, and
Geoghegan~\cite[$\S$13.4]{geoghegan}.

Let $X$ be a space with a compact exhaustion $\cpa{K_i}$.
For each $i$, let $V_i=X-K_i$ and write $\cpa{V_i^j}=\uc{V_i}$ for the set of unbounded components of $V_i$.
An \deffont{end} of $X$ is a sequence\footnote{That notation for an end was shown to us by Craig Guilbault.
It inspired our notation for basic open sets in the end space and the endpoint compactification.}
$\e=\pa{V_1^{j_1},V_2^{j_2},\ldots}$ such that $V_1^{j_1}\supseteq V_2^{j_2} \supseteq\cdots$.
Let $E\pa{X;\cpa{K_i}}$ denote the \deffont{set of ends} of $X$ with respect to the compact exhaustion $\cpa{K_i}$.
The \deffont{number of ends} of $X$ equals the cardinality of $E\pa{X;\cpa{K_i}}$---which is independent of the compact
exhaustion by Lemma~\ref{canbij} ahead.
Lemma~\ref{monotonicity} (monotonicity) yields the inverse system\footnote{For background
on inverse systems and inverse limits,
see Eilenberg and Steenrod~\cite[Ch.~VIII]{eilenbergsteenrod} and Massey~\cite[A.1~\&~A.3]{massey}.}
\begin{equation}\label{invsys}
\uc{V_1} \twoheadleftarrow \uc{V_2} \twoheadleftarrow \uc{V_3} \twoheadleftarrow \cdots
\end{equation}
of finite sets with surjective bonding functions.
The inverse limit of~\eqref{invsys} equals the set of ends of $X$.
That is, $\ilim \uc{V_i} = E\pa{X;\cpa{K_i}}$.

\begin{lemma}\label{unbdedcomptoend}
Let $X$ be a generalized continuum and let $\cpa{K_i}$ be a compact exhaustion of $X$.
If $V_k^l$ is an unbounded component of $V_k=X-K_k$ for some $k\in\Z_+$,
then there exists at least one end $\e=\pa{W_1,W_2,\ldots}\in E\pa{X;\cpa{K_i}}$ such that $W_k=V_k^l$.
The terms $W_1,W_2,\ldots,W_{k-1}$ in $\e$ are \textit{uniquely} determined by $W_k=V_k^l$.
\end{lemma}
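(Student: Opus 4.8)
The plan is to view the desired end $\e=\pa{W_1,W_2,\ldots}$ as an element of the inverse limit $\ilim\uc{V_i}$ and to construct it in two stages from the prescribed term $W_k=V_k^l$. The terms below index $k$ will be forced, yielding the asserted uniqueness, while the terms above index $k$ will be produced by repeated choices, yielding only existence (``at least one''). Throughout I would use the fact that the bonding function $\uc{V_{i+1}}\to\uc{V_i}$ of the inverse system~\eqref{invsys} sends an unbounded component $U$ of $V_{i+1}$ to the unique unbounded component of $V_i$ containing it, which is exactly conclusion~\ref{welldef} of Lemma~\ref{monotonicity}.

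For the uniqueness of $W_1,\ldots,W_{k-1}$, fix $i$ with $1\leq i<k$. Since $K_i\subseteq K_k$ we have $V_k\subseteq V_i$, and conclusion~\ref{welldef} of Lemma~\ref{monotonicity} guarantees that $V_k^l$ lies in a unique unbounded component of $V_i$; I would take $W_i$ to be that component. This is the only possible choice: in any end whose $k$-th term is $V_k^l$, the chain $V_k^l=W_k\subseteq W_{k-1}\subseteq\cdots\subseteq W_i$ exhibits $W_i$ as an unbounded component of $V_i$ containing $V_k^l$, hence equal to the unique such component. Thus $W_1,\ldots,W_{k-1}$ are determined by $W_k=V_k^l$.

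For existence of the remaining terms, set $W_k=V_k^l$ and proceed upward by induction. Given an unbounded component $W_i$ of $V_i$ with $i\geq k$, conclusion~\ref{surj} of Lemma~\ref{monotonicity} applied to $K_i\subseteq K_{i+1}$ shows that $W_i$ contains at least one unbounded component of $V_{i+1}$; I would choose one and call it $W_{i+1}$. Then $W_{i+1}\subseteq W_i$, so by conclusion~\ref{welldef} of Lemma~\ref{monotonicity} the bonding map indeed carries $W_{i+1}$ to $W_i$. Iterating produces $W_{k+1},W_{k+2},\ldots$, and assembling the two pieces gives a sequence $\e=\pa{W_1,W_2,\ldots}$ with each $W_i\in\uc{V_i}$ and $W_1\supseteq W_2\supseteq\cdots$, i.e.\ an end with $W_k=V_k^l$. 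There is no serious obstacle here; the only point requiring care is keeping the two directions straight---the downward terms are forced while each upward step is a genuine choice---which is precisely why the statement claims merely ``at least one'' end yet asserts uniqueness for $W_1,\ldots,W_{k-1}$.
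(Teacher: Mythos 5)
Your proposal is correct and follows exactly the paper's argument: Lemma~\ref{monotonicity}\ref{welldef} forces the terms below index $k$ (giving uniqueness), while Lemma~\ref{monotonicity}\ref{surj} plus induction supplies the terms above index $k$ (giving existence). The paper states this in three lines; you have merely written out the same proof in full detail.
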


\begin{proof}
We construct such a sequence $\e$ using the montonicity Lemma~\ref{monotonicity}.
Lemma~\ref{monotonicity}\ref{welldef} uniquely determines the terms preceding $W_k=V_k^j$.
Lemma~\ref{monotonicity}\ref{surj} and induction show the existence of appropriate terms following $W_k$.
\end{proof}

\begin{lemma}\label{canbij}
Let $X$ be a generalized continuum and let $\cpa{K_i}$ and $\cpa{L_i}$ be compact exhaustions of $X$.
Then, there is a bijection $E\pa{X;\cpa{K_i}} \to E\pa{X;\cpa{L_i}}$.
\end{lemma}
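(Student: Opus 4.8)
The plan is to reduce the two given exhaustions to a single interleaved chain and to exploit the fact that the set of ends, being an inverse limit, is unchanged up to canonical bijection under passage to a cofinal subsequence.

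First I would record the cofinality principle for inverse sequences: if $\cpa{A_i}$ is an inverse sequence of sets and $\cpa{A_{n_k}}$ is any subsequence, with bonding map $A_{n_{k+1}}\to A_{n_k}$ the relevant composite, then restriction of threads is a canonical bijection $\ilim_i A_i \to \ilim_k A_{n_k}$. Its inverse sends a thread $\pa{x_{n_k}}$ over the subsequence to the thread $\pa{x_i}$ whose $i$-th coordinate is the image of $x_{n_k}$ under the bonding map for any $n_k\geq i$, which is well defined and consistent by compatibility. Applied to the inverse system~\eqref{invsys}, this shows that every subsequence of a compact exhaustion has canonically the same set of ends; recall also that a subsequence of a compact exhaustion is itself a compact exhaustion by Examples~\ref{eceexamples}.

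Next, using the fact that each compactum lies in some member of either exhaustion, I would construct strictly increasing index sequences $a_1<a_2<\cdots$ and $b_1<b_2<\cdots$ producing an interleaved chain of compacta $K_{a_1}\subseteq L_{b_1}\subseteq K_{a_2}\subseteq L_{b_2}\subseteq\cdots$, built by alternately absorbing a $K$ into an $L$ and an $L$ into a $K$. Applying the operation that sends a compactum $K$ to $\uc{X-K}$ together with the canonical monotonicity surjections of Lemma~\ref{monotonicity} then yields an inverse system of finite sets $\uc{X-K_{a_1}} \twoheadleftarrow \uc{X-L_{b_1}} \twoheadleftarrow \uc{X-K_{a_2}} \twoheadleftarrow \cdots$. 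Its bonding maps compose correctly because the monotonicity map simply sends an unbounded component to the unique unbounded component of the larger complement that contains it, so a composite is again ``the unbounded component containing''; thus the combined diagram is an honest inverse system. Write $E_{\mathrm{int}}$ for its inverse limit.

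Finally I would observe that the $K$-terms and the $L$-terms each form a cofinal subsequence of this combined system, with respective inverse limits $E\pa{X;\cpa{K_{a_i}}}$ and $E\pa{X;\cpa{L_{b_i}}}$. By the cofinality principle each is canonically bijective with $E_{\mathrm{int}}$, and in turn each is canonically bijective with $E\pa{X;\cpa{K_i}}$ and $E\pa{X;\cpa{L_i}}$ since $\cpa{K_{a_i}}$ and $\cpa{L_{b_i}}$ are subsequences of the original exhaustions. Composing these bijections produces the desired bijection $E\pa{X;\cpa{K_i}} \to E\pa{X;\cpa{L_i}}$. I expect the only real subtleties to be the verification of the cofinality bijection and the compatibility of the interleaved bonding maps; both are routine but constitute the crux of the argument.
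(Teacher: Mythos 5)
Your proof is correct and is essentially the paper's own argument: the ``W'' argument of Remark~\ref{Wargument}, interleaving the two exhaustions into one chain and using that passage to a subsequence induces a canonical bijection of inverse limits. The only cosmetic difference is that the paper arranges $K_i\subseteq\interior{L_i}\subseteq\interior{K_{i+1}}$ so that the shuffled chain is literally a compact exhaustion, whereas you work directly with the interleaved inverse system of finite sets and so need only containments; both are fine.
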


\begin{proof}
First, consider a subsequence $\cpa{K_{i_k}}$ of $\cpa{K_i}$.
The natural function $\sigma:E\pa{X;\cpa{K_i}} \to E\pa{X;\cpa{K_{i_k}}}$
is defined by $\pa{V_i^{j_i}}\mapsto \pa{V_{i_k}^{j_{i_k}}}$.
By Lemma~\ref{unbdedcomptoend}, $\sigma$ has a unique inverse and, hence, is a bijection.

Next, we shuffle the exhaustions $\cpa{K_i}$ and $\cpa{L_i}$.
To do so, replace $\cpa{K_i}$ and $\cpa{L_i}$ with subsequences such that
$K_i\subseteq\interior{L_i}$ and $L_i\subseteq\interior{K_{i+1}}$ for each $i\in\Z_+$. 
Define the compact exhaustion $\cpa{J_i}$ of $X$ by
$J_{2i-1}=K_i$ and $J_{2i}=L_i$ for each $i\in\Z_+$.
Both $\cpa{K_i}$ and $\cpa{L_i}$ are subsequences of $\cpa{J_i}$.
So, we have bijections
\[
E\pa{X;\cpa{K_i}} \leftarrow E\pa{X;\cpa{J_i}} \to E\pa{X;\cpa{L_i}}
\]
as desired.
\end{proof}

\begin{remark}[The ``W" argument]\label{Wargument}
The technique used in the proof of Lemma~\ref{canbij} is noteworthy.
Given two sequences, one passes to appropriate subsequences that are themselves subsequences of a common sequence.
In more detail, suppose each sequence of some admissible type determines an object.
Suppose the objects determined are equivalent under the operation of passing to a subsequence.
Consider admissible sequences $\cpa{K_k}$ and $\cpa{L_l}$, and
suppose that there exist subsequences $\cpa{K_{k_i}}$ and $\cpa{L_{l_i}}$ such that
the sequence $\cpa{J_i}$ is admissible where $J_{2i-1}=K_{k_i}$ and $J_{2i}=L_{l_i}$ for each $i\in\Z_+$.
Those sequences form a ``W'' shaped pattern
\begin{equation}\label{eq:w}
\begin{tikzcd}
\cpa{K_k} \arrow[dr] &	&	\cpa{J_i}	\arrow[dl] \arrow[dr] &	&	\cpa{L_l} \arrow[dl] \\
	& \cpa{K_{k_i}}	&	& \cpa{L_{l_i}}
\end{tikzcd}
\end{equation}
where a downward arrow denotes the operation of passing to a subsequence.
Thus, our original sequences $\cpa{K_k}$ and $\cpa{L_l}$ determine equivalent objects.
The proof of Lemma~\ref{canbij} used the fact that all compact exhaustions of $X$ are related in that manner.
The ``W'' argument will be used to prove Corollary~\ref{canhom},
solves an exercise on pro-isomorphism of inverse systems in Guilbault~\cite[Ex.~3.4.5]{guilbault},
and is useful in general.
\end{remark}

Let $X$ be a generalized continuum.
Define $E(X)$ to be the \deffont{set of ends of $X$} with the understanding that $E(X)=E\pa{X;\cpa{K_i}}$
for some compact exhaustion of $X$---which exists by Theorem~\ref{gceee}---and is well-defined
up to bijection by Lemma~\ref{canbij}.
In fact, that bijection is a canonical homeomorphism as will be shown in Corollary~\ref{canhom}.

\begin{theorem}\label{endsthm}
Let $X$ be a generalized continuum. Consider the set of ends $E(X)=E\pa{X;\cpa{K_i}}$ where $\cpa{K_i}$ is a compact exhaustion of $X$.
Then:
\begin{enumerate*}[label=(\roman*)]
  \item\label{gencardends} $0\leq\card{E(X)}\leq\card{\R}$,
  \item\label{lowerboundnumends} if $K$ is a compactum in $X$, then $\card{E(X)} \geq \card{\uc{X-K}}$,
	\item\label{noncompactexistsend} $X$ is compact if and only if $X$ has no ends, and
	\item\label{finiteendsbondsbij} $X$ has finitely many ends if and only if cofinitely many of the bonding surjections in~\eqref{invsys} are bijections.
\end{enumerate*}
\end{theorem}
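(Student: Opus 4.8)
The plan is to fix throughout the compact exhaustion $\cpa{K_i}$ supplied in the statement, write $V_i = X - K_i$ and $n_i = \card{\uc{V_i}}$, and exploit the identification $E(X) = \ilim \uc{V_i} \subseteq \prod_i \uc{V_i}$ together with the monotonicity and realization lemmas already in hand. Recall that each $\uc{V_i}$ is finite by Lemma~\ref{compactlemma}\ref{finunbdedcomps}, that the bonding functions in~\eqref{invsys} are surjective, and that $n_1 \leq n_2 \leq \cdots$ is nondecreasing by Lemma~\ref{monotonicity}\ref{monoton}. For~\ref{gencardends} the lower bound is vacuous, and for the upper bound I would observe that $E(X)$ is a subset of the countable product $\prod_i \uc{V_i}$ of finite sets: if some factor is empty then the product is empty, and otherwise the product injects into $\N^{\N}$, so in every case $\card{E(X)} \leq \aleph_0^{\aleph_0} = \card{\R}$.

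For~\ref{lowerboundnumends}, given a compactum $K$ I would choose $n$ with $K \subseteq K_n$ (possible since $\cpa{K_i}$ is an exhaustion). Lemma~\ref{unbdedcomptoend} provides a surjection $E(X) \twoheadrightarrow \uc{V_n}$, sending $\e = \pa{W_1, W_2, \ldots}$ to $W_n$, and Lemma~\ref{monotonicity}\ref{cansurj} provides the canonical surjection $\uc{V_n} = \uc{X - K_n} \twoheadrightarrow \uc{X-K}$; composing these two surjections gives $\card{E(X)} \geq \card{\uc{X-K}}$. For~\ref{noncompactexistsend}, since $\card{E(X)}$ is independent of the exhaustion by Lemma~\ref{canbij}, if $X$ is compact I would use the exhaustion $K_i = X$ of Remarks~\ref{gcremarks}\ref{cgcuece}, so that every $V_i = \emptyset$ and hence $E(X) = \emptyset$. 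Conversely, if $X$ is noncompact I would take an efficient exhaustion (Theorem~\ref{gceee}); then $V_1$ is a nonempty open set all of whose components are unbounded, so $\uc{V_1} \neq \emptyset$, and applying Lemma~\ref{unbdedcomptoend} to any element of $\uc{V_1}$ produces an end. This route avoids invoking any general nonemptiness theorem for inverse limits.

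For~\ref{finiteendsbondsbij}, I would first record that, because each bonding map $\uc{V_{i+1}} \twoheadrightarrow \uc{V_i}$ is a surjection of finite sets, it is a bijection exactly when $n_{i+1} = n_i$; hence cofinitely many bonding maps are bijections if and only if the nondecreasing integer sequence $(n_i)$ is eventually constant. If $(n_i)$ is eventually constant, say $n_i = n$ for all $i \geq N$, then each compatible sequence is determined by its $N$-th entry—its lower entries via the bonding maps and its higher entries uniquely because those bonds are now bijections—and conversely each element of $\uc{V_N}$ extends to exactly one end, so $E(X)$ is in bijection with $\uc{V_N}$ and is finite. For the converse, if $E(X)$ is finite then part~\ref{lowerboundnumends} applied to $K = K_i$ gives $n_i \leq \card{E(X)}$ for every $i$, so the nondecreasing sequence $(n_i)$ is bounded and therefore eventually constant.

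I expect the only genuinely delicate point to be the bookkeeping in~\ref{finiteendsbondsbij}: translating the phrase ``cofinitely many bonds are bijections'' into eventual constancy of $(n_i)$, and verifying that stabilization of the inverse system forces $E(X) \cong \uc{V_N}$ via the unique-extension argument. Reusing part~\ref{lowerboundnumends} to bound $n_i$ by $\card{E(X)}$ is what makes the converse direction clean, so I would carry out the four parts in the stated order.
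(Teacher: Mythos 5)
Your proposal is correct and follows essentially the same route as the paper: part (i) from the countable product of finite sets, part (ii) by combining Lemma~\ref{unbdedcomptoend} with the monotonicity surjection after enlarging $K$ to some $K_n$, part (iii) by reducing to an efficient exhaustion via Theorem~\ref{gceee} and Lemma~\ref{canbij}, and part (iv) by translating bijectivity of cofinitely many bonds into eventual constancy of the nondecreasing sequence $\card{\uc{V_i}}$ and reusing part (ii) for the converse. The only differences are expository (you make the unique-extension bookkeeping in (iv) and the use of Lemma~\ref{unbdedcomptoend} in (iii) explicit, where the paper is terser), not mathematical.
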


\begin{proof}
For \ref{gencardends}, use the definition of an end and the inverse system~\eqref{invsys} of finite sets and surjections.
Next, choose $k\in\Z_+$ such that $K\subseteq K_k$.
Then, $\card{\uc{X-K}} \leq \card{\uc{X-K_k}} \leq \card{E(X)}$
where the first inequality holds by Lemma~\ref{monotonicity} (monotonicity) and the second holds by Lemma~\ref{unbdedcomptoend}.
That proves~\ref{lowerboundnumends}.
For~\ref{noncompactexistsend}, Theorem~\ref{gceee} and Lemma~\ref{canbij} imply that we may assume $\cpa{K_i}$
is an efficient compact exhaustion of $X$.
By Remarks~\ref{gcremarks}\ref{cgcuece}, $X$ is compact if and only if $X=K_i$ for each $i\in\Z_+$.
The latter means the sets in the inverse system~\eqref{invsys} are empty, which proves~\ref{noncompactexistsend}.
For the forward implication of~\ref{finiteendsbondsbij}, if infinitely many bonding surjections in~\eqref{invsys} are not bijections,
then the sets $\uc{X-K_i}$ have unbounded (finite) cardinalities and~\ref{lowerboundnumends} implies that $E(X)$ is infinite.
For the reverse implication of~\ref{finiteendsbondsbij}, the hypothesis allows us to pass to a subsequence of
$\cpa{K_i}$ so that all bonding surjections in~\eqref{invsys} are bijections.
Thus, $\card{E(X)}=\card{\uc{X-K_1}}$ is finite.
\end{proof}

Let $X$ be a generalized continuum.
Informally, Freudenthal's \deffont{endpoint compactification} $F(X)$ of $X$ is obtained by adding one point to each end of $X$.
We present some examples before the precise definition.
\begin{examples}\label{endspaceexamples}\mbox{ \\ }
\begin{enumerate}[label=(\alph*)]
\item If $X$ is compact, then $F(X)=X$ by Theorem~\ref{endsthm}\ref{noncompactexistsend}.
\item\label{alexandroff} If $X$ is one-ended, then $F(X)$ is homeomorphic to Alexandroff's
one-point compactification of $X$ (see the end of Section~\ref{sec:br}).
\item Evidently, $E\pa{\nnr}=\cpa{\bullet}$ is a one-point space and $F\pa{\nnr}\approx\br{0,1}$.
Also, $E\pa{\R}=\cpa{\pm\infty}$ is a two-point discrete space and $F\pa{\R}\approx\br{0,1}$.
For each $n\geq2$, $E\pa{\R^n}=\cpa{\bullet}$ is a one-point space and $F\pa{\R^n}\approx S^n$.
\item If $X$ is the thrice punctured $2$-sphere,
then $E(X)$ is a three-point discrete space and $F(X)\approx S^2$.
\item Let $X$ be the infinite comb space depicted in Figure~\ref{fig:nondiscrete}.
\begin{figure}[htbp!]
    \centerline{\includegraphics[scale=1.0]{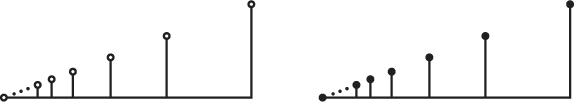}}
    \caption{Infinite comb space (left) and its endpoint compactification (right).}
\label{fig:nondiscrete}
\end{figure}
Namely, $X$ is the subspace of $\R^2$ equal to $(0,1]\times\cpa{0}$ union the vertical intervals $\cpa{1/i}\times[0,1/i)$ for $i\in\Z_+$.
Then, $F(X)$ is homeomorphic to the compact subspace of $\R^2$ depicted in Figure~\ref{fig:nondiscrete} (right), and
$E(X)$ is homeomorphic to $\cpa{0}\cup\cpa{1/i \mid i \in\Z_+}\subseteq\R$.
\item If $X$ is the infinite binary tree depicted in Figure~\ref{fig:infbintree}, then $E(X)$ is the Cantor set.
\begin{figure}[htbp!]
    \centerline{\includegraphics[scale=1.00]{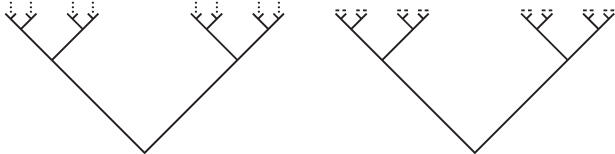}}
    \caption{Infinite binary tree (left) and its endpoint compactification (right).}
\label{fig:infbintree}
\end{figure}
\end{enumerate}
\end{examples}

More precisely, let $\cpa{K_i}$ be a compact exhaustion of a generalized continuum $X$.
Define the set $F\pa{X;\cpa{K_i}}=X \sqcup E\pa{X;\cpa{K_i}}$.
Let $V_i=X-K_i$ and $\cpa{V_i^j}=\uc{V_i}$ for each $i\in\Z_+$.
Given an unbounded component $V_k^l$ of some $V_k$, define the set of ends\footnote{Useful mnemonic:
$EV_k^l$ for \textit{end} and for $k$th term \textit{equal} to $V_k^l$.}
\begin{equation}\label{basisE}
EV_k^l=\cpa{\pa{W_1,W_2,W_3,\ldots} \in E\pa{X;\cpa{K_i}} \mid W_k=V_k^l}
\end{equation}
which is nonempty by Lemma~\ref{unbdedcomptoend}.
Consider the collection of all $EV_k^l$ for $k\in\Z_+$ and $1\leq l \leq \card{\uc{V_k}}$.
Those sets cover $E\pa{X;\cpa{K_i}}$, and if two of them meet, then one contains the other by Lemma~\ref{ebasislemma} below.
So, they form a basis for a topology on $E\pa{X;\cpa{K_i}}$.
The \deffont{space of ends} of $X$ is $E\pa{X;\cpa{K_i}}$ equipped with that topology.
Given some $V_k^l$, define the set
\begin{equation}\label{basisF}
FV_k^l = V_k^l \sqcup EV_k^l \subseteq F\pa{X;\cpa{K_i}}
\end{equation}
One may readily verify that those sets together with all open sets in $X$ form a basis for a topology on $F\pa{X;\cpa{K_i}}$.
Freudenthal's \deffont{endpoint compactification} of $X$ is $F\pa{X;\cpa{K_i}}$ equipped with that topology.
Note that the subspace topology on $E\pa{X;\cpa{K_i}}\subseteq F\pa{X;\cpa{K_i}}$ equals the topology generated by the basis~\eqref{basisE}.

\begin{lemma}\label{ebasislemma}
In the setup of the preceding paragraph, consider unbounded components $V_i^j$ and $V_k^l$ where $i\leq k$.
Then, the following are equivalent:
\begin{enumerate*}[label=(\roman*)]
	\item\label{compmeet} $V_i^j$ meets $V_k^l$,
	\item\label{compcont} $V_i^j$ contains $V_k^l$,
	\item\label{ebasismeets} $EV_i^j$ meets $EV_k^l$,
	\item\label{ebasiscont} $EV_i^j$ contains $EV_k^l$,
	\item\label{fbasismeets} $FV_i^j$ meets $FV_k^l$, and
	\item\label{fbasiscont} $FV_i^j$ contains $FV_k^l$.
\end{enumerate*}
\end{lemma}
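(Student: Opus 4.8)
The plan is to exploit three structural facts. Since $i\leq k$ we have $K_i\subseteq K_k$ and hence $V_k\subseteq V_i$, so the component $V_k^l$ of $V_k$ is a \emph{connected} subset of $V_i$; the monotonicity Lemma~\ref{monotonicity}\ref{welldef} says $V_k^l$ lies in a \emph{unique} unbounded component of $V_i$; and Freudenthal's space is built as a disjoint union $F\pa{X;\cpa{K_i}}=X\sqcup E\pa{X;\cpa{K_i}}$ with $FV_k^l=V_k^l\sqcup EV_k^l$. I would organize the six conditions around the central chain \ref{compmeet}$\Leftrightarrow$\ref{compcont}$\Leftrightarrow$\ref{ebasiscont}, deduce the remaining $E$-condition \ref{ebasismeets} from nonemptiness of $EV_k^l$ (Lemma~\ref{unbdedcomptoend}), and then transfer everything to the $F$-conditions via the disjoint-union structure.

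For the core equivalence, I would first prove \ref{compmeet}$\Leftrightarrow$\ref{compcont}. Since $V_k^l\subseteq V_k\subseteq V_i$ is connected as a subspace of $V_i$ (Lemma~\ref{compstechlemma}\ref{connss}) and $V_i^j$ is a component of $V_i$, any point of $V_i^j\cap V_k^l$ forces $V_k^l\subseteq V_i^j$; the converse is immediate since $V_k^l$ is nonempty. Next, \ref{compcont}$\Rightarrow$\ref{ebasiscont}: given an end $\e=\pa{W_1,W_2,\ldots}\in EV_k^l$, nesting gives $W_i\supseteq W_k=V_k^l$, so $W_i$ is an unbounded component of $V_i$ containing $V_k^l$; the uniqueness clause of Lemma~\ref{monotonicity}\ref{welldef} together with \ref{compcont} forces $W_i=V_i^j$, whence $\e\in EV_i^j$. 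The reverse implication \ref{ebasiscont}$\Rightarrow$\ref{compcont} follows by choosing any $\e\in EV_k^l$ (nonempty by Lemma~\ref{unbdedcomptoend}): then $\e\in EV_i^j$, so $V_i^j=W_i\supseteq W_k=V_k^l$. Finally \ref{ebasismeets} is sandwiched between these, since \ref{ebasiscont} yields \ref{ebasismeets} because $EV_k^l\neq\emptyset$, and \ref{ebasismeets} yields \ref{compcont} directly, as any end in both sets has $W_i=V_i^j$ and $W_k=V_k^l$ with $W_i\supseteq W_k$.

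For the $F$-conditions, the key observation is that, because $X$ and $E\pa{X;\cpa{K_i}}$ are disjoint in $F\pa{X;\cpa{K_i}}$, one has
\[
FV_i^j\cap FV_k^l=\pa{V_i^j\cap V_k^l}\sqcup\pa{EV_i^j\cap EV_k^l}.
\]
From the already-established \ref{compcont} and \ref{ebasiscont} I then get $FV_i^j\supseteq FV_k^l$, which is \ref{fbasiscont}; and \ref{fbasiscont}$\Rightarrow$\ref{fbasismeets} since $FV_k^l\supseteq V_k^l\neq\emptyset$. To close the loop, \ref{fbasismeets} together with the displayed decomposition shows that either $V_i^j$ meets $V_k^l$ or $EV_i^j$ meets $EV_k^l$; that is, \ref{fbasismeets} implies \ref{compmeet} or \ref{ebasismeets}, both of which already lie in the core equivalence class.

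I expect the work to be bookkeeping rather than genuine difficulty: one must consistently track that nonemptiness of $V_k^l$ and of $EV_k^l$ is precisely what upgrades each ``meets'' statement to a ``contains'' statement, and one must justify the disjoint-union decomposition of $FV_i^j\cap FV_k^l$ from the definition of $F\pa{X;\cpa{K_i}}$ as $X\sqcup E\pa{X;\cpa{K_i}}$. The single substantive step is \ref{compcont}$\Rightarrow$\ref{ebasiscont}, where the uniqueness assertion of monotonicity is essential: without it an end passing through $V_k^l$ could a priori have its $i$th term differ from $V_i^j$.
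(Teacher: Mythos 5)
Your proposal is correct and follows essentially the same route as the paper: the core cycle among (i)--(iv) is established via monotonicity/uniqueness of the containing component and nonemptiness of $EV_k^l$ from Lemma~\ref{unbdedcomptoend}, and the $F$-conditions are reduced to the $X$- and $E$-parts using the disjoint-union structure of $FV_k^l = V_k^l \sqcup EV_k^l$. The only cosmetic difference is that you prove (i)$\Rightarrow$(ii) directly from connectedness of $V_k^l$ inside $V_i$ rather than citing Lemma~\ref{monotonicity}, which amounts to the same argument.
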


\begin{proof}
By Lemma~\ref{monotonicity} (monotonicity), \ref{compmeet} implies \ref{compcont}.
As components are nonempty, \ref{compcont} implies \ref{compmeet}.
If $\e=\pa{W_1,W_2,\ldots}\in EV_i^j \cap EV_k^l$,
then $V_i^j = W_i \supseteq W_k =V_k^l$.
So, \ref{ebasismeets} implies \ref{compcont}.
If $V_i^j \supseteq V_k^l$, then Lemma~\ref{unbdedcomptoend} shows that $EV_i^j \supseteq EV_k^l$.
So, \ref{compcont} implies \ref{ebasiscont}.
Lemma~\ref{unbdedcomptoend} shows that $EV_k^l$ is nonempty, so \ref{ebasiscont} implies \ref{ebasismeets}.
Therefore, \ref{compmeet}--\ref{ebasiscont} are equivalent.
To complete the proof, recall the definition~\eqref{basisF}.
Thus: \ref{compmeet} implies \ref{fbasismeets}, 
\ref{fbasismeets} implies \ref{compmeet} or \ref{ebasismeets},
and \ref{compcont} and \ref{ebasiscont} if and only if \ref{fbasiscont}.
\end{proof}

\begin{theorem}[Freudenthal]\label{endspace}
Let $X$ be a generalized continuum and let $\cpa{K_i}$ be a compact exhaustion of $X$.
Then, the space of ends $E\pa{X;\cpa{K_i}}$ is Hausdorff, totally separated\footnote{A space $Y$ is \deffont{totally separated} provided:
for each pair of distinct points in $Y$, there exist disjoint open neighborhoods of those points whose union is $Y$.
A space is \deffont{totally disconnected} provided its components are points.
Totally separated implies totally disconnected.}, and compact.
\end{theorem}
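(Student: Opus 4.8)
The plan is to prove the three properties in the order: first observe that totally separated implies Hausdorff, so it suffices to establish \emph{totally separated} and \emph{compact}. The structural fact driving both arguments is that, for each fixed index $k$, the basic sets $EV_k^1,EV_k^2,\ldots,EV_k^{n_k}$, where $n_k=\card{\uc{V_k}}$ is finite by Lemma~\ref{compactlemma}\ref{finunbdedcomps}, form a finite \emph{partition} of $E\pa{X;\cpa{K_i}}$: every end $\e=\pa{W_1,W_2,\ldots}$ has its $k$th term $W_k$ equal to exactly one unbounded component of $V_k$, and distinct ones are disjoint by Lemma~\ref{ebasislemma}. Since there are finitely many parts, each $EV_k^l$ is therefore clopen, and as $k$ increases these partitions refine, again by Lemma~\ref{ebasislemma}.

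For totally separated, I would take distinct ends $\e=\pa{W_1,W_2,\ldots}$ and $\e'=\pa{W_1',W_2',\ldots}$. Distinctness gives an index $k$ with $W_k\ne W_k'$, so $\e\in EV_k^l$ and $\e'\in EV_k^{l'}$ for distinct components $V_k^l=W_k$ and $V_k^{l'}=W_k'$. Setting $U=EV_k^l$ and letting $V$ be the union of the remaining parts $EV_k^m$ with $m\ne l$ exhibits the required pair: $U$ and $V$ are open and disjoint, their union is all of $E\pa{X;\cpa{K_i}}$, and they contain $\e$ and $\e'$ respectively. This is exactly the totally separated condition, and it yields Hausdorff as a special case.

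For compactness, I would argue directly that every open cover has a finite subcover. Since the sets $EV_k^l$ form a basis, it suffices to extract a finite subcover from a cover $\mathcal{O}$ by basic sets. Suppose for contradiction that $\mathcal{O}$ has no finite subcover. Using that $E\pa{X;\cpa{K_i}}=EV_1^1\sqcup\cdots\sqcup EV_1^{n_1}$ is a finite partition, some part $EV_1^{l_1}$ has no finite subcover. Inductively, having chosen $EV_k^{l_k}$ with no finite subcover, I would use that $EV_k^{l_k}$ is the finite disjoint union of those $EV_{k+1}^m$ with $V_{k+1}^m\subseteq V_k^{l_k}$, a nonempty collection by Lemma~\ref{monotonicity}\ref{surj}, to select a part $EV_{k+1}^{l_{k+1}}$ with no finite subcover. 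By construction each selected term is contained in the previous, so the nested sequence $V_1^{l_1}\supseteq V_2^{l_2}\supseteq\cdots$ is an end $\e\in E\pa{X;\cpa{K_i}}$. Some member $EV_k^l\in\mathcal{O}$ contains $\e$; but containing $\e$ forces $V_k^l=V_k^{l_k}$, so that member equals $EV_k^{l_k}$, giving a one-element subcover of $EV_k^{l_k}$ and contradicting its choice.

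I expect the compactness step to be the main obstacle, and within it the two points requiring care are the reduction to covers by basic sets (a standard but necessary observation, since a space is compact once every cover by basic sets admits a finite subcover) and the verification that the descending branch assembles into a genuine end, which relies on the nonemptiness of each refining part and the nesting established in Lemma~\ref{ebasislemma}. As an alternative to the direct argument, one could instead identify $E\pa{X;\cpa{K_i}}$ with the inverse limit $\ilim\uc{V_i}$ topologized as a subspace of the product $\prod_i\uc{V_i}$ of finite discrete spaces---the basis $\{EV_k^l\}$ matches the product subbasis---and invoke compactness of the inverse limit of finite sets; but the direct branch argument keeps the exposition self-contained.
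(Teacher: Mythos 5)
Your proposal is correct and follows essentially the same route as the paper: the same finite-partition/refinement structure of the basic sets $EV_k^l$ drives total separation, and compactness is proved by the same branch-selection contradiction (choose a nested sequence of basic sets none of which is finitely covered, observe it defines an end, and derive a contradiction from a basic neighborhood of that end inside a cover member). The only differences---deducing Hausdorff from total separation and first reducing to covers by basic sets---are cosmetic.
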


\begin{proof}
Let $\e=\pa{V_1^{j_1},V_2^{j_2},\ldots}$ and $\e'=\pa{V_1^{l_1},V_2^{l_2},\ldots}$ be distinct ends of $X$.
Then, $V_m^{j_m} \neq V_m^{l_m}$ for some $m\in\Z_+$.
By Lemma~\ref{ebasislemma}, the basic open sets $EV_m^{j_m} \ni \e$ and $EV_m^{l_m} \ni \e'$ are disjoint, proving $E\pa{X;\cpa{K_i}}$ is Hausdorff.
Further, the complement in $E\pa{X;\cpa{K_i}}$ of any basic open set $EV_a^b$ equals
$\sqcup_{c\neq b} EV_a^{c}$ which is open in $E\pa{X;\cpa{K_i}}$.
Thus, $\e\in EV_m^{j_m}$ and $\e'\in E\pa{X;\cpa{K_i}} - EV_m^{j_m}$, proving $E\pa{X;\cpa{K_i}}$ is totally separated.

Suppose, by way of contradiction, that $E\pa{X;\cpa{K_i}}$ is not compact.
Then, there exists an open cover $\cpa{U_{\alpha}}_{\alpha\in S}$ of $E\pa{X;\cpa{K_i}}$ with no finite subcover.
For each $i\in\Z_+$, the set $\cpa{V_i^j}=\uc{V_i}$ is finite by Lemma~\ref{compactlemma}\ref{finunbdedcomps} 
and, thus, $\cpa{EV_i^j}$ is a finite set of disjoint basic open sets that cover $E\pa{X;\cpa{K_i}}$.
So, there exists $EV_1^{j_1}$ not covered by finitely many $U_{\alpha}$.
There also exists $V_2^{j_2}\subseteq V_1^{j_1}$ such that $EV_2^{j_2}$ is not covered by finitely many $U_{\alpha}$.
Inductively, we obtain $V_1^{j_1} \supseteq V_2^{j_2} \supseteq \cdots$
such that no $EV_i^{j_i}$ is covered by finitely many $U_{\alpha}$.
Those nested components determine an end $\e=\pa{V_1^{j_1},V_2^{j_2},\ldots}$.
As the $U_{\alpha}$ cover $E\pa{X;\cpa{K_i}}$, there exists $\beta\in S$ such that $\e\in U_{\beta}$.
There exists a basic open neighborhood $EV_k^l$ of $\e$ contained in $U_{\beta}$.
That implies $V_k^{j_k} = V_k^l$ and $EV_k^{j_k} = EV_k^l$.
But, $EV_k^{j_k}$ is not covered by finitely many $U_{\alpha}$ whereas $EV_k^l$ is covered by $U_{\beta}$ alone.
That contradiction proves $E\pa{X;\cpa{K_i}}$ is compact.
\end{proof}

\begin{theorem}[Freudenthal]\label{endcompthm}
Let $X$ be a generalized continuum and let $\cpa{K_i}$ be a compact exhaustion of $X$.
Then, the endpoint compactification $F\pa{X;\cpa{K_i}}$ is a compact generalized continuum,
and the inclusion $\iota:X \hookrightarrow F\pa{X;\cpa{K_i}}$ is an open embedding with dense image in $F\pa{X;\cpa{K_i}}$.
\end{theorem}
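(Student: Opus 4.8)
The plan is to verify each clause in the definition of a \emph{compact} generalized continuum, together with the three assertions about $\iota$, in roughly increasing order of difficulty. Throughout write $F=F\pa{X;\cpa{K_i}}$, $V_i=X-K_i$, and recall that the basis for $F$ consists of the open sets of $X$ together with the sets $FV_k^l=V_k^l\sqcup EV_k^l$. I would first dispatch the statements about $\iota$. Since $X$ is itself a basic open set, $X$ is open in $F$. Intersecting each kind of basic open set with $X$ gives $O\cap X=O$ and $FV_k^l\cap X=V_k^l$, both open in $X$, while conversely every open set of $X$ is already basic in $F$; hence $\iota$ is an embedding. Every nonempty basic open set meets $X$ (an open set of $X$ is a nonempty subset of $X$, and $FV_k^l$ contains the nonempty set $V_k^l\subseteq X$), so $X$ is dense, i.e. $\closure{X}=F$. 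Connectedness of $F$ is then immediate, since $X$ is connected and the closure of a connected set is connected.

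Next I would prove $F$ is Hausdorff by separating two distinct points $p,q\in F$ in three cases. If $p,q\in X$, separate them by disjoint open sets of $X$, which are open in $F$. If $p=x\in X$ and $q=\e=\pa{V_1^{j_1},V_2^{j_2},\ldots}$ is an end, choose $m$ with $x\in\interior{K_m}$ (possible since $\cpa{x}$ is compact); then $\interior{K_m}$ and $FV_m^{j_m}$ are disjoint open neighborhoods, because $V_m^{j_m}\subseteq X-K_m$ and $EV_m^{j_m}\subseteq E$. If $p,q$ are distinct ends, they first differ in some coordinate $m$, and then $FV_m^{j_m}$ and $FV_m^{l_m}$ are disjoint by Lemma~\ref{ebasislemma}, since the distinct components $V_m^{j_m}\neq V_m^{l_m}$ are disjoint.

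The main obstacle is compactness. Given an open cover $\cpa{U_\alpha}$ of $F$, each end lies in some $U_\alpha$ and hence in a basic set $FV_k^l\subseteq U_\alpha$; the corresponding sets $EV_k^l$ cover the end space $E$, which is compact by Theorem~\ref{endspace}, so finitely many $EV_{k_1}^{l_1},\ldots,EV_{k_n}^{l_n}$ already cover $E$. Put $m=\max_i k_i$. The key claim is that $F-\bigcup_i FV_{k_i}^{l_i}$ is a compact subset of $X$; since all ends are covered, this set equals $X-\bigcup_i V_{k_i}^{l_i}$. I would show every unbounded component $W$ of $V_m$ lies in some $V_{k_i}^{l_i}$: by Lemma~\ref{unbdedcomptoend}, $W$ is the $m$-th term of some end $\e$, which lies in some $EV_{k_i}^{l_i}$, so monotonicity (Lemma~\ref{monotonicity}) and $m\geq k_i$ force $W\subseteq V_{k_i}^{l_i}$. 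Hence $\cup\,\uc{V_m}\subseteq\bigcup_i V_{k_i}^{l_i}$, so $X-\bigcup_i V_{k_i}^{l_i}\subseteq X-\cup\,\uc{V_m}=K_m'$, the bounded filling of $K_m$, which is compact by Lemma~\ref{compactlemma}\ref{K'compact}. Being closed in $X$ and contained in the compactum $K_m'$, the remainder is compact; covering it by finitely many $U_\alpha$ and adjoining the $n$ sets $U_\alpha$ containing the $FV_{k_i}^{l_i}$ yields a finite subcover.

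Finally I would check local connectedness. At a point $x\in X$, a neighborhood basis of connected open sets is inherited from $X$, which is open in $F$ and locally connected. At an end $\e$, the basic neighborhoods are exactly the nested sets $FV_k^l$ with $\e\in EV_k^l$ (no open set of $X$ contains $\e$), and each is connected because $V_k^l$ is connected and $V_k^l\subseteq FV_k^l\subseteq\closure{V_k^l}$---the containment $EV_k^l\subseteq\closure{V_k^l}$ following since any basic neighborhood $FV_{k'}^{l'}$ of an end of $EV_k^l$ with $k'\geq k$ satisfies $V_{k'}^{l'}\subseteq V_k^l$ and hence meets $V_k^l$. Thus $F$ is Hausdorff, connected, locally connected, and compact, and compactness supplies $\sigma$-compactness and local compactness, so $F$ is a compact generalized continuum, completing the proof.
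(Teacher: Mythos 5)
Your proposal is correct and follows essentially the same route as the paper's proof: the same three-case Hausdorff argument, the same key observation that $EV_k^l\subseteq\closure{V_k^l}$ (giving both density of $X$ and connectedness of the basic sets $FV_k^l$, hence connectedness and local connectedness of $F$), and the same compactness strategy of first covering the compact end space by finitely many $FV_{k_i}^{l_i}$ and then absorbing the remainder into the compact bounded filling $K_m'$. The only cosmetic difference is that you show the unbounded components of $V_m$ are swallowed by the $V_{k_i}^{l_i}$ directly via Lemma~\ref{unbdedcomptoend} and monotonicity, where the paper argues by contradiction.
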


\begin{proof}
To show $F\pa{X;\cpa{K_i}}$ is Hausdorff, consider three cases:
two points in $X$, two points in $E(X)$, and one point in each of $X$ and $E(X)$.
The first case follows since $X$ is Hausdorff.
The second case follows as in the proof of Theorem~\ref{endspace}
except using the basic open sets $FV_m^{j_m} \ni \e$ and $FV_m^{l_m} \ni \e'$.
For the third case, let $x\in X$ and $\e=\pa{V_1^{j_1},V_2^{j_2},\ldots}\in E\pa{X;\cpa{K_i}}$.
As $x\in \interior{K_k}$ for some $k\in\Z_+$,
$FV_k^{j_k}$ is a basic open neighborhood of $\e$ in $F\pa{X;\cpa{K_i}}$ disjoint from $K_k$, as desired.

The inclusion $\iota$ is injective, has image $X$, is open, and is continuous
since each $V_k^l$ is open in $X$ by Lemma~\ref{compslemma}\ref{compsopen}.
Hence, $\iota$ is an open embedding.

We claim that $EV_k^l \subseteq \Cl{V_k^l}{F\pa{X;\cpa{K_i}}}$ for each $V_k^l \in \uc{V_k}$.
Let $\e=\pa{V_1^{j_1},V_2^{j_2},\ldots} \in EV_k^l$.
It suffices to prove that each basic open neighborhood of $\e$ in $F\pa{X;\cpa{K_i}}$ meets $V_k^l$.
So, consider $FV_a^b = V_a^b \cup EV_a^b$ a basic open neighborhood of $\e$ in $F\pa{X;\cpa{K_i}}$.
We have $\e\in EV_a^b$ and Lemma~\ref{ebasislemma} implies that $V_a^b$ meets $V_k^l$.
Hence, $FV_a^b$ meets $V_k^l$, proving the claim.

The claim implies that each basic open set $FV_k^l=V_k^l \sqcup EV_k^l$ is connected.
Indeed, $V_k^l$ is connected in $X$ and in $F\pa{X;\cpa{K_i}}$.
Now, apply the claim and Munkres~\cite[Thm.~23.4]{munkres}.
It follows readily that $F\pa{X;\cpa{K_i}}$ is locally connected.

The claim also implies that $\Cl{X}{F\pa{X;\cpa{K_i}}}=F\pa{X;\cpa{K_i}}$.
Indeed, if $\e \in E\pa{X;\cpa{K_i}}$, then $\e$ lies in some basic open set $EV_k^l$
and hence $\e\in \Cl{V_k^l}{F\pa{X;\cpa{K_i}}}$.
As $V_k^l\subseteq X$, $\Cl{V_k^l}{F\pa{X;\cpa{K_i}}} \subseteq \Cl{X}{F\pa{X;\cpa{K_i}}}$.
Hence, $\e \in \Cl{X}{F\pa{X;\cpa{K_i}}}$, as desired.
In particular, $X$ is dense in $F\pa{X;\cpa{K_i}}$ and $F\pa{X;\cpa{K_i}}$ is connected.

To see that $F\pa{X;\cpa{K_i}}$ is compact, let $\cpa{U_{\alpha}}_{\alpha\in S}$ be an open cover of $F\pa{X;\cpa{K_i}}$.
Consider, for all $\alpha \in S$, the collection of all basic open sets $FV_k^l$ contained in some $U_{\alpha}$.
That collection covers $E\pa{X;\cpa{K_i}}$---a compact subspace of $F\pa{X;\cpa{K_i}}$ by Theorem~\ref{endspace}.
Therefore, there exists $\mathcal{L}=\cpa{FV_{k_1}^{l_1},\ldots,FV_{k_n}^{l_n}}$ a finite cover of $E\pa{X;\cpa{K_i}}$
such that $FV_{k_i}^{l_i} \subseteq U_{\alpha_i}$ for each $i=1,\ldots,n$.
Let $k=\max\cpa{k_1,\ldots,k_n}$.
Then, $\mathcal{L}$ must cover every $V_k^a \in \uc{V_k}$
(otherwise, some $V_k^a$ is not covered by $\mathcal{L}$, Lemma~\ref{ebasislemma} implies that $EV_k^a$ is disjoint from $\mathcal{L}$,
and $EV_k^a\neq\emptyset$ contains an end not covered by $\mathcal{L}$, a contradiction).
Hence, $U_{\alpha_1},\ldots,U_{\alpha_n}$ cover $\cup \, \uc{V_k} \cup E\pa{X;\cpa{K_i}}$.
The bounded filling $K_k'$ of $K_k$ in $X$ is compact by Lemma~\ref{compactlemma}\ref{K'compact}.
So, finitely many $U_{\alpha}$ cover $K_k'$.
Taken together, finitely many $U_{\alpha}$ cover $F\pa{X;\cpa{K_i}}$.
Thus, $F\pa{X;\cpa{K_i}}$ is compact and, hence, locally compact and $\sigma$-compact.
It follows that $F\pa{X;\cpa{K_i}}$ is a generalized continuum.
\end{proof}

\begin{lemma}\label{endspacesshomeo}
Let $X$ be a generalized continuum, let $\cpa{K_i}$ be a compact exhaustion of $X$, and let $\cpa{K_{i_k}}$ be a subsequence of $\cpa{K_i}$.
Then, the bijection $\sigma:E\pa{X;\cpa{K_i}} \to E\pa{X;\cpa{K_{i_k}}}$ in Lemma~\ref{canbij} is a homeomorphism.
\end{lemma}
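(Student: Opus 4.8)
The plan is to show that $\sigma$ is continuous and then invoke the standard fact that a continuous bijection from a compact space onto a Hausdorff space is automatically a homeomorphism. Both $E\pa{X;\cpa{K_i}}$ and $E\pa{X;\cpa{K_{i_k}}}$ are compact and Hausdorff by Theorem~\ref{endspace}, and $\sigma$ is a bijection by Lemma~\ref{canbij}, so once continuity is established we are done. This route avoids having to argue separately that $\sigma^{-1}$ is continuous.

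To verify continuity, I would check that the preimage under $\sigma$ of each basic open set of the target is open in the source; in fact it turns out to be a basic open set. Recall that a basic open set of $E\pa{X;\cpa{K_{i_k}}}$ has the form $\cpa{\pa{W_1,W_2,\ldots}\in E\pa{X;\cpa{K_{i_k}}} \mid W_k=V_{i_k}^l}$ for some unbounded component $V_{i_k}^l$ of $X-K_{i_k}$. Since $\sigma$ carries the end $\e=\pa{V_1^{j_1},V_2^{j_2},\ldots}$ to the sequence whose $k$-th term is $V_{i_k}^{j_{i_k}}$, the image $\sigma(\e)$ lies in that basic open set precisely when $V_{i_k}^{j_{i_k}}=V_{i_k}^l$, i.e. when the $i_k$-th term of $\e$ equals $V_{i_k}^l$. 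That condition is exactly $\e\in EV_{i_k}^l$, the basic open set of the source exhaustion determined by the same component. Hence $\sigma^{-1}$ of the chosen basic open set equals $EV_{i_k}^l$, which is open in $E\pa{X;\cpa{K_i}}$, so $\sigma$ is continuous.

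There is no serious obstacle here: the entire content is the bookkeeping identity $\sigma^{-1}\pa{\cpa{W_k=V_{i_k}^l}}=EV_{i_k}^l$, which holds because $\sigma$ merely forgets the terms of an end whose indices lie outside the subsequence, and the basic open sets of both end spaces are cut out by a single coordinate. If one preferred a direct proof that $\sigma$ is open rather than appealing to compactness, I would instead compute $\sigma\pa{EV_k^l}$ for an arbitrary source coordinate $k$: choosing an index with $i_m\geq k$ and using the monotonicity surjection of Lemma~\ref{monotonicity}, the set $\sigma\pa{EV_k^l}$ is the union of the finitely many basic open sets of the target corresponding to the unbounded components of $X-K_{i_m}$ contained in $V_k^l$, hence open. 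Either route completes the proof that $\sigma$ is a homeomorphism.
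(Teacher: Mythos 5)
Your proposal is correct and follows essentially the same route as the paper: both verify that $\sigma^{-1}$ carries each basic open set of $E\pa{X;\cpa{K_{i_k}}}$ to the basic open set $EV_{i_k}^l$ of $E\pa{X;\cpa{K_i}}$, conclude $\sigma$ is continuous, and then invoke compactness of the domain and Hausdorffness of the codomain (Theorem~\ref{endspace}). Even your alternative direct argument that $\sigma$ is open---decomposing $\sigma\pa{EV_k^l}$ into finitely many basic open sets of the target via Lemma~\ref{monotonicity}---matches the alternative the paper records.
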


\begin{proof}
As above, $EV_k^l$ denotes a basic open set in $E\pa{X;\cpa{K_i}}$.
Each basic open set in $E\pa{X;\cpa{K_{i_k}}}$ has the form
\begin{equation}\label{basisEss}
\cpa{\pa{Z_1,Z_2,Z_3,\ldots} \in E\pa{X;\cpa{K_{i_k}}} \mid Z_m=V_{i_m}^n}
\end{equation}
for some unbounded component $V_{i_m}^n$ of $V_{i_m}$.
Recalling the bijection $\sigma$ from Lemma~\ref{canbij},
it is straightforward to verify that $\sigma\pa{EV_{i_m}^n}$ equals the basic open set~\eqref{basisEss}.
Therefore, $\sigma^{-1}$ sends each basic open set in $E\pa{X;\cpa{K_{i_k}}}$ to a basic open set in $E\pa{X;\cpa{K_i}}$.
Thus, $\sigma^{-1}$ is open and $\sigma$ is continuous.
Hence, it suffices to show that $\sigma$ is open or closed.
By Theorem~\ref{endspace}, $E\pa{X;\cpa{K_i}}$ is compact and $E\pa{X;\cpa{K_{i_k}}}$ is Hausdorff.
So, $\sigma$ is closed.
Alternatively, we may show $\sigma$ is open using bases.
We already observed that $\sigma\pa{EV_{i_m}^n}$ equals the basic open set~\eqref{basisEss}.
Consider a basic open set $EV_a^b$ such that $a \neq i_m$ for any $m\in\Z_+$.
There exists $l\in\Z_+$ minimal such that $a<i_l$.
Let $V_{i_l}^{c_1},\ldots,V_{i_l}^{c_n}$ be the unbounded components of $V_{i_l}$ contained in $V_a^b$.
So, $EV_a^b=EV_{i_l}^{c_1} \sqcup \cdots \sqcup EV_{i_l}^{c_n}$ and
$\sigma\pa{EV_a^b}=\sigma\pa{EV_{i_l}^{c_1}} \sqcup \cdots \sqcup \sigma\pa{EV_{i_l}^{c_n}}$
where the latter is a disjoint union of finitely many basic open sets by the observation.
That proves $\sigma$ is open.
\end{proof}

\begin{lemma}\label{endcompsshomeo}
Let $X$ be a generalized continuum, let $\cpa{K_i}$ be a compact exhaustion of $X$, and let $\cpa{K_{i_k}}$ be a subsequence of $\cpa{K_i}$.
Define $\tau:F\pa{X;\cpa{K_i}} \to F\pa{X;\cpa{K_{i_k}}}$ piecewise to be $\tn{id}:X\to X$ and $\sigma:E\pa{X;\cpa{K_i}} \to E\pa{X;\cpa{K_{i_k}}}$.
Then, $\tau$ is a homeomorphism.
\end{lemma}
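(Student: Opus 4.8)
The plan is to first note that $\tau$ is a bijection and then reduce the whole statement to a single continuity check. Since $\tau|_X=\tn{id}$ is a bijection of $X$ onto itself and $\tau|_{E\pa{X;\cpa{K_i}}}=\sigma$ is a bijection by Lemma~\ref{canbij}, and since $F\pa{X;\cpa{K_i}}=X\sqcup E\pa{X;\cpa{K_i}}$ and $F\pa{X;\cpa{K_{i_k}}}=X\sqcup E\pa{X;\cpa{K_{i_k}}}$ as sets, the map $\tau$ is a bijection. By Theorem~\ref{endcompthm}, both endpoint compactifications are compact generalized continua, hence compact Hausdorff spaces. A continuous bijection from a compact space to a Hausdorff space is closed and therefore a homeomorphism. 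Thus it suffices to prove that $\tau$ is continuous, for which I would verify that $\tau^{-1}$ carries each basic open set of $F\pa{X;\cpa{K_{i_k}}}$ to an open set of $F\pa{X;\cpa{K_i}}$.

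The basic open sets of $F\pa{X;\cpa{K_{i_k}}}$ are of two kinds: the open subsets of $X$, and the sets of the form $V_{i_m}^n \sqcup S$, where $V_{i_m}^n$ is an unbounded component of $V_{i_m}=X-K_{i_m}$ (the $m$th complement in the subsequence) and $S$ is the set~\eqref{basisEss} of subsequence ends whose $m$th term equals $V_{i_m}^n$. For an open set $U\subseteq X$ I would note that $\tau^{-1}(U)=U$, since $\tau$ restricts to the identity on $X$ and $\sigma$ maps into $E\pa{X;\cpa{K_{i_k}}}$, which is disjoint from $X$; and $U$ is open in $F\pa{X;\cpa{K_i}}$ because $X$ is open there by Theorem~\ref{endcompthm}. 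For a set $V_{i_m}^n\sqcup S$ I would split the preimage into its $X$-part and its end-part: $\tau^{-1}\pa{V_{i_m}^n}=V_{i_m}^n$ since $\tau|_X=\tn{id}$, while $\tau^{-1}(S)=\sigma^{-1}(S)$. The proof of Lemma~\ref{endspacesshomeo} records that $\sigma\pa{EV_{i_m}^n}$ equals the subsequence basic open set~\eqref{basisEss}, where $EV_{i_m}^n$ is the basic open set~\eqref{basisE} of $E\pa{X;\cpa{K_i}}$ read off from the original exhaustion with $k=i_m$ and $l=n$. Hence $\sigma^{-1}(S)=EV_{i_m}^n$, and therefore $\tau^{-1}\pa{V_{i_m}^n\sqcup S}=V_{i_m}^n\sqcup EV_{i_m}^n=FV_{i_m}^n$, which is a basic open set of $F\pa{X;\cpa{K_i}}$ by~\eqref{basisF}. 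Thus $\tau^{-1}$ sends each basic open set to an open set, $\tau$ is continuous, and the proof is complete.

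The only real care this requires is bookkeeping with the two indexings: the $m$th term of an end in the subsequence exhaustion corresponds to the complement $V_{i_m}$ of the original exhaustion, so the \emph{same} unbounded component $V_{i_m}^n$ names a basic open set in each of the two end spaces, and Lemma~\ref{endspacesshomeo} is exactly what identifies these under $\sigma$. The other subtlety, which I expect to be the main obstacle, is that the topology of $F$ does not split as a disjoint-union topology: the basic sets $V_{i_m}^n\sqcup S$ genuinely mix points of $X$ with ends, so one cannot merely assert that $\tau$ is a ``disjoint union of homeomorphisms,'' and these mixed basic sets must be checked, which is precisely what the computation $\tau^{-1}\pa{V_{i_m}^n\sqcup S}=FV_{i_m}^n$ accomplishes. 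Invoking compactness to upgrade the continuous bijection to a homeomorphism is what lets me avoid separately verifying that $\tau$ is an open map.
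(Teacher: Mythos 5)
Your proposal is correct and follows essentially the same route as the paper: establish that $\tau$ is a bijection via Lemma~\ref{canbij}, verify continuity by computing $\tau^{-1}$ of each basic open set of $F\pa{X;\cpa{K_{i_k}}}$ (with the key identification $\tau^{-1}\pa{\eqref{basisFss}}=FV_{i_m}^n$ drawn from the proof of Lemma~\ref{endspacesshomeo}), and then upgrade the continuous bijection to a homeomorphism using compactness of the domain and Hausdorffness of the codomain from Theorem~\ref{endcompthm}. The paper additionally sketches a second way to finish (showing $\tau$ is open directly via bases), but that is optional and your argument is complete without it.
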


\begin{proof}
By Lemma~\ref{canbij}, $\tau$ is a bijection.
We claim that $\tau^{-1}$ is open.
It suffices to verify that on a basis for the topology on $F\pa{X;\cpa{K_{i_k}}}$
and we consider the basis defined at~\eqref{basisF}.
As $\tau$ is the identity on $X$, it suffices to consider basic open sets that meet $E\pa{X;\cpa{K_{i_k}}}$.
As above, $FV_k^l$ denotes a basic open set in $F\pa{X;\cpa{K_i}}$.
Each basic open set in $F\pa{X;\cpa{K_{i_k}}}$ that meets $E\pa{X;\cpa{K_{i_k}}}$ has the form
\begin{equation}\label{basisFss}
V_{i_m}^n \sqcup \cpa{\pa{Z_1,Z_2,Z_3,\ldots} \in E\pa{X;\cpa{K_{i_k}}} \mid Z_m=V_{i_m}^n}
\end{equation}
for some unbounded component $V_{i_m}^n$ of $V_{i_m}$.
As in the proof of Lemma~\ref{endspacesshomeo}, 
it is straightforward to verify that $\tau\pa{FV_{i_m}^n}$ equals the basic open set~\eqref{basisFss}.
Therefore, $\tau^{-1}$ sends the basic open set~\eqref{basisFss} to the basic open set $FV_{i_m}^n$.
Hence, $\tau^{-1}$ is open, proving the claim.
That implies $\tau$ is a continuous bijection.
Hence, it suffices to show that $\tau$ is open or closed.
By Theorem~\ref{endcompthm}, $F\pa{X;\cpa{K_i}}$ is compact and $F\pa{X;\cpa{K_{i_k}}}$ is Hausdorff.
So, $\tau$ is closed.
Alternatively, we may show $\tau$ is open using bases exactly as in the proof of Lemma~\ref{endspacesshomeo}
except using $FV_a^b$ in place of $EV_a^b$.
\end{proof}

\begin{corollary}\label{canhom}
Let $X$ be a generalized continuum and let $\cpa{K_i}$ and $\cpa{L_i}$ be compact exhaustions of $X$.
Then, there exists a unique homeomorphism $h:F\pa{X;\cpa{K_i}} \to F\pa{X;\cpa{L_i}}$ such that $\rest{h}:X\to X$ is the identity.
Hence, $\rest{h}:E\pa{X;\cpa{K_i}} \to E\pa{X;\cpa{L_i}}$ is a canonical homeomorphism.
\end{corollary}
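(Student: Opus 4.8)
The plan is to combine the ``W'' argument of Remark~\ref{Wargument} with the subsequence homeomorphism of Lemma~\ref{endcompsshomeo} to produce $h$, and then to deduce uniqueness from the density of $X$ together with the Hausdorff property of the target. Since all the hard analytic work is already contained in Lemma~\ref{endcompsshomeo} and Theorem~\ref{endcompthm}, this reduces to a formal assembly.

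First I would establish existence. Exactly as in the proof of Lemma~\ref{canbij}, pass to subsequences $\cpa{K_{k_i}}$ and $\cpa{L_{l_i}}$ of $\cpa{K_i}$ and $\cpa{L_i}$ so that $K_{k_i}\subseteq\interior{L_{l_i}}$ and $L_{l_i}\subseteq\interior{K_{k_{i+1}}}$ for each $i\in\Z_+$; this is possible because any compactum—in particular any member of one exhaustion—lies in the interior of some member of the other. Interleaving yields a compact exhaustion $\cpa{J_i}$ with $J_{2i-1}=K_{k_i}$ and $J_{2i}=L_{l_i}$, of which both $\cpa{K_{k_i}}$ and $\cpa{L_{l_i}}$ are subsequences. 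This is precisely the ``W'' pattern~\eqref{eq:w}. Applying Lemma~\ref{endcompsshomeo} to each downward arrow, the four maps
\begin{gather*}
F\pa{X;\cpa{K_i}} \to F\pa{X;\cpa{K_{k_i}}}, \qquad F\pa{X;\cpa{J_i}} \to F\pa{X;\cpa{K_{k_i}}}, \\
F\pa{X;\cpa{J_i}} \to F\pa{X;\cpa{L_{l_i}}}, \qquad F\pa{X;\cpa{L_i}} \to F\pa{X;\cpa{L_{l_i}}}
\end{gather*}
are homeomorphisms each restricting to $\tn{id}$ on $X$. Composing them along the ``W'' (inverting the two maps whose arrows point the wrong way) produces a homeomorphism $h:F\pa{X;\cpa{K_i}} \to F\pa{X;\cpa{L_i}}$, and since a composite of maps that are the identity on $X$ is again the identity on $X$, we have $\rest{h}:X\to X$ equal to the identity.

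For uniqueness, suppose $h$ and $h'$ are two homeomorphisms $F\pa{X;\cpa{K_i}} \to F\pa{X;\cpa{L_i}}$ that restrict to the identity on $X$. By Theorem~\ref{endcompthm}, $X$ is dense in $F\pa{X;\cpa{K_i}}$ and $F\pa{X;\cpa{L_i}}$ is Hausdorff. Two continuous maps into a Hausdorff space that agree on a dense subset agree everywhere; since $h$ and $h'$ both restrict to $\tn{id}$ on the dense subspace $X$, we conclude $h=h'$. Finally, because $h$ fixes $X$ setwise and $F\pa{X;\cpa{K_i}}=X\sqcup E\pa{X;\cpa{K_i}}$ as sets, $h$ carries the complement $E\pa{X;\cpa{K_i}}$ homeomorphically onto $E\pa{X;\cpa{L_i}}$, yielding the asserted canonical homeomorphism of end spaces.

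I expect the only genuine subtlety to be the bookkeeping in the shuffling step—checking that $\cpa{J_i}$ is an honest compact exhaustion (nested and satisfying the interior condition~\ref{inint}) and that the two original subsequences really do sit inside it—but this is routine and mirrors the argument already carried out for Lemma~\ref{canbij}. The conceptually important point, which distinguishes this corollary from the mere bijection of Lemma~\ref{canbij}, is the uniqueness clause, and its proof rests entirely on the density of $X$ in its endpoint compactification established in Theorem~\ref{endcompthm}.
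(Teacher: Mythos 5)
Your proposal is correct and follows essentially the same route as the paper: existence via the ``W'' argument of Remark~\ref{Wargument} combined with the subsequence homeomorphisms of Lemma~\ref{endcompsshomeo}, and uniqueness from the density of $X$ in $F\pa{X;\cpa{K_i}}$ (Theorem~\ref{endcompthm}) together with the Hausdorff property of the codomain. The paper's proof is just a terser version of what you wrote.
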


\begin{proof}
For existence of $h$, Lemma~\ref{endcompsshomeo} yields a desired homeomorphism for a subsequence of a given compact exhaustion.
The general case follows by four applications of passing to a subsequence and using the ``W'' argument from Remark~\ref{Wargument}.
By Theorem~\ref{endcompthm}, $X$ is dense in $F\pa{X;\cpa{K_i}}$
and the codomain $F\pa{X;\cpa{L_i}}$ of $h$ is Hausdorff.
So, $h$ is unique.
\end{proof}

Let $X$ be a generalized continuum.
Let $F(X)$ denote Freudenthal's \deffont{endpoint compactification} of $X$ and $E(X)$ denote the \deffont{end space} of $X$.
By definition, $F(X)=F\pa{X;\cpa{K_i}}$ and $E(X)=E\pa{X;\cpa{K_i}}$
for some compact exhaustion of $X$---which exists by Theorem~\ref{gceee}.
The spaces $F(X)$ and $E(X)$ are well-defined up to canonical homeomorphism by Corollary~\ref{canhom}.
Next, we show that each proper map of generalized continua induces a unique map of their endpoint compactifications and a unique map of their end spaces.

\begin{lemma}\label{invimceisce}
Let $X$ be a space with a compact exhaustion $\cpa{K_i}$.
If $f:Y\to X$ is a proper map, then $\cpa{f^{-1}\pa{K_i}}$ is a compact exhaustion of $Y$.
\end{lemma}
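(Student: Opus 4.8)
The plan is to verify directly that $\cpa{f^{-1}\pa{K_i}}$ satisfies the two defining properties of a compact exhaustion, together with the preliminary requirements that the sets are compact and nested. None of the four verifications is deep; the only one demanding any care distinguishes the preimage of an interior from the interior of a preimage.

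First I would record the easy structural facts. Each $f^{-1}\pa{K_i}$ is compact because $f$ is proper and $K_i$ is a compactum in $X$. The sequence is nested since preimages preserve inclusions: $K_i\subseteq K_{i+1}$ gives $f^{-1}\pa{K_i}\subseteq f^{-1}\pa{K_{i+1}}$. And the sequence covers $Y$: for each $y\in Y$ we have $f(y)\in X=\cup_i K_i$, so $f(y)\in K_i$ for some $i$, whence $y\in f^{-1}\pa{K_i}$; thus $Y=\cup_i f^{-1}\pa{K_i}$, which is property~\ref{coverX}.

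The one step requiring a little care is property~\ref{inint}, namely $f^{-1}\pa{K_i}\subseteq\interior{f^{-1}\pa{K_{i+1}}}$. Here I would invoke continuity of $f$ as follows. Since $\cpa{K_i}$ is a compact exhaustion, $K_i\subseteq\interior{K_{i+1}}$, so taking preimages gives $f^{-1}\pa{K_i}\subseteq f^{-1}\pa{\interior{K_{i+1}}}$. Because $\interior{K_{i+1}}$ is open in $X$ and $f$ is continuous, $f^{-1}\pa{\interior{K_{i+1}}}$ is open in $Y$; moreover it is contained in $f^{-1}\pa{K_{i+1}}$. An open subset of $Y$ lying inside $f^{-1}\pa{K_{i+1}}$ must lie inside its interior, so $f^{-1}\pa{\interior{K_{i+1}}}\subseteq\interior{f^{-1}\pa{K_{i+1}}}$. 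Chaining the inclusions yields $f^{-1}\pa{K_i}\subseteq\interior{f^{-1}\pa{K_{i+1}}}$, as required.

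Since all four conditions hold, $\cpa{f^{-1}\pa{K_i}}$ is a compact exhaustion of $Y$. I expect no genuine obstacle; the only point to watch is not to conflate $f^{-1}\pa{\interior{K_{i+1}}}$ with $\interior{f^{-1}\pa{K_{i+1}}}$, the former being a subset of the latter by the openness argument above. This is precisely the spot where continuity of $f$ is used, as opposed to the purely set-theoretic behavior of preimages exploited for nesting and covering.
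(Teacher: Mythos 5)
Your proof is correct and follows essentially the same route as the paper: compactness from properness, nesting and covering from the set-theoretic behavior of preimages, and the interior condition via the chain $f^{-1}\pa{K_i}\subseteq f^{-1}\pa{\interior{K_{i+1}}}\subseteq\interior{f^{-1}\pa{K_{i+1}}}$ using continuity. No changes needed.
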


\begin{proof}
As $f$ is proper, each $f^{-1}\pa{K_i}$ is compact.
Evidently, the spaces $f^{-1}\pa{K_i}$ are nested and cover $Y$.
As $K_i \subseteq \interior{K_{i+1}}$, we have $f^{-1}\pa{K_i} \subseteq f^{-1}\pa{\interior{K_{i+1}}}$.
The latter subspace is open in $Y$ and contained in $f^{-1}\pa{K_{i+1}}$.
Thus
\[
f^{-1}\pa{K_i} \subseteq f^{-1}\pa{\interior{K_{i+1}}} \subseteq \interior{ f^{-1}\pa{K_{i+1}}}
\]
as desired.
\end{proof}

In Lemma~\ref{invimceisce}, one cannot ensure that $\cpa{f^{-1}\pa{K_i}}$ is efficient even if $\cpa{K_i}$ is efficient---see Example~\ref{invimceex}.

\begin{lemma}\label{inducedmaps}
Let $f:Y\to X$ be a proper map of generalized continua.
Then, there exists a unique map $F(f):F(Y) \to F(X)$ such that $\rest{F(f)}Y=f$.
That map $F(f)$ sends $E(Y)$ into $E(X)$.
Hence, there is a canonical map $E(f)=\rest{F(f)}: E(Y) \to E(X)$ induced by $f$.
Further, any choices of compact exhaustions of $X$ yield a canonical commutative diagram
\begin{equation}\label{Efds}
\begin{tikzcd}
F\pa{Y;\cpa{f^{-1}(K_i)}} \arrow[r,"F(f)"] \arrow{d}{\approx} & F\pa{X;\cpa{K_i}} \arrow{d}{\approx}\\
F\pa{Y;\cpa{f^{-1}(L_i)}} \arrow[r,"F(f)"] & F\pa{X;\cpa{L_i}}
\end{tikzcd}
\end{equation}
where the vertical homeomorphisms are given by Corollary~\ref{canhom}.
\end{lemma}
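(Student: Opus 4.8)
The plan is to fix a compact exhaustion $\cpa{K_i}$ of $X$, invoke Lemma~\ref{invimceisce} to obtain the compact exhaustion $\cpa{f^{-1}(K_i)}$ of $Y$, and define $F(f):F\pa{Y;\cpa{f^{-1}(K_i)}} \to F\pa{X;\cpa{K_i}}$ piecewise: as $f$ on $Y$, and by an explicit rule on $E(Y)$. For an end $\e=\pa{W_1,W_2,\ldots}$ of $Y$, each $W_k$ is an unbounded component of $Y-f^{-1}(K_k)=f^{-1}(V_k)$, so $f(W_k)$ is a nonempty connected subset of $V_k=X-K_k$ and hence lies in a unique component $U_k$ of $V_k$. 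I would first show $U_k$ is unbounded: if $\closure{U_k}$ were compact, then $f^{-1}\pa{\closure{U_k}}$ would be a compactum in $Y$ (by properness) containing $W_k$, forcing $\closure{W_k}$ to be compact and contradicting that $W_k$ is unbounded. I would then show $U_1\supseteq U_2\supseteq\cdots$: for $k\le k'$, since $W_{k'}\subseteq W_k$ we have $f(W_{k'})\subseteq f(W_k)\subseteq U_k$, while $U_{k'}$ is a connected subset of $V_{k'}\subseteq V_k$ containing $f(W_{k'})$; thus $U_{k'}$ meets the component $U_k$ of $V_k$ and so $U_{k'}\subseteq U_k$. Hence $F(f)(\e)=\pa{U_1,U_2,\ldots}$ is a genuine end of $X$, and $F(f)$ carries $E(Y)$ into $E(X)$ by construction.

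Next I would prove continuity. At a point $y\in Y$, because $Y$ is open in $F(Y)$ and $X$ is open in $F(X)$ (Theorem~\ref{endcompthm}), continuity of $F(f)$ at $y$ reduces to continuity of $f$. At an end $\e=\pa{W_1,W_2,\ldots}$ with image $\pa{U_1,U_2,\ldots}$, I would test the basic neighborhood $FU_k=U_k\sqcup EU_k$ of $F(f)(\e)$ against the basic neighborhood $FW_k=W_k\sqcup EW_k$ of $\e$ (notation of~\eqref{basisE} and~\eqref{basisF}) and check $F(f)\pa{FW_k}\subseteq FU_k$. For the $Y$-part this holds because $f(W_k)\subseteq U_k$; for the end-part, any $\e'\in EW_k$ has $k$-th term $W_k$, so its image has $k$-th term equal to the component of $V_k$ containing $f(W_k)$, namely $U_k$, whence $F(f)(\e')\in EU_k$. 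This establishes continuity of $F(f)$.

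For uniqueness, $Y$ is dense in $F(Y)$ and $F(X)$ is Hausdorff (Theorem~\ref{endcompthm}), so any two continuous extensions of $f$ agree on the dense subset $Y$ and hence everywhere; thus $F(f)$ is the unique map restricting to $f$, and, being our construction, it sends $E(Y)$ into $E(X)$, yielding $E(f)=\rest{F(f)}$ at once. Finally, for the diagram~\eqref{Efds} I would compare the two composites $F\pa{Y;\cpa{f^{-1}(K_i)}}\to F\pa{X;\cpa{L_i}}$: the vertical homeomorphisms of Corollary~\ref{canhom} restrict to the identity on $Y$ and on $X$, so both composites restrict to $f$ on $Y$; by the same density-and-Hausdorff argument they coincide, giving commutativity and confirming that $F(f)$ descends to a well-defined map between the canonical compactifications.

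The step I expect to require the most care is the continuity verification at the ends, namely the neighborhood bookkeeping that the chosen $FW_k$ maps into the prescribed $FU_k$; the one genuinely substantive input, however, is the properness argument showing each $U_k$ is unbounded, since that is what makes the end map well-defined in the first place.
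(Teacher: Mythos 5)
Your proposal is correct and follows essentially the same route as the paper: define $F(f)$ piecewise via the component-tracking rule on ends, verify continuity against the basic open sets $FV_k^l$, and get uniqueness (and commutativity of~\eqref{Efds}) from density of $Y$ in $F(Y)$ together with $F(X)$ being Hausdorff. The only cosmetic differences are that you check continuity pointwise at each end rather than computing the full preimage of each basic open set, and that you establish unboundedness of the image components directly by the properness argument, which the paper relegates to a parenthetical remark by working with an efficient exhaustion of $X$.
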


\begin{proof}
By Theorem~\ref{gceee}, there exists an efficient compact exhaustion $\cpa{K_i}$ of $X$.
By Lemma~\ref{invimceisce}, $\cpa{f^{-1}\pa{K_i}}$ is a compact exhaustion of $Y$.
Basic open sets in $F(X)$ meeting $E(X)$ will be denoted $FV_k^l$ as usual.
For each $i\in\Z_+$, we have $X=K_i\sqcup V_i$ where each component $V_i^j$ of $V_i$ is unbounded.
So, $Y=f^{-1}(K_i) \sqcup f^{-1}(V_i)$ where $f^{-1}(V_i)$ may contain both bounded and unbounded components.
We denote an unbounded component of $f^{-1}(V_i)$ by $U_i^j$.

For existence of $F(f)$, define $F(f)$ on $Y$ to be $f:Y\to X$.
Define $F(f)$ on $E(Y)$ as follows.
Let $\alpha=\pa{U_1^{j_1},U_2^{j_2},\ldots} \in E\pa{Y; \cpa{f^{-1}\pa{K_i}}}$ be an end of $Y$.
For each $i\in\Z_+$, $f\pa{U_i^{j_i}}$ lies in a unique component $V_i^{j_i}$ of $V_i$ by connectedness.
(Even if $\cpa{K_i}$ is not efficient, $f\pa{U_i^{j_i}}$ must lie in an unbounded component of $V_i$ since $f$ is proper
and $U_i^{j_i}$ is unbounded.)
As $\alpha$ is an end, we have $U_1^{j_1}\supseteq U_2^{j_2} \supseteq U_3^{j_3} \supseteq \cdots$.
Therefore, we have
\begin{equation}\label{eq:diagramofsurface}
\begin{tikzcd}[row sep=small]
f\pa{U_1^{j_1}} \arrow[d, phantom, sloped, "\subseteq"] & 
	\arrow[l, phantom, "\supseteq"] \arrow[d, phantom, sloped, "\subseteq"] f\pa{U_2^{j_2}} &
	\arrow[l, phantom, "\supseteq"] \arrow[d, phantom, sloped, "\subseteq"] f\pa{U_3^{j_3}} &
	\arrow[l, phantom, "\supseteq"] \cdots\\
V_1^{j_1} &  V_2^{j_2} & V_3^{j_3} & \cdots
\end{tikzcd}
\end{equation}
Lemma~\ref{ebasislemma} implies that $V_1^{j_1}\supseteq V_2^{j_2} \supseteq V_3^{j_3} \supseteq \cdots$.
Thus, $\beta=\pa{V_1^{j_1},V_2^{j_2},\ldots} \in E\pa{X; \cpa{K_i}}$ is an end of $X$.
We define $F(f)(\alpha)=\beta$.

We verify $F(f)$ is continuous using the standard basis for $F(X)$.
By our piecewise definition of $F(f)$, we need only consider basic open sets $FV_k^l$.
Recall that $FV_k^l = V_k^l \sqcup EV_k^l$.
So, $F(f)^{-1}\pa{FV_k^l} = f^{-1}\pa{V_k^l} \sqcup F(f)^{-1}\pa{EV_k^l}$.
Of the components of $f^{-1}(V_k^l)$, let $U_k^1,\ldots,U_k^{m_k}$ be the unbounded ones and let $B$ be the union of the bounded ones.
So, $f^{-1}\pa{V_k^l}= \bigsqcup_{b=1}^{m_k} U_k^b \sqcup B$.
By the defintion of $F(f)$ on ends, we have
\[
F(f)^{-1}\pa{EV_k^l} 	= \cpa{\pa{Z_1,Z_2,\ldots} \in E\pa{Y;\cpa{f^{-1}\pa{K_i}}} \mid f\pa{Z_k}\subseteq V_k^l} = \bigsqcup_{b=1}^{m_k} EU_k^b
\]
Hence, $F(f)^{-1}\pa{FV_k^l} = \bigsqcup_{b=1}^{m_k} FU_k^b \sqcup B$
is the disjoint union of $B$ and basic open sets of $F(Y)$, which is open in $F(Y)$.
Therefore, $F(f)$ is continuous.

Uniqueness of $F(f)$ such that $\rest{F(f)}Y=f$ follows from the facts that, by Theorem~\ref{endcompthm}, $Y$ is dense in $F\pa{Y}$
and the codomain $F\pa{X}$ of $F(f)$ is Hausdorff.
\end{proof}

\begin{remark}\label{fextend}
Uniqueness in Lemma~\ref{inducedmaps} proves that any continuous extension of $f$ to $F(Y)\to F(X)$
must send $E(Y)$ to a subset of $E(X)$.
That also follows directly using properness of $f$ and local compactness of $X$.
\end{remark}

The next lemma presents a useful criterion for verifying an end lies in the image of the induced map on end compactifications.

\begin{lemma}\label{endinimage}
Let $f:Y\to X$ be a proper map of generalized continua.
Consider an end $\e=\pa{V_1^{j_1},V_2^{j_2},\ldots}\in E\pa{X;\cpa{K_i}}$ where $\cpa{K_i}$ is a compact exhaustion of $X$.
Then, $\e\in\im{F(f)}$ if and only if $\im{f}$ meets $V_i^{j_i}$ for each $i\in\Z_+$.
\end{lemma}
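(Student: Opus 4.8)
The plan is to handle the two implications separately, the forward one being routine and the reverse one carrying the substance; throughout, $F(Y)$ and $E(Y)$ are built from the compact exhaustion $\cpa{f^{-1}(K_i)}$ of $Y$ furnished by Lemma~\ref{invimceisce}, matching the construction of $F(f)$ in Lemma~\ref{inducedmaps}. For the forward direction, suppose $\e\in\im{F(f)}$, say $F(f)(\alpha)=\e$ for some $\alpha\in F(Y)$. Since $F(f)$ restricts to $f$ on $Y$, which takes values in $X$, while $\e\in E(X)$ is disjoint from $X$ inside $F(X)$, the point $\alpha$ cannot lie in $Y$; hence $\alpha=\pa{U_1,U_2,\ldots}\in E(Y)$ is an end. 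By the construction of $F(f)$ on ends in Lemma~\ref{inducedmaps}, $V_i^{j_i}$ is the component of $X-K_i$ containing $f\pa{U_i}$, so $f\pa{U_i}\subseteq V_i^{j_i}$. As each component $U_i$ is nonempty, $f\pa{U_i}$ is a nonempty subset of $\im{f}\cap V_i^{j_i}$, proving that $\im{f}$ meets every $V_i^{j_i}$.

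For the reverse direction, assume $\im{f}$ meets each $V_i^{j_i}$, so each $f^{-1}\pa{V_i^{j_i}}$ is nonempty, and produce a preimage end by a compactness argument in $F(Y)$. Set $C_i=\Cl{f^{-1}\pa{V_i^{j_i}}}{F(Y)}$. Each $C_i$ is a nonempty closed subset of the compact space $F(Y)$ (compact by Theorem~\ref{endcompthm}), and the nesting $V_{i+1}^{j_{i+1}}\subseteq V_i^{j_i}$ of the end forces $C_{i+1}\subseteq C_i$. By the finite intersection property there is a point $\alpha\in\bigcap_i C_i$, and the remaining work is to verify that $\alpha$ is a genuine end mapping to $\e$.

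First, $\alpha\in E(Y)$. Suppose instead $\alpha=y\in Y$; since $Y$ is open in $F(Y)$ by Theorem~\ref{endcompthm}, membership of $y$ in each $C_i$ gives $y\in\Cl{f^{-1}\pa{V_i^{j_i}}}{Y}$, whence $f(y)\in\Cl{V_i^{j_i}}{X}$ by continuity. Using $\Cl{V_i^{j_i}}{X}\subseteq K_i\cup V_i^{j_i}$ from Lemma~\ref{compslemma}\ref{clCinKuC}, together with the fact that the frontier $\Fr{V_i^{j_i}}{}$ consists of limits of points of $V_i^{j_i}\subseteq X-K_i$ and so avoids $\interior{K_i}$, one sees that $\Cl{V_i^{j_i}}{X}$ is disjoint from $\interior{K_i}$; but $f(y)\in\interior{K_i}$ for all large $i$ because $\cpa{K_i}$ exhausts $X$, a contradiction. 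Hence $\alpha=\pa{U_1,U_2,\ldots}\in E(Y)$. Second, $F(f)(\alpha)=\e$. Since $V_i^{j_i}$ is open in $X$ by Lemma~\ref{compslemma}\ref{compsopen} and closed as a component, it is clopen in $X-K_i$, so $f^{-1}\pa{V_i^{j_i}}$ is clopen in $f^{-1}\pa{X-K_i}=Y-f^{-1}(K_i)$, hence a union of components of the latter. Applying $\alpha\in C_i$ to the basic neighborhood $FU_i=U_i\sqcup EU_i$ of $\alpha$ shows that $U_i$ meets $f^{-1}\pa{V_i^{j_i}}$; as $U_i$ is a connected component it must lie inside that clopen set, giving $f\pa{U_i}\subseteq V_i^{j_i}$. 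By the formula for $F(f)$ this says precisely $F(f)(\alpha)=\e$, so $\e\in\im{F(f)}$. The main obstacle is the first of these two checks—ruling out that the compactness limit $\alpha$ is an ordinary point of $Y$—which is exactly where properness (via Lemma~\ref{invimceisce}) and the frontier estimate are needed.
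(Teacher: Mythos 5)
Your proof is correct, and while the forward direction matches the paper's (the only preimage of $\e$ under $F(f)$ must be an end $\alpha=\pa{U_1,U_2,\ldots}$, and the construction of $F(f)$ gives $f\pa{U_i}\subseteq V_i^{j_i}$), your reverse direction takes a genuinely different route. The paper argues by contrapositive: since $F(Y)$ is compact and $F(X)$ is Hausdorff, $F(f)$ is a closed map, so if $\e\notin\im{F(f)}$ then some basic neighborhood $FV_i^{j_i}$ of $\e$ misses $\im{F(f)}$, and in particular $V_i^{j_i}$ misses $\im{f}$ --- about four lines. You instead work directly: the nested nonempty closed sets $C_i=\Cl{f^{-1}\pa{V_i^{j_i}}}{F(Y)}$ in the compact space $F(Y)$ have a common point $\alpha$, which you then show is an end mapping to $\e$. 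Both arguments lean on compactness of the Freudenthal compactification (Theorem~\ref{endcompthm}), but yours trades brevity for an explicit preimage end, and the two supporting checks you supply are sound: the frontier estimate correctly rules out $\alpha\in Y$ (indeed $\Cl{V_i^{j_i}}{X}$ avoids $\interior{K_i}$ while $f(\alpha)$ would eventually lie in $\interior{K_i}$), and the clopen-ness of $f^{-1}\pa{V_i^{j_i}}$ in $Y-f^{-1}(K_i)$ correctly forces each component $U_i$ of $\alpha$ into it once $FU_i$ is seen to meet $f^{-1}\pa{V_i^{j_i}}$. Either proof is acceptable; the paper's is the more economical, yours the more constructive.
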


\begin{proof}
For the forward implication, we are given that there exists
$\alpha=\pa{U_1^{j_1},U_2^{j_2},\ldots} \in E\pa{Y; \cpa{f^{-1}\pa{K_i}}}$ such that $F(f)(\alpha)=\e$.
By the definition of $F(f)$, $f(U_i^{j_i})\subseteq V_i^{j_i}$ for each $i\in\Z_+$.
So, $\im{f}$ meets $V_i^{j_i}$ for each $i\in\Z_+$.

We prove the contrapositive of the reverse implication.
By Theorem~\ref{endcompthm}, $F(Y)$ is compact and $F(X)$ is Hausdorff.
By Lemma~\ref{inducedmaps}, $F(f)$ is continuous.
So, $F(f)$ is a closed map.
As $\e\notin \im{F(f)}$, there exists, for some $i\in\Z_+$, a basic open neighborhood $FV_i^{j_i}=V_i^{j_i}\sqcup EV_i^{j_i}$
of $\e$ in $F(X)$ that is disjoint from $\im{F(f)}$.
In particular, $V_i^{j_i}$ is disjoint from $\im{f}$.
\end{proof}

\begin{theorem}\label{fefunctors}
Freudenthal's end compactification is a covariant functor $F$ from the category of generalized continua and proper maps to the category of
compact generalized continua and (proper) maps.
Freudenthal's end space is a covariant functor $E$ from the category of generalized continua and proper maps to the category of
Hausdorff, totally separated, compact spaces and (proper) maps.
\end{theorem}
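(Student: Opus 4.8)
The plan is to assemble the functoriality of $F$, and then of $E$, from the object-level results already proved together with the uniqueness clause of Lemma~\ref{inducedmaps}, which carries essentially all of the genuine content.

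First I would record that $F$ and $E$ are well-defined on objects: Theorem~\ref{endcompthm} shows that $F(X)$ is a compact generalized continuum whenever $X$ is a generalized continuum, and Theorem~\ref{endspace} shows that $E(X)$ is Hausdorff, totally separated, and compact. On morphisms, Lemma~\ref{inducedmaps} assigns to each proper map $f:Y\to X$ a continuous map $F(f):F(Y)\to F(X)$ that restricts to $f$ on $Y$ and sends $E(Y)$ into $E(X)$, whence a map $E(f)=\rest{F(f)}:E(Y)\to E(X)$. I would then note that these induced maps are automatically proper: since $F(Y)$ and $E(Y)$ are compact and $F(X)$ and $E(X)$ are Hausdorff, a compactum in the codomain is closed, its preimage under a continuous map is closed, and a closed subset of a compact domain is compact. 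This is what justifies the parenthetical ``(proper)'' in the statement.

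The substance of the argument is preservation of identities and of composition, and here uniqueness in Lemma~\ref{inducedmaps} is decisive. Since $\tn{id}_{F(X)}$ restricts to $\tn{id}_X$ on $X$, uniqueness forces $F(\tn{id}_X)=\tn{id}_{F(X)}$. For composition, given proper maps $g:Z\to Y$ and $f:Y\to X$, the composite $f\circ g$ is again proper---the preimage of a compactum, first under $f$ and then under $g$, is compact---so $F(f\circ g)$ is defined. The map $F(f)\circ F(g):F(Z)\to F(X)$ is continuous and, because $\rest{F(g)}{Z}=g$ lands in $Y$ and $\rest{F(f)}{Y}=f$, its restriction to $Z$ equals $f\circ g$. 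By the uniqueness clause, $F(f\circ g)=F(f)\circ F(g)$, so $F$ is a covariant functor into compact generalized continua and (proper) maps.

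Finally I would obtain functoriality of $E$ by restriction. Since $E(f)$ is the restriction of $F(f)$ to $E(Y)$ and, by Lemma~\ref{inducedmaps}, $F(g)$ carries $E(Z)$ into $E(Y)$, restricting the identities $F(\tn{id}_X)=\tn{id}_{F(X)}$ and $F(f\circ g)=F(f)\circ F(g)$ to the relevant end spaces yields $E(\tn{id}_X)=\tn{id}_{E(X)}$ and $E(f\circ g)=E(f)\circ E(g)$. I do not expect a serious obstacle: the topological difficulty has already been absorbed into Lemma~\ref{inducedmaps} and the object-level theorems, so what remains is the formal verification of the functor axioms, the only delicate points being the automatic properness of maps out of a compactum and the need to invoke that $F(g)$ respects end spaces when restricting the composition law.
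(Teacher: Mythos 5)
Your proposal is correct and takes essentially the same route as the paper: both arguments reduce the functor axioms to the fact that a continuous extension of a map from the dense subspace $X\subseteq F(X)$ into a Hausdorff space is unique, which you invoke via the uniqueness clause of Lemma~\ref{inducedmaps} and the paper invokes directly. Your added remark that the induced maps are automatically proper (compact domain, Hausdorff codomain) is a correct and harmless elaboration of the parenthetical ``(proper)''.
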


\begin{proof}
By the results in this section, it remains to show that $F$ and $E$ preserve identity maps and compositions of proper maps.
Consider $\tn{id}:X\to X$ where $X$ is a generalized continuum.
By Theorem~\ref{endcompthm}, $X$ is dense in $F\pa{X}$ and $F\pa{X}$ is Hausdorff.
Thus, $F\pa{\tn{id}}=\tn{id}$ and $E\pa{\tn{id}}=\rest{F\pa{\tn{id}}}=\tn{id}$.
Consider proper maps $g:Z\to Y$ and $f:Y\to X$ where $X$, $Y$, and $Z$ are generalized continua.
Both $F\pa{g\circ f}$ and $F\pa{g}\circ F\pa{f}$ are maps $F(Z)\to F(X)$ that extend $g\circ f: Z\to X$.
By Theorem~\ref{endcompthm}, $Z$ is dense in $F\pa{Z}$ and $F\pa{X}$ is Hausdorff.
Hence, $F\pa{g\circ f} = F\pa{g}\circ F\pa{f}$.
As $E(g\circ f)=\rest{F(g\circ f)}:E(Z) \to E(X)$,
$E(g)=\rest{F(g)}:E(Z) \to E(Y)$, and
$E(f)=\rest{F(f)}:E(Y) \to E(X)$,
we get $E\pa{g\circ f} = E\pa{g}\circ E\pa{f}$.
\end{proof}

\begin{example}\label{noproperretractplanetoxaxis}
While the plane $\R^2$ properly retracts to the nonnegative $x$-axis, the plane does not properly retract to the entire $x$-axis $\R\times\cpa{0}$.
Suppose, by way of contradiction, that $\rho:\R^2\to\R\times\cpa{0}$ is a proper retract.
Let $i:\R\times\cpa{0} \hookrightarrow \R^2$ be the inclusion map, which is proper since $\R\times\cpa{0}$ is closed in $\R^2$.
Apply the end functor $E$ to the commutative diagram
\begin{equation}\label{r2retractxaxis}
\begin{tikzcd}
\R\times\cpa{0} \arrow[r, hookrightarrow, "i"] \arrow[rr,bend right, "\tn{id}"] & \R^2 \arrow[r,"\rho"] & \R\times\cpa{0}
\end{tikzcd}
\end{equation}
to get that the identity map between a two-point space factors through a one-point space, a contradiction.
\end{example}

Next, we show that properly homotopic maps induce the same map on end spaces.
For that purpose, we show that if $Y$ and $J$ are generalized continua and $J$ is compact,
then the end spaces of $Y$ and $Y\times J$ are canonically homeomorphic.

\begin{lemma}\label{compsprod}
Let $Z$ and $J$ be spaces where $J$ is compact and connected.
Then:
\begin{enumerate*}[label=(\roman*)]
  \item\label{compsZxI} each component of $Z\times J$ has the form $C\times J$ for some component $C$ of $Z$, and
  \item\label{bdedcompsZxI} a component $C\times J$ of $Z \times J$ is bounded if and only if $C$ is bounded in $Z$.
\end{enumerate*}
\end{lemma}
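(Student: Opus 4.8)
The plan is to derive (i) from the behaviour of connectedness under the projection $\pi\colon Z\times J\to Z$, and (ii) from a closure-of-product computation; both parts are routine point-set topology.

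For (i), I would first recall the standard fact that a product of two connected spaces is connected. Since $J$ is connected and each component $C$ of $Z$ is connected, each $C\times J$ is a connected subspace of $Z\times J$. Next, for any connected $A\subseteq Z\times J$, the image $\pi(A)$ is connected, hence lies in a single component $C$ of $Z$, so $A\subseteq\pi^{-1}(C)=C\times J$. Two conclusions follow. First, each $C\times J$ is maximal among connected sets: any connected $A\supseteq C\times J$ projects onto a connected set $\pi(A)\supseteq C$, which must equal $C$ by maximality of the component $C$, forcing $A\subseteq C\times J$; thus $C\times J$ is a component. Second, any component $D$ of $Z\times J$ is connected, hence contained in some $C\times J$; as $C\times J$ is connected and $D$ is maximal connected, $D=C\times J$. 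Together these give the claimed correspondence.

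For (ii), I would use the standard identity $\closure{C\times J}=\closure{C}\times\closure{J}$ for closures in a product. Because $J$ is the entire second factor, $\closure{J}=J$, so $\closure{C\times J}=\closure{C}\times J$. Now $J$ is nonempty (being connected) and $C$ is nonempty (being a component), so both factors are nonempty; the finite case of Tychonoff's theorem and its elementary converse then show that $\closure{C}\times J$ is compact if and only if both $\closure{C}$ and $J$ are compact. Since $J$ is compact by hypothesis, this says $\closure{C}\times J$ is compact if and only if $\closure{C}$ is compact. Unwinding the definition of \emph{bounded}, this is precisely the statement that $C\times J$ is bounded in $Z\times J$ if and only if $C$ is bounded in $Z$.

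I anticipate no genuine obstacle; the only points needing care are invoking the hypothesis that $J$ is connected (hence nonempty) so that the converse direction of compactness for a product applies, and correctly identifying $\closure{C\times J}$ with $\closure{C}\times J$ using that $J$ is closed in itself.
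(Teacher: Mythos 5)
Your proposal is correct and takes essentially the same route as the paper: part (i) via the projection $Z\times J\to Z$ together with connectedness of products, and part (ii) via the identity $\closure{C\times J}=\closure{C}\times\closure{J}$ and compactness of $J$. The only cosmetic difference is that the paper shows a component $E$ containing $C\times J$ is saturated (a union of fibers $\cpa{z}\times J$) and then squeezes $p(E)=C$, whereas you establish maximality of $C\times J$ directly; both hinge on the same facts.
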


\begin{proof}
Let $C$ be a component of $Z$.
As $C\times J$ is connected, $C\times J$ lies in a unique component $E$ of $Z\times J$.
Let $p:Z\times J\to Z$ be projection which is continuous.
As $\cpa{z}\times J$ is connected for each $z\in Z$, we see that $E=p(E)\times J$.
Thus, $C\subseteq p(E)$, $p(E)$ is connected, and $p(E)\subseteq C$. Therefore, $E=C\times J$ proving~\ref{compsZxI}.
For~\ref{bdedcompsZxI}, note that $\closure{C\times J} = \closure{C}\times\closure{J}$.
\end{proof}

The proof of the following lemma is straightforward.

\begin{lemma}\label{AxBgc}
If $A$ and $B$ are generalized continua, then $A\times B$ is a generalized continuum.
\end{lemma}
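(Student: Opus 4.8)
The plan is to verify directly that $A\times B$ satisfies each of the five defining properties of a generalized continuum: Hausdorff, connected, locally connected, $\sigma$-compact, and locally compact. Three of these follow immediately from standard facts about finite products and require no special argument. A product of Hausdorff spaces is Hausdorff, and a product of connected spaces is connected. For $\sigma$-compactness, if $A=\cup_i K_i$ and $B=\cup_j L_j$ with each $K_i$ and each $L_j$ compact, then $A\times B=\cup_{i,j} K_i\times L_j$ is a countable union of compacta, since a product of two compacta is compact.

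For local connectedness I would argue from a basis. Given a point $(a,b)\in A\times B$ and an open neighborhood $W$, first choose a basic open set $U\times V\subseteq W$ with $a\in U$ open in $A$ and $b\in V$ open in $B$. Local connectedness of $A$ and of $B$ then yields connected open neighborhoods $U'\subseteq U$ of $a$ and $V'\subseteq V$ of $b$. The set $U'\times V'$ is an open neighborhood of $(a,b)$ contained in $W$, and it is connected because it is a product of connected spaces. Hence $A\times B$ has a basis of connected open sets and is locally connected. For local compactness I would argue analogously: since $A$ and $B$ are locally compact Hausdorff, each point of $A$ and of $B$ has a bounded open neighborhood. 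Given $(a,b)$, choose bounded open neighborhoods $U$ of $a$ and $V$ of $b$; then $U\times V$ is an open neighborhood of $(a,b)$ with $\closure{U\times V}=\closure{U}\times\closure{V}$ compact, so $(a,b)$ has a bounded open neighborhood.

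No step presents a genuine obstacle, consistent with the lemma being recorded as straightforward; the only point requiring any care is local connectedness, where one must shrink $W$ to a product basic open set \emph{before} invoking local connectedness of the factors, so that the resulting connected neighborhood can be taken of product form. Assembling the five verifications shows that $A\times B$ is Hausdorff, connected, locally connected, $\sigma$-compact, and locally compact, and is therefore a generalized continuum.
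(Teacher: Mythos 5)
Your proof is correct, and it is exactly the routine verification the paper has in mind when it declares the proof of this lemma straightforward and omits it: each of the five defining properties (Hausdorff, connected, locally connected, $\sigma$-compact, locally compact) is checked directly for the product, with the only mildly delicate point---shrinking to a product basic open set before invoking local connectedness or local compactness of the factors---handled properly.
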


\begin{lemma}\label{gcprodIlemma}
Let $Y$ and $J$ be generalized continua where $J$ is compact.
Then:
\begin{enumerate*}[label=(\roman*)]
  \item\label{YxI} $Y\times J$ is a generalized continuum,
  \item\label{prodce} if $\cpa{K_i}$ is a compact exhaustion of $Y$, then $\cpa{K_i \times J}$ is a compact exhaustion of $Y\times J$, and
	\item\label{prodece} if $\cpa{K_i}$ is an efficient compact exhaustion of $Y$, then $\cpa{K_i \times J}$ is an efficient compact exhaustion of $Y\times J$.
\end{enumerate*}
\end{lemma}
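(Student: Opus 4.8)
The plan is to verify the three assertions in turn, drawing on the preceding product lemmas. Part~\ref{YxI} is immediate: since $Y$ and $J$ are generalized continua, Lemma~\ref{AxBgc} gives at once that $Y \times J$ is a generalized continuum.

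For part~\ref{prodce}, I would check the defining properties of a compact exhaustion directly. Each $K_i \times J$ is compact (a finite product of compacta), the sequence $\cpa{K_i \times J}$ is nested since $K_i \subseteq K_{i+1}$, and these sets cover $Y \times J$ because $\cup_i (K_i \times J) = (\cup_i K_i) \times J = Y \times J$. The only point requiring care is the interior condition. Here I would use that the interior of a product is the product of interiors, so that $\interior{(K_{i+1} \times J)} = \interior{K_{i+1}} \times J$ since $J$ is open in itself; then $K_i \subseteq \interior{K_{i+1}}$ yields $K_i \times J \subseteq \interior{K_{i+1}} \times J = \interior{(K_{i+1} \times J)}$, as required.

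For part~\ref{prodece}, I would verify that each $K_i \times J$ is an efficient compactum assuming each $K_i$ is. Connectedness of $K_i \times J$ follows from connectedness of $K_i$ and of $J$. For the unboundedness of the components of the complement, the key observation is the set identity $(Y \times J) - (K_i \times J) = (Y - K_i) \times J$, which holds because a point $(y,j)$ lies in the complement exactly when $y \notin K_i$. Applying Lemma~\ref{compsprod}, every component of $(Y - K_i) \times J$ has the form $C \times J$ for a component $C$ of $Y - K_i$, and $C \times J$ is bounded if and only if $C$ is. Since $\cpa{K_i}$ is efficient, each such $C$ is unbounded, hence so is each $C \times J$; thus every component of the complement is unbounded and $K_i \times J$ is efficient.

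The argument presents no serious obstacle; the main subtleties are simply the interior-of-a-product computation in part~\ref{prodce} and the correct bookkeeping of the complement in part~\ref{prodece}, both of which reduce to the cited lemmas once the relevant set identities are in place.
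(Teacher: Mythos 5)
Your proposal is correct and follows essentially the same route as the paper: part (i) via Lemma~\ref{AxBgc}, part (ii) by reducing the interior condition to $\interior{K_{i+1}}\times J$ being open in $K_{i+1}\times J$ (the paper uses only the inclusion $\interior{K_{i+1}}\times J\subseteq\interior{\pa{K_{i+1}\times J}}$ rather than your equality, but both are valid), and part (iii) via the identity $Y\times J-K_i\times J=\pa{Y-K_i}\times J$ together with Lemma~\ref{compsprod}.
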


\begin{proof}
Lemma~\ref{AxBgc} implies~\ref{YxI}.
For~\ref{prodce}, note that $K_i \subseteq \interior{K_{i+1}}$ for each $i\in\Z_+$, so $K_i \times J \subseteq \interior{K_{i+1}} \times J$.
The latter subspace is open in $Y\times J$ and contained in $K_{i+1} \times J$.
Thus
\[
K_i \times J \subseteq \interior{K_{i+1}} \times J \subseteq \interior{\pa{K_{i+1} \times J}}
\]
as desired.
Lastly, let $\cpa{K_i}$ be an efficient compact exhaustion of $Y$.
By~\ref{prodce}, $\cpa{K_i \times J}$ is a compact exhaustion of $Y\times J$.
For each $i\in\Z_+$, $K_i$ is connected, so $K_i\times J$ is connected.
Notice that $Y\times J - K_i \times J = \pa{Y-K_i} \times J$ as spaces.
As $Y-K_i$ has only unbounded components, Lemma~\ref{compsprod} with $Z=Y-K_i$ implies that
$Y\times J - K_i \times J$ has only unbounded components, proving~\ref{prodece}.
\end{proof}

\begin{lemma}\label{endsYxI}
Let $Y$ and $J$ be generalized continua where $J$ is compact.
For an arbitrary but fixed $t\in J$, define $\iota_t:Y \rightarrowtail Y\times J$ by $\iota_t(y)=(y,t)$.
Let $p:Y\times J \to Y$ be projection.
Consider the commutative diagram
\begin{equation}\label{Ydiag}
\begin{tikzcd}
Y \arrow[r,"\iota_t"] \arrow[rr,bend right, "\tn{id}"] & Y\times J \arrow[r,"p"] & Y
\end{tikzcd}
\end{equation}
Then:
\begin{enumerate*}[label=(\roman*)]
  \item\label{pcpm} $p$ is a closed, proper map,
  \item\label{itcpm} $\iota_t$ is a closed, proper map, and
	\item\label{Ephomeo} in the induced diagram
\end{enumerate*}
\begin{equation}\label{EYdiag}
\begin{tikzcd}
E(Y) \arrow[r,"E(\iota_t)"] \arrow[rr,bend right, "\tn{id}"] & E(Y\times J) \arrow[r,"E(p)"] & E(Y)
\end{tikzcd}
\end{equation}
$E(p)$ is a homeomorphism and, hence, $E(\iota_t)=E(p)^{-1}$ is a homeomorphism.
In particular, $E(\iota_s)=E(\iota_t)$ for all $s,t\in J$.
\end{lemma}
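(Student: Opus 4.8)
The plan is to dispatch parts~\ref{pcpm} and~\ref{itcpm} from standard facts about projections and closed embeddings, and then to reduce the substantive part~\ref{Ephomeo} to a bijection of inverse systems supplied by Lemmas~\ref{compsprod} and~\ref{gcprodIlemma}, finishing with a compact-to-Hausdorff argument.

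For~\ref{pcpm}, properness of $p$ is immediate: if $K\subseteq Y$ is compact, then $p^{-1}(K)=K\times J$ is compact, being a product of compacta. Closedness of $p$ is the classical closed-projection lemma---since $J$ is compact, projection away from the $J$-factor is closed (this follows from the tube lemma; see Munkres~\cite{munkres}). For~\ref{itcpm}, I would observe that $\iota_t$ is a homeomorphism onto $Y\times\cpa{t}$, which is closed in $Y\times J$ because $\cpa{t}$ is closed in the Hausdorff space $J$. Thus $\iota_t$ is a closed embedding; such a map is closed, and it is proper since for compact $C\subseteq Y\times J$ the set $\iota_t^{-1}(C)=p\pa{C\cap(Y\times\cpa{t})}$ is the continuous image of a compactum, hence compact.

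For~\ref{Ephomeo}, first apply the functor $E$ of Theorem~\ref{fefunctors} to the commutative triangle~\eqref{Ydiag}. Since $p\circ\iota_t=\tn{id}_Y$, functoriality gives $E(p)\circ E(\iota_t)=\tn{id}_{E(Y)}$, which is precisely the commutativity of~\eqref{EYdiag}. It then remains to prove that $E(\iota_t)$ is a bijection. Fix a compact exhaustion $\cpa{K_i}$ of $Y$; by Lemma~\ref{gcprodIlemma} the sets $\cpa{K_i\times J}$ form a compact exhaustion of $Y\times J$, and $\iota_t^{-1}(K_i\times J)=K_i$. Writing $V_i=Y-K_i$, we have $(Y\times J)-(K_i\times J)=V_i\times J$, so Lemma~\ref{compsprod} identifies the unbounded components of $V_i\times J$ as exactly the sets $V_i^{j}\times J$ for $V_i^{j}\in\uc{V_i}$. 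This furnishes, for each $i$, a bijection $\uc{V_i}\to\uc{V_i\times J}$ given by $V_i^{j}\mapsto V_i^{j}\times J$, and it is compatible with bonding maps since $V_{i+1}^{l}\subseteq V_i^{j}$ if and only if $V_{i+1}^{l}\times J\subseteq V_i^{j}\times J$. Hence it is an isomorphism of the inverse systems~\eqref{invsys} and induces a bijection on inverse limits $E(Y)\to E(Y\times J)$. Tracing the definition of the induced map from Lemma~\ref{inducedmaps}, I would check that $E(\iota_t)$ \emph{is} this bijection: for an end $\e=\pa{V_1^{j_1},V_2^{j_2},\ldots}$ one has $\iota_t\pa{V_i^{j_i}}=V_i^{j_i}\times\cpa{t}\subseteq V_i^{j_i}\times J$, whence $E(\iota_t)(\e)=\pa{V_1^{j_1}\times J,V_2^{j_2}\times J,\ldots}$. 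In particular this formula is independent of $t$, so the final assertion $E(\iota_s)=E(\iota_t)$ falls out at once.

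Finally, to upgrade to homeomorphisms: by Theorem~\ref{endspace}, $E(Y)$ is compact and $E(Y\times J)$ is Hausdorff, and $E(\iota_t)$ is continuous because $E$ lands in continuous maps (Theorem~\ref{fefunctors}) and $\iota_t$ is proper by~\ref{itcpm}. A continuous bijection from a compact space to a Hausdorff space is a homeomorphism, so $E(\iota_t)$ is a homeomorphism; since $E(p)\circ E(\iota_t)=\tn{id}$, it follows that $E(p)=E(\iota_t)^{-1}$ is also a homeomorphism. The main obstacle is the single computational step identifying $E(\iota_t)$ with the inverse-system isomorphism coming from Lemma~\ref{compsprod}; once that is in hand, everything else is formal functoriality together with the compactness and Hausdorffness of end spaces.
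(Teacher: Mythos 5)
Your proof is correct and follows essentially the same route as the paper: both rest on Lemma~\ref{gcprodIlemma} and Lemma~\ref{compsprod} to identify the unbounded complementary components of $K_i\times J$ as the sets $V_i^j\times J$, and both finish with the continuous-bijection-from-compact-to-Hausdorff upgrade. The only (cosmetic) difference is that you compute $E(\iota_t)$ explicitly on ends and deduce that $E(p)$ is its inverse, whereas the paper reads surjectivity of $E(p)$ off the diagram and then verifies injectivity of $E(p)$ directly by the same component computation.
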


\begin{proof}
By Lemma~\ref{gcprodIlemma}\ref{YxI}, $Y\times J$ is a generalized continuum.
To see that $p$ is closed, use the fact that $J$ is compact and the tube lemma~\cite[p.~168]{munkres}.
The image of $\iota_t$ is $\im{\iota_t}=Y\times \cpa{t}$ which is closed in $Y\times J$.
A simple argument shows that $\iota_t$ is closed.
Observe that for each $A\subseteq Y\times J$, we have $\iota_t^{-1}(A)=p\pa{\im{\iota_t} \cap A}$.
That observation, together with the facts that $\im{\iota_t}$ is closed in $Y\times J$ and $Y$ and $J$ are Hausdorff, show that $\iota_t$ is proper.
Now, the end space functor $E$ from Theorem~\ref{fefunctors} yields the commutative diagram~\eqref{EYdiag}.
That diagram implies that $E(p)$ is surjective.
By Theorem~\ref{fefunctors}, $E(p)$ is continuous with compact domain and Hausdorff codomain.
So, it suffices to prove that $E(p)$ is injective.

By Theorem~\ref{gceee}, there exists an efficient compact exhaustion $\cpa{K_i}$ of $Y$.
By Lemma~\ref{gcprodIlemma}\ref{prodece}, $\cpa{K_i \times J}$ is an efficient compact exhaustion of $Y\times J$.
For each $i\in\Z_+$, $V_i=Y-K_i$ has components $\cpa{V_i^j}$ and all are unbounded (since $K_i$ is efficient).
By Lemma~\ref{compsprod}, $Y\times J - K_i\times J = V_i \times J$ has components $\cpa{V_i^j \times J}$ and all are unbounded.
Suppose $\alpha=\pa{V_1^{a_1}\times J,V_2^{a_2}\times J,\ldots}$
and $\beta=\pa{V_1^{b_1}\times J,V_2^{b_2}\times J,\ldots}$
are ends of $Y\times J$ and $E(p)(\alpha)=E(p)(\beta)$.
For each $i\in\Z_+$, $p\pa{V_i^{a_i}\times J} = V_i^{a_i}$ and $p\pa{V_i^{b_i}\times J} = V_i^{b_i}$.
So, $E(p)(\alpha)=\pa{V_1^{a_1},V_2^{a_2},\ldots}$ and
$E(p)(\beta)=\pa{V_1^{b_1},V_2^{b_2},\ldots}$.
As $E(p)(\alpha)=E(p)(\beta)$, we see that $V_i^{a_i} = V_i^{b_i}$ for each $i\in\Z_+$.
Hence, $\alpha=\beta$ and $E(p)$ is injective, completing the proof.
\end{proof}

\begin{corollary}\label{phemaps}
Let $f,g:Y\to X$ be proper maps of generalized continua.
If $f$ and $g$ are properly homotopic, then $E(f)=E(g)$.
\end{corollary}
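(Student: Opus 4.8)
The plan is to obtain $E(f)$ and $E(g)$ as composites through the end functor and then to invoke the already-proved fact, from Lemma~\ref{endsYxI}, that the two slice inclusions of $Y$ into $Y\times I$ induce the same map on end spaces.

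First I would fix a proper homotopy $H:Y\times I\to X$ from $f$ to $g$, so that $H(\cdot,0)=f$ and $H(\cdot,1)=g$. By our conventions a proper homotopy is by definition a proper map, so $H$ is proper. The closed unit interval $I=[0,1]$ is a compact generalized continuum, so Lemma~\ref{AxBgc} (equivalently Lemma~\ref{gcprodIlemma}\ref{YxI}) guarantees that $Y\times I$ is a generalized continuum and $H$ is a proper map of generalized continua. Hence the induced map $E(H):E(Y\times I)\to E(X)$ is defined by Theorem~\ref{fefunctors}.

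Next I would record the factorizations $f=H\circ\iota_0$ and $g=H\circ\iota_1$, where $\iota_0,\iota_1:Y\to Y\times I$ are the slice inclusions of Lemma~\ref{endsYxI}, which are proper by part~\ref{itcpm} of that lemma. Applying the covariant functor $E$ of Theorem~\ref{fefunctors} gives $E(f)=E(H)\circ E(\iota_0)$ and $E(g)=E(H)\circ E(\iota_1)$. The crux is Lemma~\ref{endsYxI} applied with $J=I$: since $E(\iota_0)=E(p)^{-1}=E(\iota_1)$, we conclude $E(f)=E(H)\circ E(\iota_0)=E(H)\circ E(\iota_1)=E(g)$, as desired.

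I do not expect any substantive obstacle here, since the argument is a purely formal consequence of the functoriality of $E$ together with the equality $E(\iota_0)=E(\iota_1)$ established in Lemma~\ref{endsYxI}. The only points requiring care are the bookkeeping hypotheses that make $E$ applicable to $H$---namely that $Y\times I$ is a generalized continuum and that the proper homotopy $H$ is genuinely a proper map---both of which are immediate from our conventions and from Lemma~\ref{AxBgc}. In effect, Lemma~\ref{endsYxI} carried the real content, and this corollary merely repackages it via functoriality.
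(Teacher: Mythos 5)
Your proposal is correct and follows essentially the same route as the paper's proof: factor $f=H\circ\iota_0$ and $g=H\circ\iota_1$, apply the end-space functor of Theorem~\ref{fefunctors}, and invoke $E(\iota_0)=E(\iota_1)$ from Lemma~\ref{endsYxI}\ref{Ephomeo}. The extra bookkeeping you include (that $Y\times I$ is a generalized continuum and $H$ is proper) is consistent with the paper's setup.
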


\begin{proof}
Let $H:Y\times I \to X$ be a proper map such that $H(y,0)=f(y)$ and $H(y,1)=g(y)$ for each $y\in Y$.
Using the end space functor, we have commutative diagrams
\begin{equation}\label{fgH}
\begin{tikzcd}[row sep=3]
Y \arrow[rd, "\iota_0"'] \arrow[rrd,bend left, "f"'] & & & E(Y) \arrow[rd, "E(\iota_0)"'] \arrow[rrd,bend left, "E(f)"']\\
& Y\times I \arrow[r,"H"] & X & & E(Y\times I) \arrow[r,"E(H)"] & E(X)\\
Y \arrow[ru, "\iota_1"] \arrow[rru,bend right,"g"] & & & E(Y) \arrow[ru, "E(\iota_1)"] \arrow[rru,bend right,"E(g)"]
\end{tikzcd}
\end{equation}
By Lemma~\ref{endsYxI}\ref{Ephomeo}, $E(\iota_0)=E(\iota_1)$.
Hence,~\eqref{fgH} implies that $E(f)=E(g)$ as desired.
\end{proof}

As an application of the theory of ends, the remainder of this section is devoted to
proving Freudenthal's beautiful and fundamental theorem on the possible number of ends of a topological group.
A space $X$---always assumed Hausdorff---equipped with a group structure is a \deffont{topological group} provided the binary group operation
and the unary inverse operation
\begin{equation}\label{topgroupmultinv}
\begin{tikzcd}[row sep=0]
X\times X \arrow[r,"\mu"] & X				& X \arrow[r,"\iota"] & X\\
(x,y) \arrow[r, mapsto] & x\cdot y	& x \arrow[r, mapsto] & x^{-1}
\end{tikzcd}
\end{equation}
are continuous.
In any topological group $X$, we let $e\in X$ denote the identity element---not to be confused with an end of $X$.
See Lee~\cite[pp.~77--80 \& 311--336]{lee} for an introduction to topological groups.
If $X$ is a topological group, then inversion $\iota:X\to X$ is a homeomorphism, as is left translation by each $g\in X$
\begin{equation}\label{lefttranslation}
\begin{tikzcd}[row sep=0]
X \arrow[r,"L_g"] & X\\
x \arrow[r, mapsto] & g\cdot x
\end{tikzcd}
\end{equation}
So, $X$ acts on itself continuously, freely, and transitively by left translation.
In particular, $X$ is topologically homogeneous~\cite[p.~78]{lee}.

\begin{examples}\label{topgroupexamples}
The following are topological groups.
\begin{enumerate}[label=(\alph*)]
\item Every group with the discrete topology.
\item The unit circle $S^1\subseteq \C$ under multiplication.
\item The unit sphere $S^3\subseteq \H$ in the real quaternions under multiplication.
\item Euclidean space $\R^n$ under addition for each $n\in\Z_+$.
\item The product of two topological groups.
\item The real and complex general linear groups $\tn{GL}(n)$ and orthogonal groups $\tn{O}(n)$ for each $n\in\Z_+$.
\end{enumerate}
\end{examples}

Note that the unit circle $S^1\subseteq \C$ has zero ends,
$\R^2$ under addition has one end, and $\R$ under addition has two ends.
Freudenthal's theorem states that every topological group---that is path connected and a generalized continuum---has at most two ends.
We encourage the reader unfamiliar with Freudenthal's theorem to skip forward to Theorem~\ref{endstopgroup},
grasp the main idea of the proof (including why it does not apply to groups such as $\R^2$ and $S^1\times\R$),
and then return here for further necessary details.

\begin{lemma}\label{ABandAinvcompact}
Let $X$ be a topological group.
If $A,B\subseteq X$ are compact, then $A\cdot B =\cpa{a\cdot b \mid a\in A \tn{ and } b\in B}$ and $A^{-1}=\cpa{a^{-1} \mid a \in A}$ are compact.
\end{lemma}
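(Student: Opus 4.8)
The plan is to realize both $A\cdot B$ and $A^{-1}$ as continuous images of compacta, and then to invoke the standard fact that a continuous image of a compact space is compact. The two ingredients are that a finite product of compacta is compact (a basic consequence of the tube lemma) and that the structure maps $\mu$ and $\iota$ from~\eqref{topgroupmultinv} are continuous by the very definition of a topological group.

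First I would note that $A\times B$ is compact, being the product of the two compacta $A$ and $B$. Since $A\cdot B=\mu(A\times B)$ and $\mu:X\times X\to X$ is continuous, the set $A\cdot B$ is the continuous image of a compactum and hence is compact. For the second assertion, since $A^{-1}=\iota(A)$ and $\iota:X\to X$ is continuous, $A^{-1}$ is the continuous image of the compactum $A$ and hence is compact.

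There is no genuine obstacle here: the statement is an immediate consequence of the continuity of the group operations together with the preservation of compactness under finite products and under continuous maps. The only points worth recording are that no metrizability hypothesis enters and that $A\cdot B$ and $A^{-1}$ are automatically Hausdorff as subspaces of the Hausdorff space $X$, so they are compacta in the sense used throughout.
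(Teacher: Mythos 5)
Your proposal is correct and matches the paper's proof exactly: both realize $A\cdot B=\mu(A\times B)$ and $A^{-1}=\iota(A)$ as continuous images of compacta and invoke Tychonoff for finite products. Nothing further is needed.
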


\begin{proof}
As $A\times B$ is compact, $\mu\pa{A\times B} = A \cdot B$ is compact.
As $A$ is compact, $\iota\pa{A}=A^{-1}$ is compact.
\end{proof}

The action of a topological group $X$ on itself by left translation is \deffont{proper} provided the map
\begin{equation}\label{hproper}
\begin{tikzcd}[row sep=0]
X \times X \arrow[r,"h"] & X \times X\\
(g,x) \arrow[r, mapsto] & (g\cdot x, x)
\end{tikzcd}
\end{equation}
is proper (see Lee~\cite[p.~319]{lee}).

\begin{lemma}\label{ltproperaction}
Let $X$ be a topological group.
Then, the action of $X$ on itself by left translation is proper.
\end{lemma}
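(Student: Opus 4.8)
The plan is to verify directly the defining property: that the map $h$ of~\eqref{hproper}, given by $h(g,x)=(g\cdot x,x)$, is proper, i.e.\ that $h^{-1}(C)$ is compact for every compactum $C\subseteq X\times X$. First I would record the key structural observation that $h$ is in fact a \emph{homeomorphism} of $X\times X$. Its inverse is the map $(a,b)\mapsto(a\cdot b^{-1},b)$, which one checks immediately: $h(a\cdot b^{-1},b)=(a\cdot b^{-1}\cdot b,b)=(a,b)$. Both $h$ and this inverse are continuous since they are assembled from $\mu$, $\iota$, and the projections, all of which are continuous. Because a homeomorphism carries compacta to compacta in both directions, $h^{-1}(C)$ is then the continuous image of the compactum $C$ under $h^{-1}$, hence compact, so $h$ is proper and the left translation action is proper.

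Alternatively—and this is presumably why Lemma~\ref{ABandAinvcompact} was isolated just beforehand—one can argue by a boundedness estimate without exhibiting the inverse. Let $C\subseteq X\times X$ be a compactum and let $A$ and $B$ be its images under the two projections $X\times X\to X$; both are compact, and $C\subseteq A\times B$, so it suffices to bound $h^{-1}(A\times B)$. If $(g,x)\in h^{-1}(A\times B)$, then $x\in B$ and $g=(g\cdot x)\cdot x^{-1}\in A\cdot B^{-1}$, whence $h^{-1}(A\times B)\subseteq(A\cdot B^{-1})\times B$. By Lemma~\ref{ABandAinvcompact}, $B^{-1}$ is compact and then $A\cdot B^{-1}$ is compact, so the right-hand side is compact. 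Since $X\times X$ is Hausdorff, the compactum $C$ is closed, so $h^{-1}(C)$ is closed by continuity of $h$; being a closed subset of the compactum $(A\cdot B^{-1})\times B$, it is compact.

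There is no serious obstacle here, as both routes are short. The only points that require care are the appeal to the Hausdorff hypothesis to guarantee that the compactum $C$ is closed (so that $h^{-1}(C)$ is closed), and the use of Lemma~\ref{ABandAinvcompact} to see that the product $A\cdot B^{-1}$ of compacta is again compact. I would lead with the homeomorphism argument as the main proof, since it is self-contained and transparently explains why left translation is proper, and I might append the estimate above as the reason the preceding lemma is stated.
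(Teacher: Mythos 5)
Your proposal is correct, and your preferred route differs from the paper's. The paper proves the lemma exactly by your second argument: it first shows $h^{-1}(V\times V)\subseteq (V\cdot V^{-1})\times V$ for a compactum $V$, invokes Lemma~\ref{ABandAinvcompact} and closedness of $h^{-1}(V\times V)$ to get compactness, and then reduces a general compactum $K\subseteq X\times X$ to this case by setting $U=\tn{pr}_1(K)\cup\tn{pr}_2(K)$ and noting $K\subseteq U\times U$ (so it uses a single compactum $U$ where you use the pair $A$, $B$ --- an immaterial difference). Your lead argument --- that $h$ is a homeomorphism with inverse $(a,b)\mapsto(a\cdot b^{-1},b)$, and a homeomorphism is trivially proper since preimages of compacta are continuous images of compacta --- is not in the paper, is correct (both composites $h\circ k$ and $k\circ h$ reduce to the group axioms, and both maps are assembled from $\mu$, $\iota$, and projections), and is arguably the more transparent explanation: it bypasses Lemma~\ref{ABandAinvcompact} and the Hausdorff/closedness step entirely. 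What the paper's estimate buys is that it exhibits the compact ``bound'' $(V\cdot V^{-1})\times V$ explicitly, which is precisely the form reused immediately afterward in Corollary~\ref{X_Kcompact} to show $X_K=\tn{pr}_1\pa{h^{-1}(K\times K)}$ is compact; your homeomorphism observation would serve that corollary equally well.
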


\begin{proof}
Let $V\subseteq X$ be compact.
Then, we have
\[
h^{-1}\pa{V\times V} = \cpa{(g,x)\in X\times X \mid g\cdot x \in V \tn{ and } x\in V} \subseteq \pa{V\cdot V^{-1}} \times V
\]
and the latter space is compact by Lemma~\ref{ABandAinvcompact}.
As $h^{-1}\pa{V\times V}$ is also closed in $X\times X$, we get that $h^{-1}\pa{V\times V}$  is compact.
Next, let $K\subseteq X\times X$ be compact.
Let $\tn{pr}_1: X\times X \to X$ and $\tn{pr}_2:X\times X\to X$ be the projection maps to the first and second coordinate respectively.
Let $U=\tn{pr}_1(K) \cup \tn{pr}_2(K)$ which is compact, and note that $K\subseteq U\times U$.
As $h^{-1}(K)$ is closed in $X\times X$ and contained in the compactum $h^{-1}(U\times U)$, we see that $h^{-1}(K)$ is compact, as desired.
\end{proof}

\begin{remark}
The action of $X$ on itself by left translation being proper does not imply that the group operation $\mu:X\times X\to X$ is proper.
Consider $\R$ under addition.
On the other hand, the converse of that implication is true (see also Lee~\cite[p.~319]{lee}).
\end{remark}

\begin{corollary}\label{X_Kcompact}
Let $X$ be a topological group.
Then, for each compactum $K\subseteq X$, the subspace
\[
X_K = \cpa{g\in X \mid g\cdot K \cap K \neq\emptyset} \subseteq X
\]
is compact.
\end{corollary}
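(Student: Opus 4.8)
The plan is to realize $X_K$ as a projection of the preimage of a compactum under the proper map $h$ of Lemma~\ref{ltproperaction}, and thereby obtain compactness for free. Recall that $h:X\times X\to X\times X$ is given by $(g,x)\mapsto(g\cdot x,x)$. The first step is to rewrite the defining condition $g\cdot K\cap K\neq\emptyset$: it holds precisely when there is some $x\in K$ with $g\cdot x\in K$, which is to say $(g,x)\in h^{-1}\pa{K\times K}$. Writing $\tn{pr}_1:X\times X\to X$ for projection onto the first factor, I would establish the set identity
\[
X_K=\tn{pr}_1\pa{h^{-1}\pa{K\times K}}.
\]

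Verifying that identity is the only real content, and it is a short unwinding of definitions in both directions: given $g\in X_K$ one picks $k\in K$ with $g\cdot k\in K$, so that $(g,k)$ lies in the preimage and projects to $g$; conversely, any point $(g,x)$ of the preimage has $x\in K$ and $g\cdot x\in K$, whence $g\cdot x\in g\cdot K\cap K$ witnesses $g\in X_K$. I do not expect this bookkeeping to present a genuine obstacle.

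Granting the identity, the corollary follows immediately. Since $K\times K$ is a compactum and $h$ is proper by Lemma~\ref{ltproperaction}, the set $h^{-1}\pa{K\times K}$ is compact; as $\tn{pr}_1$ is continuous, its image $X_K$ is the continuous image of a compactum and hence compact. As an alternative I could instead observe that $X_K\subseteq K\cdot K^{-1}$—because $g\cdot k\in K$ forces $g=(g\cdot k)\cdot k^{-1}$—and that $K\cdot K^{-1}$ is compact by Lemma~\ref{ABandAinvcompact}; but that route would additionally require showing $X_K$ is closed in $X$, whereas the projection argument delivers compactness directly. For that reason the projection approach is the one I would carry out.
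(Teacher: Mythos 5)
Your proposal is correct and is essentially identical to the paper's proof: both identify $X_K=\tn{pr}_1\pa{h^{-1}\pa{K\times K}}$ and conclude compactness from the properness of $h$ (Lemma~\ref{ltproperaction}) and continuity of the projection. The set identity you verify is exactly the step the paper leaves implicit, so nothing is missing.
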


\begin{proof}
Let $K\subseteq X$ be compact.
By Lemma~\ref{ltproperaction}, the map $h$ is proper.
So, $h^{-1}(K\times K)$ is compact.
Let $\tn{pr}_1: X\times X \to X$ be the projection map to the first coordinate.
Then, $\tn{pr}_1\pa{h^{-1}(K\times K)}=X_K$ is compact, as desired.
\end{proof}

\begin{lemma}\label{pctgph}
Let $X$ be a path connected topological group.
If $g\in X$, then left translation $L_g$ by $g$ is properly homotopic to the identity $L_e=\tn{id}:X\to X$.
\end{lemma}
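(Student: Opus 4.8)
The plan is to use path connectedness to interpolate between $L_e$ and $L_g$ through left translations. Since $X$ is path connected, first I would choose a path $\gamma:I\to X$ with $\gamma(0)=e$ and $\gamma(1)=g$. Then define $H:X\times I\to X$ by $H(x,t)=\gamma(t)\cdot x$, that is, $H(x,t)=\mu\pa{\gamma(t),x}$. Continuity of $H$ is immediate from continuity of $\gamma$ and of the group multiplication $\mu$, and the endpoint conditions $H(x,0)=e\cdot x=x=L_e(x)$ and $H(x,1)=g\cdot x=L_g(x)$ hold by construction. Thus $H$ is an ordinary homotopy from $L_e=\tn{id}$ to $L_g$.

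The substance of the lemma---and the step I expect to be the main obstacle---is verifying that $H$ is a \emph{proper} map, so that it qualifies as a proper homotopy. Here the key observation is that the image $\Gamma=\gamma(I)$ is compact, being the continuous image of the compactum $I$. Let $K\subseteq X$ be an arbitrary compactum. If $(x,t)\in H^{-1}(K)$, then $\gamma(t)\cdot x\in K$, so $x=\gamma(t)^{-1}\cdot\pa{\gamma(t)\cdot x}\in \Gamma^{-1}\cdot K$. By Lemma~\ref{ABandAinvcompact}, both $\Gamma^{-1}$ and the product $\Gamma^{-1}\cdot K$ are compact. Hence
\[
H^{-1}(K)\subseteq \pa{\Gamma^{-1}\cdot K}\times I,
\]
and the right-hand side is compact as a product of compacta.

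Finally, since $X$ is Hausdorff the compactum $K$ is closed in $X$, so $H^{-1}(K)$ is closed in $X\times I$ by continuity of $H$. A closed subspace of the compactum $\pa{\Gamma^{-1}\cdot K}\times I$ is itself compact, whence $H^{-1}(K)$ is compact. As $K$ was arbitrary, $H$ is proper, and therefore $L_g$ is properly homotopic to $L_e=\tn{id}$. The only genuine work is the containment argument bounding $H^{-1}(K)$ inside a compact set, which rests entirely on the compactness of $\Gamma$ together with Lemma~\ref{ABandAinvcompact}; everything else is formal.
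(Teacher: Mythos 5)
Your proof is correct and follows essentially the same route as the paper's: the homotopy $H(x,t)=\gamma(t)\cdot x$, continuity via $\mu$, and properness by trapping $H^{-1}(K)$ inside the compactum $\pa{\pa{\im{\gamma}}^{-1}\cdot K}\times I$ using Lemma~\ref{ABandAinvcompact}. Your explicit containment argument ($x=\gamma(t)^{-1}\cdot\pa{\gamma(t)\cdot x}$) just spells out a step the paper leaves implicit.
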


\begin{proof}
As $X$ is path connected, there exists a continuous path $\gamma:\br{0,1}\to X$ from $e$ to $g$.
Define $H:X\times I \to X$ by $H(x,t)=\gamma(t)\cdot x$.
The function $H$ is continuous since it equals the following composition of maps
\begin{equation}\label{Hmap}
\begin{tikzcd}[row sep=0]
X \times I \arrow[r] & X \times X \arrow[r,"\mu"] & X\\
(x,t) \arrow[r, mapsto] & (\gamma(t),x) \arrow[r, mapsto] & \gamma(t)\cdot x
\end{tikzcd}
\end{equation}
Let $K\subseteq X$ be compact.
As $H$ is continuous, $H^{-1}(K)$ is closed in $X\times I$.
Lemma~\ref{ABandAinvcompact} implies that $\pa{\im{\gamma}}^{-1} \cdot K$ is compact.
So, $\pa{\pa{\im{\gamma}}^{-1} \cdot K}\times I \subseteq X\times I$ is compact.
As $H^{-1}(K)$ is contained in the compactum $\pa{\pa{\im{\gamma}}^{-1} \cdot K}\times I$ and is closed in $X\times I$,
we see that $H^{-1}(K)$ is compact, as desired.
\end{proof}

We now have the tools to prove Freudenthal's fundamental theorem~\cite{freudenthal31}.

\begin{theorem}[Freudenthal]\label{endstopgroup}
Let $X$ be a path connected generalized continumm that is a topological group.
Then, $X$ has at most two ends.
\end{theorem}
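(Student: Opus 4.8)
The plan is to exploit the single most important consequence of path connectedness: by Lemma~\ref{pctgph} every left translation $L_g$ is properly homotopic to $\mathrm{id}_X$, so Corollary~\ref{phemaps} together with functoriality (Theorem~\ref{fefunctors}) give $E(L_g)=E(\mathrm{id})=\mathrm{id}_{E(X)}$. Thus $L_g$ fixes \emph{every} end of $X$, and its extension $F(L_g)\colon F(X)\to F(X)$ (Lemma~\ref{inducedmaps}) is a homeomorphism that restricts to $L_g$ on $X$ and to the identity on $E(X)$. The strategy is to show that three separated ends are incompatible with this rigidity. Suppose, for contradiction, that $\card{E(X)}\geq 3$. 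Using monotonicity (Lemma~\ref{monotonicity}) together with Theorem~\ref{endsthm}, if every complement $X-K$ had at most two unbounded components then the finite cardinalities $\card{\uc{X-K_i}}$ would be nondecreasing and bounded by $2$, hence eventually constant with bijective bonding maps, forcing $\card{E(X)}\leq 2$; so some compactum has at least three unbounded complementary components. Taking such a compactum inside an efficient exhaustion (Theorem~\ref{gceee}), I fix a connected compactum $K$ with unbounded components $V^1,\dots,V^m$ of $X-K$, where $m\geq 3$, and choose ends $\e_j$ exiting through $V^j$ for $j=1,2,3$ (Lemma~\ref{unbdedcomptoend}); these are distinct since they differ at level $K$.

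Next I would translate $K$ deep into the first end region. Since $X_K$ is compact (Corollary~\ref{X_Kcompact}) and $V^1$ is unbounded, fixing $k_0\in K$ there is a point $x\in V^1\setminus X_K k_0$; then $g:=x k_0^{-1}$ satisfies $gK\cap K=\emptyset$ and $g k_0=x\in V^1$, so the connected compactum $gK$ lies entirely in $V^1$. Because $L_g$ is a homeomorphism, the unbounded components of $X-gK$ are exactly $gV^1,\dots,gV^m$. As $K$ is connected, the union $Y:=K\cup V^2\cup\dots\cup V^m$ is connected (Lemma~\ref{compslemma}\ref{KconnKunioncompsconn}) and, being disjoint from $gK\subseteq V^1$, lies in a single unbounded component of $X-gK$, say $gV^{i_0}$. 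In particular $V^2,\dots,V^m\subseteq gV^{i_0}$.

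The crux is to compare the ends exiting through the component $V^{i_0}$ of $X-K$ with those exiting through the component $gV^{i_0}$ of $X-gK$. Writing $\widehat W=\Cl{W}{F(X)}\cap E(X)$ for the set of ends in the closure of an unbounded component $W$---which for a complementary component agrees with the basic open set of ends determined by $W$ (Theorem~\ref{endcompthm})---I would argue that the homeomorphism $F(L_g)$ carries $V^{i_0}$ onto $gV^{i_0}$ while fixing each end, so $\e\in\widehat{V^{i_0}}$ if and only if $F(L_g)(\e)=\e\in\widehat{gV^{i_0}}$; hence $\widehat{V^{i_0}}=\widehat{gV^{i_0}}$. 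Now $V^2\subseteq gV^{i_0}$ forces $\e_2\in\widehat{V^2}\subseteq\widehat{gV^{i_0}}=\widehat{V^{i_0}}$, and since end sets of distinct components of $X-K$ are disjoint (Lemma~\ref{ebasislemma}) this gives $i_0=2$. The identical argument with $\e_3$ gives $i_0=3$, a contradiction. Therefore $X$ has at most two ends.

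I expect the main obstacle to be the rigorous justification of the identity $\widehat{V^{i_0}}=\widehat{gV^{i_0}}$: one must verify that $F(L_g)$ is genuinely a homeomorphism of $F(X)$ restricting to the identity on $E(X)$ (functoriality of $F$ plus $L_{g^{-1}}=L_g^{-1}$), and that the closure description of $\widehat W$ coincides with the basic open set of ends exiting through a complementary component, so that a set inclusion of components translates faithfully into an inclusion of end sets. A secondary point requiring care is the existence of $g$ with $gK\subseteq V^1$, which is exactly where compactness of $X_K$ is used, namely to push a translate of $K$ off the compact ``bad set'' into the unbounded region $V^1$.
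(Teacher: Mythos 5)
Your proposal is correct and follows essentially the same route as the paper: an efficient exhaustion produces a connected compactum $K$ with at least three unbounded complementary components, compactness of $X_K$ (Corollary~\ref{X_Kcompact}) yields a translate $gK\subseteq V^1$, Lemma~\ref{pctgph} and Corollary~\ref{phemaps} force $E(L_g)=\tn{id}$, and the connected set $K\cup V^2\cup\cdots\cup V^m$ must then land in a single complementary component while end-preservation pins it simultaneously to the components of $\e_2$ and of $\e_3$. The only cosmetic difference is your endgame, which runs the identification through closures of components in $F(X)$ under the homeomorphism $F(L_g)$ rather than the paper's direct appeal to the definition of $E(L_g^{-1})$ applied to $L_g^{-1}(K\cup V^2\cup V^3)\subseteq X-K$; both are valid.
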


\begin{proof}
Suppose, by way of contradiction, that $X$ has three ends $\e_1$, $\e_2$, and $\e_3$.
Let $\cpa{K_i}$ be an efficient compact exhaustion of $X$ given by Theorem~\ref{gceee}.
By Theorem~\ref{endsthm}\ref{finiteendsbondsbij}, there exists $k\in\Z_+$ such that $V_k=X-K_k$ has three components $V_k^1$, $V_k^2$, and $V_k^3$,
each unbounded in $X$.
See Figure~\ref{fig:threeends} (left) where $K_k$ is depicted in blue.
\begin{figure}[htbp!]
    \centerline{\includegraphics[scale=1.0]{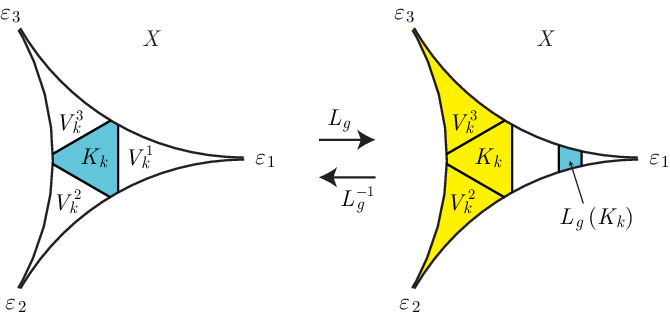}}
    \caption{Compactum $K_k$ (blue) in three-ended space $X$ (left). Image of $K_k$ (blue) under left translation $L_g$ of $X$ and
		connected subspace $K_k \cup V_k^2 \cup V_k^3$ (yellow) of $X$.}
\label{fig:threeends}
\end{figure}
Corollary~\ref{X_Kcompact} implies that $X_{K_k}$ is compact.
As $X$ has an end, $X$ is noncompact by Theorem~\ref{endsthm}\ref{noncompactexistsend}.
So, there exists $g\in X$ such that $g\cdot K_k$ is disjoint from $K_k$.
That is, left translation $L_g$ is a self-homeomorphism of $X$ that carries the connected compactum $K_k$ into one of the
three components $V_k^1$, $V_k^2$, or $V_k^3$.
Without loss of generality, assume $L_g\pa{K_k} \subseteq V_k^1$ as in Figure~\ref{fig:threeends} (right).
Lemma~\ref{pctgph} implies that $L_g$ and $L_g^{-1}=L_{g^{-1}}$ are both properly homotopic to the identity $\tn{id}:X\to X$.
Corollary~\ref{phemaps} implies that $L_g$ and $L_g^{-1}$ both induce the identity map on the end space of $X$.
Consider the subspace $K_k \cup V_k^2 \cup V_k^3$ of $X$ depicted in yellow in Figure~\ref{fig:threeends} (right).
That subspace is connected by Lemma~\ref{compslemma}\ref{KconnKunioncompsconn}.
Thus, $L_g^{-1}$ carries $K_k \cup V_k^2 \cup V_k^3$ into one of the components $V_k^1$, $V_k^2$, or $V_k^3$ of $X-K_k$.
As $E\pa{L_g^{-1}}\pa{\e_2}=\e_2$, the definition of $E\pa{L_g^{-1}}$ (see Lemma~\ref{inducedmaps})
implies that $L_g^{-1}$ carries $K_k \cup V_k^2 \cup V_k^3$ into $V_k^2$.
Similarly, $E\pa{L_g^{-1}}\pa{\e_3}=\e_3$ implies that $L_g^{-1}$ carries $K_k \cup V_k^2 \cup V_k^3$ into $V_k^3$.
That contradiction completes the proof that the topological group $X$ cannot have three ends.
It is straightforward to adapt the proof to the general case where $X$ has any number of ends greater than two.
\end{proof}

\section{Compact exhaustions of maps}
\label{sec:cem}

We consider spaces $X$ containing a baseray.
A \deffont{baseray} is a proper embedding $r:\nnr \rightarrowtail X$.
A \deffont{ray-based space} is a pair $(X,r)$ where $X$ is a (necessarily noncompact) space
and $r$ is a baseray.
We often refer to $r$ and its image in $X$ interchangeably, where no confusion should arise.
It will be convenient to have an efficient compact exhaustion of $X$ that simultaneously exhausts $r$.
We prove a more general fact that we regard as an \emph{exhaustion of a map}.

Let $f:Y\to X$ be a map of spaces with image denoted $\im{f}$.
A compact exhaustion $\cpa{K_i}$ of $X$ is \deffont{$f$-efficient} provided:
\begin{enumerate*}[label=(\roman*)]
  \item\label{Kieff} $\cpa{K_i}$ is an efficient compact exhaustion of $X$,
  \item\label{imageece} $\cpa{\im{f} \cap K_i}$ is an efficient compact exhaustion of $\im{f}$, and
	\item\label{invimagece} $\cpa{f^{-1}\pa{K_i}}$ is a compact exhaustion of $Y$.
\end{enumerate*}

In Theorem~\ref{exhaustmap}, we prove that each proper map $f:Y\to X$ of generalized continua admits an $f$-efficient compact exhaustion.
Note that part~\ref{invimagece} of the definition of $f$-efficient
does not require the compact exhaustion $\cpa{f^{-1}\pa{K_i}}$ of $Y$ to be efficient.
The next example shows that conclusion is generally not possible.
If $f$ is also injective, then Theorem~\ref{exhaustmap} shows that conclusion does hold.

\begin{example}\label{invimceex}
Consider the piecewise linear, proper map $f:\nnr\to \R$ in Figure~\ref{fig:doublebackmapgraph}.
\begin{figure}[htbp!]
    \centerline{\includegraphics[scale=1.0]{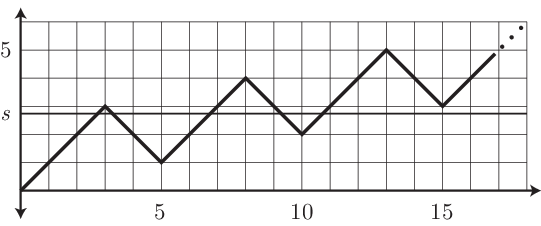}}
    \caption{Graph of a piecewise linear, proper map $f:\nnr\to \R$.}
\label{fig:doublebackmapgraph}
\end{figure}
Here, $f$ repeatedly doubles back moving three units positively and then two units negatively.
Let $K\subseteq\R$ be any compactum with $s=\sup K\geq 2$.
Notice that $f^{-1}(s)\subseteq f^{-1}(K) \subseteq f^{-1}\pa{\br{0,s}}$
where $f^{-1}\pa{\br{0,s}}$ has three connected components and each component meets $f^{-1}(s)$.
So, $f^{-1}(K)$ is disconnected and not efficient.
Thus, $\cpa{f^{-1}\pa{K_i}}$ is never efficient for any compact exhaustion $\cpa{K_i}$ of $\R$.
We obtain examples in all dimensions by composing $f$ with the inclusion $i: \R \to \R\times\R^k$ given by $t\mapsto (t,0)$
or by taking the product of $f$ with the identity on $[0,1]^k$ for any $k\in\Z_+$.
\end{example}

\begin{lemma}\label{propermapgc}
Let $f:Y\to X$ be a proper map of generalized continua.
Then:
\begin{enumerate*}[label=(\roman*)]
  \item\label{fclosed} $f$ is a closed map, and
  \item\label{imfgc} $\im{f}$ is a generalized continuum.
\end{enumerate*}
\end{lemma}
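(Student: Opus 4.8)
The plan is to prove the two parts in turn, drawing only on the facts that $X$ and $Y$ are locally compact and Hausdorff and that $Y$ is connected, locally connected, and $\sigma$-compact (Remarks~\ref{gcremarks}\ref{gcnice}).

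For part~\ref{fclosed}, I would establish the standard fact that a proper map into a locally compact Hausdorff space is closed. Let $C\subseteq Y$ be closed and let $x\in\closure{f(C)}$. Since $X$ is locally compact and Hausdorff, there is a compact neighborhood $N$ of $x$, so $x\in\interior{N}$ with $N$ compact. Properness gives that $f^{-1}(N)$ is compact, whence $C\cap f^{-1}(N)$ is a closed subset of a compactum and hence compact; thus $f\pa{C\cap f^{-1}(N)}$ is compact and therefore closed in the Hausdorff space $X$. A short point-set check---using that $\interior{N}$ is open, that $\closure{f(C)}\cap\interior{N}\subseteq\closure{f(C)\cap\interior{N}}$, and that $f(C)\cap\interior{N}\subseteq f\pa{C\cap f^{-1}(N)}$---shows $x\in\closure{f\pa{C\cap f^{-1}(N)}}=f\pa{C\cap f^{-1}(N)}\subseteq f(C)$. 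Hence $f(C)$ is closed.

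For part~\ref{imfgc}, applying part~\ref{fclosed} with $C=Y$ shows $\im{f}$ is closed in $X$. Several defining properties of a generalized continuum are then immediate: $\im{f}$ is Hausdorff as a subspace of $X$; it is connected as the continuous image of the connected space $Y$; it is $\sigma$-compact since writing $Y=\cup_i L_i$ with each $L_i$ compact forces $\im{f}=\cup_i f(L_i)$ with each $f(L_i)$ compact; and it is locally compact as a closed subspace of the locally compact Hausdorff space $X$. The one remaining property---local connectedness---is the crux.

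To establish local connectedness, I would regard $f$ as a continuous, surjective, closed map $f:Y\to\im{f}$ (closedness onto the image follows from part~\ref{fclosed}, since $f(C)\subseteq\im{f}$ is closed in $X$ and hence in the subspace $\im{f}$) and prove that the closed continuous image of a locally connected space is locally connected. Using the characterization that a space is locally connected if and only if the components of each of its open subsets are open, let $W\subseteq\im{f}$ be open and let $C$ be a component of $W$. Each component $D$ of the open set $f^{-1}(W)$ is open in $Y$ (since $Y$ is locally connected), and $f(D)$ is connected and contained in $W$, hence lies in a single component of $W$; it follows that $f^{-1}(C)$ is the union of those components $D$ with $f(D)\subseteq C$ and so is open in $Y$. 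Therefore $Y-f^{-1}(C)=f^{-1}\pa{\im{f}-C}$ is closed, and since $f$ is a closed surjection onto $\im{f}$, its image $f\pa{Y-f^{-1}(C)}=\im{f}-C$ is closed in $\im{f}$. Thus $C$ is open, proving $\im{f}$ is locally connected and completing the proof. The main obstacle is precisely this last step: local connectedness is not preserved by arbitrary continuous images, so it is essential to exploit that $f$ is a closed surjection onto $\im{f}$, which is exactly what part~\ref{fclosed} supplies.
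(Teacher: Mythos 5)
Your proposal is correct and follows essentially the same route as the paper: part~\ref{fclosed} via the standard fact that a proper map into a locally compact Hausdorff space is closed, and part~\ref{imfgc} by noting $\im{f}$ is closed in $X$ and that $f|:Y\to\im{f}$ is a closed (hence quotient) surjection, which preserves local connectedness. The only difference is that you supply the point-set verifications that the paper delegates to citations (e.g.\ Dugundji for local connectedness of quotients), and your verifications are sound.
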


\begin{proof}
As $f$ is continuous, $Y$ is connected, and $X$ is Hausdorff, we get $\im{f}$ is connected and Hausdorff.
As $f$ is proper and $X$ is locally compact, $f$ is closed proving~\ref{fclosed}.
In particular, $\im{f}$ is closed in $X$.
So, $\im{f}$ is $\sigma$-compact and locally compact.
As $f$ is closed and $\im{f}$ is closed in $X$, the restriction map $f|: Y \to \im{f}$ is closed.
As $f|$ is a closed, surjective map, it is a quotient map.
Thus, $\im{f}$ is locally connected by Dugundji~\cite[p.~125]{dugundji}.
That completes the proof of~\ref{imfgc}.
\end{proof}

\begin{lemma}\label{Acapce}
Let $X$ be a space with a compact exhaustion $\cpa{K_i}$.
If $A\subseteq X$ is closed, then $\cpa{A \cap K_i}$ is a compact exhaustion of $A$.
\end{lemma}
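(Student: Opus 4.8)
The plan is to verify, directly and in order, the defining properties of a compact exhaustion for the sequence $\cpa{A\cap K_i}$ of subspaces of $A$, keeping in mind throughout that the interior appearing in condition~\ref{inint} must be computed in the subspace $A$ rather than in the ambient space $X$. This is the only point that requires genuine care; everything else is routine.

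First I would establish compactness. Since $A$ is closed in $X$, the set $A\cap K_i$ is closed in $K_i$, and a closed subspace of the compactum $K_i$ is compact; hence each $A\cap K_i$ is compact. Nesting and covering are then immediate set-theoretic consequences of the corresponding properties of $\cpa{K_i}$: from $K_i\subseteq K_{i+1}$ I get $A\cap K_i\subseteq A\cap K_{i+1}$, and from $X=\cup_i K_i$ I get $\cup_i\pa{A\cap K_i}=A\cap\pa{\cup_i K_i}=A$, which is condition~\ref{coverX} for $\cpa{A\cap K_i}$.

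The substantive step is the interior condition. Here I would exploit that $\cpa{K_i}$ satisfies $K_i\subseteq \interior{K_{i+1}}$, where $\interior{K_{i+1}}$ is open in $X$. Intersecting with $A$, the set $A\cap\interior{K_{i+1}}$ is open in the subspace $A$, and it satisfies
\[
A\cap K_i \subseteq A\cap\interior{K_{i+1}} \subseteq A\cap K_{i+1}.
\]
Thus $A\cap K_i$ lies inside a subset of $A\cap K_{i+1}$ that is open in $A$, whence $A\cap K_i\subseteq \Int{\pa{A\cap K_{i+1}}}{A}$, which is exactly condition~\ref{inint} for the sequence $\cpa{A\cap K_i}$ in $A$.

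I do not expect a real obstacle. The only thing to be vigilant about is that the interior in the hypothesis is taken in $X$ while the interior in the conclusion is taken in $A$; one bridges the two simply by intersecting the ambient open set $\interior{K_{i+1}}$ with $A$ to produce an open set of the subspace $A$.
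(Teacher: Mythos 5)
Your proposal is correct and follows essentially the same route as the paper: compactness of $A\cap K_i$ from closedness of $A$, nesting and covering by elementary set theory, and the interior condition via $A\cap K_i \subseteq A\cap\interior{K_{i+1}} \subseteq \Int{\pa{A\cap K_{i+1}}}{A}$. Your explicit attention to the ambient-versus-subspace interior is exactly the point the paper's one-line display is making.
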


\begin{proof}
As $A$ is closed in $X$, $X$ is Hausdorff, and each $K_i$ is compact, we get $A\cap K_i$ is compact.
Evidently, the spaces $A\cap K_i$ are nested and cover $A$.
As $K_i \subseteq \interior{K_{i+1}}$, and $A \cap \interior{K_{i + 1}}$ is open in $A$ and contained in $A \cap K_{i+1}$, we have
\[A \cap K_i \subseteq A \cap \interior{K_{i+1}} \subseteq \interior{\pa{A \cap K_{i+1}}}\]
as desired.
\end{proof}

\begin{lemma}\label{AXece}
Let $X$ be a space and $A\subseteq X$ a closed subspace.
Suppose we are given efficient compact exhaustions $\cpa{J_i}$ of $X$ and $\cpa{L_i}$ of $A$.
Then, there exists an efficient compact exhaustion $\cpa{K_i}$ of $X$
such that $\cpa{A\cap K_i}$ is a subsequence of $\cpa{L_i}$.
In particular, $\cpa{A\cap K_i}$ is an efficient compact exhaustion of $A$.
\end{lemma}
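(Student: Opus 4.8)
The plan is to realize the exhaustion in the form $K_i=(J_{n_i}\cup L_{m_i})'$, the bounded filling in $X$ of the union of a term of $\cpa{J_i}$ with a term of $\cpa{L_i}$, for strictly increasing index sequences $\cpa{n_i}$ and $\cpa{m_i}$ chosen inductively so that for every $i$: (a) $A\cap J_{n_i}\subseteq L_{m_i}$; (b) $J_{n_i}$ meets $A$; and (c) $J_{n_i}\cup L_{m_i}\subseteq\interior{J_{n_{i+1}}}$. Property~(b) is secured by taking $n_1$ large enough that $J_{n_1}$ meets the nonempty set $A$ (all later $J_{n_i}\supseteq J_{n_1}$ then meet $A$ too); given $n_i$, property~(a) is secured by choosing $m_i$ large enough that the compactum $A\cap J_{n_i}$ lies in $L_{m_i}$, using that $\cpa{L_i}$ exhausts $A$; and given $m_i$, property~(c) is secured by choosing $n_{i+1}$ large enough that the compactum $J_{n_i}\cup L_{m_i}$ lies in $\interior{J_{n_{i+1}}}$, using that $\cpa{J_i}$ exhausts $X$. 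Alternately increasing the two indices keeps both sequences strictly increasing.

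The crux, and the step I expect to be the main obstacle, is to show $A\cap K_i=L_{m_i}$; the danger is that forming the bounded filling drags extra points of $A$ into $K_i$. I would rule this out by proving that every bounded component $B$ of $X-(J_{n_i}\cup L_{m_i})$ is disjoint from $A$. Indeed, suppose some $a\in A$ lay in such a $B$. Since $B\subseteq X-L_{m_i}$, the point $a$ lies in a component $P$ of $A-L_{m_i}$, and $P$ is unbounded because $L_{m_i}$ is efficient in $A$ (unboundedness in the closed subspace $A$ is the same as unboundedness in $X$). By~(a) we have $P\cap J_{n_i}\subseteq A\cap J_{n_i}\subseteq L_{m_i}$, while $P\cap L_{m_i}=\emptyset$, so $P$ is disjoint from $J_{n_i}\cup L_{m_i}$ and hence lies in $X-(J_{n_i}\cup L_{m_i})$; being connected and meeting $B$, it satisfies $P\subseteq B$, contradicting that $P$ is unbounded and $B$ is bounded. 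Consequently the bounded components contribute no points of $A$, and $A\cap K_i=A\cap(J_{n_i}\cup L_{m_i})=(A\cap J_{n_i})\cup L_{m_i}=L_{m_i}$ by~(a).

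It then remains to confirm that $\cpa{K_i}$ is an efficient compact exhaustion of $X$, which I would read off from the bounded-filling machinery of this section. Each $J_{n_i}\cup L_{m_i}$ is a nonempty compactum, and it is connected since its two connected pieces meet in $J_{n_i}\cap A\neq\emptyset$ by~(a) and~(b); thus Lemma~\ref{compactlemma}\ref{K'compact} gives compactness of $K_i$, Lemma~\ref{compslemma}\ref{KconnK'conn} gives connectedness, and Lemma~\ref{compactlemma}\ref{unbdedcomplK'} gives that $X-K_i$ has only unbounded components, so each $K_i$ is efficient. Property~(c) gives $J_{n_i}\cup L_{m_i}\subseteq\interior{J_{n_{i+1}}}\subseteq\interior{(J_{n_{i+1}}\cup L_{m_{i+1}})}$, whence Lemma~\ref{monotonicity}\ref{monotonint} yields $K_i\subseteq\interior{K_{i+1}}$, while the inclusions $K_i\supseteq J_{n_i}$ and cofinality of $\cpa{n_i}$ give $\cup_i K_i=X$. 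Finally, $A\cap K_i=L_{m_i}$ exhibits $\cpa{A\cap K_i}$ as the subsequence $\cpa{L_{m_i}}$ of $\cpa{L_i}$, which, as a subsequence of an efficient exhaustion, is itself an efficient compact exhaustion of $A$ (Examples~\ref{eceexamples}), yielding the final assertion.
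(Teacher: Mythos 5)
Your proposal is correct and follows essentially the same route as the paper: both shuffle the two exhaustions so that $A\cap J\subseteq L$, take $K_i$ to be the bounded filling of $J\cup L$, and prove via the same trapped-unbounded-component argument that the bounded components of $X-(J\cup L)$ miss $A$, giving $A\cap K_i=L$. The only cosmetic difference is that the paper additionally arranges $L_i\subseteq\interior{J_i}$ so that $J_i$ and $L_{i+1}$ both contain $L_i$ (hence their union is connected), whereas you get connectedness of the union from your conditions (a) and (b) via $J_{n_i}\cap L_{m_i}\supseteq A\cap J_{n_i}\neq\emptyset$; both work.
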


\begin{proof}
As $X$ is Hausdorff and $A$ is closed in $X$, the intersection of $A$ with each compactum in $X$ is compact.
In particular, $A\cap J_i$ is compact for each $i\in\Z_+$.
Reasoning inductively, we replace $\cpa{J_i}$ and $\cpa{L_i}$ with subsequences so that
\begin{equation}\label{feffshuffle}
L_i\subseteq\interior{J_i} \ \  \tn{and} \ \ A \cap J_i \subseteq\interior{L_{i+1}} \ \ \tn{for each} \ i\in\Z_+
\end{equation}
where $\interior{J_i}$ is the interior of $J_i$ in $X$ and $\interior{L_{i+1}}$ is the interior of $L_{i+1}$ in $A$.
\begin{figure}[htbp!]
    \centerline{\includegraphics[scale=1.0]{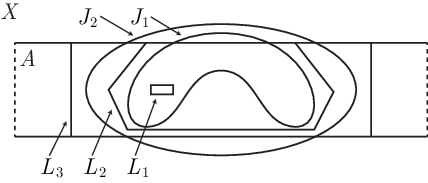}}
    \caption{Closed subspace $A$ of $X$ and shuffled efficient compact exhaustions $\cpa{L_i}$ of $A$ and $\cpa{J_i}$ of $X$.}
\label{fig:shufflieJiLiu}
\end{figure}
In more detail, begin with $L_1$.
As $\cpa{J_i}$ is a compact exhaustion of $X$, there exists some $J_k$ that contains $L_1$ in its interior.
Call that $J_1$.
Then, $A\cap J_1$ is a compact subspace of $A$.
As $\cpa{L_i}$ is a compact exhaustion of $A$, there exists some $L_k$ that contains $A\cap J_1$ in its interior.
Call that $L_2$. Now, repeat that process.
Figure~\ref{fig:shufflieJiLiu} depicts the resulting shuffled efficient compact exhaustions.
For each $i\in\Z_+$, define $K_i=\pa{J_i \cup L_{i+1}}'$ to be the bounded filling of $J_i \cup L_{i+1}$ in $X$.
We will show that $\cpa{K_i}$ is a desired exhaustion of $X$.

Each $J_i$ and $L_i$ is connected (hence, nonempty) and compact.
For each $i\in\Z_+$, $J_i$ and $L_{i+1}$ contain $L_i$, the former by~\eqref{feffshuffle}.
So, $J_i \cup L_{i+1}$ is connected and compact.
By Lemmas~\ref{compslemma}\ref{KconnK'conn} and~\ref{compactlemma}\ref{K'compact},
$K_i=\pa{J_i \cup L_{i+1}}'$ is connected and compact.
By Lemma~\ref{compactlemma}\ref{unbdedcomplK'}, $X-K_i$ has only unbounded components.

As $J_i$ and $L_{i+1}$ are both contained in $\interior{J_{i+1}}$, the latter by~\eqref{feffshuffle},
we get
\[
J_i \cup L_{i+1} \subseteq \interior{J_{i+1}} \subseteq J_{i+1} \cup L_{i+2}
\]
As $\interior{J_{i+1}}$ is open in $X$,
the previous equation implies that $J_i \cup L_{i+1} \subseteq \interior{\pa{J_{i+1} \cup L_{i+2}}}$.
Lemma~\ref{monotonicity} (monotonicity) implies that $K_i\subseteq \interior{K_{i+1}}$.
Hence, $\cpa{K_i}$ is an efficient compact exhaustion of $X$.

We claim that each bounded component of $X-\pa{J_i \cup L_{i+1}}$ is disjoint from $A$.
Before we prove that claim, we show it implies $A\cap K_i = L_{i+1}$,
which in turn implies the remaining desired conclusions in the statement of the lemma.
For each $i\in\Z_+$, we have
\[
K_i = \pa{J_i \cup L_{i+1}}' = \pa{J_i \cup L_{i+1}} \cup \bc{X-\pa{J_i \cup L_{i+1}}}
\]
where $\cup \, \bc{X-\pa{J_i \cup L_{i+1}}}$ is disjoint from $A$ by the claim.
Hence
\[
A\cap K_i	= A \cap \pa{J_i \cup L_{i+1}} = \pa{A \cap J_i} \cup \pa{A \cap L_{i+1}}
					= \pa{A \cap J_i} \cup L_{i+1} = L_{i+1}
\]
where the third equality holds since $L_{i+1}\subseteq A$
and the fourth holds by~\eqref{feffshuffle}.
So, it remains to prove the claim.

Suppose, by way of contradiction, that a bounded component $B$ of $X-\pa{J_i \cup L_{i+1}}$ meets $A$ for some $i\in\Z_+$.
Then, $B$ meets a component $U$ of $A-L_{i+1}$ that is necessarily unbounded in $A$.
We have $U\subseteq A-L_{i+1} \subseteq X-\pa{J_i \cup L_{i+1}}$ by~\eqref{feffshuffle}.
Thus, as $U$ is connected and $B$ is a component of $X-\pa{J_i \cup L_{i+1}}$, we have $U\subseteq B$.
As $A$ is closed in $X$, the closure of $U$ in $X$ equals the closure of $U$ in $A$.
However, $U$ is unbounded in $A$ and is bounded in $X$ (since $U\subseteq B$ where $B$ is bounded).
That contradiction proves the claim and the lemma.
\end{proof}

\begin{theorem}\label{exhaustmap}
Let $f:Y\to X$ be a proper map of generalized continua.
Then:
\begin{enumerate*}[label=(\roman*)]
  \item\label{efece} there exists an $f$-efficient compact exhaustion $\cpa{K_i}$ of $X$, and
  \item\label{finjeceY} if, furthermore, $f$ is injective,
	then $f$ is an embedding and $\cpa{f^{-1}\pa{K_i}}$ is an efficient compact exhaustion of $Y$.
\end{enumerate*}
\end{theorem}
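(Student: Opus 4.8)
The plan is to assemble part~\ref{efece} from the existence theorem for efficient exhaustions together with the shuffling lemma for a closed subspace, and then to deduce part~\ref{finjeceY} by transporting an efficient exhaustion of $\im{f}$ back to $Y$ along the embedding $f$. The real content lives in the auxiliary lemmas already proved; this theorem is mostly a careful assembly.

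First I would fix the setting. By Lemma~\ref{propermapgc}, the map $f$ is closed and $\im{f}$ is a generalized continuum; in particular $\im{f}$ is a \emph{closed} subspace of $X$. By Theorem~\ref{gceee}, both $X$ and $\im{f}$ admit efficient compact exhaustions, say $\cpa{J_i}$ of $X$ and $\cpa{L_i}$ of $\im{f}$. I would then apply Lemma~\ref{AXece} with $A=\im{f}$ to obtain an efficient compact exhaustion $\cpa{K_i}$ of $X$ for which $\cpa{\im{f}\cap K_i}$ is a subsequence of $\cpa{L_i}$, hence itself an efficient compact exhaustion of $\im{f}$. That immediately supplies conditions~\ref{Kieff} and~\ref{imageece} in the definition of $f$-efficiency. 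For condition~\ref{invimagece}, since $f$ is proper and $\cpa{K_i}$ is a compact exhaustion of $X$, Lemma~\ref{invimceisce} shows that $\cpa{f^{-1}\pa{K_i}}$ is a compact exhaustion of $Y$. Thus $\cpa{K_i}$ is $f$-efficient, proving~\ref{efece}.

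For part~\ref{finjeceY}, suppose in addition that $f$ is injective. Then $f$ is a continuous, injective, closed map (closedness again by Lemma~\ref{propermapgc}), so $f$ is an embedding and the corestriction $\varphi=f:Y\to\im{f}$ is a homeomorphism. Because $f$ maps $Y$ into $\im{f}$, for each $i$ one has $f^{-1}\pa{K_i}=\varphi^{-1}\pa{\im{f}\cap K_i}$. Since $\cpa{\im{f}\cap K_i}$ is an efficient compact exhaustion of $\im{f}$ by part~\ref{efece}, and since the homeomorphism $\varphi$ carries compacta to compacta, interiors to interiors, components to components, and bounded sets to bounded sets, the sequence $\cpa{\varphi^{-1}\pa{\im{f}\cap K_i}}=\cpa{f^{-1}\pa{K_i}}$ is an efficient compact exhaustion of $Y$.

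The genuine difficulty is concentrated in the supporting lemmas rather than in the assembly: Lemma~\ref{AXece} performs the delicate shuffling of the two given exhaustions so that their intersections with $A=\im{f}$ are controlled, and Lemma~\ref{propermapgc} supplies both the closedness of $f$ and the fact that $\im{f}$ is a generalized continuum, so that Theorem~\ref{gceee} applies to it. The one point requiring care within this proof is the transport of \emph{efficiency} along $\varphi$ in part~\ref{finjeceY}: I must check that connectedness of each $f^{-1}\pa{K_i}$ and unboundedness of every component of its complement are preserved by the homeomorphism. These follow because boundedness is the purely topological condition that the closure be compact, which $\varphi$ respects.
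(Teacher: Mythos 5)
Your proposal is correct and follows essentially the same route as the paper: Lemma~\ref{propermapgc} to get closedness of $f$ and that $\im{f}$ is a generalized continuum, Theorem~\ref{gceee} for the two efficient exhaustions, Lemma~\ref{AXece} applied to $A=\im{f}$ for part~\ref{efece}, Lemma~\ref{invimceisce} for condition~\ref{invimagece}, and the fact that an injective closed map is an embedding whose corestriction transports efficiency for part~\ref{finjeceY}. No gaps; the assembly matches the paper's proof step for step.
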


\begin{proof}
By Lemma~\ref{propermapgc}\ref{imfgc}, $\im{f}$ is a generalized continuum.
By Theorem~\ref{gceee}, there exist efficient compact exhaustions $\cpa{J_i}$ of $X$ and $\cpa{L_i}$ of $\im{f}$.
By Lemma~\ref{propermapgc}\ref{fclosed}, $\im{f}$ is closed in $X$.
By Lemma~\ref{AXece}, there exists $\cpa{K_i}$ an efficient compact exhaustion of $X$
such that $\cpa{\im{f}\cap K_i}$ is an efficient compact exhaustion of $\im{f}$.
By Lemma~\ref{invimceisce}, $\cpa{f^{-1}\pa{K_i}}$ is a compact exhaustion of $Y$.
Hence, $\cpa{K_i}$ is a desired $f$-efficient compact exhaustion of $X$.

Suppose, further, that $f$ is injective.
As $f$ is a closed map (Lemma~\ref{propermapgc}\ref{fclosed}), $f$ is an embedding.
So, the restriction map $f|^{-1}: \im{f} \to Y$ is a homeomorphism
and preserves connectedness, boundedness, and unboundedness.
Therefore, $\cpa{f^{-1}\pa{K_i}}$ is an efficient compact exhaustion of $Y$.
\end{proof}

\begin{corollary}\label{receexist}
Let $(X,r)$ be a ray-based generalized continuum.
That is, $X$ is a generalized continuum and $r:\nnr \rightarrowtail X$ is a proper embedding.
Then, there exists an $r$-efficient compact exhaustion $\cpa{K_i}$ of $X$.
\end{corollary}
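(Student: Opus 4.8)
The plan is to recognize this as an immediate specialization of Theorem~\ref{exhaustmap}. By definition a baseray $r$ is a proper embedding $r:\nnr\rightarrowtail X$, hence in particular a proper, \emph{injective} map. To invoke Theorem~\ref{exhaustmap} I need both domain and codomain to be generalized continua. The codomain $X$ is a generalized continuum by hypothesis, so the only preliminary point to check is that the domain $\nnr$ qualifies.

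First I would verify that $\nnr=[0,\infty)$ is a generalized continuum. It is Hausdorff and connected; it is locally connected since $\R$ is; it is $\sigma$-compact because $\nnr=\cup_i [0,i]$; and it is locally compact since each point has a compact neighborhood. Indeed $\nnr$ is a connected, Hausdorff, second countable $1$-manifold with boundary, so it is a generalized continuum of type~\ref{topmflds}. Thus all four defining conditions hold.

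With both spaces identified as generalized continua, I would apply Theorem~\ref{exhaustmap} with $f=r$ and $Y=\nnr$. Part~\ref{efece} of that theorem produces an $r$-efficient compact exhaustion $\cpa{K_i}$ of $X$, which is exactly the desired conclusion. If one wants a concrete description along the ray, part~\ref{finjeceY} applies since $r$ is injective, giving that $\cpa{r^{-1}\pa{K_i}}$ is an efficient compact exhaustion of $\nnr$; by Example~\ref{eceexamples}\ref{ecennr} this must have the form $r^{-1}\pa{K_i}=[0,b_i]$ for an increasing, unbounded sequence $0\leq b_1<b_2<\cdots$ of real numbers.

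There is essentially no obstacle here, since all the content has been packaged into Theorem~\ref{exhaustmap}; the corollary is a direct application once $\nnr$ is recognized as a generalized continuum. The only minor point requiring care is confirming that $r$ being a proper embedding supplies simultaneously the properness and the injectivity hypotheses that Theorem~\ref{exhaustmap} requires.
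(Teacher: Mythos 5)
Your proposal is correct and matches the paper's approach exactly: the paper states this as an immediate corollary of Theorem~\ref{exhaustmap} applied to the proper injective map $r:\nnr\rightarrowtail X$, with no further argument needed. Your added check that $\nnr$ is a generalized continuum is the right (if routine) point to verify.
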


\begin{remarks}\label{rece}\mbox{ \\ }
\begin{enumerate}[label=(\alph*)]
\item\label{injemd} If $X$ is a generalized continuum and $r:\nnr \rightarrowtail X$ is a proper, injective map,
then $f$ is an embedding by Lemma~\ref{propermapgc}\ref{fclosed}.
\item All efficient compact exhaustions of $\nnr$ were described in Remarks~\ref{eceexamples}\ref{ecennr}.
\item While Lemma~\ref{AXece} on exhausting a closed pair $(X,A)$ is used to prove Theorem~\ref{exhaustmap} on exhausting a proper map,
the latter applied to inclusion $i:A \hookrightarrow X$ yields a nice exhaustion of the closed pair $(X,A)$.
\end{enumerate}
\end{remarks}

\section{Baserays}
\label{sec:br}

The previous sections showed that the theory of ends is robust for generalized continua.
However, the following example shows that class of spaces is too broad when a baseray is required.
Recall that a \deffont{baseray} is a proper embedding $r:\nnr \rightarrowtail X$.

\begin{example}\label{nmgc}
Let $X=[0,1)\times[0,1]$ equipped with the dictionary order topology
where $(x_1,y_1)<(x_2,y_2)$ means $x_1<x_2$, or $x_1=x_2$ and $y_1<y_2$.
We prove that $X$ is a one-ended generalized continuum that
admits no proper map $r:\nnr \to X$, is not locally path-connected, does not have a countable basis, and is not metrizable.
The dictionary order on $X$ is a linear order.
Let $x_0=(0,0)\in X$ be the minimal element.
The dictionary order topology on $X$ is defined by the basis consisting of all
open intervals $(a,b)_X$ in $X$ and all half-open intervals $[x_0,b)_X$ in $X$~\cite[p.~84]{munkres}.
\begin{figure}[htbp!]
    \centerline{\includegraphics[scale=1.0]{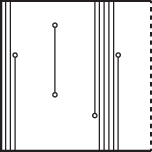}}
    \caption{Three basic open sets for the dictionary order topology on $X=[0,1)\times[0,1]$.}
\label{fig:dictordsquare}
\end{figure}
We use the notation $(a,b)_X$, $\br{a,b}_X$, $[a,b)_X$, and $[a,\infty)_X=\cpa{x\in X \mid a\leq x}$ to denote intervals in $X$ to avoid confusion with intervals in $\R$ and points in $X$.
Figure~\ref{fig:dictordsquare} depicts three basic open sets in $X$.
As $X$ has an order topology, $X$ is Hausdorff.
For each $x\in I=[0,1]\subseteq\R$, the function $I\to X$ defined by $t\mapsto (x,t)$ is an embedding.
The function $X\to I$ defined by $(x,y)\mapsto x$ is continuous.
Using the last two observations, $X$ has the least upper bound property~\cite[pp.~27 \&~155]{munkres}.
Each closed interval $\br{a,b}_X$ in $X$ is compact~\cite[Thm.~27.1]{munkres}.
The space $X$ has the intermediate value property---meaning if $a<b$ in $X$, then there exists $c\in X$ such that $a<c<b$.
As $X$ has an order topology, the least upper bound property, and the intermediate value property,
$X$ is, by definition, a linear continuum~\cite[p.~153]{munkres}.
The linear continuum $X$ is connected as are all nonempty intervals in $X$~\cite[Thm.~24.1]{munkres}.
As all nonempty basic open sets in $X$ are connected, $X$ is locally connected.
For each $i\in\Z_+$, define $K_i = \br{0,1-1/i}\times[0,1]$ which is the closed interval $\br{x_0,(1-1/i,1)}_X$.
So, $\cpa{K_i}$ is an efficient compact exhaustion of $X$.
For each $i\in\Z_+$, $V_i=X-K_i$ is unbounded and connected.
The space $X$ is $\sigma$-compact, locally compact, and a one-ended generalized continuum.
Suppose, by way of contradiction, that there exists a proper map $r:\nnr\to X$.
As $r$ is a proper map and $\nnr$ is connected, $\im{r}$ contains the interval $[r(0),\infty)_X$.
The interval $[r(0),\infty)_X$ contains uncountably many pairwise disjoint, nonempty open intervals $((x,0),(x,1))_X=\cpa{x}\times (0,1)$.
Taking inverse images under $r$, $\nnr\subseteq\R$ contains uncountably many pairwise disjoint, nonempty open sets, a contradiction.
Therefore, $X$ admits no proper map $r:\nnr\to X$.
A similar argument shows that $X$ is not locally path-connected---consider basic open neighborhoods of $p=\pa{1/2,0}$ in $X$.
As $X$ contains uncountably many pairwise disjoint, nonempty open intervals, $X$ does not have a countable basis
(that is, $X$ is not second countable).
As $X$ is a generalized continuum, $X$ is Lindel{\"o}f by Remarks~\ref{gcremarks}\ref{gcnice}.
Each Lindel{\"o}f and metrizable space has a countable basis~\cite[p.~194]{munkres}.
Hence, $X$ is not metrizable.
\end{example}

That example shows that an additional hypothesis on noncompact generalized continua is required to ensure the existence of a baseray.
Metrizability suffices as we now show.
Note that metrizable generalized continua include the four collections of spaces listed above Remarks~\ref{gcremarks}.
Those manifolds, simplicial complexes, CW complexes, and ANRs are the spaces that most interest us.
In that sense, the metrizability hypothesis is not overly restrictive.
Recall that an \deffont{arc} in a space is an embedding of the closed unit interval $I=[0,1]\subseteq\R$.

\begin{theorem}[Arcwise connectedness theorem]
Let $X$ be a connected, locally connected, locally compact metric space.
If $a$ and $b$ are distinct points of $X$, then there exists an arc in $X$ from $a$ to $b$.
If, furthermore, $A$ is a connected, open subspace of $X$ and $a$ and $b$ are distinct points of $A$,
then there exists an arc in $A$ from $a$ to $b$.
\end{theorem}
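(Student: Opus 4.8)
The plan is to prove the arcwise connectedness theorem by reducing the second statement to the first, and proving the first via a chaining argument that upgrades a path to an arc. The standard route is: first establish that $X$ is path-connected (indeed arcwise connected) by showing the set of points joinable to a fixed point $a$ by an arc is nonempty, open, and closed; then handle the relative statement by observing that a connected, open subspace $A$ of $X$ inherits the same hypotheses.

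First I would fix $a \in X$ and let $S$ denote the set of points $b \in X$ such that either $b = a$ or there is an arc in $X$ from $a$ to $b$. Since $X$ is connected, it suffices to show $S$ is nonempty, open, and closed. Nonemptiness is immediate since $a \in S$. For openness, given $b \in S$, use local connectedness and local compactness to find a connected open neighborhood $U$ of $b$ with compact closure; the content is that any point $c \in U$ can be joined to $b$ by an arc inside $U$, and that arc can be \emph{concatenated} with the arc from $a$ to $b$ to produce an arc (not merely a path) from $a$ to $c$. The concatenation of two arcs is the main technical nuisance: the naive union of two arcs need not be an arc because they may intersect in complicated ways. The standard fix is that if $\gamma$ is a path from $a$ to $c$, one extracts an arc by taking $\gamma$ and, roughly, cutting out loops---formally, one uses the fact that a path-connected Hausdorff space that is locally nice admits arcs between distinct points, or one invokes the classical result that a Peano continuum (compact, connected, locally connected metric space) is arcwise connected. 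Closedness of $S$ follows by the same local argument applied at a limit point.

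The cleanest approach is to first reduce to a \textbf{compact} situation. Given distinct $a, b$, I would produce \emph{some} path from $a$ to $b$: local connectedness plus local compactness and connectedness of $X$ let me chain finitely many connected open sets with compact closures from $a$ to $b$ (a standard connectedness-via-chains argument, since the relation ``joinable by a finite chain of such neighborhoods'' is open, closed, and nonempty). The union of the closures of these finitely many neighborhoods is a compact, connected, locally connected metric space---a \emph{Peano continuum}. I then invoke the Hahn--Mazurkiewicz circle of ideas, specifically the theorem that every Peano continuum is arcwise connected (see, e.g., the standard development via the fact that a Peano continuum is a continuous image of $[0,1]$ together with the arc-extraction lemma). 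This yields an arc from $a$ to $b$ lying inside that compact set, hence inside $X$.

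For the relative statement, I would observe that if $A$ is a connected, open subspace of $X$, then $A$ is itself connected, locally connected (openness preserves local connectedness), and locally compact (an open subspace of a locally compact Hausdorff space is locally compact), and metrizable as a subspace. Thus $A$ satisfies all the hypotheses of the first part, and applying the first part \emph{to $A$} produces an arc in $A$ from $a$ to $b$. The hard part throughout is the arc-extraction step---passing from a path or a Peano continuum to an honest embedded arc---which is where Hausdorffness and the metric structure are essential; everything else is a routine chaining argument using the local hypotheses established in the same way as in the proof of Theorem~\ref{gceee}. I expect the main obstacle to be stating the arc-extraction lemma cleanly enough that the concatenations in the open/closed argument genuinely produce arcs rather than mere paths.
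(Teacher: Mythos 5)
Your handling of the second conclusion is exactly the paper's argument: $A$ is connected by hypothesis, open subspaces of locally connected spaces are locally connected, open subspaces of locally compact Hausdorff spaces are locally compact, and subspaces of metric spaces are metric, so the first conclusion applies to $A$ itself. For the first conclusion the paper does not give a proof at all---it cites Schurle [Thm.~4.2.5]---so the comparison here is really between your sketch and the classical argument behind that citation.

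Your sketch of the first conclusion has a genuine gap at its central step. After chaining finitely many connected open sets with compact closures from $a$ to $b$, you assert that the union of their closures is a Peano continuum. Compactness and connectedness of that union are fine, but local connectedness is not: the closure of a connected open set with compact closure in a locally connected space need not be locally connected. For example, in $\R^2$ the set $U=\cpa{(x,y) \mid 0<x<1,\ |y-\sin(1/x)|<x}$ is open and connected with compact closure, yet $\closure{U}$ fails to be locally connected at every point of the limit segment $\cpa{0}\times\br{-1,1}$, because within a small ball the thickened oscillations form clopen pieces separated from the segment. Nothing in the hypotheses lets you choose the chain links so that this pathology is avoided, so the Hahn--Mazurkiewicz/Peano-continuum machinery does not apply to your compact set. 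Your fallback---first produce \emph{some} path from $a$ to $b$ and then extract an arc---has the same problem one level down: the chaining argument produces a chain of connected open sets, not a path, since local connectedness does not give local path-connectedness; producing the path is precisely the hard part, and it is where local compactness (or completeness) must be used. The classical proof instead builds a nested sequence of simple chains of connected open sets with compact closures whose meshes tend to zero, and shows that the intersection of the closures of these chains is an arc from $a$ to $b$; local compactness guarantees that the intersection is a nonempty compact connected set on which the construction forces a linear order. Either supply that nested-chain construction or, as the paper does, quote the theorem.
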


\begin{proof}
The first conclusion is proved in Schurle~\cite[Thm.~4.2.5]{schurle}.
The second conclusion follows from the first since $A$ is necessarily a
connected, locally connected, locally compact metric space
(for local compactness, see Munkres~\cite[Cor.~29.3]{munkres}).
\end{proof}

\begin{remark}\label{mgcniceH0}
The arcwise connectedness theorem implies that if $X$ is a metrizable generalized continuum and $V$ is an open subspace of $X$,
then the components and path-components of $V$ coincide.
Indeed, the arcwise connectedness theorem implies that $X$ is locally path-connected.
As $V$ is open in $X$, $V$ is locally path-connected.
The result follows by Munkres~\cite[Thm.~25.5]{munkres}.
\end{remark}

We pause to introduce some useful terminology.
Let $X$ be a generalized continuum and let $r:\nnr\to X$ be a proper map (not necessarily an embedding).
We say that $r$ \deffont{points to $\e \in E(X)$} provided the induced map $E(r):E(\nnr) \to E(X)$ on end spaces
sends the end $\infty$ of $\nnr$ to the end $\e$.
See the proof of Lemma~\ref{inducedmaps} above for the definition of the induced map on end spaces.

\begin{theorem}[Existence of baseray]\label{raysexist}
Let $X$ be a noncompact, metrizable generalized continuum.
Then, $X$ has at least one end.
Furthermore, if $\e$ is any end of $X$,
then there exists a proper embedding $r:\nnr \rightarrowtail X$ such that $r$ points to $\e$.
\end{theorem}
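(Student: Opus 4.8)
The first assertion is immediate: since $X$ is noncompact, Theorem~\ref{endsthm}\ref{noncompactexistsend} shows that $X$ has at least one end. For the second assertion, fix an efficient compact exhaustion $\cpa{K_i}$ of $X$ (Theorem~\ref{gceee}) and write the given end as $\e=\pa{V_1^{j_1},V_2^{j_2},\ldots}$, where the $V_i^{j_i}$ are unbounded components of $V_i=X-K_i$ with $V_1^{j_1}\supseteq V_2^{j_2}\supseteq\cdots$. Each $V_i^{j_i}$ is open in $X$ by Lemma~\ref{compslemma}\ref{compsopen} and is connected, hence arc-connected: $X$ is a connected, locally connected, locally compact metric space, so the arcwise connectedness theorem applies both to $X$ and to each connected open set $V_i^{j_i}$. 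The plan is to string together arcs that march outward through the nested components $V_i^{j_i}$ and then prune them into a single embedded proper ray pointing to $\e$.

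I would build the ray as a union of an expanding sequence of arcs. Fix a basepoint $q_0=r(0)$ and, for each $i\geq1$, a point $q_i\in V_i^{j_i}$; using Lemma~\ref{compslemma}\ref{CmeetsnhbdK} applied to the neighborhood $\interior{K_{i+1}}\supseteq K_i$, one may even take $q_i\in V_i^{j_i}\cap\interior{K_{i+1}}$, so that $q_i$ avoids the deeper component $V_{i+1}^{j_{i+1}}$. Since $q_i$ and $q_{i+1}$ both lie in the connected open set $V_i^{j_i}$ (because $V_{i+1}^{j_{i+1}}\subseteq V_i^{j_i}$), the arcwise connectedness theorem yields an arc $\gamma_{i+1}\subseteq V_i^{j_i}$ from $q_i$ to $q_{i+1}$. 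Starting from an initial arc $A_1$ from $q_0$ to $q_1$, I would inductively set $A_{i+1}=A_i\cup\gamma_{i+1}$ after the pruning described below, so that $A_{i+1}$ is again an embedded arc terminating at $q_{i+1}$, and take $r$ to be the limiting ray, reparametrized on $\nnr$. The whole difficulty is concentrated here: I must ensure the concatenation is genuinely an \emph{embedded} ray rather than merely a proper map, and that the sequence $\cpa{A_i}$ actually converges.

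Both issues are resolved by a \emph{last-intersection} pruning. Having built the embedded arc $A_i$ terminating at $q_i\in V_i^{j_i}$, choose $q_{i+1}\in V_{i+1}^{j_{i+1}}$ with $q_{i+1}\notin A_i$ (possible since $A_i$ is compact while $V_{i+1}^{j_{i+1}}$ is unbounded), take a connecting arc $\gamma_{i+1}\subseteq V_i^{j_i}$ as above, and cut it at its last intersection with $A_i$, namely at the parameter $s^{*}=\sup\cpa{s:\gamma_{i+1}(s)\in A_i}<1$; the terminal piece $\rest{\gamma_{i+1}}_{[s^{*},1]}$ then meets $A_i$ only at the single point $\gamma_{i+1}(s^{*})$, so appending it keeps $A_{i+1}$ embedded. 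The crucial observation is that the cut point $\gamma_{i+1}(s^{*})$ lies in $V_i^{j_i}$ (as $\gamma_{i+1}\subseteq V_i^{j_i}$). Maintaining the auxiliary invariant that $A_i$ meets $V_i^{j_i}$ in a single \emph{terminal} sub-arc ending at $q_i$---re-established at each stage by an additional last-entry truncation together with the fact that the frontier of each $V_{i+1}^{j_{i+1}}$ is contained in $K_{i+1}$ (Lemma~\ref{compslemma}\ref{clCinKuC})---guarantees that the sub-arc of $A_i$ discarded by the cut, and the piece $\rest{\gamma_{i+1}}_{[s^{*},1]}$ appended in its place, both lie inside $V_i^{j_i}$. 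Hence every modification at stage $i+1$ is confined to $V_i^{j_i}$, which is disjoint from $K_i$, and therefore $A_m\cap K_i=A_i\cap K_i$ for all $m\geq i$. Thus the arcs stabilize on each compactum, and their limit is a well-defined injective map $r:\nnr\rightarrowtail X$.

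Finally I would verify the remaining properties, which are now routine. For properness, any compactum $K$ lies in some $K_n$; by stabilization $r^{-1}(K_n)$ is the parameter set of the compact sub-arc $A_n\cap K_n$, so $r^{-1}(K)\subseteq r^{-1}(K_n)$ is a closed, bounded, hence compact subset of $\nnr$, and $r$ is a proper embedding onto a closed copy of $\nnr$. For pointing to $\e$, the tail of $r$ lies in $V_i^{j_i}\subseteq X-K_i$ for every $i$, so the induced map $E(r)$ (see the definition in Lemma~\ref{inducedmaps}) sends the end $\infty$ of $\nnr$ to $\pa{V_1^{j_1},V_2^{j_2},\ldots}=\e$. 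This produces the desired baseray, completing the proof; the main obstacle throughout is the embedding condition, met by the last-intersection pruning and the terminal sub-arc invariant that confine every stage of the construction to the component $V_i^{j_i}$.
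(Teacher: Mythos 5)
Your overall architecture matches the paper's: fix an efficient exhaustion, choose points $q_i\in V_i^{j_i}$, join them by arcs from the arcwise connectedness theorem, prune to keep the union embedded, and argue that the arcs stabilize on each compactum so the limit is a proper embedded ray pointing to $\e$. The pruning itself is fine: cutting $\gamma_{i+1}$ at $s^{*}=\sup\cpa{s:\gamma_{i+1}(s)\in A_i}$ and discarding the portion of $A_i$ beyond the cut point does yield an embedded arc. The gap is in the stabilization argument, which rests entirely on the invariant that $A_i\cap V_i^{j_i}$ is a single \emph{terminal} sub-arc of $A_i$: that is what forces the cut point---a priori only known to lie somewhere in $A_i\cap V_i^{j_i}$---into the tail of $A_i$, so that the discarded piece stays inside $V_i^{j_i}$. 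But this invariant is not preserved by your construction, and the ``additional last-entry truncation'' is not a well-defined repair. The retained initial portion of $A_i$ and the appended piece of $\gamma_{i+1}$ are constrained only to lie in $V_i^{j_i}$ (the former not even that, beyond what the previous invariant gave); either may make early excursions into $V_{i+1}^{j_{i+1}}$ and come back out. Truncating $A_{i+1}$ at its last entry into $V_{i+1}^{j_{i+1}}$ identifies a terminal piece \emph{contained in} $V_{i+1}^{j_{i+1}}$, but it cannot remove the early excursions: deleting an interior sub-arc disconnects the arc, and deleting everything before the last entry discards the basepoint. (Knowing the frontier of $V_{i+1}^{j_{i+1}}$ lies in $K_{i+1}$ tells you where the arc crosses in and out, not that it doesn't.) Consequently, at stage $i+2$ the cut point $\gamma_{i+2}(s^{**})\in A_{i+1}\cap V_{i+1}^{j_{i+1}}$ may land in an early excursion, and the discarded portion of $A_{i+1}$ then contains everything the arc does between that excursion and $q_{i+1}$, which you have not confined to any $V_k^{j_k}$ with $k\to\infty$. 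The claim $A_m\cap K_i=A_i\cap K_i$ for $m\geq i$ therefore fails, and with it the existence, properness, and end-pointing of the limit.

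The paper closes exactly this hole with a device you omit: after each arc is built, pass to a subsequence of $\cpa{K_i}$ (retaining the earlier terms) so that, after reindexing, the arc constructed so far lies in $\interior{K_{i+2}}$. The next connecting arc then lives in $V_{i+1}^{j_{i+1}}\subseteq X-K_{i+1}$ and is automatically disjoint from every previously built piece except the most recent one; the intersection trim is genuinely local, earlier pieces are never altered afterward, and both the terminal-sub-arc property and the stabilization come for free. Your argument can be repaired the same way---or equivalently by aiming each new arc at a point of $V_{n}^{j_{n}}$ for $n$ large enough that $A_i\subseteq\interior{K_n}$---but as written it does not go through.
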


\begin{proof}
By Theorem~\ref{gceee}, $X$ admits an efficient exhaustion by compacta $\cpa{K_i}$.
For notational convenience, in this proof we index by $i\in\Z_{\geq0}$.
For each $i\in\Z_{\geq0}$, let $V_i=X-K_i$ and write $\cpa{V_i^j}=\uc{V_i}$ for the set of unbounded components of $V_i$.
By Theorem~\ref{endsthm}\ref{noncompactexistsend}, $X$ has at least one end.
Let $\e=\pa{V_0^{j_0},V_1^{j_1},\ldots}\in E\pa{X;\cpa{K_i}}$ be an end of $X$.
By definition, we have $V_0^{j_0}\supseteq V_1^{j_1} \supseteq\cdots$.
Recall that for each $i\in\Z_{\geq0}$, $V_i^{j_i}$ is connected (hence, nonempty) and open in $X$ by Lemma~\ref{compslemma}\ref{compsopen}.
As we pass to subsequences of $\cpa{K_i}$, we analogously pass to subsequences of $V_i$ and $\e$.

Let $p_0\in V_0^{j_0}$.
By compactness, $p_0\in\interior{K_m}$ for some $m>0$.
Pass to a subsequence of $\cpa{K_i}$ while retaining $K_0$ so that $p_0\in\interior{K_1}$.
Let $p_1\in V_1^{j_1}$.
As $p_0$ and $p_1$ lie in the connected, open subspace $V_0^{j_0}$ of $X$,
the arcwise connectedness theorem yields an arc $\alpha_0:[0,1]\to V_0^{j_0}$ from $p_0$ to $p_1$.
By compactness, $\im{\alpha_{0}} \subseteq \interior{K_m}$ for some $m>1$.
Pass to a subsequence of $\cpa{K_i}$ while retaining $K_0$ and $K_1$ so that $\im{\alpha_{0}} \subseteq \interior{K_2}$.
Let $p_2\in V_2^{j_2}$.
As $p_1$ and $p_2$ lie in the connected, open subspace $V_1^{j_1}$ of $X$,
the arcwise connectedness theorem yields an arc $\alpha_1:[1,2]\to V_1^{j_1}$ from $p_1$ to $p_2$ as in Figure~\ref{fig:buildray} (left).
\begin{figure}[htbp!]
    \centerline{\includegraphics[scale=1.0]{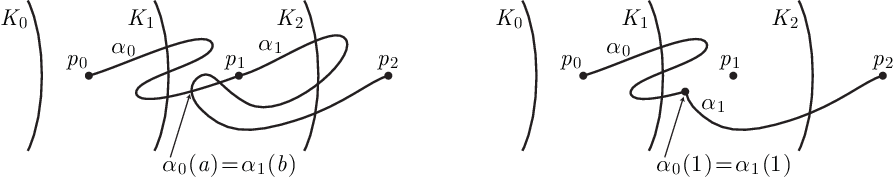}}
    \caption{Arcs $\alpha_0$ and $\alpha_1$ (left) and trimmed and reparameterized arcs $\alpha_0$ and $\alpha_1$ (right).}
\label{fig:buildray}
\end{figure}

The images of the arcs $\alpha_0$ and $\alpha_1$ meet at $p_1$, though they may meet at other points.
By compactness, there is a first point of $\im{\alpha_0}$ that meets $\im{\alpha_1}$.
Let that point be $\alpha_0(a)=\alpha_1(b)$ where $0<a\leq1$ and $1\leq b <2$.
We trim and reparameterize the arcs $\alpha_0$ and $\alpha_1$ so that they concatenate to
an arc $\br{0,2}\to V_0^{j_0}$ from $p_0$ to $p_2$.
The resulting concatenated arc may not pass through $p_1$, but that is irrelevant.
Trim the domain of $\alpha_0$ to be $\br{0,a}$ and reparameterize the result to have domain $\br{0,1}$;
let $\alpha_0$ denote the resulting arc.
Trim the domain of $\alpha_1$ to be $\br{b,2}$ and reparameterize the result to have domain $\br{1,2}$;
let $\alpha_1$ denote the resulting arc.
The images of $\alpha_0$ and $\alpha_1$ meet only at $\alpha_0(1)=\alpha_1(1)$ as in Figure~\ref{fig:buildray} (right).
By the pasting lemma (Munkres~\cite[Thm.~18.3]{munkres}), $\alpha_0$ and $\alpha_1$ concatenate to an arc.
Note that $\im{\alpha_0}\subseteq V_0^{j_0}$,
$\im{\alpha_{0}} \subseteq \interior{K_2}$,
and $\im{\alpha_1}\subseteq V_1^{j_1}$.

By compactness, $\im{\alpha_{1}} \subseteq \interior{K_m}$ for some $m>2$.
Pass to a subsequence of $\cpa{K_i}$ while retaining $K_0$, $K_1$, and $K_2$ so that $\im{\alpha_{1}} \subseteq \interior{K_3}$.
Let $p_3\in V_3^{j_3}$.
As $p_2$ and $p_3$ lie in the connected, open subspace $V_2^{j_2}$ of $X$,
the arcwise connectedness theorem yields an arc $\alpha_2:[2,3]\to V_2^{j_2}$ from $p_2$ to $p_3$.
As before, we trim and reparameterize $\alpha_1$ and $\alpha_2$---without altering $\alpha_0$---so that
$\alpha_1$ and $\alpha_2$ concatenate to an arc $\br{1,3}\to V_1^{j_1}$ from $\alpha_1(1)$ to $p_3$.
Note that $\im{\alpha_1}\subseteq V_1^{j_1}$,
$\im{\alpha_{1}} \subseteq \interior{K_3}$,
and $\im{\alpha_2}\subseteq V_2^{j_2}$.

Repeat that process inductively to obtain arcs $\alpha_i:\br{i,i+1}\to V_i^{j_i}$
that concatenate to an injective map $r:\nnr \rightarrowtail X$.
The map $r$ is proper and points to $\e$ since $\im{\alpha_i}\subseteq V_i^{j_i}$ for each $i\geq0$.
By Remarks~\ref{rece}\ref{injemd}, $r$ is an embedding as desired.
\end{proof}

In the proof of Theorem~\ref{raysexist}, no attempt was made to prevent the arc $\alpha_0$ from entering some other component $V_1^k$
before heading into $V_1^{j_1}$ to terminate at $p_1$ as in Figure~\ref{fig:rayprevent}.
\begin{figure}[htbp!]
    \centerline{\includegraphics[scale=1.0]{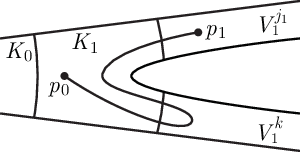}}
    \caption{Arc $\alpha_0$ entering a component $V_1^k$ before heading into $V_1^{j_1}$ to terminate at $p_1$.}
\label{fig:rayprevent}
\end{figure}
Without some additional hypothesis on $X$, it is not even possible to prevent that as shown by the following example.

\begin{example}\label{topsine}
Let $X\subseteq\R^3$ be the metrizable generalized continuum shown in Figure~\ref{fig:gcpath}.
\begin{figure}[htbp!]
    \centerline{\includegraphics[scale=1.0]{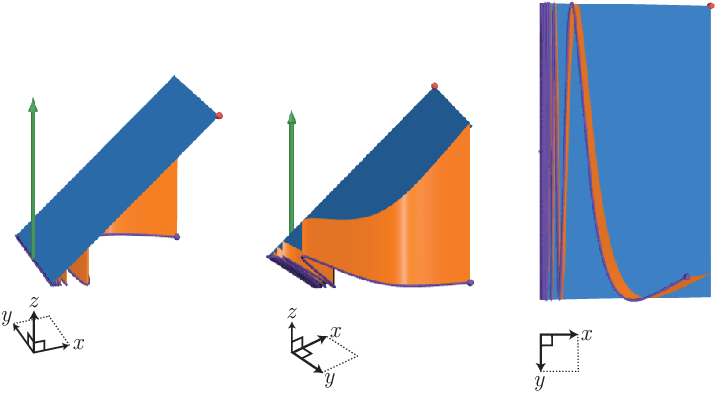}}
    \caption{Three views of a metrizable generalized continuum $X$ in $\R^3$ with positive coordinate axes indicated.
		The origin of $\R^3$ is the initial point of the green ray.}
\label{fig:gcpath}
\end{figure}
We define $X$ to be the union of the following three subsets of $\R^3$.
First, $A$ is the blue rectangle minus the red corner point as in Figure~\ref{fig:gcpath}.
That is,
\[
A = \cpa{(x,y,x) \mid 0\leq x\leq 1 \ \& \ -1\leq y\leq 1} - \cpa{(1,-1,1)}
\]
Second, $B$ is the orange surface above a portion of the topologist's sine curve and below $A$.
That is,
\[
B=\cpa{(x,\sin(1/x),z) \mid 0<x\leq1 \ \& \ 0\leq z\leq x}
\]
Third, $C$ is the green ray equal to the nonnegative $z$-axis.
We define $X=A\cup B\cup C$, which is a metric subspace of $\R^3$.
The reader may verify that $X$ is a generalized continuum with two ends (one where the red point was removed and one pointed to by the green ray).
Let $K$ be the intersection of $X$ with the plane $z=0$, which is indicated in purple in Figure~\ref{fig:gcpath}.
That is, $K$ is the union of the line segment $\cpa{(0,y,0) \mid -1\leq y\leq 1}$ and the portion of the topologist's sine curve
$\cpa{(x,\sin(1/x),0) \mid 0<x\leq 1}$.
So, $K$ is compact, connected, and efficient, but not path connected.
Let $p=(1,\sin(1),0)\in K$ which is the purple point indicated in Figure~\ref{fig:gcpath}.
The complement of $K$ in $X$ has two unbounded components $V^1$ and $V^2$, where $V^1$ is the positive $z$-axis.
Notice that every path in $X$ from $p$ to any point in $V^1$ must pass through $V^2$.
\end{example}

Just as it is useful to retract a based space to its basepoint,
it is useful to properly retract a ray-based space to its baseray.
Before proving such retracts exist, we make an observation on frontiers.

\begin{lemma}\label{frontcont}
Let $X$ be a generalized continuum and let $A\subseteq X$ be a closed subspace.
Let $K\subseteq X$ be a compactum and define $L=A\cap K$ as in Figure~\ref{fig:frontiers}.
\begin{figure}[htbp!]
    \centerline{\includegraphics[scale=1.0]{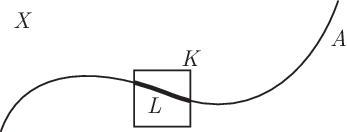}}
    \caption{Closed subspace $A$ of $X$, compactum $K$ in $X$, and intersection $L$ of $A$ and $K$.}
\label{fig:frontiers}
\end{figure}
Then, the frontier of $L$ in $A$ is contained in the frontier of $K$ in $X$.
That is, $\Fr{L}{A} \subseteq \Fr{K}{X}$.
\end{lemma}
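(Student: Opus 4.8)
The plan is to reduce everything to the standard description of a frontier as an intersection of two closures and then to use the hypothesis that $A$ is closed in $X$ to pass freely between closures taken in $A$ and closures taken in $X$. Recall that for any space $Y$ and any $S\subseteq Y$ one has $\Fr{S}{Y}=\Cl{S}{Y}\cap\Cl{(Y-S)}{Y}$. First I would record the elementary fact that, because $A$ is closed in $X$, the closure in $A$ of any subset $S\subseteq A$ coincides with its closure in $X$: in general $\Cl{S}{A}=\Cl{S}{X}\cap A$, and since $S\subseteq A$ with $A$ closed we have $\Cl{S}{X}\subseteq\Cl{A}{X}=A$, whence $\Cl{S}{A}=\Cl{S}{X}$. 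This observation is the only place the closedness of $A$ enters, and it is what makes the two subspace topologies interact correctly.

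Next I would apply this to the two pieces of $\Fr{L}{A}=\Cl{L}{A}\cap\Cl{(A-L)}{A}$. For the first piece, $L=A\cap K\subseteq K$, so $\Cl{L}{A}=\Cl{L}{X}\subseteq\Cl{K}{X}$. For the second piece, the set-theoretic identity $A-L=A-(A\cap K)=A\cap(X-K)\subseteq X-K$ gives $\Cl{(A-L)}{A}=\Cl{(A-L)}{X}\subseteq\Cl{(X-K)}{X}$. Intersecting the two containments yields
\[
\Fr{L}{A}=\Cl{L}{A}\cap\Cl{(A-L)}{A}\subseteq\Cl{K}{X}\cap\Cl{(X-K)}{X}=\Fr{K}{X},
\]
which is exactly the desired conclusion.

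The argument is purely point-set and presents no real obstacle; the only subtlety worth flagging is the careful bookkeeping between closures computed in $A$ and in $X$, which is handled once and for all by the closedness of $A$. In particular, neither local connectedness, local compactness, nor $\sigma$-compactness of $X$ is used, so the lemma in fact holds for an arbitrary space $X$ and an arbitrary closed subspace $A$; the generalized-continuum hypothesis is retained only for consistency with the surrounding results.
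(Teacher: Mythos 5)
Your proof is correct, and it takes a genuinely different route from the paper's. The paper argues pointwise by contradiction: it first notes that $K$ and $L$ are closed (using compactness of $K$ and Hausdorffness of $X$) so that each frontier equals the set minus its interior, then takes $p\in\Fr{L}{A}$, observes $p\in L\subseteq K$, and shows that $p\in\Int{K}{X}$ would force an open set $A\cap U$ witnessing $p\in\Int{L}{A}$. You instead work entirely with the identity $\Fr{S}{Y}=\Cl{S}{Y}\cap\Cl{(Y-S)}{Y}$ and monotonicity of closure, applied to $L\subseteq K$ and $A-L=A\cap(X-K)\subseteq X-K$. Your version is shorter and strictly more general: it never invokes compactness of $K$ or the Hausdorff property, and in fact you need only the inclusion $\Cl{S}{A}\subseteq\Cl{S}{X}$ (which holds for \emph{any} subspace $A$, via $\Cl{S}{A}=\Cl{S}{X}\cap A$), so even the closedness of $A$ is dispensable --- the lemma holds for an arbitrary subset $K$ of an arbitrary space $X$ and an arbitrary subspace $A$. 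What the paper's argument buys in exchange is a concrete, neighborhood-level picture of why a frontier point of $L$ in $A$ cannot sit in the interior of $K$, which matches the figure; but as a matter of logic your closure bookkeeping is cleaner and isolates exactly which hypotheses are doing work.
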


\begin{proof}
The hypotheses imply that $L$ is compact.
Thus, $\Fr{L}{A} = L-\Int{L}{A}$ and $\Fr{K}{X} = K-\Int{K}{X}$.
Let $p\in \Fr{L}{A}$, so $p\in L$ and $p\notin \Int{L}{A}$.
As $L\subseteq K$, we have $p\in K$.
Suppose, by way of contradiction, that $p\in \Int{K}{X}$.
Then, there exists a neighborhood $U$ of $p$ that is open in $X$ and contained in $K$.
Hence, $p\in A\cap U \subseteq A\cap K=L$ where $A\cap U$ is open in $A$.
That implies $p\in\Int{L}{A}$, a contradiction completing the proof.
\end{proof}

\begin{remark}
In the setup of Lemma~\ref{frontcont}, $\Int{L}{A}$ need not be contained in $\Int{K}{X}$.
\begin{figure}[htbp!]
    \centerline{\includegraphics[scale=1.0]{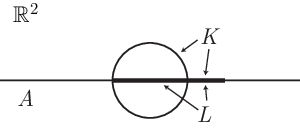}}
    \caption{The $x$-axis $A$ in $\R^2$, compactum $K$ equal to a disk union a segment, and intersection $L$ of $A$ and $K$.}
\label{fig:intce}
\end{figure}
Consider the example in Figure~\ref{fig:intce} where $X=\R^2$, $A$ is the $x$-axis, and
$K$ is a closed disk union the indicated line segment.
\end{remark}

We use Lemma~\ref{frontcont} in the following key setup.
Let $(X,r)$ be a ray-based generalized continuum, and let $\cpa{K_i}$ be an $r$-efficient compact exhaustion of $X$.
For each $i\in\Z_+$, define $L_i=\im{r} \cap K_i$.
So, we have efficient compact exhaustions $\cpa{L_i}$ of $\im{r}$ and $\cpa{r^{-1}\pa{K_i}}$ of $\nnr$.
By definition, each efficient compactum is connected and, hence, nonempty.
In particular, $r(0)\in K_1$.
If necessary, then replace $\cpa{K_i}$ with the subsequence obtained by deleting the first term
so that $r(0)\in\interior{K_1}$.
Recalling Examples~\ref{eceexamples}\ref{ecennr}, there exists an unbounded sequence of real numbers
$0< b_1<b_2<b_3<\cdots$ such that $r^{-1}\pa{K_i}=\br{0,b_i}$ for each $i\in\Z_+$ ($b_1>0$ since we arranged that $r(0)\in\interior{K_1}$).
\begin{figure}[htbp!]
    \centerline{\includegraphics[scale=1.0]{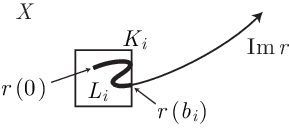}}
    \caption{Efficient compactum $K_i\subseteq X$ meeting the ray $\im{r}$ in the arc $L_i=r\pa{\br{0,b_i}}$.}
\label{fig:frontrays}
\end{figure}
So, $L_i$ is the arc $r\pa{\br{0,b_i}}$ with endpoints $r(0)$ and $r\pa{b_i}$ as depicted in Figure~\ref{fig:frontrays}.
As $\Fr{L_i}{\im{r}}=\cpa{r\pa{b_i}}$, Lemma~\ref{frontcont} implies that $r\pa{b_i} \in \Fr{K_i}{X}$.
For each $i\in\Z_+$, define $a_i=\inf r^{-1}\pa{\Fr{K_i}{X}}$.
As $K_i=\Fr{K_i}{X} \sqcup \Int{K_i}{X}$, we see that $a_i\leq b_i$
(both $a_i=b_i$ and $a_i<b_i$ occur in simple examples).
As $0\in r^{-1}\pa{\interior{K_1}}$, we have $0<a_1$.
As $r^{-1}\pa{K_i}=\br{0,b_i}$ and $K_i\subseteq \Int{K_{i+1}}{X}$ is disjoint from $\Fr{K_{i+1}}{X}$,
we see that $r^{-1}\pa{\Fr{K_{i+1}}{X}}$ is a nonempty, compact subset of the interval $\pa{b_i,\infty}$.
Hence, $b_i<a_{i+1}$.
Thus, $0<a_1 \leq b_1 < a_2 \leq b_2 < \cdots$
and $r^{-1}\pa{\Fr{K_i}{X}} \subseteq \br{a_i,b_i}$ for each $i\in\Z_+$.

\begin{theorem}[Existence of retract to baseray]\label{retract}
Let $(X,r)$ be a ray-based generalized continuum.
That is, $X$ is a generalized continuum and $r:\nnr \rightarrowtail X$ is a proper embedding.
Then, there exists a proper map $\rho:X\to\nnr$ such that $\rho\circ r=\tn{id}:\nnr\to\nnr$.
In particular, $r\circ\rho:X\to\im{r}$ is a proper retraction of $X$ onto the baseray.
\end{theorem}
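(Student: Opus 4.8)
The plan is to extend the continuous function $r^{-1}:\im{r}\to\nnr$ from the closed subspace $\im{r}$ to a continuous, \emph{proper} function $\rho$ on all of $X$. Since each generalized continuum is normal by Remarks~\ref{gcremarks}\ref{gcnice}, a Tietze extension is available; the only real difficulty is arranging properness, and this is where the $r$-efficient exhaustion and the sequences $0<a_1\leq b_1<a_2\leq b_2<\cdots$ constructed in the paragraph preceding the theorem do the work. Recall from that setup that $r^{-1}\pa{K_i}=\br{0,b_i}$ and $r^{-1}\pa{\Fr{K_i}{X}}\subseteq\br{a_i,b_i}$, and that $b_i\to\infty$, whence $a_i\to\infty$ since $b_i<a_{i+1}$.

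I would decompose $X$ into the compact shells $S_0=K_1$ and $S_i=K_{i+1}-\interior{K_i}$ for $i\geq1$. These form a closed cover of $X$ that is locally finite (the open set $\interior{K_n}$ meets only $S_0,\ldots,S_{n-1}$), with $S_{i-1}\cap S_i=\Fr{K_i}{X}$ and $S_i\cap S_j=\emptyset$ for $|i-j|\geq2$. A direct check of the ray gives $r^{-1}\pa{S_i}\subseteq\br{a_i,b_{i+1}}$ for $i\geq1$ and $r^{-1}\pa{S_0}=\br{0,b_1}$. The construction proceeds in two stages. First, on each frontier $\Fr{K_i}{X}$---a compactum, hence normal---I would use Tietze to extend $r^{-1}$, which is defined on the closed subset $\Fr{K_i}{X}\cap\im{r}$ and takes values in $\br{a_i,b_i}$, to a map $\phi_i:\Fr{K_i}{X}\to\br{a_i,b_i}$. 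Second, on each shell $S_i$ with $i\geq1$ the maps $\phi_i$, $\phi_{i+1}$, and $r^{-1}$ are continuous on the three closed pieces $\Fr{K_i}{X}$, $\Fr{K_{i+1}}{X}$, and $\im{r}\cap S_i$, and they agree on overlaps (the two frontiers are disjoint, and each $\phi$ was built to extend $r^{-1}$). The pasting lemma gives a continuous map on their union with values in $\br{a_i,b_{i+1}}$, which Tietze extends to $\psi_i:S_i\to\br{a_i,b_{i+1}}$; on $S_0=K_1$ the analogous argument yields $\psi_0:K_1\to\br{0,b_1}$.

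Next I would glue: define $\rho(x)=\psi_i(x)$ for $x\in S_i$. On each overlap $S_{i-1}\cap S_i=\Fr{K_i}{X}$ both $\psi_{i-1}$ and $\psi_i$ restrict to $\phi_i$ by construction, so $\rho$ is well defined, and continuity follows from the pasting lemma for the locally finite closed cover $\cpa{S_i}$. By construction $\psi_i$ restricts to $r^{-1}$ on $\im{r}\cap S_i$, so $\rho\circ r=\tn{id}$. For properness, note $\bigcup_{i<n}S_i=K_n$, so any $x\notin K_n$ lies in some $S_i$ with $i\geq n$ and hence satisfies $\rho(x)=\psi_i(x)\geq a_i\geq a_n$. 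Given $c\geq0$, choose $n$ with $a_n>c$; then $\rho(x)\leq c$ forces $x\in K_n$, so $\rho^{-1}\pa{\br{0,c}}$ is a closed subset of the compactum $K_n$ and therefore compact. Thus $\rho$ is proper. Finally $r\circ\rho$ is proper as a composite of proper maps and restricts to the identity on $\im{r}$, since $\pa{r\circ\rho}\circ r=r\circ\pa{\rho\circ r}=r$, giving the desired retraction.

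The main obstacle is properness: an unconstrained Tietze extension of $r^{-1}$ need not be proper. The device that overcomes it is to force $\psi_i$ to take values at least $a_i$ on the $i$-th shell, so that small values of $\rho$ can occur only deep inside the exhaustion; since $a_i\to\infty$ this confines $\rho^{-1}\pa{\br{0,c}}$ to a single compactum. Keeping the prescribed boundary data inside the nested target intervals $\br{a_i,b_{i+1}}$---so that each Tietze step stays in range and the frontier values remain consistent across adjacent shells---is the one point requiring care.
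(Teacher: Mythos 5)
Your proposal is correct and follows essentially the same route as the paper: the same $r$-efficient exhaustion and sequences $a_i\leq b_i<a_{i+1}$, Tietze extension of $r^{-1}$ on each frontier $\Fr{K_i}{X}$ into $\br{a_i,b_i}$, pasting with the ray data over each shell and extending into $\br{a_i,b_{i+1}}$, and the identical properness argument via $\rho\pa{X-K_n}\subseteq[a_n,\infty)$ with $a_n\to\infty$. The only difference is organizational---you build the shell maps $\psi_i$ and glue them, whereas the paper inductively extends $\rho_i$ from $K_i$ to $K_{i+1}$---which is the same construction presented in a different order.
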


\begin{proof}
By Corollary~\ref{receexist}, there exists an $r$-efficient compact exhaustion $\cpa{K_i}$ of $X$.
We assume the setup and notation from the paragraph preceding Theorem~\ref{retract}.

We will repeatedly apply the pasting lemma and the Tietze extension theorem for continuous functions defined on closed subspaces
(see Munkres~\cite[Thms.~18.3 \& 35.1(a)]{munkres}).
The Tietze extension theorem applies since, as $X$ is a generalized continuum, $X$ is normal by Remarks~\ref{gcremarks}\ref{gcnice}.
Note that each closed subspace of a normal space is normal.
And, for each $i\in\Z_+$, we have $K_i\subseteq\Int{K_{i+1}}{X}$ and $K_i=\Fr{K_i}{X} \sqcup \Int{K_i}{X}$.

First, we define a map $\rho_1: K_1 \to \br{0,b_1}$.
Define the restriction homeomorphism $f_1=\rest{r}^{-1}:r\pa{\br{0,b_1}}\to\br{0,b_1}$.
Define the restriction map $g_1=\rest{f_1}:r\pa{\br{0,b_1}} \cap \Fr{K_1}{X} \to \br{a_1,b_1}$.
Tietze extend $g_1$ to a map $h_1: \Fr{K_1}{X} \to \br{a_1,b_1}$.
Paste $f_1$ and $h_1$ to the map $k_1:r\pa{\br{0,b_1}} \cup \Fr{K_1}{X} \to \br{0,b_1}$.
Tietze extend $k_1$ to a map $\rho_1: K_1 \to \br{0,b_1}$.

Second, we extend $\rho_1$ to a map $\rho_2: K_2 \to \br{0,b_2}$.
Define the restriction homeomorphism $f_2=\rest{r}^{-1}:r\pa{\br{b_1,b_2}}\to\br{b_1,b_2}$.
Define the restriction map $g_2=\rest{f_2}:r\pa{\br{b_1,b_2}} \cap \Fr{K_2}{X} \to \br{a_2,b_2}$.
Tietze extend $g_2$ to a map $h_2: \Fr{K_2}{X} \to \br{a_2,b_2}$.
Paste $h_1$, $f_2$, and $h_2$ to the map $k_2:\Fr{K_1}{X} \cup r\pa{\br{b_1,b_2}} \cup \Fr{K_2}{X} \to \br{a_1,b_2}$.
Tietze extend $k_2$ to a map $l_2: K_2 - \Int{K_1}{X} \to \br{a_1,b_2}$.
Paste $\rho_1$ and $l_2$ to the map $\rho_2: K_2 \to \br{0,b_2}$.

Repeat that second step inductively to obtain $\rho_i:K_i\to \br{0,b_i}$ for each $i\geq3$.
Define $\rho:X\to \nnr$ to equal $\rho_i$ on $K_i$.
By construction, $\rho$ is continuous and restricts to $r^{-1}$ on the ray $\im{r}$.
The latter implies that $\rho\circ r=\tn{id}:\nnr\to\nnr$.
Also by construction, the following hold for each $i\in\Z_+$:
$\rho\pa{K_i}=\br{0,b_i}$, $\rho\pa{\Fr{K_i}{X}} \subseteq \br{a_i,b_i}$,
and $\rho\pa{K_{i+1}-\Int{K_i}{X}} \subseteq \br{a_i,b_{i+1}}$.
The latter implies that $\rho\pa{X-K_i}\subseteq [a_i,\infty)$ for each $i\in\Z_+$
(since $\cpa{K_i}$ exhausts $X$ and $\cpa{a_i}$ is increasing).
In particular, $\rho^{-1}\pa{\br{0,a_i}} \subseteq K_{i+1}$ for each $i\in\Z_+$.
That implies $\rho$ is proper as follows.
Let $K\subseteq \nnr$ be compact.
As $\rho$ is continuous, $\rho^{-1}\pa{K}$ is closed in $X$.
As $K\subseteq \nnr$ is compact and $\cpa{a_i}$ is increasing and unbounded,
there exists $i\in\Z_+$ such that $K\subseteq \br{0,a_i}$.
So, $\rho^{-1}\pa{K} \subseteq K_{i+1}$.
As $\rho^{-1}\pa{K}$ is closed in $X$ and contained in the compactum $K_{i+1}$, we see that $\rho^{-1}\pa{K}$ is compact, as desired.

Lastly, $r\circ\rho:X\to\im{r}$ is a composition of proper maps and, hence, is a proper map.
We must show that $r\circ\rho$ equals the identity on $\im{r}$.
Let $r(a)\in\im{r}$.
Then, $r\circ\rho \pa{r(a)}=r\pa{\rho\circ r(a)}=r(a)$ since $\rho\circ r=\tn{id}:\nnr\to\nnr$.
Therefore, $r\circ\rho:X\to\im{r}$ is a proper retraction of $X$ onto the baseray.
\end{proof}

We close this section with observations on Alexandroff's and Freudenthal's compactifications of metrizable generalized continua.
Given a generalized continuum $X$, we let $A(X)=X\sqcup\cpa{\infty}$ denote Alexandroff's one-point compactification of $X$
(see Dugundji~\cite[p.~246]{dugundji} or Munkres~\cite[Thm.~29.1]{munkres}).
Recall that $A(X)$ is equipped with the topology consisting of sets that are open in $X$
or have the form $A(X)-K$ for some compactum $K\subseteq X$.

\begin{lemma}\label{Anbi}
Let $X$ be a generalized continuum, and let $\cpa{K_i}$ be an efficient compact exhaustion of $X$.
Then, $A(X)-K_i$ for $i\in\Z_+$ is a connected, countable neighborhood basis of $\infty$ in $A(X)$.
In particular, $A(X)$ is connected if and only if $X$ is noncompact.
\end{lemma}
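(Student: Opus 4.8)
The plan is to read off the neighborhood basis claims directly from the definition of the Alexandroff topology, to establish connectedness using the \emph{efficiency} of the exhaustion, and finally to deduce the stated equivalence. First, recall that a set containing $\infty$ is open in $A(X)$ precisely when it has the form $A(X)-K$ for some compactum $K\subseteq X$ (a set open in $X$ is a subset of $X$ and so cannot contain $\infty$). Since each $K_i$ is compact, every $A(X)-K_i$ is an open neighborhood of $\infty$, and there are countably many of them. To see they form a basis at $\infty$, let $A(X)-K$ be an arbitrary open neighborhood of $\infty$; because $\cpa{K_i}$ is a compact exhaustion, there is an $i$ with $K\subseteq\interior{K_i}\subseteq K_i$, and hence $A(X)-K_i\subseteq A(X)-K$. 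This yields the countable neighborhood basis.

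Next I would show each $A(X)-K_i=\pa{X-K_i}\cup\cpa{\infty}$ is connected, which is where efficiency does the real work. The key claim is that $\infty$ lies in the closure $\Cl{C}{A(X)}$ of every component $C$ of $X-K_i$. Indeed, efficiency of $K_i$ makes each such $C$ unbounded, so $\closure{C}$ (closure in $X$) is noncompact; consequently $C\not\subseteq K_j$ for every $j$, since otherwise $\closure{C}\subseteq K_j$ would be compact. Thus $C$ meets every $A(X)-K_j$, and because those sets form a neighborhood basis at $\infty$ we conclude $\infty\in\Cl{C}{A(X)}$. Since $C$ is connected in $X$, hence in $A(X)$, and $C\subseteq C\cup\cpa{\infty}\subseteq\Cl{C}{A(X)}$, the set $C\cup\cpa{\infty}$ is connected. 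Writing $X-K_i$ as the disjoint union of its components, we get $A(X)-K_i=\cup_C\pa{C\cup\cpa{\infty}}$, a union of connected sets sharing the common point $\infty$, hence connected. (When $X=K_i$ this union is empty and $A(X)-K_i=\cpa{\infty}$ is trivially connected, consistent with the compact case.)

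Finally, for the equivalence: if $X$ is compact then Remarks~\ref{gcremarks}\ref{cgcuece} force $K_1=X$, so $A(X)-K_1=\cpa{\infty}$ and $X$ are disjoint nonempty open sets covering $A(X)$, whence $A(X)$ is disconnected. If $X$ is noncompact, then $K_1\neq X$, so $X-K_1=X\cap\pa{A(X)-K_1}$ is nonempty; as $X$ is connected (being a generalized continuum) and $A(X)-K_1$ is connected by the previous step, and $A(X)=X\cup\pa{A(X)-K_1}$, the union of these two connected sets with nonempty intersection is connected. The only mildly delicate point is the closure computation $\infty\in\Cl{C}{A(X)}$, which is exactly the place where efficiency is used; the remainder is bookkeeping with the one-point topology and the standard facts that a set trapped between a connected set and its closure is connected, and that connected sets with a common point have connected union.
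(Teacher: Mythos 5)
Your proof is correct and follows essentially the same route as the paper's: the neighborhood-basis claim is read off directly from the Alexandroff topology, efficiency forces every component of $X-K_i$ to be unbounded and hence to have $\infty$ in its closure, and connectedness follows by gluing these components at the common point $\infty$. The only cosmetic difference is that you verify $\infty\in\Cl{C}{A(X)}$ against the basic neighborhoods $A(X)-K_j$ (after first establishing that they form a basis at $\infty$), whereas the paper checks against an arbitrary neighborhood $A(X)-K$ directly.
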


\begin{proof}
First, suppose $X$ is compact.
Then, efficiency implies that $K_i=X$ for each $i\in\Z_+$.
In particular, $\cpa{\infty}$ is open in $A(X)$ and  
$A(X)$ is homeomorphic to the topological disjoint union of $X$ and the isolated point $\infty$.

Next, suppose $X$ is noncompact.
For each $i\in\Z_+$, let $V_i=X-K_i$ with (unbounded) components $V_i^1,\ldots,V_i^{j_i}$.
We claim that $\infty \in \Cl{V_i^j}{A(X)}$ for each $V_i^j$.
Consider $A(X)-K$ a neighborhood of $\infty$ in $A(X)$ where $K\subseteq X$ is compact.
As $V_i^j$ is unbounded, $V_i^j$ is not contained in $K$.
Thus, $V_i^j$ meets $X-K_i$ and meets $A(X)-K_i$.
That proves the claim.
As $V_i^j$ is connected and $\infty \in \Cl{V_i^j}{A(X)}$, it follows that $V_i^j\sqcup \cpa{\infty}$ is connected.
Thus, $\cup_{j=1}^{j_i} V_i^j\sqcup \cpa{\infty}$ is connected.
As $K_i$ is efficient, $V_i$ has no bounded component and that union equals $A(X)-K_i$.
So, each $A(X)-K_i$ is connected.
Let $A(X)-K$ be a neighborhood of $\infty$ in $A(X)$ where $K\subseteq X$ is compact.
There exists $i\in\Z_+$ such that $K\subseteq K_i$.
Hence, $A(X)-K_i \subseteq A(X)-K$, as desired.
\end{proof}

\begin{lemma}\label{Xcountablebasis}
If $X$ is a metrizable generalized continuum, then $X$ has a countable dense set and a countable basis.
\end{lemma}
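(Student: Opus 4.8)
The plan is to deduce both conclusions from the fact, already recorded in Remarks~\ref{gcremarks}\ref{gcnice}, that every generalized continuum is Lindel\"{o}f. Since $X$ is in addition metrizable by hypothesis, I would invoke the standard result that a Lindel\"{o}f metric space is second countable (Munkres~\cite[p.~194]{munkres})---exactly the implication already used in Example~\ref{nmgc}---to produce a countable basis $\mathcal{B}$ for $X$. This immediately gives the asserted countable basis.

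With a countable basis in hand, the countable dense set comes essentially for free. From each nonempty member of $\mathcal{B}$ I would select a single point, obtaining a countable set $D\subseteq X$. Every nonempty open subset of $X$ contains some nonempty basic open set, and that basic open set contains a point of $D$; hence $D$ meets every nonempty open set and is therefore dense in $X$. This furnishes the countable dense set.

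The only real content of the argument is the covering-theoretic fact that Lindel\"{o}f plus metrizable yields a countable basis; everything else is assembling results proved or cited earlier in the paper, so I anticipate no genuine obstacle. If one preferred to avoid quoting Lindel\"{o}fness, an alternative route would use $\sigma$-compactness directly: write $X=\bigcup_i K_i$ with each $K_i$ compact (for instance via the efficient compact exhaustion of Theorem~\ref{gceee}), observe that each compact metric space $K_i$ is separable with a countable dense subset $D_i$ (by total boundedness), and check that $D=\bigcup_i D_i$ is dense in $X$, using that each $K_i$ is closed in $X$ so that the closure of $D_i$ in $X$ contains $K_i$; second countability of the separable metric space $X$ then follows. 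Either way the proof is short.
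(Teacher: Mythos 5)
Your proposal is correct and follows essentially the same route as the paper: both derive Lindel\"{o}fness from Remarks~\ref{gcremarks}\ref{gcnice} and then invoke the standard equivalence, for metric spaces, of Lindel\"{o}f, separable, and second countable (the paper cites Dugundji and Engelking for the full equivalence, while you spell out the second-countable-implies-separable step by hand). The $\sigma$-compactness alternative you sketch is also fine but is not needed.
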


\begin{proof}
As $X$ is a generalized continuum, $X$ is Lindel{\"o}f by Remarks~\ref{gcremarks}\ref{gcnice}.
As $X$ is a metric space, the following are equivalent: $X$ is Lindel{\"o}f, $X$ has a countable dense set,
and $X$ has a countable basis---see Dugundji~\cite[p.~187]{dugundji} or Engelking~\cite[p.~256]{engelking}.
\end{proof}

\begin{lemma}\label{AFcountablebasis}
If $X$ is a generalized continuum with a countable basis, then both $A(X)$ and $F(X)$ have a countable basis.
\end{lemma}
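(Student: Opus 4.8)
The plan is to build explicit countable bases for $A(X)$ and $F(X)$ by enlarging a countable basis for $X$ with the distinguished neighborhoods of the added points. Fix a countable basis $\mathcal{B}$ of $X$, which exists by hypothesis, and fix an efficient compact exhaustion $\cpa{K_i}$ of $X$, which exists by Theorem~\ref{gceee}. As usual, write $V_i=X-K_i$ and $\cpa{V_i^j}=\uc{V_i}$. Throughout I would use the pointwise criterion for a basis: a collection of open sets is a basis precisely when, for every open $U$ and every point $x\in U$, some member $B$ of the collection satisfies $x\in B\subseteq U$.

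For $A(X)$, I would take $\mathcal{B}\cup\cpa{A(X)-K_i \mid i\in\Z_+}$, which is countable. To verify the criterion, consider first a point $x\in X$ in an open set $U$ of $A(X)$; since $X$ is open in $A(X)$, the set $U\cap X$ is open in $X$ and contains $x$, so some $B\in\mathcal{B}$ gives $x\in B\subseteq U\cap X\subseteq U$. For the point $\infty$, any open neighborhood is $A(X)-K$ for a compactum $K\subseteq X$, and Lemma~\ref{Anbi} supplies an index $i$ with $\infty\in A(X)-K_i\subseteq A(X)-K$. For $F(X)$, I would instead adjoin the distinguished basic sets $FV_k^l=V_k^l\sqcup EV_k^l$ from~\eqref{basisF}, taking $\mathcal{B}\cup\cpa{FV_k^l \mid k\in\Z_+,\ 1\leq l\leq\card{\uc{V_k}}}$. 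The points of $X$ are handled exactly as before, using that $X$ is open in $F(X)$ by Theorem~\ref{endcompthm}. For an end $\e\in E(X)$ in an open set $U$, recall that the sets $FV_k^l$ together with the open subsets of $X$ already form a basis for $F(X)$; since no open subset of $X$ contains an end, some $FV_k^l$ satisfies $\e\in FV_k^l\subseteq U$, and this set lies in the adjoined family.

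The only point needing care is the countability of the adjoined families. For $A(X)$ this is immediate. For $F(X)$, countability of $\cpa{FV_k^l}$ relies on the fact that each $\uc{V_k}$ is \emph{finite}, by Lemma~\ref{compactlemma}\ref{finunbdedcomps}; thus the family is a countable union of finite sets and hence countable. I expect no serious obstacle beyond recording these observations and confirming the pointwise basis condition in the point-in-$X$ and added-point cases.
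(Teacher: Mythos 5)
Your proposal is correct and follows essentially the same route as the paper: both adjoin to a countable basis of $X$ the countable neighborhood basis $\cpa{A(X)-K_i}$ of $\infty$ from Lemma~\ref{Anbi} in the $A(X)$ case, and the countable family of basic open sets $FV_k^l$ (countable because each $\uc{V_k}$ is finite) in the $F(X)$ case. The paper states this in one line; you have merely spelled out the pointwise basis verification, which is fine.
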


\begin{proof}
By Theorem~\ref{gceee}, $X$ has an efficient compact exhaustion $\cpa{K_i}$ where $i\in\Z_+$.
For $A(X)$, use the countable basis of $X$ together with the countable neighborhood basis of $\infty$ from Lemma~\ref{Anbi}
given by $A(X)-K_i$ where $i\in\Z_+$.
For $F(X)$, use the countable basis of $X$ together with the countable collection of all basic open sets $FV_k^l$ defined in~\eqref{basisF}.
\end{proof}

\begin{lemma}\label{AFmetrizable}
If $X$ is a metrizable generalized continuum, then both $A(X)$ and $F(X)$ are metrizable.
\end{lemma}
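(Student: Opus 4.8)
The plan is to deduce metrizability from the Urysohn metrization theorem, which states that every regular space with a countable basis is metrizable (Munkres~\cite[p.~215]{munkres}). Accordingly, for each of $A(X)$ and $F(X)$ I would verify two hypotheses: second countability and regularity (the latter incorporating the Hausdorff separation built into Munkres' notion of regular). Both hypotheses have essentially been prepared by the preceding lemmas, so I expect the argument to be short.

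First I would handle second countability. Since $X$ is a metrizable generalized continuum, Lemma~\ref{Xcountablebasis} shows that $X$ has a countable basis. Lemma~\ref{AFcountablebasis} then yields directly that both $A(X)$ and $F(X)$ have countable bases. For regularity, the cleanest route is to observe that each compactification is compact Hausdorff, whence normal and in particular regular. For $F(X)$ this is immediate from Theorem~\ref{endcompthm}, which asserts that $F(X)$ is a compact generalized continuum; generalized continua are Hausdorff by definition. For $A(X)$, the space $X$ is locally compact and Hausdorff (being a generalized continuum), so its one-point compactification $A(X)$ is compact and Hausdorff by the standard properties of the Alexandroff compactification, and this holds whether or not $X$ is itself compact. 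With regularity and second countability in hand for both spaces, the Urysohn metrization theorem gives that $A(X)$ and $F(X)$ are metrizable.

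I do not anticipate a genuine obstacle: every needed ingredient follows from a lemma stated earlier. The only point warranting a word of care is the Hausdorff property of $A(X)$, which rests squarely on the local compactness of $X$; once that is noted, compactness upgrades Hausdorffness to normality and hence regularity, and the metrization theorem applies uniformly to both compactifications.
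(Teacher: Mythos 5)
Your proposal is correct and follows essentially the same route as the paper's proof: countable bases for $A(X)$ and $F(X)$ via Lemmas~\ref{Xcountablebasis} and~\ref{AFcountablebasis}, regularity from compact plus Hausdorff, and then Urysohn's metrization theorem. The extra care you give to the Hausdorff property of $A(X)$ via local compactness of $X$ is sound and consistent with the paper's treatment.
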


\begin{proof}
By Lemma~\ref{Xcountablebasis}, $X$ has a countable basis.
By Lemma~\ref{AFcountablebasis}, both $A(X)$ and $F(X)$ have a countable basis.
As both $A(X)$ and $F(X)$ are compact and Hausdorff, $A(X)$ and $F(X)$ are regular (see also Munkres~\cite[Thm.~32.3]{munkres}).
By Urysohn's metrization theorem (see Munkres~\cite[p.~215]{munkres}), $A(X)$ and $F(X)$ are metrizable.
\end{proof}

\begin{theorem}\label{AFnice}
Let $X$ be a metrizable generalized continuum.
Then, $F(X)$ is a compact, metrizable generalized continuum.
If further $X$ is noncompact, then $A(X)$ is a compact, metrizable generalized continuum.
\end{theorem}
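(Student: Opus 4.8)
The plan is to assemble the previously established results, treating $F(X)$ and $A(X)$ separately, since each is essentially a packaging of earlier lemmas.

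For $F(X)$, the work is already done. Theorem~\ref{endcompthm} shows that $F(X)$---computed with respect to any compact exhaustion of $X$---is a compact generalized continuum, and Corollary~\ref{canhom} guarantees this is independent of the exhaustion. By Lemma~\ref{AFmetrizable}, the metrizability of $X$ forces $F(X)$ to be metrizable. Combining those two facts immediately yields that $F(X)$ is a compact, metrizable generalized continuum, with no further argument required.

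For $A(X)$ under the additional hypothesis that $X$ is noncompact, I would verify the defining properties of a generalized continuum one at a time. Since $X$ is a generalized continuum, it is locally compact and Hausdorff, so its Alexandroff compactification $A(X)$ is compact and Hausdorff; being compact, it is automatically $\sigma$-compact and locally compact. Metrizability is furnished by Lemma~\ref{AFmetrizable}, and connectedness is precisely the content of Lemma~\ref{Anbi} in the noncompact case. The one remaining property---local connectedness---is the only step that demands genuine care, and I would check it pointwise.

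To establish local connectedness, first note that $X$ is open in $A(X)$ and is itself locally connected, so each point of $X$ has arbitrarily small connected open neighborhoods lying in $X$, which remain open in $A(X)$. At the added point $\infty$, fix an efficient compact exhaustion $\cpa{K_i}$ of $X$ via Theorem~\ref{gceee}; by Lemma~\ref{Anbi} the sets $A(X)-K_i$ form a connected, countable neighborhood basis of $\infty$, providing arbitrarily small connected open neighborhoods there. Hence $A(X)$ is locally connected at every point, and combined with the properties above it is a compact, metrizable generalized continuum. The only subtle point is local connectedness at $\infty$, but the connected neighborhood basis supplied by the efficiency of $\cpa{K_i}$ in Lemma~\ref{Anbi} disposes of it directly.
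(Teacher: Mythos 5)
Your proposal is correct and follows essentially the same route as the paper: Theorem~\ref{endcompthm} plus Lemma~\ref{AFmetrizable} for $F(X)$, and for $A(X)$ the combination of compactness and Hausdorffness of the one-point compactification, Lemma~\ref{AFmetrizable} for metrizability, and Lemma~\ref{Anbi} for both connectedness and the connected neighborhood basis at $\infty$ giving local connectedness. The paper's proof is just a terser version of the same argument.
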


\begin{proof}
By Theorem~\ref{endcompthm}, $F(X)$ is a compact generalized continuum.
By Lemma~\ref{AFmetrizable}, $F(X)$ is metrizable.
Whether or not $X$ is noncompact, $A(X)$ is compact (hence $\sigma$-compact and locally compact), Hausdorff,
locally connected, and metrizable (by Lemma~\ref{AFmetrizable}).
For local connectedness of $A(X)$, use local connectedness of $X$ and Lemma~\ref{Anbi}
(see also de Groot and McDowell~\cite[Thm.~4.1]{dgmc}).
Finally, $A(X)$ is connected if and only if $X$ is noncompact by Lemma~\ref{Anbi}.
\end{proof}

\begin{remark}\label{shortproofraysexist}
We may now give a slick proof of the existence of a baseray in any noncompact, metrizable generalized continuum $X$:
apply the arcwise connectedness theorem to obtain an arc in $A(X)$ from $p\in X$ to $\infty$,
and then delete the endpoint $\infty$ to obtain a baseray.
While that proof is conceptually simple, it does not allow one to ensure that the resulting baseray points to a specified end of $X$
as was obtained in Theorem~\ref{raysexist}.
One may apply the same argument using $F(X)$ in place of $A(X)$.
Doing so, one may ensure that the resulting baseray points to a specified \textit{isolated} end of $X$.
Recall that an end $\e$ of $X$ is \deffont{isolated} provided $\e$ is an isolated point of $E(X)$. 
\end{remark}

\begin{remark}
We believe that the main results of this section---existence of a baseray and a proper retract to each baseray---extend to trees.
See Section~\ref{sec:fd} for more precise statements.
\end{remark}

\section{Reduced cohomology}
\label{sec:reducedcohomology}

In this section, we compare three standard definitions of reduced cohomology.
For clarity, we use integer coefficients.
Let $X$ be a nonempty space with finitely many path-components $X_1,\ldots,X_n$.
Let $\cpa{\bullet}$ denote a one-point space.
For the first definition, consider the augmented singular chain complex of $X$.
Apply the $\hom{}{-}{\Z}$ functor to that complex to obtain the dual augmented cochain complex of $X$.
Define reduced cohomology $\rzco{\ast}{X}$ of $X$ to be kernel mod image of that dual complex.
Equivalently, there is a unique map $q:X\to \cpa{\bullet}$.
That map induces a ring homomorphism $q^{\ast}:\zco{\ast}{\bullet}\to \zco{\ast}{X}$ and a short exact sequence
\begin{equation}\label{cokernelses}
0 \leftarrow \cokernel{q^{\ast}} \leftarrow \zco{\ast}{X} \xleftarrow{q^{\ast}} \zco{\ast}{\bullet} \leftarrow 0
\end{equation}
Reduced cohomology of $X$ is the cokernel of $q^{\ast}$.
For the second definition, choose a basepoint $\bullet\in X$.
Equivalently, choose a map $i:\cpa{\bullet}\to X$.
Define reduced cohomology of $X$ to be the relative cohomology of the pair $\pa{X,\bullet}$, namely $\rzco{\ast}{X}=\zco{\ast}{X,\bullet}$.
Equivalently, the map $i:\cpa{\bullet}\to X$ induces a ring homomorphism $i^{\ast}:\zco{\ast}{X}\to \zco{\ast}{\bullet}$ and a short exact sequence
\begin{equation}\label{kernelses}
0 \leftarrow \zco{\ast}{\bullet} \xleftarrow{i^{\ast}} \zco{\ast}{X} \leftarrow \zco{\ast}{X,\bullet} \leftarrow 0
\end{equation}
Reduced cohomology of $X$ is the kernel of $i^{\ast}$, which is an ideal of $\zco{\ast}{X}$.

For any map $i:\cpa{\bullet}\to X$, the composition $\cpa{\bullet} \xrightarrow{i} X \xrightarrow{q} \cpa{\bullet}$ is the identity
and both sequences~\eqref{cokernelses} and~\eqref{kernelses} split.
So, both approaches yield splittings
\begin{equation}\label{reducedsplitting2}
\zco{\ast}{X} \cong \rzco{\ast}{X} \eds \zco{\ast}{\bullet}
\end{equation}
and isomorphic $\Z$-modules $\rzco{k}{X}$ for each dimension $k$.
Recall that $\zho{0}{X}$ is canonically isomorphic to the free abelian group $\fg{X_1,\ldots,X_n}\cong\Z^n$
with basis the set $\cpa{X_1,\ldots,X_n}$ of path-components of $X$.
By the universal coefficient theorem, $\zco{0}{X}\cong \hom{}{\zho{0}{X}}{\Z}$ is canonically isomorphic
to the free abelian group $\fg{\delta X_1,\ldots,\delta X_n}\cong \Z^n$
where $\delta X_i$ denotes the dual generator of $X_i$ that sends $X_i\mapsto1$ and $X_j\mapsto0$ for all $j\neq i$.
Thus, $q^{\ast}:\zco{\ast}{\bullet}\to \zco{\ast}{X}$ is the function $\zco{0}{\bullet}\to \zco{0}{X}$
sending $\delta\bullet \mapsto \delta X_1+\cdots+\delta X_n$.
Using the canonical isomorphisms, $q^{\ast}$ is the diagonal function
$\Z\cong\zco{0}{\bullet}\to \zco{0}{X}\cong \Z^n$ given by $1\mapsto \pa{1,\ldots,1}$.
So, $\im{q^{\ast}} = \Delta \cong \Z$ is the diagonal subgroup of $\zco{0}{X}$ generated
by the element $\delta X_1+\cdots+\delta X_n$.
Therefore, the first definition gives $\rzco{0}{X} \cong \zco{0}{X}/\Delta$, where
$\Delta$ is an ideal of $\zco{\ast}{X}$ if and only if $X$ is acyclic.
And, the second definition gives $\rzco{0}{X} = \fg{\delta X_1,\ldots,\delta X_{k-1},\delta X_{k+1},\ldots, \delta X_n}$
where the basepoint $\bullet$ lies in $X_k$.

If $X$ is path-connected, then the two definitions yield canonically isomorphic, graded, associative $\Z$-algebras $\rzco{\ast}{X}$ with $\rzco{0}{X}=\cpa{0}$.
Those algebras are unital precisely when $X$ is acyclic and $\rzco{\ast}{X}=\cpa{0}$ is the zero ring.
The fact that the two definitions agree when $X$ is path-connected may be the reason the standard literature does not emphasize
that they differ, as we now explain.

Suppose $X$ has more than one path-component.
Then, the first definition yields the sequence~\eqref{cokernelses} which does not split naturally in $X$
and multiplication in $\rzco{0}{X}$ is not well-defined.
For a simple example, consider the zero-sphere $S^0=\cpa{\pm1}$.
Notice that $\rzco{\ast}{S^0}=\rzco{0}{S^0}\cong \Z^2 /\Delta$
where the diagonal subgroup $\Delta$ is not an ideal of $\Z^2$ and
coordinatewise multiplication is not well-defined on $\Z^2 /\Delta$.
The sequence~\eqref{cokernelses} becomes
\begin{equation}\label{seszs}
\begin{tikzcd}
0	& \arrow[l] \Z^2/\Delta	& \arrow[l] \Z^2	\arrow[r,bend left=45,dashed,"s"]	& \arrow[l,"q^{\ast}"] \Z	& \arrow[l] 0
\end{tikzcd}
\end{equation}
where $q^{\ast}$ is the diagonal function.
Each splitting of~\eqref{seszs} is given by a matrix $s=\begin{bmatrix} a & 1-a \end{bmatrix}$ for some $a\in\Z$.
The only splitting compatible with the map $f:\cpa{\bullet}\to S^0$ defined by $f(\bullet)=1$ is $\begin{bmatrix} 1 & 0 \end{bmatrix}$.
The only splitting compatible with the map $g:\cpa{\bullet}\to S^0$ defined by $g(\bullet)=-1$ is $\begin{bmatrix} 0 & 1 \end{bmatrix}$.
So, no splitting of~\eqref{seszs} is natural.
On the other hand, the sequence~\eqref{kernelses} always splits naturally in $X$ for based maps,
and all relative cup products are well-defined---see Hatcher~\cite[pp.~199--214]{hatcher} and May~\cite[pp.~143--148]{maycc}.
The unique natural splitting of~\eqref{kernelses} is given by $q^{\ast}:\zco{\ast}{\bullet}\to \zco{\ast}{X}$---uniqueness follows
by considering naturality for the retraction map $q:X\to\cpa{\bullet}$.

For the homotopical approach, let $\br{X,Y}$ denote the set of based homotopy classes of based maps from $X$ to $Y$.
For each integer $n\geq 0$, define $\rzco{n}{X}=\br{X,K(\Z,n)}$ where $K(\Z,n)$ is an Eilenberg-Mac Lane space---see May~\cite[Ch.~22]{maycc}.
Recall that $K(\Z,n)$ is connected for each $n>0$ and that $\Z$ is a $K(\Z,0)$.
So,
\[
\rzco{0}{X}=\br{X,\Z}\cong \fg{\delta X_1,\ldots,\delta X_{k-1},\delta X_{k+1},\ldots, \delta X_n}
\]
where the basepoint $\bullet$ lies in $X_k$.
Thus, the homotopical approach and the approach using a basepoint yield isomorphic results.

To summarize, defining reduced cohomology via augmentation has the apparent advantage of avoiding a choice of basepoint,
but then~\eqref{reducedsplitting2} does not split naturally and cup products involving dimension-zero classes are generally not defined.
The alternative definition requires an initial choice of basepoint.
In return, the resulting graded, associative $\Z$-algebra has all cup products well-defined and~\eqref{reducedsplitting2} splits naturally.
For those reasons, the latter approach is preferred\footnote{Axioms
for a reduced homology theory of based spaces first appeared in Dold and Thom~\cite{doldthom} who attribute them to Puppe.
For a sampling of textbooks that use the augmentation approach, see
Eilenberg and Steenrod~\cite[pp.~18--19 \& 190--191]{eilenbergsteenrod},
Spanier~\cite[pp.~168 \& 237]{spanier},
Dold~\cite[pp.~33--34]{dold},
Massey~\cite[pp.~40--43]{massey}, and
Bredon~\cite[pp.~172--185]{bredon}.
Textbooks that use the based approach or mention both approaches include
May~\cite[pp.~97, 105--109, \& 143--148]{maycc},
Hatcher~\cite[pp.~110--126, 160--161, \& 199--214]{hatcher}, and
tom Dieck~\cite[pp.~252 \& 407]{tomdieck}.}
and is the one we will use.

\section{End cohomology}
\label{sec:endcohomology}

In this section, we review the definition of end cohomology and its basic properties.
Then, we introduce our definition of reduced end cohomology and prove a splitting result.
Finally, we study dimension-zero end cohomology in greater detail.
Along the way, we give several illustrative examples.
Throughout this section the coefficient ring $R$ is an arbitrary PID,
except where integer coefficients are explicitly indicated.

We adopt the direct limit approach to end cohomology\footnote{For background on direct systems and direct limits,
see Eilenberg and Steenrod~\cite[Ch.~VIII]{eilenbergsteenrod} and Massey~\cite[A.1~\&~A.2]{massey}.
In particular, the direct limit of a direct system is the \emph{universal repelling object} in the sense that
any compatible collection of homomorphisms from the direct system to an object yields a unique homomorphism from the direct limit to that object.}.
Let $X$ be a generalized continuum.
By Theorem~\ref{gceee}, $X$ may be exhausted by compacta.
Define the poset $\pa{\mathcal{K},\leq}$ where $\mathcal{K}$ is the set of compacta in $X$, and $K\leq K'$ means \mbox{$K\subseteq K'$}.
We have a direct system of graded $R$-algebras $\zco{\ast}{X-K}$ where $K\in\mathcal{K}$ and
the morphisms are inclusion induced restrictions.
Define $\zeco{\ast}{X}$, the \deffont{end cohomology $R$-algebra} of $X$, to be the direct limit of that direct system.
For the relative version, let $(X,A)$ be a \deffont{closed pair}, namely a generalized continuum $X$ together with a closed subspace $A\subseteq X$.
Closed pairs are required to ensure properness of induced maps.
Regard $X$ as the closed pair $(X,\emptyset)$.
Consider the direct system $\zco{\ast}{X-K,A-K}$ where $K\in\mathcal{K}$ and the morphisms are inclusion induced restrictions.
Define $\zeco{\ast}{X,A}$, the \deffont{end cohomology $R$-algebra} of $(X,A)$, to be the direct limit of that direct system.

\sloppy We employ a standard explicit model of the direct limit---see~\cite[p.~222]{eilenbergsteenrod} or \hbox{\cite[p.~384]{massey}}---where
an element of $\zeco{\ast}{X,A}$ is represented by an element of $\zco{\ast}{X-K,A-K}$ for some compactum $K\subseteq X$.
Two representatives $\alpha \in \zco{\ast}{X-K,A-K}$ and $\alpha' \in \zco{\ast}{X-K',A-K'}$
are equivalent provided they have the same restriction in some \hbox{$\zco{\ast}{X-K'',A-K''}$},
where $K,K'\subseteq K''$ are compacta in $X$.

Let $\cpa{K_i}$ be an exhaustion of $X$ by compacta.
As $\cpa{K_i}$ is cofinal in $\mathcal{K}$,
we may compute $\zeco{\ast}{X,A}$ using the direct system indexed by $\Z_+$.
Namely, there is a canonical isomorphism (see~\cite[p.~224]{eilenbergsteenrod})
\begin{equation}\label{eq:direct_limit}
    \zeco{\ast}{X,A} \cong \dlim \zco{\ast}{X - K_i,A-K_i}
\end{equation}
Note that we may delete instead the topological interior $K_i^\circ$ of $K_i$ to obtain the canonical isomorphism (see~\cite[p.~467]{cgh})
\begin{equation}\label{eq:direct_limit_int}
    \zeco{\ast}{X,A} \cong \dlim \zco{\ast}{X - K_i^\circ,A-K_i^\circ}
\end{equation}

Let $(Y,B)$ be another closed pair.
Let $f:(Y,B) \to (X,A)$ be a proper map of closed pairs, meaning $f:Y\to X$ is a proper map and $f(B)\subseteq A$.
It follows that $\rest{f}:B\to A$ is proper and $f$ induces an $R$-algebra morphism $f^{\ast}_{\tn{e}}:\zeco{\ast}{X,A}\to \zeco{\ast}{Y,B}$.
If $\tn{id}:(X,A)\to(X,A)$, then $\tn{id}^{\ast}_{\tn{e}}=\tn{id}$.
If $g:(Z,C) \to (Y,B)$ is another proper map of closed pairs, then $(f\circ g)^{\ast}_{\tn{e}}=g^{\ast}_{\tn{e}} \circ f^{\ast}_{\tn{e}}$.
So, end cohomology is a contravariant functor from the category of proper maps of closed pairs to the category of $R$-algebras.

Let $(X,A)$ and $(Y,B)$ be closed pairs.
Let $f$ and $g$ be proper homotopy equivalent proper maps $(Y,B) \to (X,A)$.
Then, $f^{\ast}_{\tn{e}}=g^{\ast}_{\tn{e}}$.
It follows that if $(X,A)$ and $(Y,B)$ are proper homotopy equivalent, then $\zeco{\ast}{X,A}$ and $\zeco{\ast}{Y,B}$ are isomorphic.
So, the isomorphism type of the end cohomology $R$-algebra of $(X,A)$ is an invariant of the proper homotopy type of $(X,A)$.

End cohomology has a long exact sequence for each closed pair and closed triple, satisfies excision for each excisive triad,
and satisfies a Mayer-Vietoris theorem---see~\cite[$\S$~2.3]{cgh} for details on those facts and others stated above.
Those long exact sequences are natural\footnote{For naturality, let $f:\pa{Y,B}\to\pa{X,A}$ be a proper map of closed pairs.
Consider the biinfinite commutative diagram for $\pa{X,A}$ in~\cite[p.~469]{cgh}
and the analogous diagram for $\pa{Y,B}$ (see~\cite[p.~468]{cgh}).
Stack the former directly atop the latter.
Restrictions of $f$ induce vertical homomorphisms straight down from the former to the latter.
Altogether, the entire diagram commutes.
The direct limit of that diagram yields the desired commutative diagram with two rows---the long exact end cohomology sequences
for $\pa{X,A}$ and $\pa{Y,B}$ respectively---and induced homomorphisms from the $(X,A)$ sequence to the $(Y,B)$ sequence.}.
The end cohomology algebra of each topological disjoint union of finitely many closed pairs is canonically isomorphic to the product of the
end cohomology algebras of the closed pairs.

As we discussed in Section~\ref{sec:reducedcohomology}, 
reduced cohomology of a space $X$ is defined to be the relative cohomology of the based space $(X,\bullet)$
for some choice of basepoint $\bullet\in X$.
Analogously, we define reduced end cohomology as follows.
Let $(X,r)$ be a ray-based generalized continuum.
Recall that a \deffont{baseray} is a proper embedding $r:\nnr \rightarrowtail X$.
If $X$ is a noncompact, metrizable generalized continuum, then Theorem~\ref{raysexist} ensures the existence of a baseray $r$ in $X$.
We define the \deffont{reduced end cohomology $R$-algebra} of $(X,r)$ to be
\begin{equation}\label{eq:recdef}
\rzeco{\ast}{X}=\zeco{\ast}{X,r}
\end{equation}
For each integer $k$, we define the $k$-dimensional \deffont{reduced end cohomology $R$-module} of $(X,r)$ to be
\[
\rzeco{k}{X}=\zeco{k}{X,r}
\]
When the baseray is clear from context, we write $\rzeco{\ast}{X}$.
Otherwise, we write $\zeco{\ast}{X,r}$.
Given another ray-based generalized continuum $(Y,s)$ and a proper map $f:Y\to X$,
we say that $f$ is \deffont{ray-based} provided $r=f\circ s$.

\begin{example}\label{ecfirstexample}\mbox{ \\ }
Let $X$ be a compact generalized continuum.
Define \hbox{$S(X)=\nnr\times X$} to be the \deffont{stringer based on} $X$.
Notice that $S(X)$ is a one-ended generalized continuum with efficient compact exhaustion $\br{0,i}\times X$, for $i\in\Z_+$,
by Lemmas~\ref{AxBgc}, \ref{gcprodIlemma}, and~\ref{endsYxI}.
If $X$ is based at $p\in X$, then the stringer $S(X)$ is equipped with the baseray $r=\nnr\times\cpa{p}$
as depicted in Figure~\ref{fig:sfunctor}.
\begin{figure}[htbp!]
    \centerline{\includegraphics[scale=1.0]{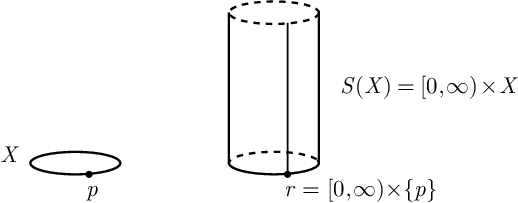}}
    \caption{Based space $X$ and ray-based stringer $S(X)=\nnr\times X$.}
\label{fig:sfunctor}
\end{figure}

There are canonical isomorphisms
\begin{align*}
\zeco{\ast}{S(X)}	&\cong	\zco{\ast}{X}\\
\rzeco{\ast}{S(X)} &= \zeco{\ast}{S(X),r} \cong \zco{\ast}{X,p} =\rzco{\ast}{X}
\end{align*}
In particular, for a ray we have
\begin{align*}
\zeco{\ast}{\nnr}	&=\zeco{0}{\nnr} \cong \zco{0}{\cpa{0}} \cong R\\
\rzeco{\ast}{\nnr} &=\cpa{0}
\end{align*}
If $f:(Y,q)\to (X,p)$ is a based map of compact generalized continua,
then we define $S(f):S(Y)\to S(X)$ by $(t,y)\mapsto (t,f(y))$.
It is straightforward to verify that $S(f)$ is a proper, ray-based map.
And, $S$ is a covariant functor from the category of based, compact generalized continua and based maps
to the category of ray-based, one-ended generalized continua and proper, ray-based maps.
\end{example}

\begin{remarks}\label{ecremarks}\mbox{ \\ }
\begin{enumerate}[label=(\alph*)]
\item Reduced end cohomology is a proper homotopy invariant for ray-based, proper maps.
\item\label{exampleDSs} Reduced end cohomology generally depends on ray choice.
Let $S^2$ denote the $2$-sphere and $T^2$ denote the $2$-torus.
Consider the topological disjoint union $X=S(S^2) \sqcup S(T^2)$.
Let $r=\nnr\times\cpa{p} \subseteq S(S^2)$ and $s=\nnr\times\cpa{q} \subseteq S(T^2)$ be straight rays.
The nonzero $R$-modules in the end cohomology algebra of $X$ are
\begin{alignat*}{2}
\zeco{2}{X}	&\cong R	&&\times	R\\
\zeco{1}{X}	&\cong 0	&&\times	R^2\\
\zeco{0}{X}	&\cong R	&&\times	R
\end{alignat*}
where the cup product is coordinatewise in the direct product.
Therefore, we have
\begin{center}
\begin{tabular}{ c|c|c }
$k$ & $\zeco{k}{X,r}$ & $\zeco{k}{X,s}$ \\ \hline
$2$ & $R \times R \ $				& $R\times R \ $ \\
$1$ & $\ 0 \times R^2$			& $\ 0 \times R^2$ \\
$0$ &	$0 \times R \, $				& $R\times 0 \ \  $
\end{tabular}
\end{center}
For each integer $k$, the $R$-modules $\zeco{k}{X,r}$ and $\zeco{k}{X,s}$ are isomorphic.
Nevertheless, the $R$-algebras $\zeco{\ast}{X,r}$ and $\zeco{\ast}{X,s}$ are not isomorphic.
To see that, note that $(1,0)\in \zeco{0}{X,s}$ annihilates all of $\zeco{1}{X,s}$,
whereas $\zeco{0}{X,r}$ does not contain a nonzero element that annihilates all of $\zeco{1}{X,r}$.
\item For a connected example where ray choice alters the isomorphism type of the reduced end cohomology $R$-algebra, begin with $X$ from the previous example.
Attach a compact, oriented $1$-handle to the boundary sphere of $X$,
and then glue together by a homeomorphism the resulting two boundary tori.
That yields $Y$, a connected, two-ended $3$-manifold without boundary.
We have $\zeco{\ast}{Y,r} \cong \zeco{\ast}{X,r}$ and $\zeco{\ast}{Y,s} \cong \zeco{\ast}{X,s}$.
Hence, the isomorphism type of the reduced end cohomology $R$-algebra of $Y$ depends on ray choice.
\item For more intricate computations of end cohomology algebras, see~\cite{ch,cgh}.
\end{enumerate}
\end{remarks}

Next, we prove a splitting result for reduced end cohomology analogous to that for ordinary cohomology in~\eqref{reducedsplitting2}.

\begin{theorem}\label{mainrecotheorem}
Let $(X,r)$ be a ray-based, metrizable generalized continuum.
There is a splitting
\begin{equation}\label{recosplittingtheorem}
\zeco{\ast}{X} \cong \rzeco{\ast}{X} \eds \zeco{\ast}{\nnr}
\end{equation}
that is natural in $(X,r)$ for ray-based proper maps.
In each positive dimension $k$, $\zeco{k}{X} \cong \rzeco{k}{X}$.
\end{theorem}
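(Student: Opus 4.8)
The plan is to mimic the proof of the splitting~\eqref{reducedsplitting2} in ordinary cohomology, replacing the retraction to a basepoint by the proper retraction to the baseray. First I would invoke Theorem~\ref{retract} to obtain a proper map $\rho:X\to\nnr$ with $\rho\circ r=\tn{id}$; writing $j:\im{r}\hookrightarrow X$ for the inclusion of the baseray (which is closed in $X$ since $r$ is a proper embedding, so $\pa{X,\im{r}}$ is a closed pair), Theorem~\ref{retract} also gives that $r\circ\rho$ is a proper retraction of $X$ onto $\im{r}$. The homeomorphism $r:\nnr\to\im{r}$ identifies $\zeco{\ast}{\im{r}}$ with $\zeco{\ast}{\nnr}$, and by definition~\eqref{eq:recdef} the relative group $\zeco{\ast}{X,\im{r}}$ is exactly $\rzeco{\ast}{X}=\zeco{\ast}{X,r}$. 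Under these identifications, the restriction map in the long exact end cohomology sequence of the closed pair $\pa{X,\im{r}}$ is $r^{\ast}_{\tn{e}}:\zeco{\ast}{X}\to\zeco{\ast}{\nnr}$.

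Next I would observe that $\rho^{\ast}_{\tn{e}}:\zeco{\ast}{\nnr}\to\zeco{\ast}{X}$ is a section of $r^{\ast}_{\tn{e}}$: since $\rho\circ r=\tn{id}$ and end cohomology is a contravariant functor, $r^{\ast}_{\tn{e}}\circ\rho^{\ast}_{\tn{e}}=\tn{id}$ on $\zeco{\ast}{\nnr}$. Hence $r^{\ast}_{\tn{e}}$ is surjective, every connecting homomorphism in the long exact sequence vanishes, and the sequence collapses into short exact sequences
\begin{equation*}
0\to\rzeco{k}{X}\to\zeco{k}{X}\xrightarrow{\ r^{\ast}_{\tn{e}}\ }\zeco{k}{\nnr}\to0
\end{equation*}
split by $\rho^{\ast}_{\tn{e}}$, which yields the $R$-module splitting~\eqref{recosplittingtheorem} in each dimension $k$. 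By Example~\ref{ecfirstexample} we have $\zeco{k}{\nnr}=\cpa{0}$ for every $k>0$ and $\zeco{0}{\nnr}\cong R$, so $\zeco{k}{X}\cong\rzeco{k}{X}$ in each positive dimension. Since $r^{\ast}_{\tn{e}}$ and $\rho^{\ast}_{\tn{e}}$ are $R$-algebra maps and $\rzeco{\ast}{X}=\kernel{r^{\ast}_{\tn{e}}}$ is the corresponding kernel ideal, the decomposition also respects the algebra structure.

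The hard part will be naturality, because the retraction $\rho$ is far from canonical. To handle this I would first show the section $\rho^{\ast}_{\tn{e}}$ is independent of the choice of $\rho$: given two proper retractions $\rho_0,\rho_1:X\to\nnr$ with $\rho_i\circ r=\tn{id}$, the straight-line homotopy $H(x,t)=(1-t)\rho_0(x)+t\rho_1(x)$ takes values in $\nnr$ and is proper, since $H(x,t)\le N$ forces $\rho_0(x)\le N$ or $\rho_1(x)\le N$, whence $H^{-1}\pa{\br{0,N}}$ lies in the compactum $\pa{\rho_0^{-1}\pa{\br{0,N}}\cup\rho_1^{-1}\pa{\br{0,N}}}\times I$; thus $\rho_0$ and $\rho_1$ are properly homotopic and $\rho_0^{\ast}_{\tn{e}}=\rho_1^{\ast}_{\tn{e}}$ by the proper homotopy invariance of end cohomology established above. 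Now let $f:(Y,s)\to(X,r)$ be a ray-based proper map, so $r=f\circ s$. Naturality of the long exact sequence of pairs makes $f^{\ast}_{\tn{e}}$ a morphism of the short exact sequences, and on the ray factor it is the identity because $s^{\ast}_{\tn{e}}\circ f^{\ast}_{\tn{e}}=\pa{f\circ s}^{\ast}_{\tn{e}}=r^{\ast}_{\tn{e}}$. For the sections, note that $\rho_X\circ f:Y\to\nnr$ satisfies $\pa{\rho_X\circ f}\circ s=\rho_X\circ r=\tn{id}$, so it is itself a valid proper retraction for $(Y,s)$; independence then gives $f^{\ast}_{\tn{e}}\circ\rho_X^{\ast}_{\tn{e}}=\pa{\rho_X\circ f}^{\ast}_{\tn{e}}=\rho_Y^{\ast}_{\tn{e}}$, which is precisely the compatibility of the two splittings. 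As a transparent cross-check in the only nontrivial dimension, one may use Lemma~\ref{dimzeroendcoho}: under $\zeco{0}{X}\cong C(E(X),R)$ the map $\rho^{\ast}_{\tn{e}}$ sends the generator to the constant function $1$, because $E(\rho):E(X)\to E(\nnr)$ is the unique map to a one-point space, and pullback of the constant function is again constant, making both independence and naturality immediate.
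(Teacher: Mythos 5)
Your construction of the splitting coincides with the paper's: Theorem~\ref{retract} produces $\rho$ with $\rho\circ r=\tn{id}$, functoriality gives $r^{\ast}_{\tn{e}}\circ\rho^{\ast}_{\tn{e}}=\tn{id}$, and the long exact end cohomology sequence of the closed pair $(X,r)$ collapses, yielding $\zeco{k}{X}\cong\rzeco{k}{X}$ for $k>0$ and the split short exact sequence in dimension $0$. Where you genuinely differ is naturality. The paper transports the dimension-zero sequence to $C(E(X),R)$ via Lemma~\ref{dimzeroendcoho}, invokes Lemma~\ref{dimzeroendcoho2} to see that pullback carries $\mathds{1}_{E(X)}$ to $\mathds{1}_{E(Y)}$, and deduces commutativity of the central square of~\eqref{eq:splittingcommdiag}. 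You instead prove that the section $\rho^{\ast}_{\tn{e}}$ is independent of the choice of proper splitting map: the straight-line homotopy $H(x,t)=(1-t)\rho_0(x)+t\rho_1(x)$ is proper---your estimate is correct, since $H(x,t)\leq N$ forces $\min\cpa{\rho_0(x),\rho_1(x)}\leq N$---so $\pa{\rho_0}^{\ast}_{\tn{e}}=\pa{\rho_1}^{\ast}_{\tn{e}}$ by proper homotopy invariance; then $\rho_X\circ f$ is itself a proper splitting map for $(Y,s)$, whence $f^{\ast}_{\tn{e}}\circ\pa{\rho_X}^{\ast}_{\tn{e}}=\pa{\rho_X\circ f}^{\ast}_{\tn{e}}=\pa{\rho_Y}^{\ast}_{\tn{e}}$. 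Both arguments are complete. Yours stays entirely inside the formal properties of the functor (functoriality plus proper homotopy invariance) and makes explicit the independence of the splitting from the choice of $\rho$, which the paper leaves implicit; the paper's route buys the concrete description of the dimension-zero sequence in terms of $C(E(X),R)$ and the uniqueness of the natural splitting recorded in Remarks~\ref{ecmstR}, which are reused elsewhere (e.g.\ for N\"obeling's theorem). One small caution: your closing remark that the decomposition ``respects the algebra structure'' overstates matters---$\rzeco{\ast}{X}$ is an ideal and $\im{\rho^{\ast}_{\tn{e}}}$ is the span of the unit, so the cup product on $\zeco{\ast}{X}$ is not the coordinatewise product on the direct sum; but the theorem claims only a natural module splitting, so this does not affect your proof.
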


\begin{proof}
By definition, $r:\nnr\rightarrowtail X$ is a proper, injective map.
By Theorem~\ref{retract}, there exists a proper map $\rho:X\twoheadrightarrow\nnr$ such that $\rho\circ r=\tn{id}:\nnr\to\nnr$.
Thus, $r^{\ast}_{\tn{e}} \circ \rho^{\ast}_{\tn{e}}=\tn{id}$, $\rho^{\ast}_{\tn{e}}$ is injective, and $r^{\ast}_{\tn{e}}$ is surjective.
Consider the long exact end cohomology sequence for the closed pair $(X,r)$.
As $\zeco{\ast}{\nnr}	=\zeco{0}{\nnr} \cong R$ and $r^{\ast}_{\tn{e}}$ is surjective,
we get $\zeco{k}{X} \cong \rzeco{k}{X}$ for each $k\in\Z_+$.
We also get the short exact sequence
\begin{equation}\label{eq:sesrecosplit}
\begin{tikzcd}
0 & \zeco{0}{r} \arrow[l] \arrow[r, dashed, bend right, "\rho^{\ast}_{\tn{e}}"]  & \zeco{0}{X} \arrow[l, "r^{\ast}_{\tn{e}}"'] & \rzeco{0}{X} \arrow[l] & 0 \arrow[l]
\end{tikzcd}
\end{equation}
with splitting homomorphism $\rho^{\ast}_{\tn{e}}$.
Thus, $\zeco{0}{X} \cong \rzeco{0}{X} \eds \zeco{0}{r}$ which proves~\eqref{recosplittingtheorem}.
It remains to prove naturality, which we address after the next two lemmas.
\end{proof}

Motivated by the fact---from the homotopical approach to ordinary cohomology discussed in
Section~\ref{sec:reducedcohomology}---that $\rzco{n}{X}\cong\br{X,K(R,n)}$,
we give in the next lemma a natural description of dimension-zero end cohomology.
Equip the ring $R$ with the discrete topology.
If $X$ is a space, then let $C(X,R)$ denote the $R$-algebra of maps $X\to R$.
Let $\mathds{1}_X:X\to R$ denote the constant map with image $1\in R$.
In case $X=\cpa{\bullet}$ is a one-point space---such as the end space of a ray---we
write $\mathds{1}_{\bullet}$ in place of $\mathds{1}_X$
and note that $\mathds{1}_{\bullet}$ is a canonical generator of $C(\bullet,R)\cong R$.
If $G=\fg{g_1,\ldots,g_n}$ is a free $\Z$-module with basis $\cpa{g_1,\ldots,g_n}$,
then let $\delta g_i$ denote the dual generator of the free $R$-module $\hom{}{G}{R}\cong\fg{\delta g_1,\ldots,\delta g_n}$ defined by
$g_i\mapsto 1$ and $g_j\mapsto 0$ for all $j \ne i$.

\begin{lemma}\label{dimzeroendcoho}
Let $X$ be a metrizable generalized continuum.
Then, there is an isomorphism $\eta:\zeco{0}{X} \to C(E(X),R)$.
\end{lemma}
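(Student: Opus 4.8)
The plan is to reduce the statement to the standard fact that the algebra of continuous $R$-valued functions on a profinite space is the direct limit of the function algebras on its finite approximations, applied to the presentation $E(X)=\ilim\uc{V_i}$ of the end space.

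First I would fix an efficient compact exhaustion $\cpa{K_i}$ of $X$, which exists by Theorem~\ref{gceee}, and set $V_i=X-K_i$. By efficiency together with Lemma~\ref{compactlemma}\ref{finunbdedcomps}, each $V_i$ has only finitely many components, all unbounded, so $\uc{V_i}=\cpa{V_i^j}$ is a finite set; moreover each $V_i^j$ is open in $X$ by Lemma~\ref{compslemma}\ref{compsopen}. Since a singular $0$-cocycle is exactly a function that is constant on path-components, $\co{0}{V_i}{R}$ is the $R$-algebra of $R$-valued functions on $V_i$ that are constant on path-components, with pointwise operations. As $X$ is metrizable and $V_i$ is open in $X$, Remark~\ref{mgcniceH0} identifies the path-components of $V_i$ with its components $\uc{V_i}$; as these components are open, I obtain a canonical $R$-algebra isomorphism $\co{0}{V_i}{R}\cong C(\uc{V_i},R)$, where $\uc{V_i}$ carries the discrete topology. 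This is the one place where metrizability is used.

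Next I would compute $\zeco{0}{X}\cong\dlim\co{0}{V_i}{R}$ via~\eqref{eq:direct_limit}, whose bonding maps are inclusion-induced restrictions. Under the identification above, restriction $\co{0}{V_i}{R}\to\co{0}{V_{i+1}}{R}$ becomes precomposition $C(\uc{V_i},R)\to C(\uc{V_{i+1}},R)$ with the canonical bonding surjection $\uc{V_{i+1}}\twoheadrightarrow\uc{V_i}$ of Lemma~\ref{monotonicity}\ref{cansurj}, since a function that is constant with value $c$ on $V_i^j$ restricts on any $V_{i+1}^{j'}\subseteq V_i^j$ to the constant $c$. The continuous projections $p_i:E(X)\to\uc{V_i}$, whose fibres are the basic clopen sets $EV_i^j$ of~\eqref{basisE}, are compatible with these bonding surjections, so the precomposition maps $p_i^{\ast}:C(\uc{V_i},R)\to C(E(X),R)$ assemble into an $R$-algebra homomorphism $\eta:\dlim C(\uc{V_i},R)\to C(E(X),R)$, which is the desired map.

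It remains to show $\eta$ is bijective. For injectivity, each $p_i$ is surjective: by Lemma~\ref{unbdedcomptoend} every $V_i^j$ occurs as the $i$-th term of some end. Hence if $g\in C(\uc{V_i},R)$ satisfies $g\circ p_i=0$, then $g=0$, so any class in the direct limit annihilated by $\eta$ is already zero. The main obstacle is surjectivity, i.e.\ showing every continuous $f:E(X)\to R$ factors through some $p_i$. Here I would use that $E(X)$ is compact by Theorem~\ref{endspace} and $R$ is discrete, so $f$ has finite image and each fibre $f^{-1}(c)$ is clopen, hence a finite union of basic open sets $EV_a^b$. Using Lemma~\ref{ebasislemma}---any two basic open sets are nested or disjoint, and each $EV_a^b$ is the finite disjoint union of the level-$i$ basic sets $EV_i^c$ with $V_i^c\subseteq V_a^b$ for $i\ge a$---I would pass to the maximal level $i$ occurring and conclude that $f$ is constant on each $EV_i^c=p_i^{-1}(V_i^c)$. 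Thus $f=g\circ p_i=p_i^{\ast}(g)$ for the induced $g:\uc{V_i}\to R$, giving surjectivity. Since $\eta$ is a bijective $R$-algebra homomorphism, it is the required isomorphism.
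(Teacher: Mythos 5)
Your proof is correct and follows essentially the same route as the paper: both identify $\zco{0}{V_i}$ with functions on the finite set $\uc{V_i}$, define $\eta$ via the basic clopen sets $EV_i^j$ (your $p_i^{\ast}$ of indicator functions is exactly the paper's $\eta_i(\delta V_i^j)$), prove injectivity from nonemptiness of each $EV_i^j$, and prove surjectivity from compactness of $E(X)$, discreteness of $R$, and the nesting of basic open sets. The only substantive difference is that the paper also adapts the construction to non-efficient exhaustions, which it needs later for naturality but which is not required for the statement itself.
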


\begin{proof}
By Theorem~\ref{gceee}, $X$ admits an efficient exhaustion by compacta $\cpa{K_i}$.
For each $i\in\Z_+$, let $V_i=X-K_i$ and write $\cpa{V_i^j}=\uc{V_i}$ for the finite (see Lemma~\ref{compactlemma}) set of unbounded components of $V_i$.
As $X$ is metrizable, Remark~\ref{mgcniceH0} implies that the components and the path-components of $V_i$ coincide.
So, $\zho{0}{V_i}=\fg{V_i^j}$ is the free $\Z$-module with basis $\cpa{V_i^j}$.
By the universal coefficient theorem, $\zco{0}{V_i}=\fg{\delta V_i^j}$ is the free $R$-module with basis $\cpa{\delta V_i^j}$.
Consider the direct system $\zco{0}{V_i}$ with direct limit $\zeco{0}{X}$, and
recall the basis $EV_k^l$ for the topology on $E(X)$---see~\eqref{basisE}.
For each $i\in\Z_+$, we define the homomorphism $\eta_i:\zco{0}{V_i}\to C(E(X),R)$ on the canonical
basis $\cpa{\delta V_i^j}$.
Namely, for each $j$ define $\eta_i\pa{\delta V_i^j}:E(X)\to R$ to be the map that sends each end $\e\in EV_i^j$ of $X$ to $1$
and sends all other ends of $X$ to $0$.
Notice that $\eta_i\pa{\delta V_i^j}$ is continuous since $R$ is discrete, the inverse image of $1$ is the basic open set $EV_i^j$,
and the inverse image of $0$ is the union of the basic open sets $EV_i^k$ where $k\ne j$.
The direct system $\zco{0}{V_i}$ together with the homomorphisms $\eta_i:\zco{0}{V_i}\to C(E(X),R)$ form a commutative diagram---to see that,
use Lemmas~\ref{monotonicity} (monotonicity) and~\ref{unbdedcomptoend}.
The universal property of direct limits yields a unique homomorphism $\eta:\zeco{0}{X} \to C(E(X),R)$ such that the whole diagram commutes.
For each unbounded component $V_i^j$ of $V_i$, Lemma~\ref{unbdedcomptoend} ensures the existence of an end of $X$ in the basic open set $EV_i^j$.
That implies each $\eta_i$ is injective.
Hence, $\eta$ is injective.
For surjectivity of $\eta$, let $\psi:E(X)\to R$ be continuous.
Recall that, by Theorem~\ref{endspace}, the space of ends $E(X)$ is compact.
As $R$ is discrete and $\psi:E(X)\to R$ is continuous, $\im{\psi}$ is compact and finite.
Taking the preimages of the points in $\im{\psi}$, we get a covering of $E(X)$ by finitely many disjoint open sets $U_1,\ldots,U_n$
and $\psi$ is constant on each $U_k$.
Say $\psi(U_k)= r_k \in R$ for each $k$.
Each $U_k$ is a union of basic open sets in $E(X)$.
As $E(X)$ is compact, finitely many $EV_{i_1}^{j_1},\ldots,EV_{i_m}^{j_m}$ of those basic open sets cover $E(X)$.
Let $i=\max\cpa{i_1,\ldots,i_m}$.
By Lemma~\ref{ebasislemma}, $K_i\subseteq X$ is a compactum such that, for each $j$, the basic open set $EV_i^j$
lies completely within a unique $U_k$, where $k=k(j)$ depends on $j$.
Define $z=\sum_j r_{k(j)} \delta V_i^j \in \zco{0}{V_i}$.
Let $\zeta\in \zeco{0}{X}$ be the element represented by $z$.
By our definition of $z$, $\eta_i(z)=\psi$.
By commutativity, $\eta\pa{\zeta}=\psi$ and $\eta$ is surjective.
Hence, $\eta$ is an isomorphism.

The argument above adapts readily to the case where $\cpa{K_i}$ is a (not necessarily efficient) compact exhaustion of $X$.
That case is necessary when we consider a proper map $f:Y\to X$ since the inverse image of an efficient compact
exhaustion of $X$ need not be an \textit{efficient} compact exhaustion of $Y$ by Example~\ref{invimceex}.
To adapt the argument, let $\cpa{K_i}$ be a compact exhaustion of $X$ and, for each $i\in\Z_+$, let $V_i=X-K_i$.
We have $\cpa{V_i^j}=\uc{V_i}$ the finite set of unbounded components of $V_i$ (see Lemma~\ref{compactlemma}),
$\cpa{B_i^k}=\bc{V_i}$ the (possibly infinite) set of bounded components of $V_i$, and $V_i=\uc{V_i}\sqcup\bc{V_i}$.
So, $\zho{0}{V_i}=\fg{V_i^j}\eds\fg{B_i^k}$ is a free $\Z$-module and
\[
\zco{0}{V_i}\cong \hom{}{\zho{0}{V_i}}{R} \cong \hom{}{\fg{V_i^j}}{R} \eds \hom{}{\fg{B_i^k}}{R}
\]
In the direct system $\zco{0}{V_i}$, the homomorphisms are restrictions induced by inclusions, and compactness implies
that each $\delta B_i^k\in \zco{0}{V_i}$ is eventually sent to $0$.
Thus, we define $\eta_i:\zco{0}{V_i}\to C(E(X),R)$ as above on each $\delta V_i^j$ and to send each $\delta B_i^k$ to $0$.
The direct system $\zco{0}{V_i}$ together with the homomorphisms $\eta_i:\zco{0}{V_i}\to C(E(X),R)$ form a commutative diagram
that induces a unique homomorphism $\eta:\zeco{0}{X} \to C(E(X),R)$ such that the whole diagram commutes.
Arguments similar to those above imply $\eta$ is an isomorphism.
\end{proof}

\begin{lemma}\label{dimzeroendcoho2}
Let $f:Y\to X$ be a proper map of metrizable generalized continua, then $f$ induces the $R$-algebra morphism $f^{\ast}:C(E(X),R) \to C(E(Y),R)$
given by $\psi\mapsto \psi\circ E(f)$.
In particular, $f^{\ast}\pa{\mathds{1}_{E(X)}}=\mathds{1}_{E(Y)}$.
\end{lemma}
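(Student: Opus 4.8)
The plan is to establish the lemma as the naturality of the isomorphism $\eta$ of Lemma~\ref{dimzeroendcoho}. Write $f^{\ast}_{\tn{e}}:\zeco{0}{X}\to\zeco{0}{Y}$ for the morphism induced by $f$ under the end cohomology functor. Since $E(f):E(Y)\to E(X)$ is continuous by Theorem~\ref{fefunctors} and $R$ carries the discrete topology, precomposition $\psi\mapsto\psi\circ E(f)$ is a well-defined $R$-algebra morphism $C(E(X),R)\to C(E(Y),R)$ (pullback of maps preserves sums, products, and the constant $1$); in particular it sends $\mathds{1}_{E(X)}$ to $\mathds{1}_{E(Y)}$, which yields the ``in particular'' assertion once the main claim is proved. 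The substance, then, is the commutativity of the square
\[
\begin{tikzcd}
\zeco{0}{X} \arrow[r,"f^{\ast}_{\tn{e}}"] \arrow[d,"\eta"'] & \zeco{0}{Y} \arrow[d,"\eta"]\\
C(E(X),R) \arrow[r,"\psi\mapsto\psi\circ E(f)"] & C(E(Y),R)
\end{tikzcd}
\]

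To verify commutativity I would work in the explicit direct-limit model. Choose an efficient compact exhaustion $\cpa{K_i}$ of $X$; by Lemma~\ref{invimceisce} the preimages $\cpa{f^{-1}\pa{K_i}}$ form a compact exhaustion of $Y$, in general not efficient, so I would compute $\eta$ on $Y$ via the adapted (non-efficient) case of Lemma~\ref{dimzeroendcoho}. Writing $V_i=X-K_i$ with unbounded components $\cpa{V_i^j}=\uc{V_i}$, note that $Y-f^{-1}\pa{K_i}=f^{-1}\pa{V_i}$ has components $W$ each of which, by connectedness, maps into a unique $V_i^{j(W)}$. A class $\zeta\in\zeco{0}{X}$ is represented by some $z=\sum_j r_j\,\delta V_i^j\in\zco{0}{X-K_i}$, and $\eta(\zeta)$ is the map sending $\e\in EV_i^j$ to $r_j$. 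The induced morphism is computed through the restriction $f_i:Y-f^{-1}\pa{K_i}\to X-K_i$ of $f$; in dimension zero $\delta V_i^j$ is the indicator of the component $V_i^j$, so $f_i^{\ast}\pa{\delta V_i^j}=\sum_{W:\,f(W)\subseteq V_i^j}\delta W$, whence $f^{\ast}_{\tn{e}}(\zeta)$ is represented by $\sum_W r_{j(W)}\,\delta W$, the sum taken over all components $W$ of $f^{-1}\pa{V_i}$.

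Finally I would apply the $Y$-version of $\eta$ and compare with the description of $E(f)$ from Lemma~\ref{inducedmaps}. Because the bounded components of $f^{-1}\pa{V_i}$ are sent to $0$ by $\eta$, the function $\eta\bigl(f^{\ast}_{\tn{e}}(\zeta)\bigr)$ assigns to an end $\alpha\in EU$, with $U$ an unbounded component of $f^{-1}\pa{V_i}$, the value $r_{j(U)}$, where $f(U)\subseteq V_i^{j(U)}$. On the other side, Lemma~\ref{inducedmaps} says $E(f)(\alpha)$ has $i$-th term exactly $V_i^{j(U)}$, so $\bigl(\eta(\zeta)\circ E(f)\bigr)(\alpha)=r_{j(U)}$ as well; the two functions agree on every end, giving commutativity. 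I expect the main obstacle to be bookkeeping rather than depth: one must justify computing $\eta$ on $Y$ through the possibly non-efficient exhaustion $\cpa{f^{-1}\pa{K_i}}$ (precisely the adapted case of Lemma~\ref{dimzeroendcoho}) and track the bounded components of $f^{-1}\pa{V_i}$ so that their vanishing under $\eta$ is consistent with their contributing no ends of $Y$. With these matches in place, agreement for a single representative propagates to the entire direct limit by compatibility of the $\eta_i$, completing the proof.
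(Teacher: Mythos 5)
Your proposal is correct and follows essentially the same route as the paper: both reduce the statement to the commutativity of the square relating $f^{\ast}_{\tn{e}}$ and precomposition with $E(f)$ via the isomorphisms $\eta$ of Lemma~\ref{dimzeroendcoho}, both compute with an efficient exhaustion of $X$ and the (possibly non-efficient) pullback exhaustion of $Y$, and both check agreement on representatives by tracking which components of $f^{-1}(V_i)$ map into $V_i^j$ and noting that the bounded ones are killed by $\eta$. The only cosmetic difference is that the paper checks the identity on the basis elements $\delta V_i^j$ while you work with a general representative; these are equivalent.
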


\begin{proof}
By Theorem~\ref{gceee}, $X$ admits an efficient exhaustion by compacta $\cpa{K_i}$.
By Lemma~\ref{invimceisce}, $\cpa{L_i=f^{-1}\cpa{K_i}}$ is a (not necessarily efficient) compact exhaustion of $Y$.
Write $V_i=X-K_i$, $\cpa{V_i^j}=\uc{V_i}$, $U_i=Y-L_i$, and $\cpa{U_i^j}=\uc{U_i}$.
Consider the following commutative diagram where $f^{\ast}:C(E(X),R) \to C(E(Y),R)$
is defined by $f^{\ast}=\eta\circ f^{\ast}_{\tn{e}}\circ\eta^{-1}$
and $\eta$ is given by Lemma~\ref{dimzeroendcoho}.
By a common abuse of notation, $\eta$ is used twice to mean different isomorphisms, one for $X$ and one for $Y$.
\begin{equation}\label{eq:etanat}
\begin{tikzcd}
\zeco{0}{Y} \arrow[dr,"\eta","\cong"'] & \zco{0}{U_i} \arrow[l] \arrow[d,"\eta_i"'] & \zco{0}{V_i} \arrow[l,"f^{\ast}"'] \arrow[r] \arrow[d,"\eta_i"] & \zeco{0}{X} \arrow[lll,bend right=20,"f^{\ast}_{\tn{e}}"] \arrow[dl,"\eta"',"\cong"]\\
& C(E(Y),R)  &  C(E(X),R) \arrow[l,"f^{\ast}"']
\end{tikzcd}
\end{equation}
It remains to show that $f^{\ast}:C(E(X),R) \to C(E(Y),R)$ is given by $\psi\mapsto \psi\circ E(f)$.
Let $i\in\Z_+$. It suffices to consider the case where $\delta V_i^j$ is an element of the canonical basis of $\zco{0}{V_i}$
and $\psi= \eta_i\pa{\delta V_i^j}$.
Let $U_i^{m}$, $m\in M_j$, denote the (finite, possibly empty) collection of unbounded components of $U_i$ that $f$ maps into $V_i^j$.
So, $f^{\ast}:\zco{0}{V_i}\to \zco{0}{U_i}$ sends $\delta V_i^j\mapsto \sum_{m\in M_j} \delta U_i^m$.
By commutativity of~\eqref{eq:etanat}, $f^{\ast}:C(E(X),R) \to C(E(Y),R)$ sends
$\eta_i\pa{\delta V_i^j} \mapsto \sum_{m\in M_j} \eta_i\pa{\delta U_i^m}$.
By the definition of $\eta_i$, $\eta_i\pa{\delta V_i^j}$ sends each end of $X$ in $EV_i^j$ to $1$ and sends all other ends of $X$ to $0$.
Similarly, $\sum_{m\in M_j} \eta_i\pa{\delta U_i^m}$ sends each end of $Y$ in $\bigsqcup_{m\in M_j} U_i^m$ to $1$ and
sends all other ends of $Y$ to $0$.
By the definition of $E(f):E(Y)\to E(X)$ in (the proof of) Lemma~\ref{inducedmaps}, we have
$\sum_{m\in M_j} \eta_i\pa{\delta U_i^m}=\eta_i\pa{\delta V_i^j} \circ E(f)$.
That is, with $\psi= \eta_i\pa{\delta V_i^j}$ we have $f^{\ast}(\psi)=\psi\circ E(f)$, as desired.
That completes the proof of the first conclusion.
The second conclusion follows immediately from the first.
\end{proof}

With the previous two lemmas in hand, we prove that the splitting in Theorem~\ref{mainrecotheorem} is natural.
Let $(Y,s)$ be another ray-based, metrizable generalized continuum.
By Theorem~\ref{retract}, there exists a proper map $\sigma:Y\twoheadrightarrow\nnr$ such that $\sigma\circ s=\tn{id}:\nnr\to\nnr$.
Let $f:(Y,s)\to(X,r)$ be a ray-based proper map.
Consider the following diagram where each of the four outer trapezoids is given by Lemmas~\ref{dimzeroendcoho} and~\ref{dimzeroendcoho2} and thus commutes.
\begin{equation}\label{eq:splittingcommdiag}
\begin{tikzcd}
C(\bullet,R) \arrow[rrr,"i^{\ast}"] \arrow[ddd,"f^{\ast}"] & & & C(E(X),R) \arrow[ddd,"f^{\ast}"] \\
 & \zeco{0}{r} \arrow[r,"\rho^{\ast}_{\tn{e}}"] \arrow[d,"f^{\ast}_{\tn{e}}"] \arrow[ul,"\eta"',"\cong"] & \zeco{0}{X} \arrow[d,"f^{\ast}_{\tn{e}}"] \arrow[ur,"\eta","\cong"'] \\
 & \zeco{0}{s} \arrow[r,"\sigma^{\ast}_{\tn{e}}"] \arrow[dl,"\eta"',"\cong"] & \zeco{0}{Y} \arrow[dr,"\eta","\cong"'] \\
C(\bullet,R) \arrow[rrr,"i^{\ast}"] & & & C(E(Y),R)
\end{tikzcd}
\end{equation}
We use $\cpa{\bullet}$ to denote both one-point spaces $E(r)\subseteq E(X)$ and $E(s)\subseteq E(Y)$ where no confusion should arise.
It suffices to show that the central square in~\eqref{eq:splittingcommdiag} commutes.
By Lemma~\ref{dimzeroendcoho2}, the outer rectangle in~\eqref{eq:splittingcommdiag} satisfies
\begin{equation}\label{eq:outersquare}
\begin{tikzcd}
\mathds{1}_{\bullet} \arrow[r, mapsto] \arrow[d, mapsto] & \mathds{1}_{E(X)} \arrow[d, mapsto]\\
\mathds{1}_{\bullet} \arrow[r, mapsto] & \mathds{1}_{E(Y)}
\end{tikzcd}
\end{equation}
and thus commutes. It follows that the central square in~\eqref{eq:splittingcommdiag} commutes, as desired.
That completes our proof that the splitting in Theorem~\ref{mainrecotheorem} is natural.

\begin{remarks}\label{ecmstR}\mbox{ \\ } 
\begin{enumerate}[label=(\alph*)]
\item To summarize, recall the sequence~\eqref{eq:sesrecosplit}.
Write $\cpa{\bullet}=E(r)\subseteq E(X)$.
Let $C((E(X),\bullet),R)$ denote the elements of $C(E(X),R)$ that send $\bullet\mapsto0$.
Notice that $C((E(X),\bullet),R)$ is both an $R$-algebra and an ideal of $C(E(X),R)$.
We have the following commutative diagram with exact rows
\begin{equation}\label{eq:sesrecosplitC}
\begin{tikzcd}
0 & \zeco{0}{r} \arrow[l] \arrow[r, dashed, bend left, "\rho^{\ast}_{\tn{e}}"] \arrow[d,"\eta"',"\cong"] & \zeco{0}{X} \arrow[l, "r^{\ast}_{\tn{e}}"'] \arrow[d,"\eta"',"\cong"] & \rzeco{0}{X} \arrow[l] \arrow[d,"\eta"',"\cong"] & 0 \arrow[l]\\
0 & C(\bullet,R) \arrow[l] \arrow[r, dashed, bend right, "i^{\ast}"]  & C(E(X),R) \arrow[l, "r^{\ast}"'] \arrow[r, hookleftarrow,"j^{\ast}"] & C((E(X),\bullet),R) & 0 \arrow[l]
\end{tikzcd}
\end{equation}
where $r^{\ast}:\psi\mapsto \psi\circ E(r)$, $i^{\ast}:\mathds{1}_{\bullet}\mapsto \mathds{1}_{E(X)}$, 
and $j^{\ast}$ is inclusion.
The left square in~\eqref{eq:sesrecosplitC} that uses the two splitting homomorphisms (dashed) commutes.
Both splitting homomorphisms $\rho^{\ast}_{\tn{e}}$ and $i^{\ast}$ are natural with respect to ray-based proper maps.
In fact, those are the unique natural splitting homomorphisms---for uniqueness of $i^{\ast}$, consider naturality for $\rho:X\to\nnr$. 
\item Lemma~\ref{dimzeroendcoho} showed that $\zeco{0}{X} \cong C(E(X),R)$ for each metrizable generalized continuum $X$.
Thus, if $E(X)$ is finite, then $\zeco{0}{X}$ and $\zco{0}{E(X)}$ are isomorphic since both are isomorphic to $R^{\card{E(X)}}$.
However, if $E(X)$ is infinite, then $\zeco{0}{X}$ and $\zco{0}{E(X)}$ are not isomorphic.\footnote{Raymond erroneously
claimed~\cite[Thm.~1.13]{raymond} that $\zco{0}{E(X)}$ and $\zeco{0}{X}$ are isomorphic.
Apparently, the misstep occurred in the paragraph preceding that theorem where ``continuity'' meant cohomology commutes
with the inverse limit of topological inclusions. That is false by the infinite comb space example.}
For the infinite comb space $X$ from Examples~\ref{endspaceexamples} and shown in Figure~\ref{fig:combends}, $\zeco{0}{X} \cong C(E(X),\Z)$
is countably infinite, whereas
\[
\zco{0}{E(X)} \cong \hom{}{\zho{0}{E(X)}}{\Z} \cong \prod_{E(X)} \Z
\]
is uncountable---see also~\cite{schroeer} and~\cite[App.~A]{cgh}.

Continuing with the comb space example, let $E(X)=\cpa{\e_0,\e_1,\e_2,\ldots}$ as in Figure~\ref{fig:combends}.
\begin{figure}[htbp!]
    \centerline{\includegraphics[scale=1.00]{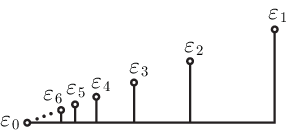}}
    \caption{Infinite comb space with ends labelled.}
\label{fig:combends}
\end{figure}
Let $\mathds{1}_{E(X)}:E(X)\to R$ be the constant $1$ map and, for each $i\in\Z_+$, let $\mathds{1}_{\cpa{\e_i}}:E(X)\to R$
denote the map sending $\e_i\mapsto 1$ and all other ends of $X$ to $0$.
As $E(X)$ is homeomorphic to $\cpa{0}\cup\cpa{1/i \mid i \in\Z_+}\subseteq\R$,
$\zeco{0}{X} \cong C(E(X),R)$ is the free $R$-algebra with basis $\cpa{\mathds{1}_{E(X)},\mathds{1}_{\cpa{\e_1}},\mathds{1}_{\cpa{\e_2}},\ldots}$.
\item A celebrated theorem of N\"obeling~\cite{nobeling} shows that if $A$ is a profinite space---meaning the limit
of an inverse system of finite sets---then $C(A,\Z)$ is a free $\Z$-module (see also Fuchs~\cite[Cor.~97.7]{fuchs}).
N\"obeling's theorem implies that $C(E(X),\Z)$ is a free $\Z$-module for each metrizable generalized continuum $X$.
In Theorem~\ref{recdirect} below, we give a very direct proof---not using N\"obeling's theorem---that $\zeco{0}{X}$ is a free $R$-module of countable rank for each metrizable generalized continuum $X$.
Thus, our work provides a proof of N\"obeling's theorem.
Namely, let $A=\ilim A_i$ be a profinite space where $i\in\Z_+$.
Replace each $A_i$ with the image of the projection $A\to A_i$ and restrict the bonding functions $A_i \leftarrow A_{i+1}$
to obtain an inverse system---still denoted $A_i$---of surjections with the same limit $A$.
Construct a locally finite simplicial tree $T$ with efficient compact exhaustion $\cpa{K_i}$ such that inverse system $\uc{T-K_i}$ in~\eqref{invsys} is isomorphic to $A_i$.
So, $E(T)\approx A$.
By Lemma~\ref{dimzeroendcoho}, $C(E(T),R) \cong \zeco{0}{T}$.
Thus
\[
C(A,R) \cong C(E(T),R) \cong \zeco{0}{T}
\]
where $\zeco{0}{T}$ is a free $R$-module of countable rank by Theorem~\ref{recdirect}.
That completes our proof of N\"obeling's theorem.
Our proof of N\"obeling's theorem constructs an explicit basis and does not use transfinite induction---compare~\cite[pp.~1--2]{asg}.

\item\label{rem:reasons} There are several reasons for requiring baserays to be embedded: topological simplicity, the end sum operation on manifolds in the next section,
the existence of a retract to each baseray as in Theorem~\ref{retract},
and the splitting in Theorem~\ref{mainrecotheorem}.
In fact, the splitting in Theorem~\ref{mainrecotheorem} can fail dramatically for \textit{nonembedded} rays as we now show.
Fix a point $p$ in the circle $S^1$.
Consider $Y=S(S^1)=\nnr\times S^1$ where $\eco{\ast}{Y}{R}	\cong	\co{\ast}{S^1}{R}$.
\begin{figure}[htbp!]
    \centerline{\includegraphics[scale=1.0]{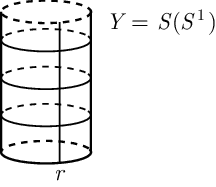}}
    \caption{Stringer $Y=S(S^1)$ on $S^1$ with (nonembedded) proper ray $r$.}
\label{fig:loopback}
\end{figure}
Let $r$ be a (nonembedded) proper ray in $Y$ that:
starts at $(0,p)$,
runs once around the circle $\cpa{0}\times S^1$,
runs along the segment $\br{0,1}\times\cpa{p}$,
runs once around the circle $\cpa{1}\times S^1$,
runs along the segment $\br{1,2}\times\cpa{p}$,
and so on, outwards towards the end of $Y$ as in Figure~\ref{fig:loopback}.
Using arguments as in~\cite{ch,cgh}, one may verify that $\eco{\ast}{Y,r}{R}=\eco{2}{Y,r}{R}\cong R\eqc{x}/R[x]$.
Here, $R\eqc{x}/R[x]$ is the $R$-module of formal power series modulo polynomials.
Thus, the conclusions of Theorem~\ref{mainrecotheorem} are radically false for $Y$ and the \textit{nonembedded} proper ray $r$.
\end{enumerate}
\end{remarks}

The remainder of this section is devoted to a direct proof of the following.

\begin{theorem}\label{recdirect}
Let $(X,r)$ be a ray-based, metrizable generalized continuum.
Then, there is an exact sequence of free $R$-modules of countable rank
\begin{equation}\label{eq:sesrecosplitdirect}
\begin{tikzcd}
0 & \zeco{0}{r} \arrow[l]  & \zeco{0}{X} \arrow[l] & \rzeco{0}{X} \arrow[l] & 0 \arrow[l]
\end{tikzcd}
\end{equation}
that splits naturally in $(X,r)$.
Further, $\card{E(X)}=\rank{\zeco{0}{X}}=\tn{rank}\rzeco{0}{X}+1$ (infinities not distinguished).
\end{theorem}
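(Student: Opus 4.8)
The plan is to observe that the exactness and the natural splitting of~\eqref{eq:sesrecosplitdirect} are already in hand: they constitute the dimension-zero instance of Theorem~\ref{mainrecotheorem}, recorded as the short exact sequence~\eqref{eq:sesrecosplit} together with its natural splitting $\rho^{\ast}_{\tn{e}}$. Thus the only remaining work is to verify that the three modules are \emph{free of countable rank} and to count their ranks. To carry this out, I would fix an $r$-efficient compact exhaustion $\cpa{K_i}$ of $X$, which exists by Corollary~\ref{receexist}; using a single such exhaustion lets one track $X$ and the baseray simultaneously, and this is precisely where the notion of an exhaustion of a map earns its keep.

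Writing $V_i = X - K_i$, the direct-limit model underlying Lemma~\ref{dimzeroendcoho} gives $\zeco{0}{X} \cong \dlim \zco{0}{V_i}$, where each $\zco{0}{V_i} = \fg{\delta V_i^j}$ is free of finite rank $n_i = \card{\uc{V_i}}$ (finiteness by Lemma~\ref{compactlemma}\ref{finunbdedcomps}). The bonding homomorphism sends
\[
\delta V_i^j \longmapsto \sum_{V_{i+1}^k \subseteq V_i^j} \delta V_{i+1}^k,
\]
and since each unbounded component $V_{i+1}^k$ lies in a unique $V_i^j$ (Lemma~\ref{monotonicity}\ref{welldef}) while each $V_i^j$ receives at least one (Lemma~\ref{monotonicity}\ref{surj}), these images have pairwise disjoint supports in the basis $\cpa{\delta V_{i+1}^k}$. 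Hence every bonding map is a \emph{split} injection of finite-rank free modules.

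The technical heart is then the elementary lemma that a direct limit of split injections of finite-rank free $R$-modules is free of countable rank. I would prove it by an explicit inductive construction of a basis: choose a basis $B_1$ of $\zco{0}{V_1}$, and at each stage extend across the split inclusion by a finite set $B_{i+1}'$ that bases a complementary summand, so that $B = B_1 \sqcup \bigsqcup_{i} B_{i+1}'$ is a countable basis of the limit. For the rank count, if the $n_i$ stabilize at some $n$---equivalently, cofinitely many bonding maps are bijections, so $X$ has finitely many ends by Theorem~\ref{endsthm}\ref{finiteendsbondsbij}---then $\zeco{0}{X} \cong R^n$ with $n = \card{E(X)}$; otherwise $n_i \to \infty$, the basis $B$ is countably infinite, and $E(X)$ is infinite as well.

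Finally, for the reduced module I would exploit that the tail of $r$ lies in a nested sequence $V_i^{j_i}$ of unbounded components determining the end $E(r)$, so that the level-wise restriction $\zco{0}{V_i} \to \zco{0}{\nnr - r^{-1}\pa{K_i}} \cong R$ extracts the $\delta V_i^{j_i}$-coefficient. The kernels $\fg{\delta V_i^j : j \neq j_i}$ are carried into one another by the bonding maps, since $V_{i+1}^{j_{i+1}} \subseteq V_i^{j_i}$; this is again a system of split injections of free modules, now of rank $n_i - 1$, and---direct limits being exact---its limit is precisely $\ker r^{\ast}_{\tn{e}} = \rzeco{0}{X}$. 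The same lemma makes $\rzeco{0}{X}$ free of countable rank, equal to $\card{E(X)} - 1$ when $E(X)$ is finite and countably infinite otherwise, whence $\card{E(X)} = \rank{\zeco{0}{X}} = \rank{\rzeco{0}{X}} + 1$ with infinities not distinguished. The main obstacle is the freeness-of-the-limit lemma; because the $r$-efficient exhaustion furnishes compatible \emph{finite} bases at every stage, this construction sidesteps any appeal to the structure theory of infinitely generated projective modules or to transfinite induction.
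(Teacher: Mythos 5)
Your proposal is correct, and its mathematical core---an $r$-efficient compact exhaustion, the identification of the bonding maps $\zco{0}{V_i}\to\zco{0}{V_{i+1}}$ as direct sums of diagonal homomorphisms, and a freeness-of-the-direct-limit lemma proved by inductively extending compatible finite bases---is exactly the paper's argument (Lemmas~\ref{lemma:diagonal_extension_preserving}--\ref{lemma:basis-preserving-DL}); your ``split injection'' hypothesis is equivalent, for finite-rank free modules over a PID, to the paper's ``extension preserving,'' by Lemma~\ref{epequiv}. The organizational differences are worth noting. You outsource exactness, the splitting, and its naturality to Theorem~\ref{mainrecotheorem}, which is legitimate and not circular; the paper instead re-derives all three inside the proof of Theorem~\ref{recdirect} by dualizing the level-wise short exact homology sequences of the pairs $(V_i,r_i)$ via the universal coefficient theorem, splitting each row by the purely algebraic diagonal $\sigma_i:\delta r_i\mapsto\sum_j \delta V_i^j$, and passing to the (exact) direct limit. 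That keeps Theorem~\ref{recdirect} independent of the topological retraction of Theorem~\ref{retract} and of the $C(E(X),R)$ machinery of Lemmas~\ref{dimzeroendcoho} and~\ref{dimzeroendcoho2}, which is what makes it the ``very direct'' proof the paper wants (and all that the N\"obeling application needs is the freeness you establish independently, so nothing is lost there). Your computation of $\rzeco{0}{X}$ as the direct limit of the kernels $\fg{\delta V_i^j \mid j\neq j_i}$ agrees with the paper's direct limit of the relative groups $\zco{0}{V_i,r_i}$ by exactness of the rows. Either route is sound.
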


\begin{remark}
There is some overlap between Theorems~\ref{mainrecotheorem} and~\ref{recdirect}, although the conclusions and the approaches differ.
The fact that $\zeco{0}{X}$ is free of rank equal to the number of ends of $X$---infinities not distinguished---has been observed by others,
namely by Specker~\cite[$\S$5]{specker} for simplicial and CW complexes and integer coefficients,
Epstein~\cite[p.~110]{epstein} for locally finite simplicial complexes and field coefficients,
and Geoghegan~\cite[p.~298]{geoghegan} for strongly locally finite CW complexes and PID coefficients.
\end{remark}

Before we prove Theorem~\ref{recdirect}, we observe some useful algebraic facts.
The direct limit of a direct system of free $R$-modules of finite rank is a flat $R$-module by Lazard's theorem~\cite[p.~134]{lam},
however, it need not be a free $R$-module.
Consider the direct system
\begin{equation}\label{bad-direct-system}
\begin{tikzcd}
    \Z \arrow[r, "\times 2"] &
    \Z \arrow[r, "\times 2"] &
    \Z \arrow[r, "\times 2"] &
    \cdots
\end{tikzcd}
\end{equation}
with direct limit $\Z[1/2]$.
The element $1\in\Z[1/2]$ is divisible by arbitrarily large powers of $2$, so $\Z[1/2]$ is not a free $\Z$-module.
Thus, an additional condition is required to ensure freeness of a direct limit of free $R$-modules of finite rank.
One such condition---suitable for our purposes---is the following.
An injective homomorphism $f: G \to H$  of finitely generated $R$-modules is \deffont{extension preserving}
if whenever a set $S \subseteq G$ can be extended to a basis of $G$, then $f(S)$ can be extended to a basis of $H$.

\begin{lemma}\label{epequiv}
Let $f:G\to H$ be an injective homomorphism of finitely generated $R$-modules.
Then, $f$ is extension preserving if and only if there exists a basis $B$ of $G$ such that $f(B)$ extends to a basis of $H$.
\end{lemma}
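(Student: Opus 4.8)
The plan is to treat $G$ and $H$ as finitely generated \emph{free} $R$-modules (the very mention of a basis forces freeness) and to reduce everything to one standard structural fact over the PID $R$: a finite subset $S\subseteq G$ extends to a basis of $G$ if and only if the submodule $\fg{S}$ is a direct summand of $G$ of which $S$ is a basis (equivalently, $S$ is linearly independent and $G/\fg{S}$ is torsion-free, hence free). First I would record this characterization, invoking the structure theorem for finitely generated modules over a PID, which guarantees that submodules and torsion-free quotients of free modules are free and that bases of complementary direct summands concatenate to a basis of the ambient module.

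The forward implication is immediate. If $f$ is extension preserving, I would simply take any basis $B$ of $G$; since a basis trivially extends to a basis of $G$, the defining property of extension preserving applied with $S=B$ yields that $f(B)$ extends to a basis of $H$, which is exactly the asserted existence statement.

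For the reverse implication, suppose $B=\{b_1,\dots,b_n\}$ is a basis of $G$ with $f(B)$ extending to a basis $\{f(b_1),\dots,f(b_n),c_1,\dots,c_m\}$ of $H$. The key structural consequence I would extract is that $f$ realizes $G$ as a free direct summand of $H$: writing $C=\fg{c_1,\dots,c_m}$, one has $H=f(G)\oplus C$, and since $f$ is injective and $f(B)$ is a basis of $f(G)$, the corestriction $f\colon G\to f(G)$ is an isomorphism. Now let $S\subseteq G$ be an arbitrary subset that extends to a basis. By the characterization, $G=\fg{S}\oplus G'$ for some free complement $G'$, with $S$ a basis of $\fg{S}$. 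Applying the injective map $f$ preserves this internal direct sum, so inside $H$ we obtain $f(G)=f(\fg{S})\oplus f(G')$, whence
\[
H=f(\fg{S})\oplus f(G')\oplus C.
\]
Each summand is free, being a direct summand of the free module $H$ over the PID $R$, and $f(S)$ is a basis of the first summand $f(\fg{S})$. Taking the union of $f(S)$ with bases of $f(G')$ and of $C$ therefore produces a basis of $H$ containing $f(S)$; hence $f(S)$ extends to a basis of $H$, and $f$ is extension preserving.

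The only real content lies in the reverse direction, and the single step I expect to require the most care is the passage from ``$f(B)$ extends to a basis'' to ``$f$ embeds $G$ as a direct summand of $H$,'' together with the verification that the internal decomposition $G=\fg{S}\oplus G'$ is faithfully transported by $f$. Both rest on injectivity of $f$ combined with the PID structure theory ensuring that all the relevant submodules are free and that bases of complementary summands concatenate; everything else is routine bookkeeping.
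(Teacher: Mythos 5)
Your proof is correct and follows essentially the same route as the paper's: observe that $f(B)\sqcup C$ being a basis of $H$ gives $H=f(G)\oplus\fg{C}$ with $f$ an isomorphism onto $f(G)$, so any basis $f(S)\sqcup f(T)$ of $f(G)$ concatenates with $C$ to a basis of $H$. The only difference is cosmetic: your appeal to the PID structure theorem is unnecessary, since the complementary summands $\fg{T}$ and $\fg{C}$ already come equipped with explicit bases $T$ and $C$, which is all the paper uses.
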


\begin{proof}
The forward direction is clear.
For the reverse implication, we are given a basis $B$ of $G$ such that $f(B)$ extends to a basis $f(B)\sqcup C$ of $H$.
So, $\im{f}=\myspan{f(B)}$ and $H=\im{f}\eds \myspan{C}$---an internal direct sum of $R$-modules.
Consider a set $S \subseteq G$ that can be extended to a basis $S\sqcup T$ of $G$.
That implies $\im{f}=\myspan{f(S)\sqcup f(T)}$ and $f(S)\sqcup f(T) \sqcup C$ is a basis of $H$ extending $f(S)$, as desired.
\end{proof}

\begin{lemma}\label{lemma:diagonal_extension_preserving}
For each $n\in\Z_+$, the diagonal homomorphism $\Delta: R \to R^n$ defined by $1\mapsto(1,\ldots,1)$ is extension preserving.
\end{lemma}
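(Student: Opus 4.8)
The plan is to invoke the criterion just established in Lemma~\ref{epequiv}, which reduces verifying the extension-preserving property to exhibiting a single well-behaved basis of the domain. Since the domain $R$ is free of rank one with standard basis $B=\cpa{1}$, it suffices to show that $\Delta(B)=\cpa{(1,\ldots,1)}$ extends to a basis of $R^n$.

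To produce such an extension explicitly, I would set $v_1=(1,1,\ldots,1)=\Delta(1)$ and let $v_i=e_i$ be the $i$-th standard basis vector of $R^n$ for $2\leq i\leq n$. The $n\times n$ matrix whose columns are $v_1,v_2,\ldots,v_n$ is lower triangular with every diagonal entry equal to $1$, so its determinant is $1$, a unit in $R$. Hence $\cpa{v_1,v_2,\ldots,v_n}$ is a basis of $R^n$ that extends $\Delta(B)$, and each $v_j$ lies in $R^n$ as required.

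By Lemma~\ref{epequiv}, the existence of this one basis $B$ of $R$ whose image extends to a basis of $R^n$ is precisely equivalent to $\Delta$ being extension preserving, which completes the argument. I do not expect any genuine obstacle here: the only substantive point is that the vector $(1,\ldots,1)$ is unimodular---its first coordinate alone generates the unit ideal---so it can be completed to a basis, and the triangular completion above makes that completely explicit, requiring no structure theory for modules over a PID.
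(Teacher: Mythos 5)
Your proof is correct and is essentially identical to the paper's: both take the basis $\cpa{\Delta(1), e_2, \ldots, e_n}$ of $R^n$ and conclude via Lemma~\ref{epequiv}. Your determinant computation merely makes explicit the verification that this set is a basis, which the paper leaves to the reader.
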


\begin{proof}
Let $e_i$ denote the $i$th standard basis element of $R^n$.
Then, $\cpa{\Delta(1), e_2, e_3, \ldots, e_n}$ is a basis of $R^n$ and
the result follows by Lemma~\ref{epequiv}.
\end{proof}

\begin{lemma}\label{lemma:extension_preserving_direct_sum}
Given extension preserving homomorphisms $f_i: G_i \to H_i$ for $1 \leq i \leq n$,
define $G=\eds_{i=1}^n G_i$ and $H=\eds_{i=1}^n H_i$---external direct sums of $R$-modules.
Then, the homomorphism $f=\eds_{i = 1}^n f_i:G \to H$ is extension preserving.
\end{lemma}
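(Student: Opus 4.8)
The plan is to apply the equivalent criterion from Lemma~\ref{epequiv}: in order to show that $f=\eds_{i=1}^n f_i$ is extension preserving, it suffices to exhibit a \emph{single} basis $B$ of $G$ such that $f(B)$ extends to a basis of $H$. Since each $f_i$ is extension preserving, Lemma~\ref{epequiv} supplies, for each $i$, a basis $B_i$ of $G_i$ together with a set $C_i\subseteq H_i$ such that $f_i(B_i)\sqcup C_i$ is a basis of $H_i$.

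First I would assemble these per-summand data into global data for the direct sum. Writing $\iota_i:G_i\to G$ and $\iota_i':H_i\to H$ for the canonical inclusions, the set $B=\sqcup_{i=1}^n \iota_i(B_i)$ is a basis of $G=\eds_{i=1}^n G_i$, since a finite direct sum of free $R$-modules is free with basis the disjoint union of the images of bases of the summands. The defining relation $f\circ\iota_i=\iota_i'\circ f_i$ for the direct-sum map gives $f\pa{\iota_i(b)}=\iota_i'\pa{f_i(b)}$ for each $b\in B_i$, so $f(B)=\sqcup_{i=1}^n \iota_i'\pa{f_i(B_i)}$.

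Next I would verify that $f(B)$ extends to a basis of $H$. By the same fact about direct sums of free modules, the set $\sqcup_{i=1}^n \iota_i'\pa{f_i(B_i)\sqcup C_i}$ is a basis of $H=\eds_{i=1}^n H_i$. This basis contains $f(B)$, the remaining elements being exactly $\sqcup_{i=1}^n \iota_i'(C_i)$. Hence $f(B)$ extends to a basis of $H$, and Lemma~\ref{epequiv} yields that $f$ is extension preserving, as desired.

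I expect no serious obstacle here, as the argument is essentially bookkeeping once Lemma~\ref{epequiv} reduces the problem to producing one convenient basis. The only point requiring care is the standard structural fact that a finite direct sum of free $R$-modules is free, with basis obtained as the disjoint union of the images of bases of the summands; this is precisely what lets me transport the single-basis data furnished by Lemma~\ref{epequiv} on each factor into a single-basis statement for $f$, and in particular lets me keep the chosen extensions $C_i$ disjoint across summands.
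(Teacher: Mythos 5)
Your proof is correct and follows essentially the same route as the paper's: take $B_i$ a basis of each $G_i$, note $f_i(B_i)$ extends to a basis $f_i(B_i)\sqcup C_i$ of $H_i$, assemble $B=\sqcup_i\iota_i(B_i)$ and the extension $\sqcup_i\iota_i'(C_i)$, and invoke Lemma~\ref{epequiv}. No issues.
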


\begin{proof}
For each $1\leq i \leq n$, let $B_i$ be a basis of $G_i$, and let $\varphi_i: G_i \to G$ and $\psi_i:H_i\to H$ be the canonical inclusions.
Notice that $f\circ \varphi_i=\psi_i\circ f_i$, and $B=\sqcup_{i=1}^n \varphi_i\pa{B_i}$ is a basis of $G$.
By hypothesis, $f_i\pa{B_i}$ extends to a basis $f_i\pa{B_i} \sqcup C_i$ of $H_i$.
Hence, $f(B)$ extends to a basis $f(B) \sqcup \psi_1\pa{C_1}\sqcup \cdots \sqcup \psi_n\pa{C_n}$ of $H$
and the result follows by Lemma~\ref{epequiv}.
\end{proof}

\begin{lemma}\label{lemma:basis-preserving-DL}
Let $G_1\xrightarrow{f_1}G_2\xrightarrow{f_2}G_3\xrightarrow{f_3}\cdots$
be a direct system of free $R$-modules of finite rank where each $f_i$ is extension preserving.
Then, the direct limit $G=\dlim G_i$ is a free $R$-module of countable rank.
\end{lemma}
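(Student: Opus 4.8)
The plan is to construct a coherent system of bases for the $G_i$ and then read off an explicit basis of the direct limit. First I would fix an arbitrary basis $B_1$ of $G_1$ and build a tower of bases inductively: assuming a basis $B_i$ of $G_i$ has been chosen, observe that $B_i$ is already a basis of $G_i$, so the extension-preserving hypothesis on $f_i$ guarantees that $f_i(B_i)$ extends to a basis of $G_{i+1}$; fix one such basis $B_{i+1}$ and set $C_{i+1}=B_{i+1}\setminus f_i(B_i)$. By construction $f_i(B_i)\subseteq B_{i+1}$ for every $i$, and since each $f_i$ is injective (injectivity is part of the definition of extension preserving), $f_i$ restricts to a bijection of $B_i$ onto $f_i(B_i)$. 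Writing $\tilde C_1=B_1$ and $\tilde C_k=C_k$ for $k\ge2$, iterating the splitting $B_i=f_{i-1}(B_{i-1})\sqcup C_i$ expresses each basis as a disjoint union $B_i=\bigsqcup_{k=1}^{i}F_{k,i}(\tilde C_k)$, where $F_{k,i}=f_{i-1}\circ\cdots\circ f_k$ denotes the bonding map $G_k\to G_i$ for $k<i$ and $F_{i,i}=\tn{id}$.

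Let $\phi_i:G_i\to G$ be the canonical maps into the direct limit, so that $\phi_{i+1}\circ f_i=\phi_i$ and hence $\phi_i\circ F_{k,i}=\phi_k$. I would propose as a basis of $G$ the set
\[
\mathcal B=\bigcup_{k\ge1}\phi_k\bigl(\tilde C_k\bigr).
\]
Countability of the rank is then immediate: each $G_i$ has finite rank, so each $B_i$, and hence each $\tilde C_k$, is finite, making $\mathcal B$ a countable union of finite sets. For spanning, any element of $G$ equals $\phi_i(x)$ for some $i$ and some $x\in G_i$. Expanding $x$ in the basis $B_i=\bigsqcup_{k\le i}F_{k,i}(\tilde C_k)$ as $x=\sum_{k\le i}\sum_{c\in\tilde C_k}a_{k,c}\,F_{k,i}(c)$ and applying $\phi_i$, the relation $\phi_i\circ F_{k,i}=\phi_k$ gives $\phi_i(x)=\sum_{k\le i}\sum_{c}a_{k,c}\,\phi_k(c)$, an $R$-linear combination of elements of $\mathcal B$.

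The main step is linear independence, and I expect it to be the only real obstacle. Suppose $\sum_{k}\sum_{c\in\tilde C_k}a_{k,c}\,\phi_k(c)=0$ is a finite relation, and let $N$ be the largest index appearing. Using $\phi_k(c)=\phi_N\bigl(F_{k,N}(c)\bigr)$, this reads $\phi_N(y)=0$, where $y=\sum_{k\le N}\sum_c a_{k,c}\,F_{k,N}(c)\in G_N$ and the scalars $a_{k,c}$ are precisely the coordinates of $y$ in the basis $B_N=\bigsqcup_{k\le N}F_{k,N}(\tilde C_k)$. Here I would invoke the standard fact that in a direct limit of $R$-modules an element is killed by $\phi_N$ if and only if some bonding map annihilates it, that is, $F_{N,M}(y)=0$ for some $M\ge N$. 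Since each $f_i$ is injective, the composite $F_{N,M}$ is injective, forcing $y=0$; as $B_N$ is a basis, every $a_{k,c}=0$. Hence $\mathcal B$ is linearly independent, therefore a basis, and $G$ is free of countable rank. The crux is exactly this interplay: the extension-preserving hypothesis furnishes the compatible tower of bases, while injectivity of the bonding maps is what upgrades ``eventually zero in the system'' to ``already zero in $G_N$''.
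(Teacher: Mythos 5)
Your proof is correct and follows essentially the same route as the paper's: both build the tower of bases $B_{i+1}=f_i(B_i)\sqcup C_{i+1}$ via the extension-preserving hypothesis, take the union of the images of the new basis elements in the direct limit, and verify spanning from representatives and independence from injectivity of the canonical maps $\phi_N$ (which the paper likewise deduces from injectivity of the bonding maps in the standard model of the direct limit). No gaps.
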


\begin{proof}
The direct limit gives us a commutative diagram
\begin{equation} \label{DS-of-lemma}
\begin{tikzcd}
    G_1 \arrow[r, "f_1"] \arrow[rrrr, bend right=28, "\theta_1"'] &
    G_2 \arrow[r, "f_2"] \arrow[rrr, bend right=20, "\theta_2"'] &
    G_3 \arrow[r, "f_3"] \arrow[rr, bend right=10, "\theta_3"'] &
    \cdots &
		G=\dlim G_i
\end{tikzcd}
\end{equation}
As each $f_i$ is injective, the standard model of the direct limit implies that each $\theta_i$ is injective.
Inductively, we define a basis $A_i$ of $G_i$ and a subset $B_i\subseteq A_i$ for $i=1,2,3,\ldots$.
Let $A_1$ be a basis of $G_1$, and define $B_1=A_1$.
As $f_1$ is extension preserving, $f_1\pa{A_1}$ extends to a basis $A_2=f_1\pa{A_1} \sqcup B_2$ of $G_2$.
Given $A_i=f_{i-1}\pa{A_{i-1}} \sqcup B_i$ a basis of $G_i$,
$f_i$ is extension preserving and so $f_i\pa{A_i}$ extends to a basis $A_{i+1}=f_i\pa{A_i} \sqcup B_{i+1}$ of $G_{i+1}$.
That completes our definition of the bases $A_i$ of $G_i$ and the subsets $B_i\subseteq A_i$.
We define $\mathcal{B} = \cup_{i = 1}^\infty \theta_i\pa{A_i}$ and claim that
\begin{equation} \label{basis-of-G}
\mathcal{B} = \cup_{i = 1}^\infty \theta_i\pa{A_i} = \sqcup_{i = 1}^\infty \theta_i\pa{B_i} 
\end{equation}
is a basis of $G$.
Evidently, that claim proves the lemma since a countable collection of finite sets is countable.
To prove that claim, induction shows that $\cup_{i = 1}^n \theta_i\pa{A_i} = \cup_{i = 1}^n \theta_i\pa{B_i}$ for each $n\in\Z_+$.
Hence, $\cup_{i = 1}^\infty \theta_i\pa{A_i} = \cup_{i = 1}^\infty \theta_i\pa{B_i}$.
For disjointedness of the $\theta_i\pa{B_i}$, suppose, by way of contradiction, that $\theta_i\pa{b_i}=\theta_j\pa{b_j}$ for some $i<j$, $b_i\in B_i$, and $b_j\in B_j$.
Let $a=f_{j-1}\circ\cdots f_{i+1}\circ f_i (b_i) \in f_{j-1}\pa{A_{j-1}}\subseteq A_j-B_j$.
By commutativity, $\theta_j(a)=\theta_j(b_j)$.
As $\theta_j$ is injective, $a=b_j$.
However, $b_j\in B_j$ and $a\notin B_j$.
That contradiction completes the proof of disjointedness.
It remains to prove $\mathcal{B}$ is a basis of $G$.
To see $\mathcal{B}$ spans $G$, note that each element of $G$ has a representative in some $G_i$ and $A_i$ is a basis of $G_i$.
Finally, $\theta_1\pa{A_1}\subseteq \theta_2\pa{A_2}\subseteq\cdots$ is
a nested sequence of linearly independent sets---since each $A_i$ is linearly independent and each $\theta_i$ is injective---and so
$\mathcal{B}$ is linearly independent.
\end{proof}

Now, we prove Theorem~\ref{recdirect}.

\begin{proof}[Proof of Theorem~\ref{recdirect}]
We are given $(X,r)$, a ray-based, metrizable generalized continuum.
By Corollary~\ref{receexist}, there exists an $r$-efficient compact exhaustion $\cpa{K_i}$ of $X$.
For each $i\in \Z_+$, let $V_i=X-K_i$ and let $r_i=\im{r}-K_i$.
The components and the path-components of each $V_i$ coincide since $X$ is metrizable---see Remark~\ref{mgcniceH0}.
As $\cpa{K_i}$ is an $r$-efficient compact exhaustion of $X$, each $K_i$ is connected,
the components $\cpa{V_i^j}_{j = 1}^{\mu_i}$ of $V_i$ are all unbounded and finite in number (see Lemma~\ref{compactlemma}),
$r^{-1}\pa{K_i}$ is an efficient compact exhaustion of $\nnr$, and $r_i$ is connected.
We have the following commutative diagram of free $\Z$-modules of finite rank.
\begin{equation}\label{eq:inverse_system}
\begin{tikzcd}
    0 \arrow[r] & 
    \zho{0}{r_1} \arrow[r] & 
    \zho{0}{V_1} \arrow[r] \arrow[l, dashed, bend left, "s_1"] & \zho{0}{V_1,r_1} \arrow[r] & 0 \\
    0 \arrow[r] & 
    \zho{0}{r_2} \arrow[r] \arrow[u, "\cong"] & 
    \zho{0}{V_2} \arrow[r] \arrow[l, dashed, bend left, "s_2"] \arrow[u] & 
    \zho{0}{V_2,r_2}  \arrow[r] \arrow[u] & 0 \\
    & \vdots   \arrow[u, "\cong"]           
    & \vdots   \arrow[u]           
    & \vdots   \arrow[u] 
    & \\
\end{tikzcd}
\end{equation}
In~\eqref{eq:inverse_system}, the rows come from the long exact sequences of the pairs $(V_i,r_i)$ and the vertical homomorphisms
are induced by the inclusions $(V_1, r_1) \hookleftarrow (V_2, r_2) \hookleftarrow \cdots$.
We have $\zho{0}{V_i}=\fg{V_i^j}_{j=1}^{\mu_i}\cong\Z^{\mu_i}$ and $\zho{0}{r_i}=\fg{r_i}\cong \Z$.
Lemmas~\ref{monotonicity} (montonicity) and~\ref{unbdedcomptoend} imply that $1\leq \mu_1 \leq \mu_2 \leq \mu_3 \leq\cdots$
is a nondecreasing sequence of positive integers.
Those lemmas also imply that the vertical homomorphism $\zho{0}{V_i}\to\zho{0}{V_{i-1}}$
sends $V_i^j$ to the unique component of $V_{i-1}$ containing $V_i^j$ and that vertical homomorphism is surjective.
Essentially, that homomorphism is a finite direct sum of homomorphisms of the form $\Z^n\to\Z$ where $e_k\mapsto1$ for all $k$.
The unique natural splitting $s_i:\zho{0}{V_i}\to\zho{0}{r_i}$ of the $i$th row sends $V_i^j\mapsto r_i$ for all $j$.
That splitting is essentially $\Z^{\mu_i}\to\Z$ where $e_k\mapsto1$ for all $k$.
Diagram~\eqref{eq:inverse_system} commutes, including the splittings.

Apply the $\hom{\Z}{-}{R}$ functor and then the universal coefficients theorem to~\eqref{eq:inverse_system} to
obtain the following commutative diagram of free $R$-modules of finite rank.
\begin{equation}\label{eq:direct_system}
\begin{tikzcd}
0  & \zco{0}{r_1} \arrow[l] \arrow[r, dashed, bend right, "\sigma_1"'] \arrow[d, "\cong"'] & \zco{0}{V_1} \arrow[l] \arrow[d]  &
\zco{0}{V_1,r_1}  \arrow[l] \arrow[d] & 0 \arrow[l] \\
0  & \zco{0}{r_2} \arrow[l] \arrow[r, dashed, bend right, "\sigma_2"'] \arrow[d, "\cong"'] & \zco{0}{V_2} \arrow[l]  \arrow[d] &
\zco{0}{V_2,r_2} \arrow[l] \arrow[d] & 0 \arrow[l] \\
& \vdots & \vdots & \vdots & \\
\end{tikzcd}
\end{equation}
We obtain short exact sequences in each row by left exactness of the $\hom{\Z}{-}{R}$ functor
and since each row in~\eqref{eq:inverse_system} splits.
By the universal coefficients theorem, the homomorphisms in~\eqref{eq:direct_system} are dual to those in~\eqref{eq:inverse_system}.
In particular, for each $i\in\Z_+$ we have $\zco{0}{V_i}\cong\fg{\delta V_i^j}_{j=1}^{\mu_i}\cong R^{\mu_i}$ and $\zco{0}{r_i}\cong\fg{\delta r_i}\cong R$.
The homomorphism $\zco{0}{V_i}\to \zco{0}{V_{i+1}}$ sends $\delta V_i^j$ to the sum of the components of $V_{i+1}$ that $V_i^j$ contains.
Essentially, that homomorphism is a finite direct sum of diagonal homomorphisms $\Delta:R\to R^n$ sending $1\mapsto (1,\ldots,1)$.
By Lemmas~\ref{lemma:diagonal_extension_preserving} and~\ref{lemma:extension_preserving_direct_sum}, that homomorphism is extension preserving.
The unique natural splitting $\sigma_i:\zco{0}{r_i}\to\zho{0}{V_i}$ of the $i$th row sends $\delta r_i\mapsto \sum_{j=1}^{\mu_i} \delta V_i^j$.
That splitting is essentially the diagonal homomorphism $\Delta:R\to R^{\mu_i}$ sending $1\mapsto (1,\ldots,1)$.
Diagram~\eqref{eq:direct_system} commutes, including the splittings.

Apply the direct limit functor---which is exact---to~\eqref{eq:direct_system} to obtain the following exact sequence.
\begin{equation}\label{eq:sesrecosplitdirectrel}
\begin{tikzcd}
0 & \zeco{0}{r} \arrow[l]  & \zeco{0}{X} \arrow[l] & \zeco{0}{X,r} \arrow[l] & 0 \arrow[l]
\end{tikzcd}
\end{equation}
By our definition of reduced end cohomology, we have $\rzeco{0}{X}=\zeco{0}{X,r}$.
Therefore,~\eqref{eq:sesrecosplitdirectrel} is our desired exact sequence~\eqref{eq:sesrecosplitdirect}.
Lemma~\ref{lemma:basis-preserving-DL} implies that $\zeco{0}{X}$ is a free $R$-module of countable rank.
We know $\zeco{\ast}{r}=\zeco{0}{r}\cong R$.
So, the exact sequence~\eqref{eq:sesrecosplitdirect} implies that $\rzeco{0}{X}$ is a free $R$-module and
$\rank{\zeco{0}{X}}=\tn{rank}\rzeco{0}{X}+1$.
If $X$ has $k\in\Z_+$ ends, then the sequence $1\leq \mu_1 \leq \mu_2 \leq \mu_3 \leq\cdots$
eventually stabilizes at $k$ and $\zeco{0}{X}\cong R^k$.
If $X$ has infinitely many ends, then the sequence $1\leq \mu_1 \leq \mu_2 \leq \mu_3 \leq\cdots$ is unbounded.
As each $\zco{0}{V_i}$ injects into $\zeco{0}{X}$, we get that the rank of $\zeco{0}{X}$ is countably infinite.
The splittings $\sigma_i:\zco{0}{r_i}\to\zho{0}{V_i}$ yield a splitting $\zeco{0}{r}\to \zeco{0}{X}$ of~\eqref{eq:sesrecosplitdirectrel}.
For naturality of that splitting, let $f:(Y,s)\to(X,r)$ be a ray-based proper map of metrizable generalized continua.
Take the commutative diagram formed from~\eqref{eq:direct_system}, the analogous diagram for $(Y,s)$,
and the induced homomorphisms from the former to the latter.
Applying the direct limit functor yields the desired commutative diagram.
\end{proof}

Note that in Section~\ref{sec:cem} we introduced and proved the existence of $r$-efficient
compact exhaustions---and compact exhaustions of maps more generally---precisely
to make the algebra in the previous proof as simple as possible.
We leave the interested reader with an exercise:
construct an explicit basis for $\zeco{0}{X}\cong C(E(X),R)$ where $X$ is the infinite binary tree in Figure~\ref{fig:infbintree}.

\section{End cohomology of an end sum}
\label{sec:king}

End sum is an operation that combines two noncompact manifolds along an end of each manifold.
It was introduced by Gompf~\cite{gompf83}---then a graduate student---to construct various smooth manifolds homeomorphic but not diffeomorphic to $\R^4$.
End sum is now a major tool in $4$-manifolds and has applications in dimensions $2$, $3$, and
higher---see~\cite{gompf85} for $4$-manifolds, \cite{cg} and \cite{cgh} for background, manifolds of dimensions $\geq3$,
and further references, and~\cite{ac} for surfaces.
In this section, let $A \approx B$ mean that $A$ and $B$ are diffeomorphic---not necessarily preserving orientation---and
let $\Int{}{}$ denote manifold interior.

Colloquially, Gompf~\cite[p.~322]{gompf83} described end sum as gluing together two noncompact manifolds along an end of each manifold
using a \textit{piece of scotch tape}.
In more detail, let $M$ be a smooth, connected, oriented, noncompact manifold of dimension $n+1\geq 2$ with compact---possibly empty---boundary.
Let $r:\nnr\to M$ be a smoothly embedded proper ray in $\Int{M}{}$.
We call $(M,r)$ an \deffont{end sum pair}.
\begin{figure}[htbp!]
    \centerline{\includegraphics[scale=1.0]{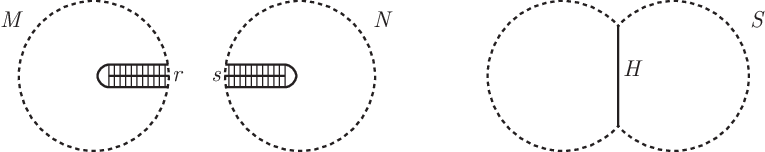}}
    \caption{End sum pairs $(M,r)$ and $(N,s)$ with regular neighborhoods $\nu r$ and $\nu s$ hatched (left) and
		end sum $S=(M,r)\es (N,s)$ (right).}
\label{fig:endsum}
\end{figure}
Let $(M,r)$ and $(N,s)$ be end sum pairs of the same dimension $n+1\geq 2$.
Let $\nu r\subset \Int{M}{}$ and $\nu s\subset \Int{N}{}$ be smooth, closed regular neighborhoods of $r$ and $s$ respectively.
Define $\widehat{M}=M-\Int{\nu r}{}$ and $\widehat{N}=N-\Int{\nu s}{}$.
Notice that $\widehat{M}$ contains $\partial \nu r \approx \R^n$ as a boundary component,
and similarly $\widehat{N}$ contains $\partial \nu s \approx \R^n$ as a boundary component.
Orient boundaries by the outward normal first convention~\cite[Ch.~3]{gp}.
As indicated in Figure~\ref{fig:endsum}, the \deffont{end sum} $S=(M,r)\es (N,s)$ of $(M,r)$ and $(N,s)$ is defined
to be the oriented manifold obtained by gluing together $\widehat{M}$ and $\widehat{N}$
along $\partial \nu r$ and $\partial \nu s$ by an orientation reversing diffeomorphism\footnote{For
details and variations on the end sum operation, see~\cite{cks}, \cite{cg}, and \cite[$\S$3]{ac}.
In particular, the latter includes a proof that proper data ensures the end sum is Hausdorff.}.
Let $H\subset S$ denote the properly embedded copy of $\R^n$ that is the common image of both $\partial\nu r$ and $\partial\nu s$.

The main goal of this section is to compute the end cohomology algebra of $S$
in terms of the algebras of $M$ and $N$.
Such a theorem was originally conceived by King (unpublished) during the collaboration~\cite{cks}.
Guilbault, Haggerty, and the second author~\cite[$\S$5]{cgh} carefully fixed orientation conventions and proved King's theorem.
At that time, a precise definition of reduced end cohomology was lacking---nevertheless, all applications in~\cite{cgh}
are valid by the proofs therein since only end cohomology elements of positive dimension were utilized.
We reuse all orientation conventions from~\cite[$\S$5]{cgh} and now employ
our precise definition of reduced end cohomology.
Throughout this section, $R$ is a PID.

The end sum $S$ is an adjunction space of $\widehat{M}$ and $\widehat{N}$.
We have smooth, proper embeddings
$i_M:\pa{\widehat{M},\partial \nu r} \to \pa{S,H}$ and
$i_N:\pa{\widehat{N},\partial \nu s} \to \pa{S,H}$.
Choose the orientation of $H$ so that the restriction
$\rest{i_M}\partial \nu r:\partial \nu r \to H$ is an orientation preserving diffeomorphism.
So, $\rest{i_N}\partial \nu s:\partial \nu s \to H$ is an orientation reversing diffeomorphism.
Choose $u\subset H\approx \R^n$ an unknotted smooth, proper ray.
Let $r'\subset \partial \nu r$ and $s'\subset \partial \nu s$ be the smooth, proper rays carried to $u$ by $i_M$ and $i_N$ respectively.
Thus, we have smooth, proper embeddings of closed triples
\begin{equation}\label{eq:triplesembed}
\begin{tikzcd}
\pa{\widehat{M},\partial \nu r, r'} \arrow[r,"i_M"] & \pa{S,H,u} & \pa{\widehat{N},\partial \nu s, s'} \arrow[l,"i_N"']
\end{tikzcd}
\end{equation}
as depicted in Figure~\ref{fig:endsum2}
\begin{figure}[htbp!]
    \centerline{\includegraphics[scale=1.0]{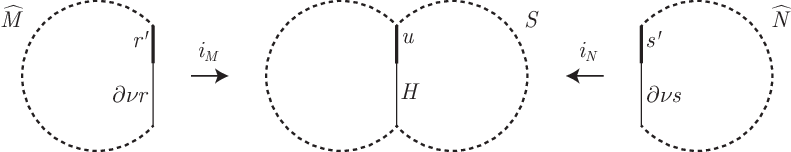}}
    \caption{Smooth, proper embeddings of closed triples.}
\label{fig:endsum2}
\end{figure}

The maps $i_M$ and $i_N$ induce the commutative diagram
\begin{equation}\label{eq:triplesexact}
\begin{tikzcd}
\phantom{} & \zeco{k}{\partial\nu r,r'} \arrow[l] & \zeco{k}{\widehat{M},r'} \arrow[l] & \zeco{k}{\widehat{M},\partial\nu r} \arrow[l] & \zeco{k-1}{\partial\nu r,r'} \arrow[l,"\delta_M"'] & \phantom{} \arrow[l]\\
\phantom{} & \zeco{k}{H,u} \arrow[l] \arrow[u] \arrow[d] & \zeco{k}{S,u} \arrow[l] \arrow[u] \arrow[d] & \zeco{k}{S,H} \arrow[l,"j^{\ast}_{\tn{e}}"'] \arrow[u,"i_M^{\ast}"] \arrow[d,"i_N^{\ast}"'] & \zeco{k-1}{H,u} \arrow[l,"\delta_S"'] \arrow[u,"\cong"'] \arrow[d,"\cong"] & \phantom{} \arrow[l]\\
\phantom{} & \zeco{k}{\partial\nu s,s'} \arrow[l] & \zeco{k}{\widehat{N},s'} \arrow[l] & \zeco{k}{\widehat{N},\partial\nu s} \arrow[l] & \zeco{k-1}{\partial\nu s,s'} \arrow[l,"\delta_N"'] & \phantom{} \arrow[l]
\end{tikzcd}
\end{equation}
where the rows are the long exact sequences of the closed triples~\eqref{eq:triplesembed}.
As $H\approx\R^n$, $n\geq1$, and $u$ is unknotted in $H$, the long exact sequence of the closed pair $(H,u)$
implies that $\zeco{n-1}{H,u}\cong R$ and $\zeco{k}{H,u}=\cpa{0}$ otherwise.
So, $j^{\ast}_{\tn{e}}: \zeco{k}{S,H}\to\zeco{k}{S,u}$ is an isomorphism in each dimension $k\ne n$ and $k\ne n-1$,
is surjective in dimension $n$, and is injective in dimension $n-1$.
It will turn out that $j^{\ast}_{\tn{e}}$ is an isomorphism in dimension $n-1$ as well.

The algebra $\zeco{\ast}{S,H}$ is related to algebras for $M$ and $N$ 
by a well-known commutative diagram that yields Mayer-Vietoris sequences---see \cite[p.~32]{eilenbergsteenrod},
\cite[pp.~110 \& 145]{maycc}, and \cite[pp.~470--471]{cgh}.
In particular, there is an isomorphism
\begin{equation}\label{eq:MV}
\begin{tikzcd}[row sep=0]
h: \zeco{\ast}{S,H} \arrow[r,"\cong"] & \zeco{\ast}{\widehat{M},\partial\nu r} \eds \zeco{\ast}{\widehat{N},\partial\nu s} \\
\alpha \arrow[r, mapsto] & \pa{i_M^{\ast}(\alpha),i_N^{\ast}(\alpha)}
\end{tikzcd}
\end{equation}
where the cup product is coordinatewise in the direct sum.
We also have isomorphisms
\begin{equation}\label{eq:seriesisos}
\begin{tikzcd}
\zeco{\ast}{\widehat{M},\partial\nu r} & \zeco{\ast}{M,\nu r} \arrow[l,"\cong"'] \arrow[r,"\cong"] & \zeco{\ast}{M,r} = \rzeco{\ast}{M}
\end{tikzcd}
\end{equation}
where the first follows from excision and the second is induced by inclusion---a proper homotopy equivalence since $\nu r$ nicely collapses to $r$.
Similarly, we have the analogous isomorphisms for $N$.
Hence, we have an isomorphism
\begin{equation}\label{eq:combineisos}
\begin{tikzcd}
\zeco{\ast}{S,H} \cong \zeco{\ast}{M,r} \eds \zeco{\ast}{N,s}
\end{tikzcd}
\end{equation}
We have shown that
\begin{equation}\label{eq:combineisosk}
\begin{tikzcd}
\zeco{k}{S,u} \cong \zeco{k}{M,r} \eds \zeco{k}{N,s}
\end{tikzcd}
\end{equation}
for each dimension $k\ne n$ and $k\ne n-1$.
To understand those remaining dimensions, we must look more closely at the coboundary homomorphism $\delta_S$ in~\eqref{eq:triplesexact}.
For that purpose, we recall notation and orientation conventions from~\cite[$\S$5]{cgh}.

By~\cite[Lemma~5.1]{cgh}, there is a smooth, proper Morse function $h:M\to \R$ that is bounded below and is well-behaved on $\nu r$.
Namely,
$\rest{h}r$ is projection,
$\rest{h}\nu r$ has a unique critical point that is a global minimum occurring in $\partial \nu r$,
$h^{-1}\pa{[t,\infty)}\cap \pa{\nu r,\partial \nu r}\approx [t,\infty)\times \pa{D^n,S^{n-1}}$ for each $t\in\nnr$,
each nonnegative integer is a regular value of $h$,
and the boundary of $M$---compact by hypothesis---is contained in $h^{-1}\pa{(-\infty,0)}$.
Such a Morse function is constructed using Whitney's embedding theorem and is depicted as height in Figure~\ref{fig:morse}.
\begin{figure}[htbp!]
    \centerline{\includegraphics[scale=1.0]{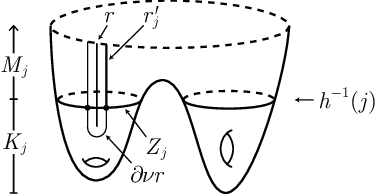}}
    \caption{Manifold $M$ with a Morse function $h$ depicted as height.}
\label{fig:morse}
\end{figure}
Figure~\ref{fig:triplesnotation} contains further submanifolds of $M$ that we now define.
\begin{figure}[htbp!]
    \centerline{\includegraphics[scale=1.0]{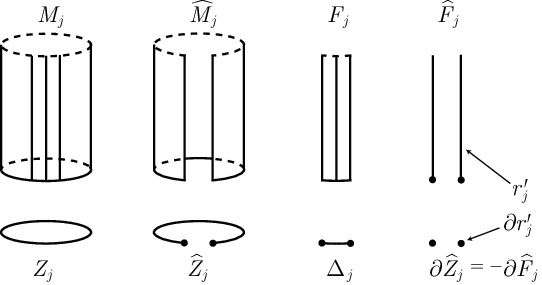}}
    \caption{Closed submanifold $M_j$ of $M$ and relevant closed submanifolds of $M_j$.}
\label{fig:triplesnotation}
\end{figure}

For each nonnegative integer $j$, we define:
\begin{align*}
K_j &= h^{-1}\pa{(-\infty,j]}\\
M_j &= h^{-1}\pa{[j,\infty)}\\
F_j &= \nu r \cap M_j \approx [j,\infty)\times D^n\\
Z_j &= \tn{the unique component of $M_j \cap K_j = h^{-1}(j)$ that meets $\nu r$}\\
\widehat{M}_j &= M_j-\Int{\nu r}{}\\
\widehat{F}_j &= \partial \nu r \cap M_j \approx [j,\infty)\times S^{n-1}\\
\widehat{Z}_j &= Z_j-\Int{\nu r}{}\\
r_j &= r \cap M_j\\
r'_j &= r' \cap M_j\\
\Delta_j &= \nu r \cap Z_j \approx D^n
\end{align*}
As an aid in assimilating that notation, recall the mnemonic from~\cite[p.~481]{cgh}: $\widehat{X}$ denotes a ``nicely punctured'' copy of $X$.
Orient $M_j$ and $K_j$ as codimension-0 submanifolds of $M$.
Note that $\cpa{K_j}_{j=0}^{\infty}$ is a compact exhaustion of $M$,
and $M_j \cap K_j = h^{-1}(j)$ is a nonempty, finite topological disjoint union of closed $n$-manifolds.
Equip $\partial K_j$ with the boundary orientation.
Orient $Z_j$ and $\widehat{Z}_j$ as codimension-0 submanifolds of $\partial K_j$.
Equip $\partial \widehat{Z}_j$ with the boundary orientation.
Note that $\partial\widehat{Z}_j\approx S^{n-1}$.

With integer coefficients, let $\br{\partial\widehat{Z}_j} \in \zho{n-1}{\partial\widehat{Z}_j}$ denote the fundamental class,
and let $\br{\widehat{Z}_j,\partial\widehat{Z}_j} \in \zho{n}{\widehat{Z}_j,\partial\widehat{Z}_j}$
denote the relative fundamental class---see Kreck~\cite[pp.~5--6]{kreck}.
If $n>1$, then both of those groups are infinite cyclic with preferred generators given by the fundamental classes,
and the long exact sequence of the pair $\pa{\widehat{Z}_j,\partial \widehat{Z}_j}$ contains
\begin{equation}\label{lepngt1}
\begin{tikzcd}[row sep=0]
\Z \cong \zho{n}{\widehat{Z}_j,\partial\widehat{Z}_j} \arrow[r,"\partial_{\ast}","\cong"'] & \zho{n-1}{\partial\widehat{Z}_j} \cong\Z \\
\br{\widehat{Z}_j,\partial\widehat{Z}_j} \arrow[r, mapsto] & \br{\partial\widehat{Z}_j}
\end{tikzcd}
\end{equation}
If $n=1$, then~\eqref{lepngt1} becomes
\begin{equation}\label{lepneq1}
\begin{tikzcd}[row sep=0]
\Z \cong \zho{1}{\widehat{Z}_j,\partial\widehat{Z}_j} \arrow[r,"\partial_{\ast}"] & \zho{0}{\partial\widehat{Z}_j} \cong \Z^2\\
\br{\widehat{Z}_j,\partial\widehat{Z}_j} \arrow[r, mapsto] & \br{\partial\widehat{Z}_j}
\end{tikzcd}
\end{equation}
where $\partial_{\ast}$ sends $1\mapsto(1,-1)$.
In any case $n\geq 1$, the long exact sequence of the triple $\pa{\widehat{Z}_j,\partial \widehat{Z}_j,\partial r'_j}$---see May~\cite[p.~110]{maycc}---contains
$\partial_{\ast}:\zho{n}{\widehat{Z}_j,\partial\widehat{Z}_j} \to \zho{n-1}{\partial\widehat{Z}_j,\partial r'_j}$.
By definition, that boundary homomorphism is the composition
\begin{equation}\label{compforb}
\begin{tikzcd}[row sep=0]
\zho{n}{\widehat{Z}_j,\partial\widehat{Z}_j} \arrow[r,"\partial_{\ast}"] & \zho{n-1}{\partial\widehat{Z}_j} \arrow[r] &
\zho{n-1}{\partial\widehat{Z}_j,\partial r'_j}
\end{tikzcd}
\end{equation}
of homomorphisms from the long exact sequences of the pairs $\pa{\widehat{Z}_j,\partial \widehat{Z}_j}$ and
$\pa{\partial \widehat{Z}_j,\partial r'_j}$.
It is straightforward to verify that overall composition is an isomorphism of copies of $\Z$.
So, we have
\begin{equation}\label{letnge1}
\begin{tikzcd}[row sep=0]
\Z \cong \zho{n}{\widehat{Z}_j,\partial\widehat{Z}_j} \arrow[r,"\partial_{\ast}","\cong"'] & \zho{n-1}{\partial\widehat{Z}_j,\partial r'_j} \cong \Z \\
\br{\widehat{Z}_j,\partial\widehat{Z}_j} \arrow[r, mapsto] & \br{\partial\widehat{Z}_j,\partial r'_j}
\end{tikzcd}
\end{equation}
where $\br{\partial\widehat{Z}_j,\partial r'_j}$ is defined to be the image of $\br{\widehat{Z}_j,\partial\widehat{Z}_j}$ under $\partial_{\ast}$.
By the universal coefficients theorem and taking the obvious duals,
the long exact sequence of the triple $\pa{\widehat{Z}_j,\partial \widehat{Z}_j,\partial r'_j}$ contains
\begin{equation}\label{lectnge1}
\begin{tikzcd}[row sep=0]
R \cong \zco{n}{\widehat{Z}_j,\partial\widehat{Z}_j} & \zco{n-1}{\partial\widehat{Z}_j,\partial r'_j} \cong R \arrow[l,"\delta"',"\cong"]\\
\br{\widehat{Z}_j,\partial\widehat{Z}_j}^{\ast} & \br{\partial\widehat{Z}_j,\partial r'_j}^{\ast} \arrow[l, mapsto]
\end{tikzcd}
\end{equation}

The inclusion maps of closed triples $\pa{\widehat{Z_j},\partial\widehat{Z_j},\partial r'_j}\hookrightarrow \pa{\widehat{M_j},\widehat{F_j},r'_j}$ yield the commutative diagram
\begin{equation}\label{eq:triplesexact2}
\begin{tikzcd}
\phantom{} & \zco{k}{\partial\widehat{Z}_j,\partial r'_j} \arrow[l] & \zco{k}{\widehat{Z}_j,\partial r'_j} \arrow[l] & \zco{k}{\widehat{Z}_j,\partial\widehat{Z}_j} \arrow[l] & \zco{k-1}{\partial\widehat{Z}_j,\partial r'_j} \arrow[l,"\delta"'] & \phantom{} \arrow[l]\\
\phantom{} & \zco{k}{\widehat{F}_j,r'_j} \arrow[l] \arrow[u,"\cong"'] & \zco{k}{\widehat{M}_j,r'_j} \arrow[l] \arrow[u] & \zco{k}{\widehat{M}_j,\widehat{F}_j} \arrow[l] \arrow[u] & \zco{k-1}{\widehat{F}_j,r'_j} \arrow[l,"\delta"'] \arrow[u,"\cong"'] & \phantom{} \arrow[l]
\end{tikzcd}
\end{equation}
where the rows are the long exact sequences of the closed triples.
The indicated vertical isomorphisms hold in all dimensions since
$\pa{\widehat{F}_j,r'_j}$ strong deformation retracts to $\pa{\partial\widehat{Z}_j,\partial r'_j}$---see Figure~\ref{fig:triplesnotation}.
In dimension $k=n$, the coboundary homomorphism~\eqref{lectnge1} is an isomorphism.
Thus, commutativity of~\eqref{eq:triplesexact2} implies that the coboundary homomorphism
$\delta:\zco{n-1}{\widehat{F}_j,r'_j} \to \zco{n}{\widehat{M}_j,\widehat{F}_j}$
is injective.
Taking the direct limit, we get that the coboundary homomorphism
$\delta_M: \zeco{n-1}{\partial\nu r,r'} \to \zeco{n}{\widehat{M},\partial\nu r}$ is injective.
Similarly, $\delta_N: \zeco{n-1}{\partial\nu s,s'} \to \zeco{n}{\widehat{N},\partial\nu s}$ is injective.
Commutativity of~\eqref{eq:triplesexact} and injectivity of $\delta_M$---or injectivity of $\delta_N$---imply that
$\delta_S: \zeco{n-1}{H,u} \to \zeco{n}{S,H}$ is injective.
Exactness of~\eqref{eq:triplesexact} and injectivity of $\delta_S$ imply that
$j^{\ast}_{\tn{e}}: \zeco{n-1}{S,H} \to \zeco{n-1}{S,u}$ is surjective.
So, $j^{\ast}_{\tn{e}}: \zeco{k}{S,H} \to \zeco{k}{S,u}$ is an isomorphism in each dimension $k\ne n$ and is surjective in dimension $n$.
Hence, $\zeco{\ast}{S,u}\cong \zeco{\ast}{S,H}/ K$ where $K$ is the kernel of $j^{\ast}_{\tn{e}}: \zeco{n}{S,H} \to \zeco{n}{S,u}$.
The kernel $K$ is an ideal of $\zeco{\ast}{S,H}$ and is principal
since---by exactness in~\eqref{eq:triplesexact}---$K$ equals the image of $\delta_S: \zeco{n-1}{H,u} \to \zeco{n}{S,H}$
and $\zeco{n-1}{H,u}\cong R$.
Recalling~\eqref{eq:MV} and~\eqref{eq:combineisos}, we wish to identify $K$ in $\zeco{\ast}{M,r} \eds \zeco{\ast}{N,s}$.

By~\eqref{lectnge1} and~\eqref{eq:triplesexact2}, the classes $\br{\partial\widehat{Z}_j,\partial r'_j}^{\ast}$
determine a preferred generator
\[
\br{\partial \nu r, r'}_{\tn{e}}^{\ast}\in \zeco{n-1}{\partial \nu r, r'}\cong \dlim_j \zco{n-1}{\widehat{F}_j,r'_j}  \cong R
\]
Let $\omega\in \zeco{n-1}{H,u}\cong R$ be the generator that is sent to
$\br{\partial \nu r, r'}_{\tn{e}}^{\ast}$ by $\rest{i_M}$.
So, $\rest{i_N}$ sends $\omega$ to $-\br{\partial \nu s, s'}_{\tn{e}}^{\ast}$.
Define $\br{r}_{\tn{e}}^{\ast}\in\zeco{n}{M,r}$---which we call a \deffont{ray-fundamental class}---to be
the image of $\br{\partial \nu r, r'}_{\tn{e}}^{\ast}$ under
the isomorphisms~\eqref{eq:seriesisos} and similarly for $N$.
Thus, we have a portion of~\eqref{eq:triplesexact} augmented with~\eqref{eq:seriesisos}
\begin{equation}\label{eq:keydiag}
\begin{tikzcd}
\zeco{n}{M,r} & & & \br{r}_{\tn{e}}^{\ast}\\
\zeco{n}{\widehat{M},\partial\nu r} \arrow[u, "\cong"', "\eqref{eq:seriesisos}"] & \zeco{n-1}{\partial\nu r,r'}\cong R \arrow[l, tail, "\delta_M"']
& & 
\delta_M\pa{\br{\partial \nu r, r'}_{\tn{e}}^{\ast}} \arrow[u, mapsto] &
\br{\partial \nu r, r'}_{\tn{e}}^{\ast} \arrow[l, mapsto]\\
\zeco{n}{S,H} \arrow[u,"i_M^{\ast}"] \arrow[d,"i_N^{\ast}"'] & \zeco{n-1}{H,u}\cong R \arrow[l, tail, "\delta_S"'] \arrow[u,"\cong"'] \arrow[d,"\cong"]
& &
\delta_S(\omega) \arrow[u, mapsto] \arrow[d, mapsto] & \omega \arrow[l, mapsto] \arrow[u,mapsto] \arrow[d,mapsto]\\
\zeco{n}{\widehat{N},\partial\nu s} \arrow[d, "\cong", "\eqref{eq:seriesisos}"'] & \zeco{n-1}{\partial\nu s,s'}\cong R \arrow[l, tail, "\delta_N"']
& &
\delta_N\pa{-\br{\partial \nu s, s'}_{\tn{e}}^{\ast}} \arrow[d, mapsto] & -\br{\partial \nu s, s'}_{\tn{e}}^{\ast} \arrow[l, mapsto]\\
\zeco{n}{N,s} & & & -\br{s}_{\tn{e}}^{\ast}
\end{tikzcd}
\end{equation}
By exactness $K=\im{\delta_S}$, and under the isomorphism~\eqref{eq:combineisos} $K$ corresponds to the principal ideal
$\fg{\pa{\br{r}^{\ast}_{\tn{e}}, -\br{s}^{\ast}_{\tn{e}}}}$ of dimension $n$.
Thus, we have proved the following version of King's theorem.

\begin{theorem}\label{thm:King}
Let $(M,r)$ and $(N,s)$ be end sum pairs of dimension $n+1\geq 2$.
Let $S=(M,r)\es (N,s)$ be the end sum of $(M,r)$ and $(N,s)$.
There exists an isomorphism of graded $R$-algebras
\begin{equation}\label{eq:redmainiso}
\rzeco{\ast}{S} \cong \pa{\rzeco{\ast}{M} \eds \rzeco{\ast}{N}}/\fg{\pa{\br{r}^{\ast}_{\tn{e}}, -\br{s}^{\ast}_{\tn{e}}}}
\end{equation}
where, by our definition, $\rzeco{\ast}{S}=\zeco{\ast}{S,u}$, $\rzeco{\ast}{M}=\zeco{\ast}{M,r}$, and $\rzeco{\ast}{N}=\zeco{\ast}{N,s}$.
\end{theorem}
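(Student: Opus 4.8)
The plan is to build the isomorphism~\eqref{eq:redmainiso} from three ingredients assembled above: a quotient description of $\rzeco{\ast}{S}=\zeco{\ast}{S,u}$ extracted from the long exact end cohomology sequence of the closed triple $(S,H,u)$; the Mayer--Vietoris identification of $\zeco{\ast}{S,H}$; and an explicit calculation of the relevant kernel as a principal ideal. Concretely, I would first show
\[
\rzeco{\ast}{S}=\zeco{\ast}{S,u}\cong\zeco{\ast}{S,H}/K,
\]
then rewrite the target via~\eqref{eq:combineisos} as $\pa{\rzeco{\ast}{M}\eds\rzeco{\ast}{N}}/K$, and finally identify $K$ with the principal ideal $\fg{\pa{\br{r}^{\ast}_{\tn{e}},-\br{s}^{\ast}_{\tn{e}}}}$ concentrated in dimension $n$.

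For the quotient description I would read off the middle row of~\eqref{eq:triplesexact}. Because $H\approx\R^n$ and $u$ is an unknotted proper ray in $H$, the pair $(H,u)$ has $\zeco{n-1}{H,u}\cong R$ and $\zeco{k}{H,u}=\cpa{0}$ otherwise. Feeding this into the sequence shows at once that $j^{\ast}_{\tn{e}}$ is an isomorphism in every dimension $k\ne n-1,n$, is surjective in dimension $n$, and is injective in dimension $n-1$. The only missing input is that $\delta_S\colon\zeco{n-1}{H,u}\to\zeco{n}{S,H}$ is injective; granting it, exactness forces $j^{\ast}_{\tn{e}}$ to be an isomorphism in dimension $n-1$ as well and a surjection with kernel $K=\im{\delta_S}$ in dimension $n$, so $\zeco{\ast}{S,u}\cong\zeco{\ast}{S,H}/K$. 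Since $\zeco{n-1}{H,u}\cong R$, the $R$-submodule $K$ is cyclic, and being the kernel of the ring map $j^{\ast}_{\tn{e}}$ it is in fact a principal ideal.

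I expect the injectivity of $\delta_S$ to be the main obstacle, and I would deduce it from the local Morse-theoretic model. Using the vertical maps of~\eqref{eq:triplesexact}, injectivity of $\delta_S$ reduces to injectivity of $\delta_M$ (equivalently $\delta_N$). To prove $\delta_M$ injective I would argue at the finite stages indexed by $j$: the inclusions of triples in~\eqref{eq:triplesexact2} together with the fundamental-class computation~\eqref{lectnge1}---which identifies the coboundary $\zco{n-1}{\partial\widehat{Z}_j,\partial r'_j}\to\zco{n}{\widehat{Z}_j,\partial\widehat{Z}_j}$ as an isomorphism of copies of $R$---show by commutativity that each $\delta\colon\zco{n-1}{\widehat{F}_j,r'_j}\to\zco{n}{\widehat{M}_j,\widehat{F}_j}$ is injective. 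Passing to the direct limit over $j$ and using exactness of the direct limit functor then yields injectivity of $\delta_M$, hence of $\delta_S$.

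It remains to assemble the pieces and verify the algebra structure. Combining the Mayer--Vietoris isomorphism~\eqref{eq:MV} with the excision and proper-homotopy isomorphisms~\eqref{eq:seriesisos} gives~\eqref{eq:combineisos}, under which I must locate $K=\im{\delta_S}$. For this I would chase the preferred generator $\omega\in\zeco{n-1}{H,u}$ through~\eqref{eq:keydiag}: by construction $\rest{i_M}$ carries $\omega$ to $\br{\partial\nu r,r'}^{\ast}_{\tn{e}}$ and $\rest{i_N}$ carries $\omega$ to $-\br{\partial\nu s,s'}^{\ast}_{\tn{e}}$, so that $\delta_S(\omega)$ corresponds under~\eqref{eq:combineisos} to $\pa{\br{r}^{\ast}_{\tn{e}},-\br{s}^{\ast}_{\tn{e}}}$; hence $K=\fg{\pa{\br{r}^{\ast}_{\tn{e}},-\br{s}^{\ast}_{\tn{e}}}}$. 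Finally, every map involved is an $R$-algebra homomorphism and cup products on $\zeco{\ast}{S,H}\cong\rzeco{\ast}{M}\eds\rzeco{\ast}{N}$ are coordinatewise, so $K$ is a graded ideal and the map induced by $j^{\ast}_{\tn{e}}$ on the quotient is an isomorphism of graded $R$-algebras, which is precisely~\eqref{eq:redmainiso}.
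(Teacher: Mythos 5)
Your proposal is correct and follows essentially the same route as the paper: the long exact sequence of the closed triple $(S,H,u)$ giving $\rzeco{\ast}{S}\cong\zeco{\ast}{S,H}/K$, the Mayer--Vietoris plus excision identification~\eqref{eq:combineisos}, injectivity of $\delta_S$ deduced from the finite-stage coboundary isomorphisms~\eqref{lectnge1} via~\eqref{eq:triplesexact2} and the direct limit, and the generator chase through~\eqref{eq:keydiag} identifying $K$ with $\fg{\pa{\br{r}^{\ast}_{\tn{e}},-\br{s}^{\ast}_{\tn{e}}}}$. No substantive differences from the paper's argument.
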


Theorems~\ref{recdirect} and~\ref{thm:King} immediately imply the following.

\begin{corollary}\label{cor:numends}
Furthermore in Theorem~\ref{thm:King}, $\card{E(S)}=\card{E(M)}+\card{E(N)}-1$ (infinities not distinguished).
In particular, if $M$ and $N$ are one-ended, then $S$ is one-ended.
\end{corollary}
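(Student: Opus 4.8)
The plan is to extract the dimension-zero part of the graded-algebra isomorphism in Theorem~\ref{thm:King} and then feed the result into the rank count of Theorem~\ref{recdirect}. I would first note that all three spaces qualify as ray-based, metrizable generalized continua: $M$ and $N$ are connected, noncompact, second-countable manifolds carrying the baserays $r$ and $s$, and the end sum $S$ is a connected, noncompact manifold of dimension $n+1\geq2$ carrying the proper ray $u$. Consequently Theorem~\ref{recdirect} applies to each, yielding $\rank{\rzeco{0}{X}}=\card{E(X)}-1$ for $X\in\cpa{S,M,N}$ (infinities not distinguished).

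The key step is to observe that the quotient in~\eqref{eq:redmainiso} does nothing in dimension zero. The ideal $\fg{\pa{\br{r}^{\ast}_{\tn{e}}, -\br{s}^{\ast}_{\tn{e}}}}$ is generated by a single homogeneous element of dimension $n$, and $n+1\geq2$ forces $n\geq1$. Since $\rzeco{\ast}{M}\eds\rzeco{\ast}{N}$ is a graded $R$-algebra, this ideal is homogeneous and every nonzero element in it has dimension at least $n\geq1$; in particular its dimension-zero component vanishes. Because~\eqref{eq:redmainiso} is an isomorphism of \emph{graded} $R$-algebras, restricting it to dimension zero therefore gives
\[
\rzeco{0}{S} \cong \rzeco{0}{M} \eds \rzeco{0}{N}.
\]

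Finally I would take ranks. Rank is additive over direct sums of free $R$-modules, so $\rank{\rzeco{0}{S}}=\rank{\rzeco{0}{M}}+\rank{\rzeco{0}{N}}$. Substituting the three instances of the rank count from the first paragraph gives $\card{E(S)}-1=\pa{\card{E(M)}-1}+\pa{\card{E(N)}-1}$, which rearranges to $\card{E(S)}=\card{E(M)}+\card{E(N)}-1$; the one-ended statement is the special case $\card{E(M)}=\card{E(N)}=1$. The only point demanding care---and the main obstacle, such as it is---is the bookkeeping of the ``$-1$'' under the convention that infinities are not distinguished, since each rank may be countably infinite. One checks the arithmetic in $\Z_{\geq0}\cup\cpa{\infty}$ is consistent: if, say, $E(M)$ is infinite, then $\rank{\rzeco{0}{S}}\geq\rank{\rzeco{0}{M}}$ is infinite and both sides of the formula equal $\infty$.
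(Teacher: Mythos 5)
Your proposal is correct and is exactly the argument the paper intends by its one-line proof ``Theorems~\ref{recdirect} and~\ref{thm:King} immediately imply the following'': the ideal $\fg{\pa{\br{r}^{\ast}_{\tn{e}}, -\br{s}^{\ast}_{\tn{e}}}}$ is homogeneous and concentrated in dimensions $\geq n\geq 1$, so~\eqref{eq:redmainiso} gives $\rzeco{0}{S}\cong\rzeco{0}{M}\eds\rzeco{0}{N}$, and the rank count $\card{E(X)}=\tn{rank}\,\rzeco{0}{X}+1$ from Theorem~\ref{recdirect} finishes it. Your added care about the convention on infinities is consistent with the paper's usage.
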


\begin{remarks}\label{kingrem}\mbox{ \\ } 
\begin{enumerate}[label=(\alph*)]
\item Regarding Corollary~\ref{cor:numends}, Axon and the second author~\cite[Lemma~5.9]{ac}
proved that $E(S)$ is homeomorphic to the quotient space of the topological disjoint union of $E(M)$ and $E(N)$
where the ends $\rho$ and $\sigma$---pointed to by $r$ and $s$ respectively---are identified.
\item The proof of our version of King's theorem is structured similarly to the proof in~\cite[$\S$5]{cgh} with the following distinctions:
we employed our precise definition of reduced end cohomology, used additional baserays $r'$ and $s'$, and correspondingly used triples of spaces.
Further, the seemingly two different conclusions in King's theorem~\cite[Thm.~5.4]{cgh} dissolved into a single conclusion
in our version.
\item In the construction of the end sum $S$, an unknotted ray $u$ was chosen in $H\approx \R^n$. 
When $n>1$, all such rays are ambiently isotopic in $H$.
When $n=1$, $H$ contains two such rays up to ambient isotopy.
Those rays need not be ambiently isotopic in $S$ either, although they do point to the same end of $S$
and they are related by a self-homeomorphism of $S$---see the proof of Lemma~5.9 and Theorem~6.10 in~\cite{ac}.
\item Ray-fundamental classes may be computed in some interesting cases---see~\cite[Ex.~5.3 \& $\S$6]{cgh}.
\end{enumerate}
\end{remarks}

\section{Future directions}
\label{sec:fd}

We conjecture that the main results of Section~\ref{sec:br}---existence of a baseray and a proper retract to each baseray---extend to trees.
A \deffont{tree} is a locally finite, acyclic simplicial $1$-complex.
Each tree is a generalized continuum.
A \deffont{leaf} is a vertex of degree $1$.
A tree is \deffont{rooted} provided it contains a distinguished vertex called the \deffont{root}.

\begin{conjecture}[Existence of nice embedded tree]
\label{conjextree}
Let $X$ be a noncompact, metrizable generalized continuum.
Then, there exists an infinite rooted tree $T$ and a proper (topological) embedding $\tau:T \rightarrowtail X$
such that $E(\tau):E(T)\to E(X)$ is a homeomorphism.
Further, $T$ may be chosen to contain no leaves or such that the root is the unique leaf.
\end{conjecture}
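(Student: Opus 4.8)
The plan is to build $T$ directly from the inverse system~\eqref{invsys} attached to an efficient compact exhaustion $\cpa{K_i}$ of $X$ (Theorem~\ref{gceee}) and then to realize it as a closed embedded subtree of $X$, one shell at a time, using the arcwise connectedness theorem. First I would record a reduction that makes properness free: the inclusion of any closed subspace of a generalized continuum is proper, since its intersection with each compactum is compact. Hence it suffices to produce a \emph{closed} subset $Y\subseteq X$ that is homeomorphic to a tree $T$ of the required kind and for which the inclusion induces a homeomorphism $E(Y)\to E(X)$.

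For the combinatorial model, take as the level-$i$ vertex set the finite set $\uc{V_i}$ (Lemma~\ref{compactlemma}\ref{finunbdedcomps}), use the canonical surjections $\uc{V_{i+1}}\twoheadrightarrow\uc{V_i}$ of Lemma~\ref{monotonicity} as the parent maps, and adjoin a root at level $0$ joined to every element of $\uc{V_1}$. By Lemma~\ref{monotonicity}\ref{surj} every vertex has at least one child, so each non-root vertex has degree at least two and $T$ is locally finite (the bonding maps have finite fibers). The rays of $T$ are exactly the nested sequences $\pa{V_1^{j_1},V_2^{j_2},\ldots}$, so $E(T)$ is canonically $\ilim\uc{V_i}=E(X)$; this identification is the homeomorphism we must realize geometrically. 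Note also that the only possible leaf of $T$ is the root, which is a leaf precisely when $\card{\uc{V_1}}=1$; this already produces the stated dichotomy (no leaves, or the root as unique leaf) with no further work.

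The embedding is built shell by shell, imitating the proof of Theorem~\ref{raysexist}. After passing to a subsequence of $\cpa{K_i}$ (harmless by Lemma~\ref{endspacesshomeo}) I would choose a vertex point $p_i^j\in V_i^j$ for each unbounded component, and for each edge $V_{i+1}^k\subseteq V_i^j$ connect $p_i^j$ to $p_{i+1}^k$ by an arc lying in the connected open set $V_i^j$ (arcwise connectedness theorem). Disjointness \emph{across} branches is automatic, since arcs attached to distinct components $V_i^j,V_i^{j'}$ live in disjoint open sets. The genuine difficulty---the step I expect to be the main obstacle---is to keep the finitely many arcs emanating from a common parent $p_i^j$ pairwise disjoint away from $p_i^j$ and, more seriously, to prevent an edge arc from wandering into a sibling component $V_{i+1}^{k'}$ before reaching its target $V_{i+1}^k$. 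Example~\ref{topsine} shows such wandering cannot be excluded in general, so naive trimming fails. The remedy I propose is to work inside the Peano continuum formed by $p_i^j$ together with the preliminary connecting arcs to all its children, and to extract from it an embedded finite dendrite reaching each required child component; the collar structure of the efficient exhaustion should let these local dendrites assemble into a single closed, locally finite, acyclic $1$-complex $Y$. Controlling the interaction between a wandering arc and the deeper construction inside the invaded sibling component is the crux, and a careful ordering---fixing all vertex points before routing any edges, and completing the innermost shells first---will be needed here.

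Once $Y$ is built as a closed embedded tree, closedness supplies properness, and it remains to check that inclusion induces a homeomorphism $E(Y)\to E(X)$. Using the image criterion of Lemma~\ref{endinimage}, arranging $\im{\tau}$ to meet every $V_i^{j_i}$ gives surjectivity of $E(\tau)$, while the shell-by-shell control forces $E(\tau)$ to agree with the canonical identification $E(T)=E(X)$ of the second paragraph; as a continuous bijection of compact Hausdorff spaces (Theorem~\ref{endspace}) it is then a homeomorphism, and functoriality of $E$ (Theorem~\ref{fefunctors}) confirms that $E(\tau)$ is the inclusion-induced map. The leaf clause needs no separate argument, being built into the model $T$.
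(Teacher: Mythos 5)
The statement you are proving is not proved in the paper at all: it is posed as an open conjecture in Section~\ref{sec:fd}, where the authors explicitly warn that it ``may require some additional niceness hypothesis on the space $X$'' and point to Halin's tree lemma as a known version only for CW complexes. So your proposal would have to be a complete, self-contained proof to succeed, and it is not: you correctly identify the crux and then defer it. The combinatorial model $T$ built from the inverse system~\eqref{invsys}, the reduction to a closed embedded copy of $T$ (closed subspaces include properly), and the leaf dichotomy are all fine and essentially forced. The gap is the geometric realization.

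Concretely, the step that fails as written is the assembly of the edge arcs into an \emph{embedded} tree. In the proof of Theorem~\ref{raysexist} the trim-and-reparameterize device works because there is a single chain of arcs: one cuts each arc at its first intersection with its successor, and the result is still an arc pointing to the same end. With branching, an arc from $p_i^j$ toward a child $V_{i+1}^k$ may (as Example~\ref{topsine} shows) be forced to pass through a sibling component $V_{i+1}^{k'}$, where it can meet the subtree already routed inside $V_{i+1}^{k'}$; trimming at a first intersection point now reattaches one branch to another, creating a cycle or changing which end a branch points to, and no ordering of the construction obviously avoids this because the obstruction is two-sided (the wandering arc and the invaded subtree each constrain the other). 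Your proposed remedy---extract a finite dendrite from the Peano continuum formed by a parent vertex and its preliminary arcs---is a reasonable local idea, but you give no argument that the dendrites extracted in successive shells are pairwise disjoint, that their union is closed and locally finite, or that each branch eventually stays inside its designated nested components (which is what injectivity of $E(\tau)$ requires). The phrase ``collar structure of the efficient exhaustion'' has no content for a general metrizable generalized continuum; efficient compacta need not have collared frontiers, nor even path-connected ones. Until this assembly step is carried out, the proposal is a strategy, not a proof, and the authors' own caution suggests the statement may genuinely need stronger hypotheses for it to go through.
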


\begin{conjecture}[Existence of retract to tree]
\label{conjretree}
Let $X$ be a noncompact generalized continuum.
If $\tau:T \rightarrowtail X$ is a proper embedding of an infinite tree $T$ and $E(\tau)$ is injective,
then there exists a proper map $\rho:X\to T$ such that $\rho\circ \tau=\tn{id}:T \to T$.
In particular, $\tau\circ\rho:X\to\im{\tau}$ is a proper retraction of $X$ onto the image of $\tau$.
\end{conjecture}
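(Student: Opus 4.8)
The plan is to mirror the proof of Theorem~\ref{retract}, replacing the target ray $\nnr$ by the tree $T$ and replacing the real-valued Tietze extensions by extensions into finite subtrees. First I would apply Theorem~\ref{exhaustmap}\ref{finjeceY} to the proper embedding $\tau:T\rightarrowtail X$ to obtain a $\tau$-efficient compact exhaustion $\cpa{K_i}$ of $X$; then $\cpa{T_i:=\tau^{-1}\pa{K_i}}$ is an efficient compact exhaustion of $T$, and after subdividing $T$ and passing to a subsequence I may assume each $T_i$ is a finite subtree meeting $\Fr{K_i}{X}$ only in interior points of edges, away from vertices. As in the setup preceding Theorem~\ref{retract}, Lemma~\ref{frontcont} gives $\Fr{T_i}{T}=\tau^{-1}\pa{\Fr{\tau\pa{T_i}}{\im{\tau}}}\subseteq\tau^{-1}\pa{\Fr{K_i}{X}}$, a finite set of edge-interior points; I would arrange these frontier points to march strictly outward along each branch of $T$, exactly as $0<a_1\leq b_1<a_2\leq\cdots$ did along the ray.

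Next I would build $\rho:X\to T$ inductively, just as $\rho_i:K_i\to\br{0,b_i}$ was built. On $K_1$ I define $\rho_1$ to restrict to $\tau^{-1}$ on $\im{\tau}\cap K_1=\tau\pa{T_1}$ and to send $\Fr{K_1}{X}$ into a prescribed collar of $T_1$ near $\Fr{T_1}{T}$. The real-valued Tietze step is replaced by extension into the finite subtree $T_1$: since a finite tree is a compact absolute retract, I embed $T_1\hookrightarrow\br{0,1}^N$ with a retraction $\pi:\br{0,1}^N\to T_1$, extend coordinatewise by the ordinary Tietze theorem (valid since $X$, hence each closed $K_i$, is normal by Remarks~\ref{gcremarks}\ref{gcnice}), and postcompose with $\pi$. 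The inductive step extends $\rho_i$ across the annular region $K_{i+1}-\Int{K_i}{X}$ into $T_{i+1}$, routing each unbounded component of $X-K_i$ toward the matching hanging subtree of $T-T_i$. By construction $\rho\circ\tau=\tn{id}$, and---exactly as in Theorem~\ref{retract}---the containments $\rho\pa{K_{i+1}-\Int{K_i}{X}}\subseteq T-T_{i-1}$ force $\rho^{-1}$ of any finite subtree to sit in some $K_m$, whence $\rho$ is proper and $\tau\circ\rho$ is the desired retraction onto $\im{\tau}$.

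The routing in the inductive step is where the injectivity hypothesis on $E(\tau)$ enters, and it is the main obstacle. Using the monotonicity Lemma~\ref{monotonicity} I would track the inverse system $\uc{X-K_{i+1}}\twoheadrightarrow\uc{X-K_i}$ alongside the corresponding system of components of $T-T_i$; injectivity of $E(\tau)$ guarantees that distinct ends of $T$ point to distinct ends of $X$, so distinct hanging subtrees of $T-T_i$ lie in distinct unbounded components of $X-K_i$. This is precisely what prevents the pathology in which a single component $V$ of $X-K_i$ meets two branches of $\im{\tau}$ heading to the same end---were that to happen, connectivity of $V$ would force $\rho\pa{V}$ to contain the arc of $T$ joining those branches back through $T_i$, destroying the properness containment. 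The genuinely delicate point, and the reason this remains a conjecture, is to verify that the component-to-subtree correspondence can be made coherent across all levels simultaneously, and that the ``extra'' ends of $X$---present whenever $E(\tau)$ is not surjective---can be funneled to chosen ends of $T$ continuously across every annulus while keeping each frontier value inside its prescribed collar. Assembling these choices into a single continuous, proper $\rho$ is the technical heart of the argument.
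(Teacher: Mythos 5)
The statement you are proving is stated in the paper as Conjecture~\ref{conjretree}; the paper contains no proof of it. The authors offer only a one-sentence suggested strategy---argue as in the ray version, replacing real-valued Tietze extension by extension into a finite tree, which is an absolute retract since each finite tree is a retract of the $2$-disk---and explicitly leave the result open. Your proposal is an elaboration of exactly that strategy (exhaust the map $\tau$ via Theorem~\ref{exhaustmap}, mimic the frontier bookkeeping $0<a_1\leq b_1<a_2\leq\cdots$ from the setup preceding Theorem~\ref{retract}, and extend into finite subtrees $T_i$), so you are on the paper's intended route. But your write-up is, by your own admission, not a proof: the ``technical heart''---making the correspondence between unbounded components of $X-K_i$ and hanging subtrees of $T-T_i$ coherent across all levels, and funneling the ends of $X$ outside $\im{E(\tau)}$ continuously while preserving the containments needed for properness---is precisely the part left undone, and it is the part that makes this a conjecture rather than a theorem.

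One concrete inaccuracy in the portion you do write out: injectivity of $E(\tau)$ does \emph{not} imply that distinct hanging subtrees of $T-T_i$ lie in distinct unbounded components of $X-K_i$ at the same level $i$. For distinct hanging subtrees $W_a,W_b$ of $T-T_i$, injectivity gives disjoint compact images $E(\tau)\pa{EW_a}$ and $E(\tau)\pa{EW_b}$ in $E(X)$, hence separation by disjoint unions of basic open sets $EV_j^{\cdot}$ only at some deeper level $j\gg i$; and even then this separates only the deep portions $W_a\cap\pa{T-T_j}$ and $W_b\cap\pa{T-T_j}$, while the portions of $W_a$ and $W_b$ near $T_i$ may still share a component of $X-K_i$. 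So already the first step of your routing argument requires passing to carefully shuffled subsequences of $\cpa{K_i}$ and of the induced exhaustion of $T$---a ``W''-type argument in the sense of Remark~\ref{Wargument}---and the resulting level-by-level combinatorics of which component maps into which branch is exactly the unresolved coherence problem. Your sketch is a reasonable plan of attack and honestly identifies the obstacles, but it does not close the gap, and the paper does not close it either.
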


Conjecture~\ref{conjextree} may require some additional niceness hypothesis on the space $X$.
Halin's tree lemma~\cite[pp.~63 \& 78]{bq} is a version of Conjecture~\ref{conjextree} for CW complexes.
A natural approach to Conjecture~\ref{conjretree} is to argue as in the ray version
but using the Tietze extension theorem with target a finite tree---which holds since each finite tree is a retract of the $2$-disk.
Proofs of versions of those two conjectures would immediately yield a splitting result more general than Theorem~\ref{mainrecotheorem}.
Guilbault and the second author will utilize such results for manifolds in a forthcoming paper to
prove that for each $k$, the $k$-dimensional end cohomology $R$-module of an end sum is independent up to isomorphism
of the chosen rays.
Thus, the ring structure is crucial for using end cohomology to distinguish various end sums of given manifolds,
as has been done in~\cite{ch,cgh}.

The dependence of end sum on ray choice has been well-studied---see
\cite{ch,cg,cgh}---yet interesting questions remain.
Guilbault, Haggerty, and the second author~\cite[Ques.~1.1]{cgh} have asked: \textit{for contractible, open $n$-manfiolds $M$ and $N$
of dimension $n\geq 4$, is $M\es N$ well-defined up to diffeomorphism or up to homeomorphism?}
Poincar\'{e} duality at the end---see Geoghegan~\cite[pp.~361--362]{geoghegan}---implies that the end cohomology algebra of a
contractible, open $n$-manifold is isomorphic to the ordinary cohomology algebra of $S^{n-1}$.
Thus, if one hopes to construct examples answering that question in the negative, then invariants other than end cohomology appear to be necessary.

It is interesting to ask whether end sums of two smooth, open, one-ended $n$-manifolds
can be homeomorphic but not diffeomorphic.
Examples for some $n\geq8$ were constructed by Gompf and the second author~\cite[Ex.~3.4(a)]{cg}.
It is unknown whether examples exist in dimension $n=4$.
A natural question is:
\textit{Let $X$ be a smooth, one-ended, oriented 4–manifold.
Can summing $X$ with a fixed exotic $\R^4$, preserving orientation,
yield different diffeomorphism types depending on the choice of ray in $X$?}
For more on that question, see~\cite[pp.~1303, 1325--1326]{cg}.

For a variety of additional open questions on ends---including interesting aspects
outside of the scope of the present paper---see Guilbault~\cite{guilbault}.


\begin{thebibliography}{99}

%

\bibitem[Asg24]{asg}
	D.~Asgeirsson,
	\textit{Towards solid abelian groups: a formal proof of N\"obeling's theorem},
	in \textit{15th International Conference on Interactive Theorem Proving},
	LIPIcs. Leibniz Int. Proc. Inform. \textbf{309},
	Schloss Dagstuhl. Leibniz-Zent. Inform., Wadern, 2024, 17 pp.

\bibitem[AC23]{ac}
	L.~Axon and J.S.~Calcut,
	\textit{The end sum of surfaces},
	\url{https://arxiv.org/abs/2309.07101v2},
	to appear in the AMS Contemporary Mathematics volume in honor of Mike Mihalik (2024), 38 pp.

\bibitem[BQ01]{bq}
	H.~Baues and A.~Quintero,
	\textit{Infinite homotopy theory},
	Kluwer Academic Publishers, Dordrecht, 2001.

\bibitem[Ben16]{bennett}
	J.~Bennett,
	\textit{Exotic smoothings via large $\R^4$'s in Stein surfaces},
	Algebr. Geom. Topol. \textbf{16} (2016), 1637--1681.

\bibitem[Bre93]{bredon}
	G.E.~Bredon,
	\textit{Topology and geometry},
	Corrected third printing of the 1993 original,
	Springer--Verlag, New York, 1997.



\bibitem[CG19]{cg}
	J.S.~Calcut and R.E.~Gompf,
	\textit{On uniqueness of end sums and 1--handles at infinity},
	Algebr. Geom. Topol. \textbf{19} (2019), 1299--1339.

\bibitem[CGH20]{cgh}
	J.S.~Calcut, C.R.~Guilbault, and P.V.~Haggerty,
	\textit{Extreme nonuniqueness of end-sum},
	J. Topol. Anal. \textbf{14} (2022), 461--503.

\bibitem[CH14]{ch}
	J.S.~Calcut and P.V.~Haggerty,
	\textit{Connected sum at infinity and 4-manifolds},
  Algebr. Geom. Topol. \textbf{14} (2014), 3281--3303.

\bibitem[CKS12]{cks}
	J.S.~Calcut, H.C.~King, and L.C.~Siebenmann,
	\textit{Connected sum at infinity and Cantrell-Stallings hyperplane unknotting},
	Rocky Mountain J. Math. \textbf{42} (2012),
	1803--1862.

\bibitem[dGMc67]{dgmc}
	J.~de Groot and R.H.~McDowell,
	\textit{Locally connected spaces and their compactifications},
	Illinois J. Math. \textbf{11} (1967), 353--364.
	
\bibitem[Die89]{dieudonne}
	J.~Dieudonn\'{e},
	\textit{A history of algebraic and differential topology. 1900--1960},
	Birkh\"{a}user Boston, Inc., Boston, MA, 1989.

\bibitem[DT58]{doldthom}
	A.~Dold and R.~Thom,
	\textit{Quasifaserungen und unendliche symmetrische Produkte},
	Ann. of Math. \textbf{67} (1958), 239--281.

\bibitem[Dol80]{dold}
	A.~Dold,
	\textit{Lectures on algebraic topology},
	Second Edition,
  Springer-Verlag, Berlin-New York, 1980.

\bibitem[Dug66]{dugundji}
	J.~Dugundji,
	\textit{Topology},
	Allyn and Bacon, Inc., Boston, Mass., 1966.




\bibitem[ES52]{eilenbergsteenrod}
	S.~Eilenberg and N.~Steenrod,
	\textit{Foundations of algebraic topology},
	Princeton University Press, Princeton, New Jersey, 1952.

\bibitem[Eng89]{engelking}
	R.~Engelking,
	\textit{General topology},
	2nd edition,
	Translated from the Polish by the author,
	Heldermann Verlag, Berlin, 1989.

\bibitem[Eps61]{epstein}
	D.~Epstein,
	\textit{Ends},
	in \emph{Topology of 3-manifolds and related topics},
  Proc. The Univ. of Georgia Institute,
	Prentice-Hall, Inc., Englewood Cliffs, NJ, 1961, 110--117.

\bibitem[FA48]{foxartin}
	R.H.~Fox and E.~Artin,
	\emph{Some wild cells and spheres in three-dimensional space},
	Ann. of Math. (2) \textbf{49} (1948), 979--990.

\bibitem[Fre31]{freudenthal31}
	H.~Freudenthal,
	\textit{\"{U}ber die Enden topologischer R\"{a}ume und Gruppen},
	Math. Z. \textbf{33} (1931), 692--713.

	
\bibitem[Fre42]{freudenthal42}
	H.~Freudenthal,
	\textit{Neuaufbau der {E}ndentheorie},
	Ann. of Math. \textbf{43} (1942), 261--279.

\bibitem[Fuc73]{fuchs}
	L.~Fuchs,
	\emph{Infinite abelian groups. Vol. II},
	Pure and Applied Mathematics. Vol. 36,
	Academic Press, New York-London, 1973.

\bibitem[Geo08]{geoghegan}
	R.~Geoghegan,
	\textit{Topological methods in group theory},
	Graduate Texts in Mathematics \textbf{243}, Springer, New York, 2008.

\bibitem[Gom83]{gompf83}
	R.E.~Gompf,
	\emph{Three exotic ${\bf R}^{4}$'s and other anomalies},
	J. Differential Geom. \textbf{18} (1983), 317--328.

\bibitem[Gom85]{gompf85}
	R.E.~Gompf,
	\emph{An infinite set of exotic {${\bf R}\sp 4$}'s},
	J. Differential Geom. \textbf{21} (1985), 283--300.

\bibitem[Gui16]{guilbault}
	C.R.~Guilbault,
	\textit{Ends, shapes, and boundaries in manifold topology and geometric group theory},
	in \textit{Topology and geometric group theory},
	Springer Proc. Math. Stat. \textbf{184},
	Springer, 2016, 45--125.

\bibitem[GP74]{gp}
	V.~Guillemin and A.~Pollack,
	\emph{Differential topology},
	Prentice--Hall, Englewood Cliffs, NJ, 1974.

\bibitem[Hat02]{hatcher}
	A.~Hatcher,
	\textit{Algebraic topology},
	Cambridge University Press, Cambridge, 2002.

\bibitem[HY61]{hockingyoung}
	J.~Hocking and G.~Young,
	\textit{Topology},
	Addison-Wesley Publishing Co., Inc.,
	Reading, Mass.-London, 1961.

\bibitem[Hop44]{hopf}
	H.~Hopf,
	\textit{Enden offener {R}\"{a}ume und unendliche diskontinuierliche {G}ruppen},
	Comment. Math. Helv. \textbf{16} (1944), 81--100.

\bibitem[HR96]{hr}
	B.~Hughes and A.~Ranicki,
	\textit{Ends of complexes},
	Cambridge Tracts in Mathematics \textbf{123},
	Cambridge University Press,
	Cambridge, 1996.


\bibitem[Kre13]{kreck}
	M.~Kreck,
	\emph{Orientation of manifolds},
	Bull. of the Manifold Atlas, 
	available at
	\href{http://www.map.mpim-bonn.mpg.de/Orientation_of_manifolds}{\curl{http://www.map.mpim-bonn.mpg.de/Orientation{\_}of{\_}manifolds}}
	(2013), 1--7.


\bibitem[Lam99]{lam}
	T.Y.~Lam,
	\textit{Lectures on modules and rings},
	Springer-Verlag, New York, 1999.

\bibitem[Lee11]{lee}
	J.M.~Lee,
	\textit{Introduction to topological manifolds},
	Second edition,
  Springer, New York, 2011.

\bibitem[Mas78]{massey}
	W.~Massey,
	\textit{Homology and cohomology theory: An approach based on Alexander-Spanier cochains},
	Marcel Dekker, Inc., New York-Basel, (1978).


\bibitem[May99CC]{maycc}
	J.P.~May,
	\textit{A concise course in algebraic topology},
	Chicago Lectures in Mathematics,
  University of Chicago Press, Chicago, IL, 1999.



\bibitem[Mun18]{munkres}
	J.R.~Munkres,
	\textit{Topology: Second Edition},
	Pearson Education, Inc., 2018.

\bibitem[Mye99]{myers}
	R.~Myers,
	\emph{End sums of irreducible open {$3$}-manifolds},
	Quart. J. Math. Oxford Ser. (2) \textbf{50} (1999), 49--70.

\bibitem[N\"ob68]{nobeling}
	G.~N\"obeling,
	\textit{Verallgemeinerung eines Satzes von Herrn E. Specker},
	Invent. Math. \textbf{6} (1968), 41--55.

\bibitem[Pes90]{peschke}
	G.~Peschke,
	\textit{The theory of ends},
	Nieuw Arch. Wisk. \textbf{8} (1990), 1--12.

\bibitem[Por95]{porter}
	T.~Porter,
	\textit{Proper homotopy theory},
	in \textit{Handbook of algebraic topology},
  North-Holland, Amsterdam, 127--167, 1995.

\bibitem[Ray60]{raymond}
	F.~Raymond,
	\textit{The end point compactification of manifolds},
	Pacific J. Math. \textbf{10} (1960), 947--963.


\bibitem[Rud69]{rudin}
	M.E.~Rudin,
	\textit{A new proof that metric spaces are paracompact},
	Proc. Amer. Math. Soc. \textbf{20} (1969), 603.

\bibitem[Sam99]{samelson}
	H.~Samelson,
	\textit{$\pi_3(S^2)$, H. Hopf, W. K. Clifford, F. Klein},
	in \textit{History of topology},
	North-Holland, Amsterdam, 575--578, 1999.


\bibitem[Sch08]{schroeer}
	S.~Schr\"{o}er,
	\textit{Baer's result: The infinite product of the integers has no basis},
	Amer. Math. Monthly \textbf{115} (2008), 660--663.

\bibitem[Sch79]{schurle}
	A.W.~Schurle,
	\textit{Topics in topology},
	North-Holland, New York-Oxford, 1979.

\bibitem[Sie65]{siebenmannthesis}
	L.C.~Siebenmann,
	\textit{The obstruction to finding a boundary for an open manifold of dimension greater than five},
	Ph.D. thesis, Princeton University, 1965;
	available online as the file {\tt sieben.pdf} (original) or {\tt thesis.pdf} (re-typed in \LaTeX) at
	\url{http://www.maths.ed.ac.uk/~aar/surgery/notes.htm}.


\bibitem[Spa66]{spanier}
	E.H.~Spanier,
	\textit{Algebraic topology},
	Corrected reprint of the 1966 original,
	Springer-Verlag, New York, 1994.

\bibitem[Spa18]{sparks}
	P.~Sparks,
	\textit{The double $n$-space property for contractible $n$-manifolds},
	Algebr. Geom. Topol. \textbf{18} (2018), 2131--2149.

\bibitem[Spe49]{specker}
	E.~Specker,
	\textit{Die erste Cohomologiegruppe von \"Uberlagerungen und
  Homotopie-Eigenschaften dreidimensionaler Mannigfaltigkeiten},
	Comment. Math. Helv. \textbf{23} (1949), 303--333.

\bibitem[Spi99]{spivak}
	M.~Spivak,
	\textit{A comprehensive introduction to differential geometry. Vol. I},
	Third edition,
  Publish or Perish, Inc., Houston, TX, 1999.

\bibitem[Sta68]{stallings}
	J.~Stallings,
	\textit{On torsion-free groups with infinitely many ends},
	Ann. of Math. \textbf{88} (1968), 312--334.


\bibitem[tD08]{tomdieck}
	T.~tom Dieck,
	\textit{Algebraic topology},
	European Mathematical Society (EMS), Z\"{u}rich, 2008.

\bibitem[vE93]{vanest93}
	W.T.~van Est,
	\textit{Hans Freudenthal (17 September 1905--13 October 1990)},
	in \textit{The legacy of Hans Freudenthal},
	Kluwer Acad. Publ., Dordrecht, 59--69, 1993.

\bibitem[vE99]{vanest99}
	W.T.~van Est,
	\textit{Hans Freudenthal. 17 September 1905--13 October 1990},
	in \textit{History of topology},
	North-Holland, Amsterdam, 1009--1019, 1999.

\end{thebibliography}
\end{document}